\documentclass[11pt,a4paper]{article}

\usepackage{enumitem}

\usepackage[margin=2.5cm]{geometry}
\usepackage[flushmargin]{footmisc}
\usepackage{amsfonts}
\usepackage{amsthm}
\usepackage{amsmath}
\usepackage{amssymb}
\usepackage{centernot}
\usepackage[T1]{fontenc}
\usepackage{cite}
\usepackage{mathrsfs}
\usepackage{amscd}
\usepackage[utf8]{inputenc}
\usepackage{t1enc}
\usepackage{lmodern}
\usepackage{dsfont}
\usepackage{xcolor}
\usepackage{float}
\usepackage{hyperref}
\hypersetup{colorlinks=true, urlcolor= black, linkcolor=black, citecolor=blue}
\usepackage[mathscr]{eucal}
\usepackage{indentfirst}
\renewcommand{\epsilon}{\varepsilon}
\usepackage{bbm}
\usepackage{graphicx}
\usepackage{mathtools}

\numberwithin{equation}{section}

\theoremstyle{plain}
\newtheorem{Thm}{Theorem}[section]
\newtheorem{Lem}[Thm]{Lemma}
\newtheorem{Claim}[Thm]{Claim}
\newtheorem{Coro}[Thm]{Corollary}
\newtheorem{Prop}[Thm]{Proposition}

\newtheorem{Conj}{Conjecture}

\theoremstyle{definition}
\newtheorem{Def}[Thm]{Definition}

\newtheorem{Rem}[Thm]{Remark}

\newtheorem{Ex}[Thm]{Example}

\definecolor{darkgreen}{rgb}{0,0.6,0.05}

\newcommand{\connect}{\xleftrightarrow}
\newcommand\fg{\mathfrak{g}}

\title{Supercritical sharpness for the random cluster representation of real-valued spin models}
\date{}

\begin{document}

\author{Trishen S. Gunaratnam\footnote{TIFR Mumbai and ICTS Bengaluru, \url{trishen@math.tifr.res.in, trishen.gunaratnam@icts.res.in}}\:,  Dmitry Krachun\footnote{Princeton University, \url{dk9781@princeton.edu}}\:, Christoforos Panagiotis\footnote{University of Bath, \url{cp2324@bath.ac.uk}}\:, \\ Romain Panis\footnote{Université Lyon 1, 
\url{panis@math.univ-lyon1.fr}}\:, Franco Severo\footnote{CNRS and Sorbonne Université, \url{severo@lpsm.paris}}}

\maketitle

\begin{abstract}
We study the supercritical regime $\beta>\beta_c$ of a large family of real-valued spin models on $\mathbb Z^d$, including the Blume--Capel and $P(\varphi)$ models. We consider their random cluster representations and prove that they are well-behaved, in the sense that local uniqueness of macroscopic clusters occurs with high probability, uniformly in the boundary conditions. This implies, among other things, a surface-order exponential bound for the (lower) large deviations of the empirical magnetisation. These results were previously known only in the cases of the Ising and $\varphi^4$ models.
\end{abstract}

\setcounter{tocdepth}{2}
    
\section{Introduction}\label{sec:intro}

This article concerns real-valued spin systems with $\mathbb Z/2\mathbb Z$ symmetry. The study of these models originates in the 1920s with the seminal works of Lenz \cite{Lenz1920beitrag} and Ising \cite{Ising1924}, who introduced and analysed what is now known as the Ising model of ferromagnetism. Over the following decades, this discrete model was extended in several directions. Notable examples include the Blume--Capel model \cite{BL,Capel}, which incorporates an additional vacancy state; and continuous-spin models such as the $\varphi^4$ model and, more generally, the $P(\varphi)$ models, developed in the context of Euclidean field theory and the renormalization group \cite{GlimmJaffeQuantumBOOK}. Despite their shared symmetry, these systems exhibit a rich variety of critical phenomena: depending on the specific choice of single-site potential and interaction, one observes different universality classes, as well as the possibility of first-order phase transitions. Their behaviour away from criticality, on the other hand, is expected to display a striking degree of universality. 
In particular, truncated correlations of the extremal measures are believed to decay exponentially in the distance at all temperatures except the (unique) critical one. This dichotomy between model-dependent critical behaviour and robust off-critical properties motivates a unified analysis of high- and low-temperature regimes across this family of spin systems. 
We stress, however, that exponential decay of truncated correlations does not necessarily hold for spin models beyond those considered here. In particular, it can fail for models with continuous symmetries; see, e.g., \cite{DarioWu24Villain} and references therein.

There exist many powerful tools in statistical physics that allow for a detailed description of models in their perturbative regimes, namely at very high or very low temperatures. Examples include high- and low-temperature expansions, cluster expansions, and Pirogov--Sinai theory. While these methods typically cease to be effective in the vicinity of a critical point, it is generally expected that the qualitative behaviours they reveal persist throughout the corresponding phase, arbitrarily close to criticality. This phenomenon is often referred to as a \emph{sharp phase transition}. Establishing such sharpness results is, however, a much more challenging task, and the range of available techniques is considerably more limited.

Exponential decay of correlations above the critical temperature was established for the Ising and $\varphi^4$ models by Aizenman, Barsky, and Fernández in the 1980s \cite{AizenmanBarskyFernandezSharpnessIsing1987} (see also \cite{DuminilTassionNewProofSharpness2016} for an alternative proof in the case of the Ising model). In recent years, major advances in the study of subcritical percolation models—most notably the work of Duminil-Copin, Raoufi, and Tassion \cite{DCRT19}—have led to analogous high-temperature results for a broad class of spin systems, including all the real-valued models considered here, as proved very recently in \cite{PanaVeisubcrit}. In contrast, the low-temperature behaviour of spin systems
(and, correspondingly, the supercritical regime of percolation models) remains much less understood.
Within the family of models under consideration, exponential decay of \emph{truncated correlations} below the critical temperature has so far been established (in all dimensions $d\geq 2$) only for the Ising model, in a relatively recent paper by Duminil-Copin, Goswami, and Raoufi \cite{DCGR20} (see also \cite{GPPS26} for a simpler proof). A key ingredient in their proof is a \emph{supercritical sharpness} result for the associated \emph{random cluster} representation (or \emph{Fortuin--Kasteleyn} (FK) representation), which was established by Bodineau for the Ising model \cite{Bod05} (see also \cite{SeveroSlabIsing2023} for a streamlined argument), and also more recently for the $\varphi^4$ model \cite{gunaratnam2025supercritical}.
However, these papers rely on either the Lee--Yang theorem or the so-called random (tangled) current representation, which are specific to the Ising and $\varphi^4$ models and not available at our level of generality.  

In the present article, we establish supercritical sharpness for the random cluster representation of general real-valued spin systems. Our approach requires little beyond one common feature of these models: the validity of the so-called \emph{Lebowitz inequality}\footnote{This inequality should not be confused with the other Lebowitz inequality on Ursell's four-point function, see \cite{Lebowitz1974Inequ}.} \cite{Lebowitz1977CoexistencePhasesIsing,lebowitz1978number}. The main innovation of the paper is an almost everywhere 
uniqueness result for half-space measures with a non-zero magnetic field on the 
boundary. The proof relies on a soft percolation argument inspired by the 
work of Burton and Keane~\cite{BurtonKeane1989density}, bypassing the need for the aforementioned Ising/$\varphi^4$-specific tools. As in the case of 
the $\varphi^4$ model~\cite{gunaratnam2025supercritical}, several 
technical difficulties arise in the handling of unbounded spins, some of 
which are treated here in a new and simpler way. To isolate the key 
conceptual ideas, a more streamlined argument is presented in a forthcoming 
paper \cite{GKPPS26} in the specific setting of the Blume--Capel model, where spins take 
values in $\{-1,0,1\}$.

\subsection{Definitions and main result}

We start by defining precisely the class of spin models considered here. They will be indexed by a family of measures on $\mathbb R$, which motivates the following definitions.

\begin{Def}[Admissible single-site measures]\label{def:single site} We say that a Borel measure $\rho$ on $\mathbb R$ is \emph{admissible} if it satisfies the following properties:
\begin{enumerate}
    \item[$(i)$] $\rho$ is \emph{even}, which means that for every Borel measurable set $A\subset \mathbb R$, $\rho(A)=\rho(-A)$. 
    \item[$(ii)$] $\rho$ is \emph{super-Gaussian}, in the sense that, for every $a>0$, one has
    \begin{equation}
        \int_{\mathbb R}e^{au^2}\mathrm{d}\rho(u)<\infty.
    \end{equation}
    \item[$(iii)$] $\rho$ has a non-trivial support, i.e.\ $\rho(\mathbb{R}\setminus\{0\})>0$.
\end{enumerate}
\end{Def}

To define the spin models, we need some graph notation. For $x,y\in \mathbb Z^d$, we write $x\sim y$ if $\Vert x-y\Vert_1=1$, where $\Vert \cdot \Vert_1$ denotes the $\ell^1$ norm on $\mathbb R^d$. Given a subset of vertices $\Lambda\subset \mathbb{Z}^d$, consider its external boundary and closure given by $\partial^{\textup{ext}}\Lambda = \{y\in\mathbb{Z}^d\setminus \Lambda : \exists \, x\in\Lambda, x\sim y\}$ and $\overline{\Lambda} = \Lambda\cup \partial^{\textup{ext}}\Lambda$, respectively. Denote by $E(\Lambda):= \{ xy : x,y \in \Lambda,\, x\sim y\}$ the induced set of edges, and by $\overline E(\Lambda) := \{ xy : x \in \Lambda, \, y \in \mathbb Z^d,\, x\sim y \}$ the edges touching $\Lambda$. The edge boundary of $\Lambda$ is $\partial_E \Lambda:= \overline E(\Lambda) \setminus E(\Lambda)$.

\begin{Def}[The spin model]\label{def:spin models} Let $\rho$ be an admissible measure on $\mathbb{R}$. The corresponding real-valued spin model on a finite subset $\Lambda\subset \mathbb Z^d$, with coupling constants $J=\{J_{xy}\}_{xy\in \overline E(\Lambda)}\in (\mathbb{R}^+)^{\overline E(\Lambda)}$, external magnetic field $\mathsf h=\{\mathsf h_x\}_{x\in \Lambda}\in \mathbb{R}^\Lambda$, boundary condition $\eta\in \mathbb{R}^{\partial^{\textup{ext}}\Lambda}$, and inverse temperature $\beta \geq 0$, is defined as the probability measure $\nu^\eta_{\Lambda,\beta,J,\mathsf h}$ on $\mathbb{R}^{\overline{\Lambda}}$ given by
\begin{multline}
\mathrm{d}\nu^\eta_{\Lambda,\beta,J,\mathsf h}(\tau) = \frac{1}{\mathbf{Z}^\eta_{\Lambda,\beta,J,\mathsf h}} \exp\Big(\beta \sum_{xy\in \overline{E}(\Lambda)} J_{xy} \tau_x \tau_y + \beta \sum_{x\in \Lambda} \mathsf h_x\tau_x  
\Big)\prod_{x\in \Lambda}\mathrm{d}\rho(\tau_x) \prod_{x \in \partial^{\textup{ext}} \Lambda} \delta_{\eta_x}(\tau_x),
\end{multline}
where $\mathbf{Z}^\eta_{\Lambda,\beta,J,\mathsf h}$ is a normalisation constant, and where $\delta_t$ is the Dirac measure at $t\in \mathbb R$. We will denote by $\langle f \rangle^\eta_{\Lambda,\beta,J,\mathsf h}$ the expectation of a function $f:\mathbb{R}^{\overline{\Lambda}}\rightarrow \mathbb R$ with respect to $\nu^\eta_{\Lambda,\beta,J,\mathsf h}$. When $J\equiv 1$, $\eta\equiv 0$ or $\mathsf{h}\equiv 0$, we may drop them from the notation. 
\end{Def}

\begin{Rem}
The assumption $\int_{\mathbb{R}} e^{au^2}\mathrm{d}\rho(u)<\infty$ for every $a>0$ is necessary and sufficient for the partition function $\mathbf{Z}^{\eta}_{\Lambda,\beta,J,\mathsf{h}}$ to be well-defined \emph{for every} $\beta>0$. To see this, notice that 
$
\tau_x\tau_y=-\frac{(\tau_x-\tau_y)^2}{2}+\frac{\tau_x^2+\tau_y^2}{2}\leq \frac{\tau_x^2+\tau_y^2}{2}.
$
Observe that the Gaussian Free Field, obtained by choosing $\mathrm{d}\rho(t)=\exp(-at^2)\mathrm{d}t$ (and $\beta$ small enough so that the measure is well defined), is \emph{not} included in our class of models.
\end{Rem}

\begin{Ex} Our class of spin models includes the following examples:
\begin{enumerate}
    \item[$\bullet$] The Ising and $\varphi^4$ models, by choosing, respectively
    \begin{equation}
        \rho=\delta_{-1}+\delta_1, \qquad \mathrm{d}\rho(t)=\exp(-gt^4-at^2)\mathrm{d}t,
    \end{equation}
where $g>0$ and $a\in \mathbb R$. More generally, one may take any distribution in the Griffiths--Simon class \cite{GriffithsSimon}.
    \item[$\bullet$] The Blume--Capel model, by choosing
    \begin{equation}
        \rho=e^{\Delta}\delta_{-1}+\delta_0+e^{\Delta}\delta_1,
    \end{equation}
    where $\Delta\in \mathbb R$. 
    \item[$\bullet$] General lattice $P(\varphi)$ models, by choosing
    \begin{equation}\label{eq:equation p(phi)}
    \mathrm{d}\rho(t)=\exp(-P(t))\mathrm{d}t,
    \end{equation}
    where $P$ is an even polynomial of degree at least $4$ and has a positive leading coefficient.  
\end{enumerate}
\end{Ex}

\hfill

\noindent \textbf{Infinite-volume measures and phase transition.} For simplicity, from now on, we will always consider $J\equiv1$ and $\mathsf{h}\equiv 0$ (unless otherwise stated), and drop them from the notation. 

There exist different procedures to define infinite-volume spin measures. We can construct them by taking (sub)sequential limits of measures of the above form, with potentially random boundary conditions. Alternatively, they can be defined as solutions of the Dobrushin--Lanford--Ruelle (DLR) equation, see \eqref{eq:DLR}. These measures form a simplex which admits extremal measures. The \emph{maximal measure} of this simplex, in the sense of stochastic domination, is denoted by $\nu^+_\beta$ and is often called the \emph{plus measure}. We construct it explicitly in Section \ref{sec:preliminaries}. It is translation invariant, ergodic, and extremal. Although these facts are classical for the Ising model, they are more delicate for models with unbounded spins. A careful analysis of the difficulties stemming from unboundedness was first carried out in \cite{LebowitzPresutti1976}, and recently revisited and strengthened in \cite{PV26}. We refer the reader to Section~\ref{sec:basics} for more details. 

It is always possible to define a \emph{flip-symmetric} infinite-volume measure, called the \emph{free measure}, as the following weak limit: $\nu_{\beta}=\lim_{\Lambda\nearrow \mathbb{Z}^d} \nu_{\Lambda,\beta}^0$, see e.g.~\cite[Chapter~4.1]{GlimmJaffeQuantumBOOK}. The existence of a phase transition is related to the breaking of this 
symmetry, or, equivalently, to whether $\nu_\beta$ is the \emph{only} 
infinite-volume measure. As it turns out, the latter can be reduced to checking whether the \emph{spontaneous magnetisation}
\begin{equation}
    m^*(\beta)
    :=
    \langle \tau_0 \rangle^+_\beta
\end{equation}
vanishes or not, 
where $\langle \cdot \rangle^+_\beta$ denotes, as usual, the expectation with respect to $\nu_\beta^+$.
Therefore, we can define the \emph{critical point} as
\begin{equation}
    \beta_c=\beta_c(\rho,d)
    :=
    \inf\lbrace \beta\geq0:\text{ } m^*(\beta)>0\rbrace.
\end{equation}
It is known that, for all the models considered here, one has $\beta_c\in (0,\infty)$ for every $d\geq 2$, see \cite[Proposition~4.1]{PanaVeisubcrit}. 

\hfill

\noindent\textbf{Random cluster representation.} We will now give an informal description of the random cluster (or FK) representation of our spin models; we refer the reader to Section~\ref{sec:randomcluster} for precise definitions and basic properties. 
Under the measure $\nu_{\Lambda,\beta}^0$ and conditionally on the absolute value field $|\tau|$, the sign field $(\mathrm{sign}(\tau_x))_{x\in \Lambda}$ can be seen as an Ising model on $\{x\in \Lambda: \tau_x\neq 0\}$ with random coupling constants $J(|\tau|)_{xy}= |\tau_x||\tau_y|$. Therefore, we can consider the corresponding random cluster representation in a random environment. For $\Lambda\subset \mathbb{Z}^d$, let $\Psi_{\Lambda,\beta}^0$ be the probability measure on configurations $(\omega,\mathsf{a})\in \{0,1\}^{\overline{E}(\Lambda)} \times (\mathbb R^+)^{\overline{\Lambda}}$, where $\mathsf{a}$ is distributed as $|\tau|$ (under $\nu_{\Lambda,\beta}^0$), and $\omega$---conditionally on $\mathsf{a}$---is distributed as an \emph{Ising random cluster model} with coupling constants $J_{xy}=\mathsf{a}_x \mathsf{a}_y$ and inverse temperature $\beta$. We call this the random cluster model on $\Lambda$ with \emph{free boundary conditions}. It follows from the construction and the Edwards--Sokal coupling that $\tau$ can be obtained by sampling $(\omega,\mathsf{a})$ according to $\Psi_{\Lambda,\beta}^0$ and then assigning independent uniform signs to each cluster of $\omega$. As a consequence, certain correlation functions of $\tau$ can then be expressed in terms of connectivity properties for $\Psi_{\Lambda,\beta}^0$. For example, we have (see Corollary \ref{cor: ES correlations} below)
\begin{equation}
\langle \tau_x \tau_y\rangle^0_{\Lambda,\beta}= \Psi_{\Lambda,\beta}^0[ \mathsf{a}_x \mathsf{a}_y \mathbbm{1}\{x \overset{\omega}{\longleftrightarrow} y\}].    
\end{equation}
We can also construct random cluster measures $\Psi^\#_{\Lambda,\beta}$ with \emph{general boundary conditions} $\#=(\xi,\mathsf{b})$, given by a partition $\xi$ of $\partial^{\textup{ext}}\Lambda$ and $\mathsf{b}\in (\mathbb{R}^+)^{\partial^{\textup{ext}}\Lambda}$.

\hfill

In this paper, we analyse the supercritical properties of the measures 
$\Psi_{\Lambda_L, \beta}^\#$, where $L \geq 1$ and $\Lambda_L := [-L, L]^d \cap \mathbb{Z}^d$. We establish a supercritical sharpness result, which roughly states that these measures exhibit robust supercritical behaviour. This is naturally formulated through the \emph{local uniqueness} event, which we now define.

For every scale $L\geq1$, a percolation configuration $\omega\in \{0,1\}^{E(\Lambda_{8L})}$ satisfies the local uniqueness event $U(L)$  if there exists at least one cluster of $\omega$ crossing the annulus $\Lambda_{8L}\setminus \Lambda_L$, and if any two paths in $\omega$ crossing the annulus $\Lambda_{4L}\setminus \Lambda_{2L}$ are connected in $\omega\cap(\Lambda_{8L}\setminus \Lambda_L)$. This event is of interest as it often serves as an essential building block 
in renormalisation arguments. Our main result shows that, throughout the supercritical regime, local uniqueness occurs with high probability.

\begin{Thm}\label{thm:main} Fix $d\geq 2$ and an admissible single-site measure $\rho$. For every $\beta>\beta_c$,
\begin{equation}\label{eq:local_uniqueness}
\lim_{L\to\infty} \inf_{\#}\Psi^{\#}_{\Lambda_{10L},\beta}[U(L)] = 1.   
\end{equation}
\end{Thm}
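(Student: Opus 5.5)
The plan follows the Bodineau / Severo strategy for supercritical sharpness of FK-Ising, as adapted to $\varphi^4$ in \cite{gunaratnam2025supercritical}. The Ising-specific inputs (Lee--Yang, random currents) are replaced by the Lebowitz inequality and a Burton--Keane type uniqueness argument. The proof has three stages.

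\textbf{Stage 1: macroscopic connections to the boundary.} We reduce to the worst boundary condition, i.e.\ free boundary conditions on $\Lambda_{10L}$. We then use monotonicity of $\Psi^\#$ in $\#$ (FKG/comparison of boundary conditions for the random cluster measure in random environment). This reduces the problem to estimates that are uniform in the boundary condition.

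Fix $\beta>\beta_c$ and choose $\beta'\in(\beta_c,\beta)$. Using $m^*(\beta')>0$ and the Edwards--Sokal identity $\langle \tau_x\rangle^+ = \Psi^{1}[\mathsf a_x\mathbbm 1\{x\leftrightarrow \partial\}]$, we get a positive density of vertices connected to the wired boundary. This comes with integrability control on $\mathsf a$, which should follow from the super-Gaussian assumption in Definition~\ref{def:single site}.

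\textbf{Stage 2: removing the wired boundary via half-space measures with a boundary field.} The goal is to show that for $\beta>\beta'$, the probability that two crossing clusters of $\Lambda_{4L}\setminus\Lambda_{2L}$ fail to be connected inside $\Lambda_{8L}\setminus\Lambda_L$ tends to $0$. Following Bodineau's slab/coarse-graining approach, this reduces to an ``almost everywhere uniqueness'' statement. Consider the half-space $\mathbb H$ with a magnetic field $\mathsf h$ on the boundary hyperplane (equivalently, a ghost vertex connected to boundary sites). For almost every $(\beta,\mathsf h)$, the infinite-volume half-space measure should have a unique infinite cluster not touching the ghost, or equivalently the limiting measures from plus and free-type boundary conditions coincide.

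The plan for this step has two parts. First, use convexity (differentiability almost everywhere) of the half-space pressure in $\mathsf h$ to get equality of the boundary magnetisations of the limiting measures for almost every $\mathsf h$. Second, apply the Lebowitz inequality to convert equality of one-point functions into equality of the measures. The Burton--Keane argument, using translation invariance along the hyperplane and insertion tolerance, excludes coexistence of several infinite clusters that avoid the ghost. Insertion tolerance here requires care because $\mathsf a$ is unbounded; we would condition on $\mathsf a$ being in a compact set with positive probability.

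\textbf{Stage 3: renormalisation.} The half-space uniqueness at a boundary field $\mathsf h$ and parameter $\beta''\in(\beta',\beta)$ is fed into a renormalisation argument. The strict gap $\beta''<\beta$ is what lets us simulate the boundary field with the extra edges available at $\beta$, via a ``sprinkling''. This yields the following: crossing clusters of $\Lambda_{4L}\setminus\Lambda_{2L}$ are connected to a large well-connected set with high probability uniformly in $\#$. Hence they are connected to each other, which is $U(L)$.

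\textbf{Main obstacle.} The hardest step is Stage 2, the almost-everywhere uniqueness of half-space measures with a boundary field and its transfer to finite-volume statements. The first thing to try is a direct Burton--Keane argument on the ghost-augmented half-space graph, combined with the Lebowitz inequality. Unbounded spins are a secondary difficulty throughout; they would be handled by truncating $\mathsf a$ at a large level with a Gaussian-tail cost.
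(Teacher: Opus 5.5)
\textbf{Gap 1: no surface-order estimate.} Nothing in your plan produces a surface-order bound, and the whole argument rests on one. The paper deduces Theorem~\ref{thm:main} (via Sections~6.1--6.2 of \cite{gunaratnam2025supercritical}) from the free-measure bound of Theorem~\ref{thm:sharpness}, namely $\Psi^0_{\Lambda_L,\beta}[\Lambda_\ell\centernot\longleftrightarrow\partial\Lambda_{cL}]\le e^{-c\ell^{d-1}}$. For $d\ge3$ this bound is union-bounded over all vertices with $\ell\asymp(\log L)^{1/(d-1)}$. Then \cite[Proposition~6.2]{gunaratnam2025supercritical}, renormalisation and Proposition~\ref{prop:sprinkling} give slab percolation, and onion-peeling gives $U(L)$; for $d=2$ one uses RSW and planarity. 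Your Stage~1 only gives a positive density of vertices connected to the wired boundary, and that cannot be union-bounded.

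The missing idea is where Lebowitz's inequality actually enters. It bounds $\partial_\beta$ of the strip surface tension $\boldsymbol{\tau}^{L,M}$ from below by a multiple of $m^*(\beta)^2$ (Proposition~\ref{prop: positivity of tau}). Lemma~\ref{lem: surface tension expression} identifies $\mathbf Z^{+,-}/\mathbf Z^{+,+}$ with the wired probability of $\partial^+\mathcal R\centernot\longleftrightarrow\partial^-\mathcal R$, which yields the $e^{-c_1L^{d-1}}$ bound of Theorem~\ref{thm: wired disconnection}. This bound is then moved to free boundary conditions in two steps. First, a large boundary field plus finite-energy surgery (Lemma~\ref{lem: strip to rectangle}). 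Second, differentiating $\log\Psi^{0,s}[\mathcal D]$ in the field strength $s$ (Lemma~\ref{lem: large to free bc}). There each bulk boundary site contributes a difference of wired and free half-space expectations, which vanishes for a.e.\ $s$; this is the only place half-space uniqueness is used. The slack $\beta'<\beta$ serves only to pick a parameter where the a.e.\ statement holds, followed by monotonicity in $\beta$. Your Stage~3 (``simulating the boundary field by sprinkling'') has no such mechanism. Finally, $U(L)$ is neither increasing nor decreasing, so the infimum over $\#$ cannot be reduced to free boundary conditions by stochastic domination as in your Stage~1.

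\textbf{Gap 2: the route to half-space uniqueness.} This would not work either. In a box of $\mathbb H$, replacing wired by free conditions on the far faces changes $\log Z$ by order $n^{d-1}$, the same order as the boundary-field term. So a.e.\ differentiability of a convex pressure in $h$ does not identify the free and wired boundary magnetisations. You would first have to show that this boundary-condition cost is insensitive to $h$ up to $o(n^{d-1})$, which is precisely the hard part the paper isolates. Even equal one-point functions would only say that, under the monotone coupling, the clusters joined to the ghost agree; they do not identify the measures. Lebowitz's inequality does not perform that conversion, and uniqueness of infinite clusters is not equivalent to free $=$ wired.

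The paper instead proceeds as follows:
\begin{itemize}
\item It restricts to $\beta\notin\Xi$, a countable set by a full-space Lebowitz--Martin-L\"of argument.
\item It perturbs $\beta$ only on one $\Gamma$-orbit of edges, to access the nearest-neighbour two-point functions required by Proposition~\ref{prop: two-pt classification half-space}.
\item It brings the boundary-condition cost down to $e^{O(\varepsilon)n^{d-1}}$ by showing that the boundary conditions induced on thin slabs $R(n,\ell)$ by the two half-space measures nearly coincide.
\end{itemize}
For the absolute values this uses $\Psi^0_\beta=\Psi^1_\beta$, regularity and ergodicity (Lemma~\ref{lem:abs value approx}). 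For the partitions it uses a finite-box trifurcation count showing that, with high probability, at most two clusters cross $R_m(x)$ from $S_k(x)$ (Lemma~\ref{lem:partitions approx}). That local count, not uniqueness of an infinite cluster, is how Burton--Keane enters.
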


\begin{Rem}
	In dimensions $d\geq 3$, a more classical formulation of supercritical sharpness is given by the notion of \emph{slab percolation}. This statement can be formulated directly in terms of the original spin model and asserts that the two-point function is uniformly bounded away from $0$ on a sufficiently thick two-dimensional slab, even in finite volume and under free boundary conditions. We refer the reader to the seminal work of Pisztora \cite{Pisztora96surface} for a precise definition in the context of Potts models. We choose to formulate our main theorem in terms of local uniqueness for two reasons. First, local uniqueness remains meaningful in dimension $d=2$. Second, it is the  most useful  formulation for applications based on renormalisation arguments (see the discussion below).
\end{Rem}

Although the statement of Theorem~\ref{thm:main} may at first appear purely technical, it has several far-reaching consequences for the supercritical behaviour of the underlying percolation model. Indeed, standard \emph{renormalisation} (or \emph{coarse-graining}) arguments imply that, for \emph{every} $\beta>\beta_c$, the qualitative behaviour of any random cluster measure on a sufficiently large box or in infinite volume is the same as that of a perturbative model (i.e.~a model with $\beta\gg\beta_c$, or simply Bernoulli percolation with parameter $p$ close to $1$). We briefly recall this classical argument below.

For concreteness, we consider the case of an 
infinite-volume measure. Let $\Psi_\beta$ denote a subsequential limit of any sequence of random cluster measures on growing boxes. 
Fix $L$ sufficiently large (to be chosen later), and consider the renormalised site percolation process $\omega^L$ on $\mathbb{Z}^d$ defined by
\begin{equation*}
	\omega^L_x := \mathbbm{1}_{\{\omega(Lx+\cdot)\in U(L)\}}, ~~~~~~ x\in  \mathbb{Z}^d,
\end{equation*}
where $\omega$ is distributed as the percolation marginal of $\Psi_\beta$. 
First, by the definition of the event $U(L)$, every cluster of $\omega^L$ deterministically induces a cluster of $\omega$.
Second, by the domain Markov property, one has 
\begin{equation*}
	\Psi_\beta[\omega^L_{x}=1 \,|\, ( \omega^L_{y}, \, y\in \mathbb{Z}^d \setminus \Lambda_{20}(x) )]\geq  \inf_{\#}\Psi^{\#}_{\Lambda_{10L},\beta}[U(L)].
\end{equation*}
It then follows from the main result of \cite{LSS97} that $\omega^L$ stochastically dominates a Bernoulli site percolation whose parameter can be made arbitrarily close to $1$, provided that $\inf_{\#}\Psi^{\#}_{\Lambda_{10L},\beta}[U(L)]$
is sufficiently close to $1$. By Theorem~\ref{thm:main}, this condition holds as long as $L$ is chosen large enough.
Consequently, many properties that are known to hold for strongly supercritical Bernoulli percolation automatically transfer to $\Psi_\beta$. Combined with the Edwards--Sokal coupling, this observation yields several important consequences for the original spin model.

\subsection{Mixing and consequences for the spin model}

We now discuss applications of our main result for the original spin model. Under an additional \emph{weak mixing} assumption (which holds at least for almost every $\beta>\beta_c$, see Theorem~\ref{thm:full-space uniqueness} below), Theorem~\ref{thm:main} can be effectively combined with the coarse-graining techniques developed in \cite{gunaratnam2025supercritical} (and inspired by \cite{Pisztora96surface}). As a consequence, for every $\beta>\beta_c$ at which the infinite-volume random cluster measure is unique, one obtains two important results for the spin model, see Theorems~\ref{thm:ldp free} and \ref{thm: weak plus measure} below. Their proofs are omitted, as they proceed along the same lines as the arguments presented in Section 7 and Appendix B of \cite{gunaratnam2025supercritical}, respectively.

We start by discussing the uniqueness of the infinite-volume random cluster measure.  As with the spin model, one can also define minimal and maximal infinite-volume random cluster measures, which are respectively denoted by $\Psi^0_\beta$ and $\Psi^1_\beta$: these are the measures which are naturally coupled with $\nu^0_\beta=\nu_\beta$ and $\nu^+_\beta$, respectively. 
We may then consider the set of inverse temperatures at which there is non-uniqueness of infinite-volume measure, namely
\begin{equation*}
	\Xi:=\{\beta\geq0 : \Psi^0_\beta \neq \Psi^1_\beta\}.
\end{equation*}
It is straightforward to verify that, if $m^*(\beta)=\Psi^1_\beta[\mathsf{a}_x \mathbbm{1}\{x \overset{\omega}{\longleftrightarrow} \infty\}]=0$, then $\beta\notin \Xi$---see e.g. \cite[Proposition C.2]{GunaratnamPanagiotisPanisSeveroPhi42022}. Consequently, $\Xi\subset [\beta_c,\infty)$. We propose the following conjecture regarding the critical and supercritical regimes.
\begin{Conj}\label{conj:uniqueness_Gibbs}
	For every admissible single-site measure $\rho$, one has $\Xi\subset \{\beta_c\}$. Moreover, $\Xi=\emptyset$ if and only if the phase transition is continuous, i.e.~$m^*(\beta_c)=0$.
\end{Conj}

The dichotomy proposed in this conjecture is known to hold for the Blume--Capel model on $\mathbb{Z}^2$ \cite{GKP24}. It is also proved in \cite{GKP24} that, in all dimensions, both continuous and discontinuous phase transitions may occur, depending on the vacancy parameter $\Delta$.  In the particular cases of the Ising and $\varphi^4$ models, it was proved in \cite{RaoufiGibbsMeasures} and \cite{GunaratnamPanagiotisPanisSeveroPhi42022}, respectively, that $\Xi=\emptyset$ in all dimensions, and even on more general transitive graphs.  However, continuity of the phase transition remains open for the Ising and $\varphi^4$ models for these graphs beyond the case of $\mathbb{Z}^d$.

For many spin models, one can often use soft arguments based on the monotonicity of the two-point function (or convexity of the \emph{free energy}) to prove that the set $\Xi$ is at most countable. This is indeed the case for our class of models, as shown by the following result.

\begin{Thm}\label{thm:full-space uniqueness}
	The set $\Xi$ is at most countable. 
\end{Thm}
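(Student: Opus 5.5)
The plan is to reduce uniqueness of the infinite-volume random cluster measure to the equality of a single monotone quantity under the two extremal measures, and then show that this quantity has at most countably many jumps in $\beta$. The natural candidate is the edge-energy type observable $\mathsf{a}_x\mathsf{a}_y\omega_{xy}$ (or equivalently the spin correlation $\tau_x\tau_y$ for neighbours $x\sim y$). Under the Edwards--Sokal coupling, $\Psi^1_\beta[\mathsf{a}_x\mathsf{a}_y\mathbbm 1\{x\overset{\omega}{\longleftrightarrow} y\}]=\langle\tau_x\tau_y\rangle^+_\beta$, and similarly for the free measure.

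\textbf{Step 1: reduce uniqueness to equality of two numbers.} The two extremal measures $\Psi^0_\beta\le\Psi^1_\beta$ are ordered in the appropriate stochastic sense, on the joint space of $(\omega,\mathsf a)$ with its natural partial order, which is available from the FKG/monotonicity properties of Section~\ref{sec:randomcluster}. Both are translation invariant. For ordered translation-invariant measures, equality follows from equality of the one-site and one-edge marginals of increasing observables. I would check equality of the laws of $\mathsf a_x$ via $\langle f(|\tau_x|)\rangle$ for increasing $f$, and of $\omega_{xy}$ jointly with $\mathsf a$. Lemma-level domination then yields equality of all finite-dimensional increasing functions. This requires showing that $\langle\tau_x\tau_y\rangle^+_\beta=\langle\tau_x\tau_y\rangle^0_\beta$ for neighbours, together with $\langle\tau_x^2\rangle^+_\beta=\langle\tau_x^2\rangle^0_\beta$, forces $\Psi^0_\beta=\Psi^1_\beta$. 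I expect this to be the main obstacle, because $\mathsf a$ is unbounded and the edge weight $\mathsf a_x\mathsf a_y\omega_{xy}$ mixes both coordinates.

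My first attempt would exploit the conditional structure. Given $\mathsf a$, the probability that $\omega_{xy}=1$ is $1-e^{-2\beta \mathsf a_x\mathsf a_y}$ on the event that $x,y$ have equal spins. The coupling is therefore monotone, and a strictly increasing functional whose expectations agree under ordered measures forces equality of the marginals. Concretely, one can realise both measures on a common monotone coupling. The difference of expectations of a strictly increasing integrable function vanishes only if the coupled variables agree almost surely. Super-Gaussian integrability ensures all the relevant moments are finite.

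\textbf{Step 2: convexity of the free energy.} Define the pressure $p(\beta)=\lim_{\Lambda}|\Lambda|^{-1}\log \mathbf Z_{\Lambda,\beta}$, which exists and is independent of the boundary condition for tempered boundary conditions (Section~\ref{sec:basics}). It is convex in $\beta$. Its left and right derivatives are $\tfrac12\sum_{y\sim0}\langle\tau_0\tau_y\rangle$ under the extremal measures, by standard convexity arguments: the derivative of finite-volume pressures is the energy density, monotone in boundary conditions by Griffiths/FKG. A convex function is differentiable outside a countable set.

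For the single-site term, I would add a second parameter: consider the measure with the single-site weight $e^{-s\tau^2}$ perturbed in $s$, giving a pressure $p(\beta,s)$ that is jointly convex. Its $s$-derivative is $-\langle\tau_0^2\rangle$. By the same argument, the one-sided $s$-derivatives at $s=0$ coincide except for countably many $\beta$. However, one needs differentiability along the line $s=0$, which is not automatic for a jointly convex function. To avoid this I would instead show that energy equality already implies $\langle\tau_0^2\rangle$ equality, via monotonicity: $\langle\tau_0^2\rangle^+$ and $\langle\tau_0^2\rangle^0$ are both monotone functions of $\beta$ by Griffiths-type inequalities, and monotone functions have countably many discontinuities. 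Together with $\langle\tau_0^2\rangle^+_\beta=\lim_{\beta'\downarrow\beta}$ and $\langle\tau_0^2\rangle^0_\beta=\lim_{\beta'\uparrow\beta}$ (right and left continuity of the plus and free quantities), they agree off a countable set.

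\textbf{Step 3: conclude.} The union of the countable exceptional sets from Step 2 is countable. Outside it, Step 1 gives $\Psi^0_\beta=\Psi^1_\beta$, so $\Xi$ is at most countable.
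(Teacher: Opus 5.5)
\textbf{Gap: the equality of $\langle\tau_0^2\rangle$.} Your Step~1 needs $\langle\tau_0^2\rangle^+_\beta=\langle\tau_0^2\rangle_\beta$ in addition to the energy equality, and your Step~2 argument for it is a non sequitur. That both functions are increasing in $\beta$, with the plus one right-continuous and the free one left-continuous, only shows that each of them is continuous off a countable set. It says nothing about whether they coincide: two ordered monotone functions need not agree anywhere (take the constants $0$ and $1$). To conclude you would need a cross-inequality $\langle\tau_0^2\rangle^+_{\beta'}\le\langle\tau_0^2\rangle_\beta$ for $\beta'<\beta$, and nothing in your proposal supplies it. Convexity in $\beta$ controls only the observable conjugate to $\beta$, namely the nearest-neighbour energy. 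As you noticed yourself, the perturbation $e^{-s\tau^2}$ only gives statements for almost every $(\beta,s)$, not on the line $s=0$. Also, you announce that energy equality implies equality of $\langle\tau_0^2\rangle$, but the argument you then give never uses the energy equality.

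\textbf{Repair.} Drop the second condition and show, as the paper does, that equality of nearest-neighbour two-point functions alone forces $\Psi^0_\beta=\Psi^1_\beta$. Under a monotone coupling $\mathbb Q$, the variables $\mathsf a^i_x\mathsf a^i_y\,\Psi^i_\beta[x\leftrightarrow y\mid\mathsf a=\mathsf a^i]$, $i=0,1$, are pointwise ordered with equal means, hence agree almost surely. The conditional connection probability of neighbours vanishes iff $\mathsf a_x\mathsf a_y=0$, so this yields $\mathsf a^1_x\mathsf a^1_y=\mathsf a^0_x\mathsf a^0_y$ for all $x\sim y$. Hence $\mathsf a^1_x=\mathsf a^0_x$ off the event $\mathcal Z_x$ that all neighbours of $x$ carry zero absolute value.

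That event matters when $\rho$ has an atom at $0$ (e.g.\ Blume--Capel), and it is handled by the domain Markov property. On $\mathcal Z_x$ the site $x$ is decoupled, so $\mathbb E_{\mathbb Q}[\mathsf a^1_x-\mathsf a^0_x]=\rho[\mathsf a_x]\big(\Psi^1_\beta[\mathcal Z_x]-\Psi^0_\beta[\mathcal Z_x]\big)\le 0$, since $\mathcal Z_x$ is decreasing.

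For $\omega$, your ``strictly increasing functional'' idea does not work as stated, because $\mathsf a_x\mathsf a_y\mathbbm{1}\{x\leftrightarrow y\}$ is not strictly increasing in $\omega_{xy}$. Use instead the Edwards--Sokal identity $\Psi[\omega_{xy}=1\mid\mathsf a]=\tfrac12 p(\beta,\mathsf a)_{xy}\big(1+\Psi[x\leftrightarrow y\mid\mathsf a]\big)$ on $\{\mathsf a_x\mathsf a_y>0\}$. It turns equal conditional connection probabilities into equal edge densities, and ordering then gives $\omega^1=\omega^0$.

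\textbf{Comparison with the paper.} With this repair, your convexity-of-the-pressure route to the energy equality off a countable set is a legitimate alternative. The paper instead proves $\langle\tau_0\tau_{\mathbf e_1}\rangle^+_{\beta'}\le\langle\tau_0\tau_{\mathbf e_1}\rangle_\beta$ for $\beta'<\beta$ by a Lebowitz--Martin-L\"of concentration argument. It prefers that argument because it is the template adapted to the half-space. There, perturbing $\beta$ on a single layer changes the free energy only at surface order, the same order as a change of boundary condition, so bulk convexity gives no information.
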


We are now ready to state the aforementioned applications of Theorem~\ref{thm:main}. The  first result is a surface-order large deviation bound for the empirical magnetisation, analogous to \cite[Theorem~1.1]{gunaratnam2025supercritical}.

\begin{Thm}\label{thm:ldp free}
	For every $d\geq2$, $\beta \in (\beta_c,\infty)\setminus\Xi$ and $\delta\in(0,m^*(\beta) )$, there exist constants $c,C\in(0,\infty)$ such that for every $n$ large enough,
	\begin{equation}\label{eq:ldp surface}
		e^{-Cn^{d-1}}\leq \nu_{\Lambda_n,\beta}\left[ \Big|\frac{1}{|\Lambda_n |} \sum_{x\in\Lambda_n} \tau_x \Big| \leq m^*(\beta)-\delta\right]\leq e^{-cn^{d-1}}.
	\end{equation}
\end{Thm}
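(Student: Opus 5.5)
The plan is to follow the coarse-graining strategy of \cite{Pisztora96surface} as adapted in \cite[Section~7]{gunaratnam2025supercritical}. Theorem~\ref{thm:main} will be the input that provides a renormalised good-block process, and the assumption $\beta\notin\Xi$ will supply the weak mixing needed to identify the density of the giant cluster. Throughout I work with the Edwards--Sokal coupling between $\nu_{\Lambda_n,\beta}$ and the free random cluster measure $\Psi^0_{\Lambda_n,\beta}$. Under this coupling $\sum_x\tau_x=\sum_{\mathcal C}\epsilon_{\mathcal C}\sum_{x\in\mathcal C}\mathsf a_x$, with independent uniform signs $\epsilon_{\mathcal C}$ attached to the clusters.

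\textbf{Lower bound.} I would force, at cost $e^{-Cn^{d-1}}$, a configuration whose magnetisation is small. Concretely, close all edges crossing a hyperplane splitting $\Lambda_n$ into two halves. By the finite energy of the random cluster measure, where the cost per edge depends on $\mathsf a$, this costs at most $e^{-Cn^{d-1}}$ once we restrict to $\mathsf a$ bounded on the boundary layer. That restriction itself costs at most $e^{-Cn^{d-1}}$ by the super-Gaussian tails and a per-site lower bound on the conditional probability. Next, give the two halves opposite signs; the spin symmetry within each half then makes the magnetisation concentrate near $0$. A cruder alternative is to condition on $\tau\le 0$ on a surface, which costs $e^{-Cn^{d-1}}$, and then use FKG together with the symmetry of the free measure in the interior. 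Either route is routine.

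\textbf{Upper bound.} First, fix $L$ large and use Theorem~\ref{thm:main} together with the domain Markov property and \cite{LSS97}, as described after Theorem~\ref{thm:main}. This shows that the block process $\omega^L_x=\mathbbm 1\{U(L)\text{ at }Lx\}$ dominates Bernoulli site percolation with parameter $p$ close to $1$. Second, call a block \emph{good} if $U(L)$ holds and, in addition, the quantity $\sum_{y}\mathsf a_y\mathbbm 1\{y\leftrightarrow\text{crossing cluster}\}$ over the block lies within $\delta/4$ of $m^*(\beta)|\Lambda_L|$. This is where $\beta\notin\Xi$ enters. Since $\Psi^0_\beta=\Psi^1_\beta$, the density of $\mathsf a\mathbbm 1\{\leftrightarrow\infty\}$ under the free measure equals $m^*(\beta)$. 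Because the infinite-volume measure is unique, any boundary condition on a box $\Lambda_{10L}$, as $L\to\infty$, should give such local averages concentrating at $m^*$, uniformly in the boundary condition. I would argue this by sandwiching between the free and wired measures: ergodicity of the unique limit gives concentration for both extremes, and monotonicity transfers it to all $\#$ in between. The unbounded $\mathsf a$ requires a truncation at level $K$ with super-Gaussian control, handled via $\mathbb E[\mathsf a^2]$ bounds as in \cite{PV26}.

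With this enlarged notion of good block, the probability of a block being good is still close to $1$ uniformly in boundary conditions. Domination by high-density Bernoulli percolation then implies the following Peierls-type estimate: except with probability $e^{-cn^{d-1}}$, the good blocks form one macroscopic connected region covering all but a $\delta/10$ fraction of $\Lambda_n$. In that region the crossing clusters glue into a single cluster, which receives a single sign. The remaining clusters carry total $\mathsf a$-mass at most about $\delta|\Lambda_n|/10$, after truncation again with exponential tails. The signs of the small clusters are independent, so Hoeffding's inequality (over the $\mathsf a$-weighted cluster masses, which are small) gives $|\sum\epsilon_{\mathcal C}\cdots|\le \delta|\Lambda_n|/4$ except with probability much smaller than $e^{-cn^{d-1}}$. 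Hence the magnetisation exceeds $m^*-\delta$ in absolute value.

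The main obstacle is the second step of the upper bound: obtaining uniform-in-boundary-condition concentration of the giant-cluster $\mathsf a$-density at exactly $m^*(\beta)$ from $\beta\notin\Xi$, while handling unbounded spins. I would follow \cite[Section~7]{gunaratnam2025supercritical} closely there.
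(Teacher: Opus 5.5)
The paper gives no proof of this statement; it defers to Section~7 of \cite{gunaratnam2025supercritical}. Your architecture is the intended one: Theorem~\ref{thm:main} together with \cite{LSS97} gives a dominating good-block process, and $\beta\notin\Xi$ is used to identify the free giant-cluster density with $m^*(\beta)=\Psi^1_\beta[\mathsf a_0\mathbbm 1\{0\leftrightarrow\infty\}]$. However, several concrete steps you write down fail as stated.

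\textbf{Lower bound.} On the event that the half-magnetisations $M_1,M_2$ have opposite signs, $|\sum_x\tau_x|=\bigl||M_1|-|M_2|\bigr|$. Sign-flip symmetry of each half says nothing about this difference. It is small only if the half-magnetisations concentrate, which is exactly the hard law of large numbers. The only a priori control, $\bigl||M_1|-|M_2|\bigr|\le\max_i|M_i|\lesssim m^*|\Lambda_n|/2$ from the volume-order upper deviations, suffices only for $\delta<m^*/2$. Your alternative of conditioning on $\tau\le 0$ on a surface pushes the magnetisation negative rather than towards $0$. The fix is to cut $\Lambda_n$ into $K$ slabs at cost $e^{-CKn^{d-1}}$. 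The conditioned law is invariant under flipping each slab separately, so the cross terms $\mathbb E[S_iS_j]$ of the slab magnetisations vanish. Conditional regularity then gives $\mathbb E[(\sum_x\tau_x)^2]\le C|\Lambda_n|^2/K$, and Chebyshev yields probability at least $1/2$ once $K$ is large.

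\textbf{Upper bound.} The claim that the non-giant clusters carry total $\mathsf a$-mass at most $\delta|\Lambda_n|/10$ is false; for Ising it is about $(1-m^*)|\Lambda_n|$. You need to split the non-giant clusters into two groups.
\begin{itemize}
\item Clusters lying in the good region have diameter $O(L)$, hence individually small truncated mass. Hoeffding over their independent signs then gives a volume-order bound.
\item Clusters meeting the $O(L)$-neighbourhood of the bad blocks must have small \emph{total} mass. This forces the bad fraction to be $\ll\delta/K$ for truncation level $K$, together with a volume-order bound on $\sum_x\mathsf a_x\mathbbm 1\{\mathsf a_x>K\}$.
\end{itemize}

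\textbf{Good-block density step.} The uniform-in-$\#$ argument is misconceived in two ways.
\begin{itemize}
\item With unbounded spins there is no maximal boundary condition, so no sandwich exists for the upper half of your two-sided density condition.
\item Ergodicity of the infinite-volume limit gives no concentration under $\Psi^0_{\Lambda_{10L},\beta}$, because $\Psi^0_{\Lambda_{10L},\beta}\preccurlyeq\Psi^0_\beta$ goes the wrong way.
\end{itemize}
Only the increasing condition (density at least $m^*-\delta/4$) is needed, and its worst case is the free measure. There, concentration comes from setting to zero the couplings between mesoscopic sub-boxes. This lowers the measure to a product of i.i.d.\ free box measures with mean density $\Psi^0_\beta[\mathsf a_0\mathbbm 1\{0\leftrightarrow\partial\Lambda_j\}]-o(1)\ge m^*-o(1)$. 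The last inequality is exactly where $\beta\notin\Xi$ enters. Local uniqueness at intermediate scales then attaches these points to the block's crossing cluster.
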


\begin{Rem}\label{rem: vol order}
	We stress that the upper large deviation is easier to prove and always of volume order: for every $\beta\geq0$ and $\delta>0$ small enough, there exist $c,C\in(0,\infty)$ such that for every $n$ large enough
	\begin{equation}\label{eq:ldp volume}
		e^{-Cn^d}\leq \nu_{\Lambda_n,\beta}\left[ \Big|\frac{1}{|\Lambda_n |} \sum_{x\in\Lambda_n} \tau_x \Big|  \geq m^*(\beta)+\delta \right]\leq e^{-cn^d}.
	\end{equation}
	See \cite[Section 7]{gunaratnam2025supercritical} for a proof. 
\end{Rem}

\begin{Rem} Theorem \ref{thm:ldp free} can be used to analyse the Glauber dynamics of the class of spin models we treat. Indeed, \cite{gunaratnam2025supercritical} establishes a surface-order exponential upper bound on the 
spectral gap of the Glauber dynamics of the $\varphi^4$ model in the entire regime $\beta>\beta_c$. The proof can be adapted to the present more 
general setting, establishing the analogous result for every 
$\beta\in(\beta_c,\infty)\setminus\Xi$. We refer to \cite[Section~8]{gunaratnam2025supercritical} for more information.
\end{Rem}
The second result is a characterisation of the plus state through moderately positive boundary conditions. It is analogous to \cite[Proposition~B.2]{gunaratnam2025supercritical}. 

\begin{Thm}\label{thm: weak plus measure}
	Let $d\geq2$ and $(\eta_n)_{n\in\mathbb N}$ be a sequence of boundary conditions satisfying $\eta_n(x)\ll e^{n}$ and $\eta_n(x) \gg n^{-(d-1)}$ for every $x\in \partial^{\textup{ext}}\Lambda_n$. Then, for every $\beta\in (\beta_c,\infty)\setminus\Xi$,
	\begin{equation}\label{eq:weak plus equal plus}
		\lim_{n\to\infty} \nu^{\eta_n}_{\Lambda_n,\beta} = \nu^+_{\beta}.    
	\end{equation}
\end{Thm}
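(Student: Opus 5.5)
The statement to prove is Theorem~\ref{thm: weak plus measure}. The plan is to sandwich $\nu^{\eta_n}_{\Lambda_n,\beta}$ between a measure that converges to $\nu^+_\beta$ from above and one that converges to $\nu^+_\beta$ from below.

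\textbf{Upper bound.} Since $\eta_n(x)\ll e^n$, the boundary condition is dominated by any sufficiently large constant boundary condition. I would use FKG and monotonicity in the boundary condition to compare $\nu^{\eta_n}_{\Lambda_n,\beta}$ with the measure having boundary condition $e^{n}$. Then I would check, as in the construction of $\nu^+_\beta$ in Section~\ref{sec:preliminaries}, that the limit of such measures is $\nu^+_\beta$. The super-Gaussian tail of $\rho$ should make the influence of a boundary value $e^{n}$ decay inside the box. This is the standard argument of \cite{LebowitzPresutti1976,PV26}, and I would take it for granted. So every subsequential limit $\mu$ satisfies $\mu\preceq \nu^+_\beta$.

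\textbf{Lower bound.} It remains to show $\mu\succeq\nu^+_\beta$. By maximality of $\nu^+_\beta$ and a standard argument for increasing functions, it suffices to show $\mu[\tau_0]\geq m^*(\beta)$. I would work in the Edwards--Sokal coupling with boundary condition $\mathsf b=\eta_n$ and wired partition. Signs are then forced positive on clusters touching the boundary, while the field $\eta_n$ acts through edges with weights $\beta\,\mathsf a_x\eta_n(y)$. The point is that even a small boundary field is felt through $\Theta(n^{d-1})$ boundary edges. The required input is that, for $\beta\notin\Xi$, the spin at $0$ is connected with high probability to a macroscopic cluster reaching near $\partial\Lambda_n$. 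Theorem~\ref{thm:main} gives this: the renormalisation/\cite{LSS97} argument makes $\omega^L$ dominate a highly supercritical site percolation, uniformly in boundary conditions, so there is a unique giant cluster touching $\Omega(n^{d-1})$ boundary-adjacent sites.

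Given the giant cluster, I would bound the probability that its sign is not $+$. The total field acting on it is at least of order $\beta\sum_y \mathsf a_x\eta_n(y)\gg 1$, because $\eta_n\gg n^{-(d-1)}$ and a positive density of the boundary-adjacent $\mathsf a_x$ are bounded below. This bound holds with high probability by a coarse-graining estimate on $\mathsf a$. The Ising conditional law of the cluster's sign then gives $+$ with probability tending to $1$. Therefore
\[
\mu[\tau_0]\geq \lim \Psi[\mathsf a_0\mathbbm 1\{0\leftrightarrow\text{giant}\}]=\Psi^1_\beta[\mathsf a_0\mathbbm 1\{0\leftrightarrow\infty\}]=m^*(\beta).
\]
The first equality uses $\beta\notin\Xi$, which identifies the limit of the relevant random cluster measures with $\Psi^1_\beta=\Psi^0_\beta$. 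Uniform integrability of $\mathsf a_0$ comes from the super-Gaussian moments.

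\textbf{Main obstacle.} The hardest step is the lower bound. It has two parts. First, one must show that the finite-volume connection probability to the giant cluster converges to $\Psi^1_\beta[0\leftrightarrow\infty]$, which is where uniqueness ($\beta\notin\Xi$) and local uniqueness have to be combined. Second, one must control the random amplitudes $\mathsf a$ near the boundary, because unbounded spins could, in principle, make the boundary-adjacent amplitudes very small. For this I would follow \cite[Appendix~B]{gunaratnam2025supercritical}, using a renormalised lower bound on the number of good boundary boxes in which both $U(L)$ holds and $\mathsf a$ is bounded below.
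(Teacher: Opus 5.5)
Your proposal is correct and follows essentially the route the paper has in mind; the paper omits the proof and defers to Appendix~B of \cite{gunaratnam2025supercritical}. There, the hypothesis $\eta_n\ll e^{n}$ only provides uniform regularity near the origin, hence tightness and $\mu\preccurlyeq\nu^+_\beta$, while the lower bound combines the renormalisation built on Theorem~\ref{thm:main}, the field $\eta_n\gg n^{-(d-1)}$ acting on the $\Omega(n^{d-1})$ boundary sites of the giant cluster, and $\beta\notin\Xi$ to identify $\Psi^0_\beta[\mathsf a_0\mathbbm{1}\{0\leftrightarrow\infty\}]$ with $m^*(\beta)$ (with two minor adjustments: use the uniform regularity of $\nu^{\eta_n}_{\Lambda_n,\beta}$ directly, or compare with $\max_x\eta_n(x)$, rather than the borderline constant boundary value $e^{n}$; and note that you only need the inequality coming from $\Psi^{(w,\eta_n)}_{\Lambda_n,\beta}\succcurlyeq\Psi^0_{\Lambda_n,\beta}$ together with $\Psi^0_\beta=\Psi^1_\beta$, not an identification of the limit of the finite-volume random cluster measures).
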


\begin{Rem}\label{rem: weak plus measure}
	It is not hard to prove that if $ \eta_n(x) \ll n^{-(d-1)}$ for every $x\in \partial
    ^{\textup{ext}}\Lambda_n$ and every $n$ large enough, then $\lim_{n\to\infty} \nu^{\eta_n}_{\Lambda_n,\beta}= \nu_\beta^0 \neq \nu^+_{\beta}$ for every $\beta>\beta_c$. In particular, the assumption $\eta_n \gg n^{-(d-1)}$ of Theorem~\ref{thm: weak plus measure} is sharp. The assumption $\eta_n\ll e^n$ is only used to guarantee that the measures $(\nu^{\eta_n}_{\Lambda_n,\beta})_{n\in\mathbb N}$ are \emph{uniformly regular} around the origin and thus tight (see \cite{PV26} and Section~\ref{sec:basics} below for more details).
\end{Rem}

The uniqueness of the infinite-volume Gibbs measure may be viewed as a weak form of mixing.
A natural question is whether this can be upgraded to quantitative mixing estimates. In particular, we believe that the following exponential mixing property holds.
\begin{Conj}\label{conj:exp_mixing}
For every $\beta>\beta_c$, there exists $c>0$ such that, for every $n\geq 1$,
\begin{equation}\label{eq:exp_mixing}
   \sup_{\substack{A\in \sigma(\Lambda_n) \\ B\in \sigma(\Lambda_{2n}^c)}} \big|\Psi_\beta[A\cap B] - \Psi_\beta[A]\Psi_\beta[B]\big| \leq e^{-cn},
\end{equation}
where $\sigma(\Lambda)$ denotes the $\sigma$-algebra generated by $(\omega_e)_{e\in \overline{E}(\Lambda)}$ and $(\mathsf{a}_x)_{x\in\Lambda}$, and where $\Psi_\beta$ denotes the (conjecturally) unique infinite volume measure at parameter $\beta$. 
\end{Conj}
At present, such a (non-perturbative) result is only known for the Ising model, where it follows from the random-current representation and its associated switching principle \cite{DCGR20,GPPS26}. Establishing analogous results for more general spin systems remains a major open problem. 
As observed in \cite{DCGR20} for the case of the Ising model, the exponential mixing of Conjecture~\ref{conj:exp_mixing} combined with our main result Theorem~\ref{thm:main} would easily imply exponential decay of truncated correlations, as we quickly explain now. On the one hand, a classical consequence of Theorem~\ref{thm:main} (and the renormalisation procedure described below it) is that for every $\beta>\beta_c$ there exists $c'>0$ such that
\begin{equation}\label{eq:exp_truncated_connection}
    \Psi_\beta[0\longleftrightarrow \partial \Lambda_n, 0\centernot\longleftrightarrow \infty]\leq e^{-c'n}.
\end{equation}
On the other hand, Conjecture~\ref{conj:exp_mixing} allows us to compare truncated correlations with the probability in \eqref{eq:exp_truncated_connection}: if $|x|_\infty>3n$, then 
\begin{align*}
    \langle \tau_0 ; \tau_x \rangle_\beta^+ 
    &:= \langle \tau_0 \tau_x \rangle_\beta^+ - \langle \tau_0 \rangle_\beta^+\langle \tau_x \rangle_\beta^+ 
    = \Psi_\beta[ \mathsf{a}_0 \mathsf{a}_x \mathbbm{1}\{0 \connect{\omega\:} x\}] - \Psi_\beta[\mathsf{a}_0 \mathbbm{1}\{0 \connect{\omega\:} \infty\}]^2 \\
    &\leq \Psi_\beta[\mathsf{a}_0 \mathbbm{1}\{0 \connect{\omega\:} \partial \Lambda_n\} \mathsf{a}_x \mathbbm{1}\{x \connect{\omega\:} \partial \Lambda_n(x)\}] - \Psi_\beta[\mathsf{a}_0 \mathbbm{1}\{0 \connect{\omega\:} \infty\}]^2 \\
    &\leq e^{-cn} + \Psi_\beta[\mathsf{a}_0 \mathbbm{1}\{0 \connect{\omega\:} \partial \Lambda_n\}]^2 - \Psi_\beta[\mathsf{a}_0 \mathbbm{1}\{0 \connect{\omega\:} \infty\}]^2 \\
    &\leq e^{-cn} + C\Psi_\beta[ \mathsf{a}_0 \mathbbm{1}\{0 \connect{} \partial \Lambda_n, 0\centernot\longleftrightarrow \infty \} ] \\
    &\leq e^{-cn} + C' \sqrt{\Psi_\beta[ 0 \connect{} \partial \Lambda_n, 0\centernot\longleftrightarrow \infty ]},
\end{align*}
where the third line is due to \eqref{eq:exp_mixing}\,\footnote{Even though \eqref{eq:exp_mixing} is stated for events, it is straightforward to extend it to well-behaved random variables, and all the variables here are well-behaved due to regularity---see Proposition~\ref{prop:reg_plus}.} and in the last line we used the Cauchy–Schwarz inequality.

\subsection{Strategy of proof}

We finish this introduction by describing the main steps and ideas in our proof of Theorem~\ref{thm:main}. By the same arguments as in \cite[Section 6]{gunaratnam2025supercritical}, Theorem~\ref{thm:main} follows from the following result.

\begin{Thm}\label{thm:sharpness} Let $d\geq2$ and let $\rho$ be an admissible single-site measure. Let $\beta>\beta_c$. There exists $c>0$ such that for every $L\geq 1$ and $1\leq \ell\leq cL$, we have
\begin{equation}\label{eq:disconnection_surface}
        \Psi^0_{\Lambda_{L},\beta}[\Lambda_\ell \centernot\longleftrightarrow \partial \Lambda_{cL}]\leq e^{-c\ell^{d-1}}.
    \end{equation}
\end{Thm}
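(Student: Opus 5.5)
We plan to follow the Bodineau strategy, as streamlined in \cite{SeveroSlabIsing2023} and adapted to $\varphi^4$ in \cite{gunaratnam2025supercritical}. The Lee--Yang / random-current input is replaced by a uniqueness statement for half-space measures with a boundary magnetic field.

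\textbf{Step 1: reduction to a surface-tension statement.} The event $\{\Lambda_\ell \centernot\longleftrightarrow \partial\Lambda_{cL}\}$ under $\Psi^0_{\Lambda_L,\beta}$ forces the existence of a closed separating surface. Using the Edwards--Sokal coupling, we compare its probability with a ratio of spin partition functions. On the event, we may flip the sign of the cluster of $\Lambda_\ell$ freely. Therefore the probability is bounded by the cost of imposing ``$+$'' on $\Lambda_\ell$ and ``free/$-$'' outside a surface, in a box with free boundary. Following \cite{SeveroSlabIsing2023}, we bound it by $\exp(-c\ell^{d-1})$ provided that a suitable boundary surface tension is positive. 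Concretely, consider the slab or half-space with a small positive field $\mathsf h$ on the boundary hyperplane. We need the boundary magnetisation to be strictly positive uniformly, and hence the free energy per unit boundary area to be strictly different from the free case. Summing this free-energy gain over the order-$\ell^{d-1}$ boundary sites of a box (and a chaining / coarse-graining over scales between $\ell$ and $cL$) gives the surface-order bound.

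\textbf{Step 2: positivity of the boundary magnetisation.} For $\beta>\beta_c$ we want to show the following. In the half-space $\mathbb H$ with free boundary condition and boundary field $\mathsf h>0$, the magnetisation at the boundary is bounded below by some $m(\beta,\mathsf h)\ge c(\beta)>0$, uniformly as $\mathsf h\downarrow 0$ after the volume limit. Here the Lebowitz inequality enters. It gives monotonicity and convexity of the relevant free energies in $\mathsf h$, and it lets us compare the half-space measure with boundary field with the plus half-space measure. The derivative in $\mathsf h$ of the half-space pressure equals the boundary magnetisation. By convexity in $\mathsf h$, for all but countably many $\mathsf h$ the half-space infinite-volume measure with field $\mathsf h$ is unique. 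This is the ``almost everywhere uniqueness'' advertised in the introduction.

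\textbf{Step 3: uniqueness implies the plus-state is reached.} For such $\mathsf h$, we show by a Burton--Keane type argument that the limiting half-space random cluster measure with field $\mathsf h$ has at most one infinite cluster. The argument uses translation invariance along the hyperplane, insertion tolerance (which needs care because of the unbounded $\mathsf a$-field, handled with regularity estimates as in Proposition~\ref{prop:reg_plus}), and the trifurcation counting. Uniqueness of the infinite cluster, together with the uniqueness of the Gibbs measure, lets us identify the half-space measure with field $\mathsf h$ with the half-space plus measure. Its magnetisation far from the boundary is then $m^*(\beta)>0$. Coupled with the FK connection to the boundary, this yields the positive surface tension lower bound needed in Step 1.

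\textbf{Main obstacle.} I expect the main difficulty to be Step 3, namely running Burton--Keane in the half-space with a boundary ghost field and unbounded spins. Finite-energy modifications must be controlled uniformly in the $\mathsf a$-field. The ghost vertex can also create spurious connections that must be ruled out. I would try first to truncate $\mathsf a$ at a large level, using the super-Gaussian tail bounds on the plus measure, and to treat the ghost field as a single extra vertex contributing at most one cluster.
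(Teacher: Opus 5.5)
There is a genuine gap, located at the step the paper treats as its main difficulty.

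\textbf{Step 2 is unjustified.} You assert that convexity of the half-space pressure in $\mathsf h$ gives uniqueness of the half-space measure for all but countably many $\mathsf h$. The full-space argument does not transfer, as the paper notes. $\mathbb H$ has no translation invariance in the $\mathbf e_d$ direction, so the free energy you would differentiate is a surface quantity of order $n^{d-1}$. The plus versus free conditions on the far faces of the box contribute at that same order. Hence the boundary free energies need not coincide, and differentiability in $\mathsf h$ does not identify $\langle\tau_x\rangle^{+,h}_{\mathbb H,\beta}$ with $\langle\tau_x\rangle^{0,h}_{\mathbb H,\beta}$. Even if it did, it would only control the boundary-layer magnetisation. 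Identifying the two random cluster measures needs every nearest-neighbour two-point function (Proposition~\ref{prop: two-pt classification half-space}).

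The paper instead runs a Lebowitz--Martin-L\"of argument in which $\beta$ is lowered only along one orbit $\Gamma uv$. It works on thin slabs $R(n,\ell)$, with boundary conditions induced by a monotone coupling of $\Psi^{1,h}_{\mathbb H,\beta}$ and $\Psi^{0,h}_{\mathbb H,\beta}$. The cost of switching boundary conditions is reduced to $e^{O(\varepsilon n^{d-1})}$ in two ways. Full-space uniqueness ($\beta\notin\Xi$) shows the induced absolute values agree (Lemma~\ref{lem:abs value approx}). A Burton--Keane trifurcation count shows the induced partitions agree (Lemma~\ref{lem:partitions approx}). The outcome is uniqueness for almost every $(\beta,h)$ via Fubini, which is why Lemma~\ref{lem: large to free bc} first moves to a nearby $\beta'$.

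Your Step 3 places Burton--Keane after uniqueness, where it does no work. If the Gibbs measure were already unique, identifying the field-$\mathsf h$ measure with the plus half-space measure would be immediate. The logical order is also reversed: in the paper, positivity of surface tension does not depend on half-space uniqueness at all.

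\textbf{The surface-order bound is never actually derived.} A positive boundary magnetisation, uniform as $\mathsf h\downarrow 0 $ (which the paper never needs), does not bound the probability of a closed separating surface. ``Summing the free-energy gain over boundary sites'' has no rigorous counterpart. In the paper the surface tension is the Dobrushin-type ratio $\mathbf Z^{+,-}/\mathbf Z^{+,+}$ on a vertical strip with $\zeta$-distributed boundary spins. This ratio equals $\Psi^1_{\mathcal R(L,M),\beta}[\partial^+\mathcal R(L,M)\centernot\longleftrightarrow\partial^-\mathcal R(L,M)]$ exactly (Lemma~\ref{lem: surface tension expression}).

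Lebowitz's inequality is used not for convexity in $\mathsf h$ but to bound $\frac{\mathrm d}{\mathrm d\beta}\boldsymbol{\tau}^{L,M}$ from below. After telescoping and $M,N\to\infty$, the lower bound is $\frac{2}{L^{d-1}}\sum_x(\langle\tau_{(x,0)}\rangle^{+,+}_{\mathcal R(L),\beta})^2$. This gives $\boldsymbol{\tau}(\beta)\ge 2^d\int_{\beta_c}^\beta(\langle\tau_0\rangle^+_u)^2\,\mathrm du>0$, but only under wired conditions.

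Passing to free takes three further steps, none of which appear in your plan:
\begin{enumerate}
\item A surgery converts the wired bound into one for a free slab measure with a large field on the top and bottom (Lemma~\ref{lem: strip to rectangle}).
\item An interpolation $s\mapsto\log\Psi^{0,s}[\mathcal D]$ from $h$ down to $0$ follows. Its derivative is a sum over boundary sites of $\Psi[F_x\mid\mathcal D]-\Psi[F_x]$. FKG sandwiches each term between half-box free and wired measures with field $s$. The a.e.\ half-space uniqueness makes the bulk terms $o(1)$, so the total cost is only $e^{O(\varepsilon L^{d-1})}$.
\item Finally, a covering of $\{-L,\dots,L\}^{d-1}\times\{0\}$ by $(L/\ell)^{d-1}$ boxes, together with FKG, turns the slab bound into the $e^{-c\ell^{d-1}}$ bound.
\end{enumerate}
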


We quickly describe the argument to deduce the local uniqueness property \eqref{eq:local_uniqueness} from the disconnection bound \eqref{eq:disconnection_surface}---see Sections 6.1 and 6.2 of \cite{gunaratnam2025supercritical} for a full proof, which applies mutatis mutandis to our context. 
On the one hand, for $d=2$, \eqref{eq:disconnection_surface} says that short rectangles are crossed with high probability (w.h.p.), which then implies that long rectangles are also crossed w.h.p.~by the general Russo--Seymour--Welsh result of \cite{KohlerTassionGeneralRSW}. As a consequence, we can construct circuits in an annulus w.h.p., which then deterministically implies local uniqueness of a crossing cluster by planarity. The case $d\geq3$, on the other hand, follows closely the proof of \cite{SeveroSlabIsing2023} for the Ising model. First, fix any $\beta>\beta'>\beta_c$. Let $\omega'\sim \Psi_{\beta'}$. Note that a union bound over \eqref{eq:disconnection_surface} implies that every vertex of $\Lambda_{c'L}$ lies within distance $\ell$ of a cluster of $\omega'$ touching $\partial\Lambda_{c'L}$ w.h.p., where $\ell:=C(\log L)^{\frac{1}{d-1}} = o(\log L)$. One can then apply \cite[Proposition 6.2]{gunaratnam2025supercritical} to deduce that local uniqueness holds for $\omega'\cup \gamma_\epsilon$, and $ \gamma_\varepsilon$ is an independent Bernoulli percolation of parameter $\varepsilon$. By standard renormalization arguments (as explained above), this implies that $\omega'\cup\gamma_\varepsilon$ percolates on sufficiently thick 2D slabs. Since $\omega\sim\Psi_{\beta}$ stochastically dominates $\omega'\cup \gamma_\varepsilon$ for $\varepsilon$ small enough (see Proposition~\ref{prop:sprinkling} below), this yields slab percolation for the original model $\Psi_{\beta}$, from which local uniqueness follows through a standard ``onion-peeling'' argument.

Our main contribution thus lies in the proof of Theorem~\ref{thm:sharpness}, which we now explain. Similarly to \cite{Bod05} and \cite{gunaratnam2025supercritical}, our proof consists of two main steps: positivity of surface tension and comparison between boundary conditions.  However, our more general framework requires new ideas when implementing each of these steps, which we discuss below.

In the first step, we adapt the proof of the corresponding result for the Ising model from \cite{lebowitz1981surface}. On the one hand, the main ingredient in this proof is the so-called \emph{Lebowitz inequality} from \cite{Lebowitz1977CoexistencePhasesIsing}, which was later generalised to our setting in \cite{lebowitz1978number} (see also \cite[Theorem~2.12.5]{simon2026phase}). We include a full proof in Section \ref{sec:surface_tension} for the sake of completeness. 
On the other hand, an important technical aspect of this step is that, since the spins are unbounded, there is no obvious notion of ``maximal'' measure in finite volume, and thus no unambiguous notion of surface tension. Indeed, for the proof to work, we need a notion of finite-volume  measures $\nu^+_{\Lambda_L,\beta}$ with ``plus boundary conditions'' which satisfy the following two properties simultaneously: $(i)$ they converge to the infinite-volume plus measure $\nu_\beta^+$, and $(ii)$ they are \emph{regular} (in the sense of Proposition~\ref{prop:reg_plus}) up to the boundary. The first property guarantees that the proof of \cite{lebowitz1981surface} can be adapted, whereas the second is needed for the second step, which is explained below.
The issue of constructing a robust notion of finite-volume plus measure is resolved in \cite{gunaratnam2025supercritical} for the special case of the $\varphi^4$ model by considering a ``thick layer'' of $+1$ magnetic field near the boundary and then proving that property $(i)$ holds via the random tangled current representation of the model. Since such a representation is not available for our models, we need to follow another route: we construct an appropriate \emph{random} positive boundary condition, obtained by tilting the single-site measure on the boundary. The resulting measures satisfy properties $(i)$ and $(ii)$ by the results of \cite{PV26}, see Section~\ref{sec:plus_measure}.  With this notion of plus boundary condition, we can define the corresponding notion of surface tension and then prove that it is strictly positive for every $\beta>\beta_c$. Via the Edwards--Sokal coupling, this directly implies that the inequality \eqref{eq:disconnection_surface} holds for the measure $\Psi^1_{\Lambda_L,\beta}$ which is naturally coupled with $\nu^+_{\Lambda_L,\beta}$. 

In the second step, we transfer the bound \eqref{eq:disconnection_surface} from $\Psi^1_{\Lambda_L,\beta}$ to $\Psi^0_{\Lambda_L,\beta}$. Such a comparison between boundary conditions can be proved by taking the derivative, in boundary field strength, of the log probability of disconnection, and showing that it is negligible when compared to $L^{d-1}$, as done in both \cite{Bod05} and \cite{gunaratnam2025supercritical}.  The difficulty is that this argument relies on the uniqueness of infinite-volume random cluster measure on the half-space with constant positive magnetic field on the boundary. In \cite{Bod05}, this statement is shown to hold for the Ising model by relying on results from \cite{frohlich1987semi} and a Lee--Yang theorem. For the $\varphi^4$ model, on the other hand, this is proved in \cite{gunaratnam2025supercritical} by relying on its random tangled current representation. 
Since neither of these tools is available in our setting, we have to find a different approach. In fact, we first observe that uniqueness of half-space infinite-volume measure is only required to hold for almost every pair of inverse temperature and (constant) boundary magnetic  field.   It is then natural to try to prove a result analogous to Theorem~\ref{thm:full-space uniqueness} in the half-space. This is the content of our Theorem~\ref{thm:half-space}, which is the main innovation in our paper. However, the proof of Theorem~\ref{thm:full-space uniqueness} does not adapt to the half-space due to lack of translation invariance. Instead, we use  Theorem~\ref{thm:full-space uniqueness} to say that the minimal and maximal measures on the half-space are close sufficiently far from the boundary of the half-space, and then argue that this implies that the boundary conditions induced by the complement of a large half-space box are also approximately equal for both measures. This ultimately yields equality of the two half-space measures via an argument analogous to that of Theorem~\ref{thm:full-space uniqueness}. In order to prove that induced partitions are comparable, we use geometric ideas inspired by the classical Burton--Keane argument \cite{BurtonKeane1989density}. 

We note that our proof of Theorem~\ref{thm:half-space} admits a straightforward adaptation to the standard random cluster model with arbitrary cluster weight $q\geq1$. In fact, the proof is much simpler for the latter since there is no need to deal with the random absolute-value field $\mathsf{a}$---see Remark~\ref{rem:FK_adaptation}. 

\hfill

\noindent \textbf{Organisation of the paper.} In Section~\ref{sec:preliminaries}, we define the main models in this paper, namely the spin model and its random cluster representation, and state some of their fundamental properties. In Section~\ref{sec:Gibbs_FK}, we prove the uniqueness of half-space random cluster measures for almost every parameter, as well as its (simpler) full-space analogue stated in Theorem~\ref{thm:full-space uniqueness}. In Section~\ref{sec:surface_tension}, we prove the Lebowitz inequality and use it to deduce the strict positivity of the surface tension. Finally, in Section~\ref{sec:comparison_bc}, we implement the comparison between boundary conditions described above, thus concluding the proof of Theorem~\ref{thm:sharpness}.

\hfill

\noindent \textbf{Acknowledgements.} TSG was supported by the Department of Atomic Energy, Government of India, under project no.\ 12-R\&D-TFR-5.01-0500. CP was supported by an EPSRC New Investigator Award (UKRI1019). RP acknowledges support from the Swiss National Science Foundation through a Postdoc.Mobility grant. FS acknowledges support
from the ERC grant Vortex (No.~101043450).

\section{Preliminaries}\label{sec:preliminaries}

In this section, we fix an admissible measure $\rho$, which we suppress from the notation.

\subsection{Basic properties}\label{sec:basics}

\subsubsection{Correlation inequalities and monotonicity properties}

We now present several classical correlation inequalities used throughout 
this paper. These results were established in 
\cite{GunaratnamPanagiotisPanisSeveroPhi42022} in the context of the 
$\varphi^4$ model, and their proofs carry over to our more general setting; 
we refer to that paper and the references therein for further details. We 
begin with correlations of increasing functions.

\begin{Prop}[FKG inequalities, {\cite[Propositions~B.1--B.2]{GunaratnamPanagiotisPanisSeveroPhi42022} and \cite[Corollary~6.4]{LammersOtt2021}}] \label{prop: FKG phi4}

Let $\Lambda \subset \mathbb Z^d$ be finite, $\beta\geq0$, $J\in (\mathbb R^+)^{E(\Lambda)}$, and $\mathsf{h}\in \mathbb R^\Lambda$. For any increasing and bounded functions $F,G:\mathbb{R}^\Lambda\to \mathbb{R}$,
\begin{equation}
    \langle F(\tau)G(\tau)\rangle_{\Lambda,\beta,J, \mathsf{h}}\geq \langle F(\tau)\rangle_{\Lambda,\beta,J, \mathsf{h}}\langle G(\tau)\rangle_{\Lambda,\beta,J,\mathsf{h}}.
\end{equation}
The same holds for the absolute value field if $\mathsf h_x \geq 0$ for every $x \in \Lambda$, namely
\begin{equation}
    \langle F(|\tau|)G(|\tau|) \rangle_{\Lambda,\beta,J,\mathsf{h}}\geq \langle F(|\tau|) \rangle_{\Lambda,\beta,J,\mathsf{h}} \langle G(|\tau|) \rangle_{\Lambda,\beta,J,\mathsf{h}}.
\end{equation}
\end{Prop}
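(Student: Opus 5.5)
The natural route is the Holley criterion, applied through a reduction to finite-dimensional, bounded-spin approximations, and for the first inequality this is the route I will follow. Since $F,G$ are bounded and increasing, it suffices to prove the inequality for measures of the form $\mu(\mathrm{d}\tau)=e^{-H(\tau)}\prod_x\mathrm{d}\rho(\tau_x)$ with $-H(\tau)=\beta\sum J_{xy}\tau_x\tau_y+\beta\sum\mathsf h_x\tau_x$. First I will approximate $\rho$ by finitely supported measures $\rho_n$, for example discretising $\rho$ restricted to $[-n,n]$ onto a grid. By the super-Gaussian integrability, the corresponding expectations of bounded continuous functions converge. Bounded increasing functions can be approximated by bounded continuous increasing ones $\rho$-a.e.; if needed I will handle atoms by approximating from the appropriate side, or simply prove the claim for continuous $F,G$ and extend by monotone class. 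For a measure on a finite product of finite totally ordered sets, the FKG lattice condition $\mu(\tau\vee\tau')\mu(\tau\wedge\tau')\ge\mu(\tau)\mu(\tau')$ follows from $H$ being submodular. Indeed, the product single-site weights factorise coordinatewise and so are invariant under $\vee,\wedge$, while $\tau_x\tau_y$ with $J_{xy}\ge0$ is supermodular, because $(a\vee a')(b\vee b')+(a\wedge a')(b\wedge b')\ge ab+a'b'$. The linear field term is modular. The FKG theorem on distributive lattices then gives the first inequality, and passing to the limit preserves it.

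For the absolute-value field, I will first integrate out the signs. Writing $\tau_x=\epsilon_x a_x$ with $a_x=|\tau_x|$, the law of $a$ has density, with respect to $\prod_x\mathrm{d}\bar\rho(a_x)$, proportional to $W(a)=\sum_{\epsilon}\exp\bigl(\beta\sum J_{xy}\epsilon_x\epsilon_y a_xa_y+\beta\sum\mathsf h_x\epsilon_x a_x\bigr)$. Here $\bar\rho$ is the pushforward of $\rho$ under $|\cdot|$, and the evenness of $\rho$ makes the sign uniform a priori. Up to a constant, $W(a)$ is an Ising partition function with couplings $K_{xy}=\beta J_{xy}a_xa_y$ and fields $g_x=\beta\mathsf h_xa_x$, both nonnegative when $a\ge0$ and $\mathsf h\ge0$. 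I then need to show that $a\mapsto\log W(a)$ is supermodular on $(\mathbb R^+)^\Lambda$. For smooth dependence, supermodularity reduces to $\partial_{a_x}\partial_{a_z}\log W\ge0$ for $x\ne z$. By the chain rule, these mixed derivatives are combinations of $\partial_{a_x}\partial_{a_z}$ applied to $K,g$, which is nonzero only for $K_{xz}$ and yields $\beta J_{xz}\langle\epsilon_x\epsilon_z\rangle\ge0$ by Griffiths I. The remaining terms are products of first derivatives, all nonnegative, times Ising truncated correlations $\langle\epsilon_A;\epsilon_B\rangle$ with $|A|,|B|\in\{1,2\}$, which are $\ge0$ by Griffiths II (GKS), valid because couplings and fields are nonnegative. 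Hence $\log W$ is supermodular, the lattice condition holds for the discretised law of $a$, and FKG plus the same approximation argument yields the second inequality. The alternative via Ginibre-type duplication could replace GKS, but GKS suffices.

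I expect the main obstacle to be the passage from the finite, discretised setting to general $\rho$, rather than the lattice condition itself. Unbounded support and possible atoms require care: one must ensure that the discretised measures are well defined, with partition functions finite thanks to the super-Gaussian tails, and that they converge weakly, with bounded increasing test functions handled by monotone approximation. I would first try truncation to $[-n,n]$ followed by dominated convergence using $\int e^{au^2}\mathrm{d}\rho<\infty$. I would then discretise monotonically, rounding toward $0$ on each sign so that evenness is preserved, which is needed for the absolute-value part. Boundary conditions and couplings to $\partial^{\textup{ext}}\Lambda$ are absorbed into the field term and cause no extra difficulty.
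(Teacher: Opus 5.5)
Your proposal is correct, and it is the standard argument behind the results the paper cites. The paper gives no proof of its own beyond \cite[Propositions~B.1--B.2]{GunaratnamPanagiotisPanisSeveroPhi42022} and \cite[Corollary~6.4]{LammersOtt2021}. For the spins you use the FKG lattice condition for the ferromagnetic density together with an approximation of $\rho$. For $|\tau|$ you integrate out the signs and prove supermodularity of $a\mapsto\log W(a)$ via Griffiths' first and second inequalities, which is the absolute-value-FKG mechanism.

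On passing to general bounded increasing test functions, your first option is the one that works. Continuous increasing functions are $L^1(\mu)$-dense among bounded increasing Borel functions. To approximate $\mathbf 1_U$ for an upper set $U$, take compacts $K\subset U$ and $K'\subset U^c$ and use the positive distance between $K+(\mathbb R^+)^\Lambda$ and $K'-(\mathbb R^+)^\Lambda$. A monotone-class argument, by contrast, does not apply directly to the non-linear FKG inequality.
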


As a consequence of the FKG inequalities above, we obtain the following monotonicity properties of correlations.

\begin{Prop}\label{prop:stoc_monotonicity}
Let $\Lambda \subset \mathbb Z^d$ be finite, and $\beta\geq0$. Let also $J_1,J_2\in (\mathbb R^+)^{E(\Lambda)}$ be such that $J_1\leq J_2$, and $\mathsf{h}_1,\mathsf{h}_2\in \mathbb R^\Lambda$ be such that $\mathsf{h}_1\leq \mathsf{h}_2$. For any increasing and bounded function $F:\mathbb{R}^{\Lambda}\to\mathbb{R}$ we have 
\begin{equation}
\langle F(\tau) \rangle_{\Lambda,\beta,J_1,\mathsf{h}_1}\leq \langle F(\tau) \rangle_{\Lambda,\beta,J_1,\mathsf{h}_2}.  
\end{equation}
Moreover, if $|\mathsf{h}_1| \leq \mathsf{h}_2$, for any increasing and bounded function $F: (\mathbb{R}^+)^{\Lambda}\to\mathbb{R}$ we have 
\begin{equation}
\langle F(|\tau|) \rangle_{\Lambda,\beta,J_1,\mathsf{h}_1}\leq \langle F(|\tau|) \rangle_{\Lambda,\beta,J_2,\mathsf{h}_2}.  
\end{equation}
\end{Prop}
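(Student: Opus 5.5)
For both claims I would avoid differentiating along a path of parameters and instead use one mechanism: write one measure as a tilt of the other by a monotone density, then apply the FKG inequalities of Proposition~\ref{prop: FKG phi4} under whichever of the two measures is known to satisfy them.

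\emph{First claim.} Set $\mu_i:=\nu_{\Lambda,\beta,J_1,\mathsf h_i}$ for $i=1,2$. Directly from Definition~\ref{def:spin models},
\[
\frac{\mathrm d\mu_2}{\mathrm d\mu_1}(\tau)=\frac{G(\tau)}{\langle G\rangle_{\Lambda,\beta,J_1,\mathsf h_1}},\qquad G(\tau):=\exp\Big(\beta\sum_{x\in\Lambda}(\mathsf h_{2,x}-\mathsf h_{1,x})\tau_x\Big).
\]
Since $\mathsf h_2\geq \mathsf h_1$, the function $G$ is increasing in $\tau$. Hence $\langle F\rangle_{\mu_2}=\langle FG\rangle_{\mu_1}/\langle G\rangle_{\mu_1}\geq\langle F\rangle_{\mu_1}$, by the first FKG inequality applied under $\mu_1$; that inequality holds for arbitrary real fields.

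The only technical point is that $G$ is unbounded. I would apply FKG to the bounded increasing truncations $G\wedge M$ and let $M\to\infty$ by monotone convergence. This is legitimate because $G\in L^1(\mu_1)$, which follows from the super-Gaussian assumption (ii) together with the bound $\tau_x\tau_y\leq(\tau_x^2+\tau_y^2)/2$ from the Remark.

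\emph{Second claim.} This is the main obstacle. The field $\mathsf h_1$ may have mixed signs, so FKG for $|\tau|$ need not hold under $\nu_{\Lambda,\beta,J_1,\mathsf h_1}$. It does hold under $\nu_2:=\nu_{\Lambda,\beta,J_2,\mathsf h_2}$, because $\mathsf h_2\geq|\mathsf h_1|\geq0$. I would therefore tilt in the opposite direction, starting from $\nu_2$.

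Summing out the signs, the law of $\mathsf a=|\tau|$ under $\nu_{\Lambda,\beta,J,\mathsf h}$ has density proportional to $Z^{\rm Is}(\mathsf a;J,\mathsf h)\prod_x\mathrm d\rho^{+}(\mathsf a_x)$. Here $\rho^+$ is the pushforward of $\rho$ under $u\mapsto|u|$, and $Z^{\rm Is}(\mathsf a;J,\mathsf h)$ is the partition function of the Ising model on $\{x:\mathsf a_x\neq0\}$ with couplings $\beta J_{xy}\mathsf a_x\mathsf a_y$ and fields $\beta\mathsf h_x\mathsf a_x$. (I would handle the sites with $\mathsf a_x=0$, which carry no sign, by a continuity or limiting convention.) Consequently the law of $|\tau|$ under $\nu_1:=\nu_{\Lambda,\beta,J_1,\mathsf h_1}$ is the law under $\nu_2$ reweighted by $1/R(\mathsf a)$, where
\[
R(\mathsf a):=\frac{Z^{\rm Is}(\mathsf a;J_2,\mathsf h_2)}{Z^{\rm Is}(\mathsf a;J_1,\mathsf h_1)}.
\]

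The key step is to show that $R$ is increasing in each $\mathsf a_x$. Differentiating $\log R$ gives
\[
\partial_{\mathsf a_x}\log R=\beta\sum_{y\sim x}\mathsf a_y\big(J_{2,xy}\langle\sigma_x\sigma_y\rangle_2-J_{1,xy}\langle\sigma_x\sigma_y\rangle_1\big)+\beta\big(\mathsf h_{2,x}\langle\sigma_x\rangle_2-\mathsf h_{1,x}\langle\sigma_x\rangle_1\big),
\]
where $\langle\cdot\rangle_i$ denotes the Ising expectation with parameters $(\mathsf a;J_i,\mathsf h_i)$. Ginibre's inequality applies because $|J_1\mathsf a\mathsf a|\leq J_2\mathsf a\mathsf a$ and $|\mathsf h_1\mathsf a|\leq\mathsf h_2\mathsf a$, and yields $\langle\sigma_A\rangle_2\geq|\langle\sigma_A\rangle_1|$ for $A=\{x\}$ and $A=\{x,y\}$. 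Combined with $J_2\geq J_1\geq0$ and $\mathsf h_2\geq|\mathsf h_1|$, every term in the display is nonnegative.

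Granting this monotonicity,
\[
\langle F(|\tau|)\rangle_{\nu_1}=\frac{\langle F(|\tau|)R^{-1}(|\tau|)\rangle_{\nu_2}}{\langle R^{-1}(|\tau|)\rangle_{\nu_2}}\leq\langle F(|\tau|)\rangle_{\nu_2}.
\]
The inequality is the second FKG inequality of Proposition~\ref{prop: FKG phi4} under $\nu_2$, applied to the increasing function $F$ and the decreasing function $R^{-1}$, which is legitimate since $\mathsf h_2\geq0$. No truncation is needed here: $R^{-1}$ is bounded, because $Z^{\rm Is}(\mathsf a;J_1,\mathsf h_1)\leq Z^{\rm Is}(\mathsf a;J_2,\mathsf h_2)$ termwise after bounding each Boltzmann weight by its value with absolute couplings and fields, which gives $R\geq1$.

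I expect the main care to be needed in verifying Ginibre's hypotheses in this random-environment form, and in the treatment of the zero-spin sites.
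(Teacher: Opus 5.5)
Your proof is correct. The first claim is the standard tilt-plus-FKG argument the paper has in mind. For the second claim your route differs from the paper's.

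The paper handles the case $J_1=J_2$ by adapting \cite[Lemma~2.13]{GunaratnamPanagiotisPanisSeveroPhi42022}. It then gets monotonicity in the couplings from the stochastic domination of random cluster measures in Proposition~\ref{prop: random cluster properties}, using that the $\mathsf a$-marginal of $\Psi^0$ is the law of $|\tau|$. That domination is stated only for nonnegative fields, so the signed field $\mathsf h_1$ must be removed first. The paper's argument therefore implicitly goes $(J_1,\mathsf h_1)\to(J_1,\mathsf h_2)\to(J_2,\mathsf h_2)$. You instead show in one step that the likelihood ratio $R^{-1}$ between the two laws of $|\tau|$ is decreasing, via Ginibre's inequality for the conditional Ising model. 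You then conclude with the absolute-value FKG inequality under $\nu_2$.

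Your route is self-contained, using only Ising-level correlation inequalities, and it treats signed $\mathsf h_1$ and $J_1\le J_2$ at the same time. The Ginibre bounds you need can even be read off from $\mathbb E_{\mu\otimes\mu'}[u_Av_B]\geq 0$ in the proof of Proposition~\ref{prop: lebowitz}, applied with Ising single-site measures. The paper's route instead yields the stronger joint domination of $(\omega,\mathsf a)$, which it needs later anyway.

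Two small repairs:
\begin{itemize}
\item Your justification of $R\geq 1$ is not right as stated. The $(J_2,\mathsf h_2)$ Boltzmann weights do not dominate the $(J_1,\mathsf h_1)$ weights configuration by configuration; an edge with $\sigma_x\sigma_y=-1$ is a counterexample. You do not need this, though: since $R$ is increasing, $R(\mathsf a)\geq R(0)=1$, so $R^{-1}\le 1$ is bounded.
\item The sites with $\mathsf a_x=0$ need no limiting convention. Take $\rho^+$ to be the pushforward of $\rho$ under $u\mapsto|u|$, with the atom at $0$ included. Evenness then gives exactly $\int g\,\mathrm d\rho=\int\frac{g(a)+g(-a)}{2}\,\mathrm d\rho^+(a)$. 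Summing $\sigma$ over all of $\{\pm1\}^\Lambda$, the law of $|\tau|$ has density proportional to $Z^{\rm Is}(\mathsf a;J,\mathsf h)$, with the spins at zero sites simply decoupled. This $Z^{\rm Is}$ is smooth on the closed orthant, so your derivative computation of $\log R$ is valid everywhere. It also removes the factor $2^{-\#\{x:\mathsf a_x\neq 0\}}$ you dropped, although that factor would cancel in $R$ anyway.
\end{itemize}
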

\begin{proof} The first inequality is standard. The second inequality when $J_1=J_2$ follows from a straightforward adaptation of the proof of \cite[Lemma 2.13]{GunaratnamPanagiotisPanisSeveroPhi42022}. The more general monotonicity statement for the absolute value field follows from the properties of the associated random cluster measure, as stated in Proposition \ref{prop: random cluster properties} below.
\end{proof}

We now turn to spin correlations. In what follows, given $\Lambda\subset\mathbb{Z}^d$ finite and $A:\Lambda\rightarrow \mathbb{N}$, we write $\tau_A\coloneqq \prod_{x\in \Lambda} \tau_x^{A_x}$.

\begin{Prop}[Monotonicity of correlation functions, {\cite[Proposition 3.20 and Remark 3.21]{GunaratnamPanagiotisPanisSeveroPhi42022}}] \label{prop:monotonicity}
Let $\Lambda \subset \mathbb Z^d$ be finite, $\beta\geq0$, $J\in (\mathbb R^+)^{E(\Lambda)}$, $\mathsf{h},\mathsf{h}'\in (\mathbb R^+)^\Lambda$, and $A: \Lambda\to \mathbb{N}$. For every $e\in E(\Lambda)$, the function
$J_e\mapsto \langle \tau_A\rangle_{\Lambda,\beta,\mathsf{h},J}$ is increasing. Moreover, if $\mathsf{h}'\leq \mathsf{h}$, one has $\langle \tau_A\rangle_{\Lambda,\beta,\mathsf{h}',J}\leq \langle \tau_A\rangle_{\Lambda,\beta,\mathsf{h},J}$.
\end{Prop}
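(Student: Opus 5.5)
The plan is to derive both claims from the Griffiths inequalities for the model with general single-site measure $\rho$, namely $\langle \tau_A\rangle\geq 0$ (GKS I) and $\langle \tau_A\tau_B\rangle\geq\langle\tau_A\rangle\langle\tau_B\rangle$ (GKS II). These hold whenever $J\geq0$ and $\mathsf h\geq0$, for even single-site measures with the required integrability, which is exactly our admissibility assumption. Once these are available, monotonicity follows by differentiating under the integral sign. Differentiation is justified by the super-Gaussian condition (ii): for parameters in a compact range, $\tau_A$ times the Boltzmann weight is dominated by $C\prod_x e^{a\tau_x^2}$ for some $a>0$, which is integrable.

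For the first claim, differentiate in $J_e$ with $e=xy$:
\[
\frac{\partial}{\partial J_e}\langle \tau_A\rangle_{\Lambda,\beta,\mathsf h,J}=\beta\big(\langle \tau_A\tau_x\tau_y\rangle-\langle\tau_A\rangle\langle\tau_x\tau_y\rangle\big)\geq 0,
\]
where the inequality is GKS II. For the second claim, interpolate $\mathsf h_t=\mathsf h'+t(\mathsf h-\mathsf h')$ for $t\in[0,1]$. All $\mathsf h_t$ are nonnegative, and
\[
\frac{d}{dt}\langle\tau_A\rangle_{\mathsf h_t}=\beta\sum_{z}(\mathsf h_z-\mathsf h'_z)\big(\langle\tau_A\tau_z\rangle_{\mathsf h_t}-\langle\tau_A\rangle_{\mathsf h_t}\langle\tau_z\rangle_{\mathsf h_t}\big)\geq0,
\]
again by GKS II. Integrating over $t$ gives the result.

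The main step is proving GKS II in this generality. I would use Ginibre's duplicated-variable argument. Take two independent copies $\tau,\tau'$, set $q=(\tau+\tau')/\sqrt2$ and $s=(\tau-\tau')/\sqrt2$, and write
\[
\langle\tau_A\tau_B\rangle-\langle\tau_A\rangle\langle\tau_B\rangle=\tfrac12\big\langle(\tau_A-\tau'_A)(\tau_B+\tau'_B)\big\rangle_{\otimes}.
\]
In the new variables, the interaction $\sum J_{uv}(\tau_u\tau_v+\tau'_u\tau'_v)+\sum \mathsf h_z(\tau_z+\tau'_z)$ becomes $\sum J_{uv}(q_uq_v+s_us_v)+\sqrt2\sum\mathsf h_zq_z$. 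After expanding the exponential in a power series, this is a polynomial in $q,s$ with nonnegative coefficients. Each factor $\tau_A\pm\tau'_A$ is also a polynomial in $(q,s)$ with nonnegative coefficients.

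Ginibre's lemma then finishes the argument. For any even measure $\rho$, the product measure $\rho\otimes\rho$ pushed forward to $(q,s)$ has $\int q^m s^n\,d(\rho\otimes\rho)\geq0$ for all $m,n$. This reduces to showing $(u+v)^m(u-v)^n$ integrates to something nonnegative; by the symmetry $u\leftrightarrow v$ and evenness of $\rho$, it is a sum of products of moments of $\rho$ that are all nonnegative. Combining this with the positivity of the expansion coefficients gives GKS II. GKS I is the same computation with a single copy, using that odd moments vanish and even moments are nonnegative.

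The main obstacle is the technical justification: interchanging the power-series expansion with integration, and differentiating under the integral sign, for unbounded spins. The super-Gaussian assumption is what makes this work. For fixed finite $\Lambda$ and $\beta$, using $|\tau_u\tau_v|\leq(\tau_u^2+\tau_v^2)/2$, the series of absolute values is dominated by $\prod_x e^{a\tau_x^2}$ with $a$ depending on $\beta$, $\max J$, $\max\mathsf h$ and the degree. This product is $\rho$-integrable by condition (ii), so Fubini and dominated convergence apply. In the end this reproduces the argument referenced from \cite{GunaratnamPanagiotisPanisSeveroPhi42022}, adapted from $\varphi^4$ to general admissible $\rho$.
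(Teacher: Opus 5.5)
Your route is the standard argument behind the cited result: Griffiths' second inequality for even single-site measures via Ginibre's duplicated variables, then differentiation in $J_e$ and along $\mathsf h_t$, with the super-Gaussian condition justifying all interchanges.

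However, the identity on which your proof of GKS~II rests is false as written. Since $\tau$ and $\tau'$ are i.i.d., the integrand $(\tau_A-\tau'_A)(\tau_B+\tau'_B)$ is antisymmetric under $\tau\leftrightarrow\tau'$. So $\langle(\tau_A-\tau'_A)(\tau_B+\tau'_B)\rangle_{\otimes}=0$ identically, and your chain only establishes $0\geq 0$. The mixed product $(\tau_A+\tau'_A)(\tau_B-\tau'_B)$ is the right object for Lebowitz's inequality (Proposition~\ref{prop: lebowitz}), where the two copies have \emph{different} laws. The correct identity is
\[
\langle\tau_A\tau_B\rangle-\langle\tau_A\rangle\langle\tau_B\rangle=\tfrac12\big\langle(\tau_A-\tau'_A)(\tau_B-\tau'_B)\big\rangle_{\otimes}.
\]
With it your argument goes through, since
\[
\tau_B-\tau'_B=2^{-|B|/2}\sum_{S\le B}\binom{B}{S}\big(1-(-1)^{|S|}\big)q_{B\setminus S}\,s_S
\]
has nonnegative coefficients in $(q,s)$, and likewise for $\tau_A-\tau'_A$.

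Second, your justification of Ginibre's lemma is insufficient. After expanding $(u+v)^m(u-v)^n$ some coefficients are negative, so nonnegativity of the moments $M_j:=\int t^j\,\mathrm{d}\rho(t)$ does not by itself give nonnegativity of the integral. For instance, $m=n=2$ gives $2(M_0M_4-M_2^2)$, whose sign requires Cauchy--Schwarz. The standard argument runs as follows:
\begin{itemize}
\item If $n$ is odd, the swap $u\leftrightarrow v$ shows the integral vanishes.
\item If $m$ is odd and $n$ is even, the map $(u,v)\mapsto(-u,-v)$ together with evenness of $\rho$ shows it vanishes.
\item If both are even, write $(u+v)^m(u-v)^n=(u^2-v^2)^{k}(u\pm v)^{|m-n|}$ with $k=\min(m,n)$ even, and expand the last factor. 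Terms containing an odd power of $u$ integrate to zero by evenness, and the remaining terms are pointwise nonnegative.
\end{itemize}
With these two repairs the proof is complete. Note also that GKS~I is not needed for the statement.
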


\subsubsection{Infinite volume measures}

We now consider infinite volume Gibbs measures for the spin model at parameters $\beta\geq 0$ and $\mathsf{h}\in(\mathbb R^+)^{\mathbb Z^d}$.

\hfill

\noindent\textbf{DLR property.} We say that a Borel probability measure $\nu$ on $\mathbb R^{\mathbb Z^d}$ satisfies the DLR property at $(\beta,\mathsf{h})$ if, for every bounded measurable function $F: \mathbb R^{\mathbb Z^d} \rightarrow \mathbb R$ depending only on spins on some finite $\Lambda \subset \mathbb Z^d$, one has
\begin{equation}\label{eq:DLR}
\int_{\mathbb R^{\mathbb Z^d}} F(\tau) \mathrm{d}\nu(\tau) = \int_{\mathbb R^{\mathbb Z^d \setminus \Lambda}} \langle F \rangle_{\Lambda, \beta,\mathsf{h}}^{\tau}\mathrm{d}\nu(\tau).
\end{equation}

\hfill

\noindent\textbf{Regularity.} For $a>0$ and $b\in \mathbb R$, we say that $\nu$ is $(a,b)$-\emph{regular} if for every $\Lambda \subset \mathbb Z^d$ finite, the restriction measure $\nu^{(\Lambda)}$ has a density with respect to $\prod_{x \in \Lambda} \mathrm{d}\rho(\tau_x)$ satisfying
\begin{equation} \label{eqdef: regularity}
\frac{\mathrm{d}\nu^{(\Lambda)}(\tau)}{\prod_{x \in \Lambda} \mathrm{d}\rho(\tau_x)} \leq \prod_{x \in \Lambda} e^{a\tau_x^2+b} .
\end{equation}
We say that $\nu$ is a regular Gibbs measure at $(\beta,\mathsf{h})$ if it satisfies the DLR property and is regular for some constants $(a,b)$. We denote the set of regular Gibbs measures by $\mathcal G(\beta,\mathsf{h})$.

The set of regular Gibbs measures has a simplex structure and admits extremal measures, see \cite{LebowitzPresutti1976, GunaratnamPanagiotisPanisSeveroPhi42022,PV26}. We say that a measure $\nu$ is maximal in $\mathcal G(\beta,\mathsf{h})$ if, for every $\mu \in \mathcal G(\beta,\mathsf{h})$, $\nu$ stochastically dominates $\mu$, which we denote $\mu \preccurlyeq \nu$. The following proposition concerns the existence of a maximal extremal measure.

\begin{Prop}\label{prop: existence of plus state extremal}
Let $\beta_0,h_0\geq 0$. There exist $a_0,b_0>0$ such that the following properties hold for $\beta\leq \beta_0$ and $\mathsf{h}\in (\mathbb R^+)^{\mathbb Z^d}$ such that $\mathsf{h}\leq h_0$. 
\begin{enumerate}
    \item[$(i)$] The measures $\nu_{\Lambda, \beta,\mathsf{h}}$ are $(a_0,b_0)$-regular for every finite $\Lambda\subset \mathbb Z^d$. In addition, they converge to a measure $\nu_{\beta,\mathsf{h}}\in \mathcal{G}(\beta,\mathsf{h})$ as $\Lambda\nearrow \mathbb Z^d$. We call $\nu_{\beta,\mathsf h}$ the infinite volume free measure at parameters $(\beta, \mathsf h)$.
    \item[$(ii)$] There exists a measure $\nu^+_{\beta,\mathsf{h}} \in \mathcal{G}(\beta,\mathsf{h})$ which is maximal, extremal, and $(a_0,b_0)$-regular. We call $\nu^+_{\beta,\mathsf{h}}$ the infinite volume plus measure at parameters $(\beta,\mathsf{h})$.
\end{enumerate}
\end{Prop}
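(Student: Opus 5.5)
The plan is to prove both parts through a uniform single-site moment bound, followed by compactness and monotonicity. The only quantitative input is the super-Gaussian property of $\rho$, so a bound that is uniform in $\beta\le\beta_0$ and $\mathsf h\le h_0$ seems attainable.

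\emph{Step 1: uniform regularity of finite-volume measures.} Fix a finite set $\Lambda'\subset\Lambda$ and condition on the spins outside $\Lambda'$. By the DLR property, the conditional density of $\tau_{\Lambda'}$ with respect to $\prod\mathrm d\rho$ is proportional to
\[
\exp\Big(\beta\sum J\tau_x\tau_y+\beta\sum_x(\mathsf h_x+\textstyle\sum_{y\notin\Lambda'}\tau_y)\tau_x\Big).
\]
Using $\tau_x\tau_y\le(\tau_x^2+\tau_y^2)/2$ and $|\mathsf h_x\tau_x|\le \tau_x^2+h_0^2/4$, the numerator is at most $\prod_x e^{(2d+1)\beta_0\tau_x^2}$ times factors involving the outside spins. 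These outside factors must be controlled against the denominator.

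The standard way to do this is a Ruelle-type superstability argument, or, more simply for ferromagnets with nonnegative field, an FKG/Griffiths comparison. The absolute-value field under $\nu_{\Lambda,\beta,\mathsf h}$ is stochastically dominated by that under larger $\beta$ and $\mathsf h$ (Proposition~\ref{prop:stoc_monotonicity}). Combined with the bound $\langle e^{a\tau_x^2}\rangle\le C$, which one proves by a one-site conditioning and the super-Gaussian tail, this yields the $(a_0,b_0)$ bound. Concretely, I would prove $\sup\langle \prod_{x\in\Lambda'}e^{a\tau_x^2}\rangle\le e^{b|\Lambda'|}$ by the Lebowitz--Presutti/Ruelle method, as adapted in \cite{PV26}. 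This is exactly the content of regularity, and it is the main obstacle. I expect to cite \cite{LebowitzPresutti1976,PV26} for the uniform estimate rather than redo it.

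\emph{Step 2: free measure.} For $\mathsf h\ge0$, the correlations $\langle\tau_A\rangle_{\Lambda,\beta,\mathsf h}$ increase in $\Lambda$. This is Proposition~\ref{prop:monotonicity}: enlarging $\Lambda$ amounts to raising couplings from $0$. Together with the uniform moment bounds from Step 1, they converge. Tightness follows from regularity, and moments determine the limit, since the exponential moments make the distributions determined by their moments. So $\nu_{\Lambda,\beta,\mathsf h}$ converges. Regularity passes to the limit because the bound in \eqref{eqdef: regularity} is closed under weak limits. The DLR property follows by passing to the limit in \eqref{eq:DLR}, using the uniform integrability supplied by regularity, since the kernel $\tau\mapsto\langle F\rangle^\tau_{\Lambda}$ is continuous with quadratic-exponential growth.

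\emph{Step 3: plus measure.} Take boundary conditions $\eta\equiv+M$, or better a tilted random positive boundary condition, and pass to the limit along $\Lambda\nearrow\mathbb Z^d$. Monotonicity in $\Lambda$ (decreasing for plus boundary conditions, by FKG) gives convergence. Regularity again comes from Step 1, provided the boundary values are regular. Maximality follows because any regular Gibbs measure $\mu$ satisfies $\mu\preccurlyeq$ the finite-volume plus measure. To see this, use the DLR property for $\mu$ and FKG monotonicity in the boundary condition, controlling large boundary spins via regularity, then let $\Lambda\to\mathbb Z^d$. Extremality follows from maximality: if $\nu^+=t\mu_1+(1-t)\mu_2$ with $\mu_i\preccurlyeq\nu^+$, then equal expectations of increasing functions force $\mu_i=\nu^+$.
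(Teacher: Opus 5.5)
Your treatment of part $(i)$ is essentially the paper's route. Regularity is imported from \cite{LebowitzPresutti1976,PV26}; your one-site heuristic in Step 1 is circular, since the conditional law of $\tau_x$ depends on its neighbours, but you rightly defer to the citation. Convergence of the free measure then follows from monotonicity of $\langle\tau_A\rangle_{\Lambda,\beta,\mathsf h}$ in $\Lambda$ (Proposition~\ref{prop:monotonicity}) together with uniform moment bounds.

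The gap is in Step 3. Your claim that finite-volume plus measures decrease in $\Lambda$ ``by FKG'' is false once $\rho$ has unbounded support. The Ising argument uses that the plus boundary condition is maximal, so the boundary condition induced on $\partial^{\textup{ext}}\Lambda_1$ by the measure on $\Lambda_2\supset\Lambda_1$ lies below it. Here the induced boundary spins are unbounded.

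Concretely, take $\eta\equiv M$, $\Lambda_1=\{0\}$ and $\Lambda_2=\{0,\mathbf e_1\}$. The density of the $\tau_0$-marginal of $\nu^{\eta}_{\Lambda_2,\beta}$ with respect to $\nu^{\eta}_{\Lambda_1,\beta}$ is proportional to $t\mapsto\int e^{\beta t(s-M)}e^{(2d-1)\beta M s}\,\mathrm d\rho(s)$. This tends to $+\infty$ as $t\to\infty$, because $\rho$ charges $(M+1,\infty)$. So $\tau_0$ has a heavier upper tail in the larger box, and $\nu^{\eta}_{\Lambda_2,\beta}\not\preccurlyeq\nu^{\eta}_{\Lambda_1,\beta}$.

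The same obstruction breaks your maximality argument. Under a regular Gibbs measure, by DLR and regularity, $\tau_y>M$ holds at a positive density of sites $y\in\partial^{\textup{ext}}\Lambda$. Regularity therefore does not let you discard those sites and compare with the fixed-$M$ boundary measure as $\Lambda\nearrow\mathbb Z^d$.

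Random boundary conditions do not rescue this cheaply. Monotonicity in the volume holds only with a gap $r$, and only for the specially constructed $\zeta$ of \cite{PV26} (Proposition~\ref{prop:monotonicity in plus}). That is a genuine theorem, not a consequence of FKG, and the paper stresses that no canonical finite-volume plus measure preserves maximality for unbounded spins.

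What is missing is the input the paper imports from \cite{LebowitzPresutti1976} for $(ii)$: boundary conditions that grow slowly with $\Lambda$. A workable route is as follows.
\begin{enumerate}
\item[$\bullet$] Choose the growth so that, with probability tending to one, the boundary values of any given regular Gibbs measure lie below the boundary condition on all of $\partial^{\textup{ext}}\Lambda$. This is where the super-Gaussian tails from regularity enter, via a union bound.
\item[$\bullet$] Prove regularity estimates for the resulting measures, uniformly in the bulk despite the growing boundary values.
\item[$\bullet$] Obtain convergence from compactness and maximality rather than from monotonicity. Every subsequential limit is then a regular Gibbs measure dominating every element of $\mathcal G(\beta,\mathsf h)$, including every other subsequential limit, so all subsequential limits coincide.
\end{enumerate}
Once maximality is established this way, your deduction of extremality from maximality is fine.
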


\begin{proof}
The convergence of the free measure follows directly from the regularity estimates of \cite{LebowitzPresutti1976} (see also \cite{GunaratnamPanagiotisPanisSeveroPhi42022,PV26}), together with monotonicity in the volume of Proposition \ref{prop:monotonicity}. The existence of the plus measure was also derived in \cite{LebowitzPresutti1976}, where the finite volume plus measure was defined with respect to slowly growing boundary conditions. 
\end{proof}

\subsubsection{Finite volume plus measures}\label{sec:plus_measure}

We now turn to the definition of a finite volume plus measure. When $\rho$ has unbounded support, there is no canonical way of defining an analogue of $\nu^+_{\beta}$ on a finite volume $\Lambda \subset \mathbb Z^d$ that preserves the maximality in terms of stochastic domination. As mentioned above, \cite{LebowitzPresutti1976} considered weakly growing deterministic boundary conditions. In this paper, following \cite{PV26}, we define it using \emph{random boundary conditions}. The upshots are that the corresponding measures preserve regularity estimates \emph{up to the boundary} and satisfy a form of monotonicity in the volume, see Proposition~\ref{prop:monotonicity in plus} below. We begin with a definition. If $\zeta$ is a measure on $\mathbb R$, we say that it has all Gaussian moments if, for every $A>0$, one has
\begin{equation}
    \int_{\mathbb R}e^{At^2}\mathrm{d}\zeta(t)<\infty.
\end{equation}

\begin{Def}\label{def:+measure spin finite volume}
    Let $\Lambda \subset \mathbb Z^d$ be finite, $\beta\geq 0$, and $\mathsf{h}\in \mathbb R^\Lambda$. Let also $\zeta$ be a measure supported on $\mathbb R^+$ that has all Gaussian moments. We abuse notation, and denote by $\nu^\zeta_{\Lambda,\beta, \mathsf{h}}$ the probability measure on $\mathbb R^{\overline{\Lambda}}$ defined by the Radon--Nikodym density
\begin{equation}
\mathrm{d}\nu^\zeta_{\Lambda,\beta,\mathsf{h}}(\tau) = \frac{1}{\mathbf{Z}^\zeta_{\Lambda,\beta,\mathsf{h}}} \exp\Big(\beta \sum_{xy\in \overline E(\Lambda)} \tau_x \tau_y
+\beta \sum_{x\in \Lambda}\mathsf{h}_x\tau_x\Big)\prod_{x\in \Lambda}\mathrm{d}\rho(\tau_x) \prod_{x \in \partial^{\textup{ext}} \Lambda} \mathrm{d}\zeta (\tau_x), \qquad \forall \tau \in \mathbb R^{\overline \Lambda},
\end{equation}
where $\mathbf{Z}^\zeta_{\Lambda,\beta,\mathsf{h}}$ is a normalisation constant. We write $\langle \cdot \rangle^\zeta_{\Lambda,\beta,\mathsf{h}}$ for the expectation with respect to $\nu^\zeta_{\Lambda,\beta,\mathsf{h}}$. 
\end{Def}

\begin{Prop}[\hspace{1pt}{\cite[Theorem 4.1, Proposition 6.8]{PV26}}]\label{prop:reg_plus} 
Let $\beta_0,h_0 \geq 0$.  There exists a measure $\zeta=\zeta(\rho,\beta_0,h_0)$ supported on $\mathbb R^+$ and which has all Gaussian moments,
such that the following holds for $\beta\leq \beta_0$ and $\mathsf{h}\in (\mathbb R^+)^{\mathbb Z^d}$ such that $\mathsf{h}\leq h_0$.
\begin{enumerate}
    \item[$(i)$] The measures $\nu^{\zeta}_{\Lambda,\beta,\mathsf{h}}$ converge weakly to the measure $\nu_{\beta,\mathsf{h}}^+$ defined in Proposition \textup{\ref{prop: existence of plus state extremal}} as $\Lambda\nearrow \mathbb Z^d$.
\item[$(ii)$] There exist $a=a(\beta_0,h_0),b=b(\beta_0,h_0)>0$  such that, for every finite $\Lambda\subset \mathbb Z^d$, the measure $\nu^\zeta_{\Lambda,\beta,\mathsf{h}}$ is $(a,b)$-regular, in the following sense. For every $\Delta \subset \overline{\Lambda}$, the restriction measure $(\nu^\zeta_{\Lambda,\beta,\mathsf{h}})^{(\Delta)}$ has a density with respect to $\prod_{x\in \Delta \cap \Lambda}\mathrm{d}\rho(\tau_x)\prod_{x\in \partial^{\textup{ext}}\Lambda\cap \Delta}\mathrm{d}\zeta(\tau_x)$ satisfying
\begin{equation}
 \frac{\mathrm{d}(\nu^\zeta_{\Lambda,\beta,\mathsf{h}})^{(\Delta)}(\tau)}{\prod_{x\in \Delta\cap \Lambda}\mathrm{d}\rho(\tau_x)\prod_{x\in \Delta\cap \partial^{\textup{ext}}\Lambda}\mathrm{d}\zeta(\tau_x)} \leq \prod_{x\in \Delta}e^{a\tau_x^2+b}.
\end{equation}
\end{enumerate}
\end{Prop}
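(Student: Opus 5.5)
The plan is to take $\zeta$ to be an exponential tilt of $\rho$ restricted to the positive half-line, and then to prove the two items in the order $(ii)$, then $(i)$, since the convergence argument uses the regularity bounds. Concretely, set
\[
\mathrm{d}\zeta(t)=\mathbbm{1}_{\{t\geq 0\}}\,e^{Mt}\,\mathrm{d}\rho(t),
\]
with $M=M(\rho,\beta_0,h_0)$ large, to be fixed later. Because $\rho$ is super-Gaussian, $\zeta$ has all Gaussian moments for every $M$. The point of this choice is that $\nu^\zeta_{\Lambda,\beta,\mathsf h}$ becomes a spin model on $\overline\Lambda$ in which every site carries the single-site law $\rho$. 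It has couplings $J\le 1$: edges between two sites of $\partial^{\textup{ext}}\Lambda$ are switched off. It has field $\mathsf h$ in $\Lambda$ and field $M/\beta$ on $\partial^{\textup{ext}}\Lambda$, and it is conditioned on $\{\tau_x\ge 0 \ \forall x\in\partial^{\textup{ext}}\Lambda\}$. This places us in a setting where the correlation inequalities of Propositions~\ref{prop: FKG phi4}--\ref{prop:monotonicity} and the free-measure regularity of Proposition~\ref{prop: existence of plus state extremal}$(i)$ are available.

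\textbf{Regularity $(ii)$.} The plan is to rerun the Lebowitz--Presutti (Ruelle superstability) argument directly on $\overline\Lambda$, treating boundary sites exactly like bulk sites. Each site has at most $2d$ neighbours, and the bound $\tau_x\tau_y\le(\tau_x^2+\tau_y^2)/2$ holds. The bounded fields (at most $\max(h_0,M/\beta)$, and $\beta\le\beta_0$) are absorbed into the quadratic weight via $Mt\le \epsilon t^2+M^2/(4\epsilon)$. Together these give the usual iterative bound on the marginal density: each site carries a factor $e^{a\tau_x^2+b}$, with $a,b$ depending only on $\beta_0,h_0,M$, and hence not on $\Lambda$ or $\Delta$. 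The conditioning on positivity of the boundary spins costs at most a constant factor per boundary site. Indeed, by FKG and the positive boundary field, the conditional probability that a boundary spin is nonnegative is at least $1/2$ given its neighbours, so the normalisations can be compared site by site.

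\textbf{Convergence $(i)$.} I would follow three steps.

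\emph{Step 1 (tightness).} The regularity just proved gives tightness of $(\nu^\zeta_{\Lambda,\beta,\mathsf h})_\Lambda$. Any subsequential limit then satisfies the DLR equation \eqref{eq:DLR} and is regular, so it lies in $\mathcal G(\beta,\mathsf h)$.

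\emph{Step 2 (monotonicity in the volume).} For $\Lambda\subset\Lambda'$, I aim to show that $\nu^\zeta_{\Lambda',\beta,\mathsf h}$ restricted to $\Lambda$ is stochastically dominated by $\nu^\zeta_{\Lambda,\beta,\mathsf h}$. By the DLR property, it suffices to show that the law which $\nu^\zeta_{\Lambda'}$ induces on $\partial^{\textup{ext}}\Lambda$ is dominated by a product law, which is then replaced by $\zeta^{\otimes\partial^{\textup{ext}}\Lambda}$ in the boundary weights. This step is where $M$ has to be large.

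\emph{Step 3 (identifying the limit).} Monotonicity gives a unique limit. It remains to show that this limit dominates every $\mu\in\mathcal G(\beta,\mathsf h)$, so that it equals $\nu^+_{\beta,\mathsf h}$ by maximality. For this I write $\mu$ in DLR form on $\Lambda$ with boundary $\tau|_{\partial^{\textup{ext}}\Lambda}$ drawn from $\mu$. I then couple the ($\mu$-typical, regular) boundary values below $\zeta$-distributed boundary values, except on an event whose probability vanishes as $\Lambda\nearrow\mathbb Z^d$.

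\textbf{Main obstacle.} Step 2 is the delicate step, because a boundary spin's neighbours are unbounded and no single $M$ dominates all conditional laws uniformly. I would first try to exploit the fact that the neighbours themselves carry the $e^{a\tau^2}$ control from $(ii)$. The idea is to compare, via the Holley criterion, the conditional density at $x\in\partial^{\textup{ext}}\Lambda$, which is proportional to $e^{\beta t\sum_{y\sim x}\tau_y}\mathrm{d}\rho(t)$, with $e^{Mt}\mathrm{d}\rho(t)$, after integrating the neighbours against their regular marginals. If this only holds approximately, the fallback is to replace exact monotonicity by approximate domination with an error that is summable in the volume. That error would still suffice both for the convergence and for Step 3.
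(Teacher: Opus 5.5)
The paper does not prove this statement itself. It imports it from \cite[Theorem~4.1, Proposition~6.8]{PV26}, and the surrounding text shows the shape of that construction: $\zeta$ has much heavier tails than $\rho$ (Remark~\ref{rem:plus}$(ii)$: for $\rho\propto e^{-gt^{2n}}$ it is a shift of $e^{-(g/2)t^{2n}}$), and monotonicity in the volume holds only with a gap $r$ (Proposition~\ref{prop:monotonicity in plus}).

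Your part $(ii)$ is reasonable in spirit. One step is wrong, though: a boundary spin is \emph{not} nonnegative with conditional probability at least $1/2$ given its neighbours, since this fails when those neighbours are very negative. It is cleaner to run the superstability argument with $\zeta$ as an inhomogeneous super-Gaussian single-site measure.

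\textbf{The gap is Step~2.} With your $\zeta=\mathbbm{1}_{\{t\geq 0\}}e^{Mt}\rho$, Step~2 is false for every $M$ once $\rho$ has unbounded support. Take $\Lambda=\{0\}$ and $\Lambda'\supset\overline{\{0\}}$.
Under $\nu^\zeta_{\{0\}}$, $\tau_0$ has density with respect to $\rho$ proportional to $e^{\beta\mathsf h_0t}\hat\zeta(\beta t)^{2d}$, where $\hat\zeta(s)=\int_0^\infty e^{(s+M)u}\,\mathrm{d}\rho(u)$.
Under $\nu^\zeta_{\Lambda'}$ this density is proportional to $e^{\beta\mathsf h_0t}\,\mathbb E_\pi[e^{\beta t\sum_{y\sim 0}\tau_y}]$, with $\pi$ the law of the remaining spins once site $0$ is deleted. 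By FKG this is at least $e^{\beta\mathsf h_0t}\prod_{y\sim0}\mathbb E_\pi[e^{\beta t\tau_y}]$.
Again by FKG, and because the other neighbours of $y$ have unbounded support, the $\pi$-density of $\tau_y$ with respect to $\rho$ is at least $c_Ke^{Ku}$ on $\{u\geq 0\}$, for every $K$.
Taking $K=2M$ gives $\mathbb E_\pi[e^{\beta t\tau_y}]/\hat\zeta(\beta t)\to\infty$. Hence $\nu^\zeta_{\Lambda'}[\tau_0>t]>\nu^\zeta_{\{0\}}[\tau_0>t]$ for large $t$, which contradicts the domination you want.

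This is not an artefact of small volumes. Single-site laws of regular measures, in particular of $\nu^+_{\beta,\mathsf h}$, have densities with respect to $\rho$ that outgrow every $e^{Ku}$; they are controlled only by $e^{au^2+b}$. A linear tilt has density exactly $e^{Mu}$, so it cannot keep up.

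The same mismatch breaks Step~3. At each boundary site, coupling a $\nu^+$-distributed value below a $\zeta$-distributed one fails with probability bounded below uniformly in $\Lambda$. So the exceptional event does not have vanishing probability; the expected number of failures is at least of order $|\partial^{\textup{ext}}\Lambda|$. Your fallback of an error summable in the volume therefore has nothing small to sum. Showing that these failures do not affect the bulk would essentially mean proving that merely bounded-below positive boundary conditions select the maximal state. This paper only obtains that for $\beta\notin\Xi$ (Theorem~\ref{thm: weak plus measure}), and downstream of the present proposition.

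\textbf{The missing idea is the choice of $\zeta$.} Roughly, $\zeta$ must dominate the single-site laws allowed by the regularity envelope $e^{at^2+b}\,\mathrm{d}\rho(t)$, with $a=a(\beta_0,h_0)$, while keeping all Gaussian moments. This is why $\zeta$ depends on $(\beta_0,h_0)$. Even with such a $\zeta$, the exterior neighbours are unbounded, so Holley's criterion cannot be checked pointwise. What one can actually prove is monotonicity with a gap $r$, not your exact Step~2.
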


\noindent\textbf{Convention.} In this article, we will always be working with values of $\beta,h$ belonging to a compact interval (i.e.~we never make these parameters diverge to infinity). This means that in all the plus measures considered below, we can choose the same measure $\zeta(\beta_0,h_0)$ in their definitions and drop $\beta_0,h_0$ from the notation.

\begin{Rem}\label{rem:plus}
We make a few comments on Proposition \ref{prop:reg_plus}.
\begin{enumerate}
    \item[$(i)$] As a consequence of \cite[Remark 6.12]{PV26}, we can always choose $\zeta$ to be supported on $[C,\infty)$, for $C>0$ arbitrarily large. We will use this fact in Section \ref{sec:comparison_bc} below. 
    \item[$(ii)$] In the case $\rho(\mathrm{d}t)\propto \exp(-gt^{2n})\mathrm{d}t$, $\zeta$ can be chosen as the shift of a probability measure with density proportional to $\exp(-(g/2)t^{2n})\mathrm{d}t$, see \cite[Remark~6.13]{PV26} for more details.
    \item[$(iii)$] We may always view the plus measure as a free measure of a model with \emph{inhomogeneous} single-site measures that are identically equal to $\rho$ in $\Lambda$, and equal to $\zeta$ on $\partial^{\textup{ext}}\Lambda$. Therefore, as we shall see in Proposition \ref{prop: monot plus}, many correlation inequalities and properties valid for free measures with homogeneous single-site measures also hold true for this more general class. A key exception is monotonicity in volume, which will be treated in Proposition \ref{prop:monotonicity in plus}.
    \item[$(iv)$] We could also define, as before, a measure $\nu^+_{\Lambda,\beta,J,\mathsf{h}}$ for coupling constants $J=\{J_{xy}\}_{xy\in E(\Lambda)}$. 
\end{enumerate}
\end{Rem}

Many of the correlation inequalities valid for the measure $\nu_{\Lambda,\beta,\mathsf{h}}$ also hold for $\nu_{\Lambda,\beta,\mathsf{h}}^+$, as formalised in the proposition below.

\begin{Prop}\label{prop: monot plus} The following properties hold.
\begin{enumerate}
    \item[$\bullet$] \textup{(FKG inequalities)} Let $\Lambda \subset \mathbb Z^d$ be finite, $\beta \geq 0$, $\mathsf h \in \mathbb R^\Lambda$. For all increasing and bounded functions $F,G:\mathbb R^\Lambda \rightarrow \mathbb R$,
    \begin{equation}
    \langle F(\tau) G(\tau) \rangle_{\Lambda,\beta, \mathsf h}^+ \geq \langle F(\tau) \rangle_{\Lambda,\beta,\mathsf h}^+  \langle G(\tau) \rangle_{\Lambda,\beta,\mathsf h}^+. 
    \end{equation}
    The same holds for the absolute value field if $\mathsf h_x \geq 0$ for every $x \in \Lambda$, namely
\begin{equation}
    \langle F(|\tau|)G(|\tau|) \rangle_{\Lambda,\beta,\mathsf{h}}^+\geq \langle F(|\tau|) \rangle_{\Lambda,\beta,\mathsf{h}}^+ \langle G(|\tau|) \rangle_{\Lambda,\beta,\mathsf{h}}^+.
\end{equation}
\item[$\bullet$] \textup{(Monotonicity in $\mathsf{h}$)} Let $\Lambda \subset \mathbb Z^d$ be finite, and $\beta\geq0$. Let also  $\mathsf{h}_1,\mathsf{h}_2\in \mathbb R^\Lambda$ be such that $\mathsf{h}_1\leq \mathsf{h}_2$. For every increasing and bounded function $F:\mathbb{R}^{\Lambda}\to\mathbb{R}$ we have 
\begin{equation}
\langle F(\tau) \rangle_{\Lambda,\beta,\mathsf{h}_1}^+\leq \langle F(\tau) \rangle_{\Lambda,\beta,\mathsf{h}_2}^+.  
\end{equation}
The same holds for the absolute value field if $|\mathsf{h}_1| \leq \mathsf{h}_2$. Namely, for every increasing and bounded function $F: (\mathbb{R}^+)^{\Lambda}\to\mathbb{R}$ we have 
\begin{equation}
\langle F(|\tau|) \rangle_{\Lambda,\beta,\mathsf{h}_1}^+\leq \langle F(|\tau|) \rangle_{\Lambda,\beta,\mathsf{h}_2}^+.  
\end{equation}
\item[$\bullet$] \textup{(Monotonicity of correlation functions)} Let $\Lambda \subset \mathbb Z^d$ be finite, $\beta\geq0$, $\mathsf{h},\mathsf{h}'\in (\mathbb R^+)^\Lambda$, and $A: \Lambda\to \mathbb{N}$. The function
$\beta \mapsto \langle \tau_A\rangle_{\Lambda,\beta,\mathsf{h}}^+$ is increasing. Moreover, if $\mathsf{h}'\leq \mathsf{h}$, one has $\langle \tau_A\rangle_{\Lambda,\beta,\mathsf{h}'}^+\leq \langle \tau_A\rangle_{\Lambda,\beta,\mathsf{h}}^+$.
\item[$\bullet$] \textup{(Comparison to the free measure)} Let $\Lambda \subset \mathbb Z^d$ be finite, $\beta\geq0$, $\mathsf{h}\in (\mathbb R^+)^\Lambda$, and $A: \Lambda\to \mathbb{N}$. One has
\begin{equation}
    \langle \tau_A\rangle_{\Lambda,\beta,\mathsf{h}}\leq \langle \tau_A\rangle^+_{\Lambda,\beta,\mathsf{h}}.
\end{equation}
\end{enumerate}
\end{Prop}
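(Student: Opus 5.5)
The plan is to use Remark~\ref{rem:plus}$(iii)$ throughout: $\nu^+_{\Lambda,\beta,\mathsf h}=\nu^\zeta_{\Lambda,\beta,\mathsf h}$ is the free measure of an inhomogeneous spin model on $\overline{\Lambda}$. In this model the single-site measure is $\rho$ at each $x\in\Lambda$ and $\zeta$ at each $x\in\partial^{\textup{ext}}\Lambda$. The couplings are $J_{xy}=1$ for $xy\in\overline E(\Lambda)$ and $0$ otherwise, and the field is $\mathsf h$ on $\Lambda$ and $0$ on $\partial^{\textup{ext}}\Lambda$. Each of the four bullets will then follow from the corresponding statement for free measures (Propositions~\ref{prop: FKG phi4}, \ref{prop:stoc_monotonicity} and \ref{prop:monotonicity}), once we check two things. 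First, those proofs never use that the single-site measures are identical, only that each is even (or supported on one side), super-Gaussian, and gives a well-defined partition function. Second, we must handle the fact that $\zeta$ is supported on $\mathbb R^+$ and is not even.

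To deal with the non-evenness, I will replace $\zeta$ by its symmetrisation $\tilde\zeta(A)=\tfrac12(\zeta(A)+\zeta(-A))$ and put a field $\mathsf h_x=+\infty$ on the boundary sites, or equivalently condition on $\tau_x\ge0$ there. Concretely, I will realise the $\zeta$-sites as limits $H\to\infty$ of sites with measure $\tilde\zeta$ and field $H$. The tilt $e^{\beta H t}\tilde\zeta(\mathrm dt)$, normalised, converges to $\zeta$ as $H\to\infty$, because the negative half carries the factor $e^{-2\beta H|t|}$ relative to the positive half. At finite $H$ we have an honest free measure with even single-site measures $\rho,\tilde\zeta$, super-Gaussian since $\zeta$ has all Gaussian moments, and nonnegative field. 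The inequalities of Propositions~\ref{prop: FKG phi4}--\ref{prop:monotonicity}, in their inhomogeneous form, then hold uniformly in $H$, and they pass to the limit for bounded (or Gaussian-integrable) observables by dominated convergence using the Gaussian moments.

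Carrying this out bullet by bullet:
\begin{itemize}
\item FKG for $\tau$ is the lattice-condition (Holley/FKG) argument: the density $\exp(\beta\sum J\tau_x\tau_y+\beta\sum\mathsf h\tau)$ is supermodular, and the product reference measure, whatever the single-site factors are, preserves this.
\item FKG for $|\tau|$ follows the \cite{LammersOtt2021}/\cite{GunaratnamPanagiotisPanisSeveroPhi42022} route: sum out the signs to get $|\tau|$-marginal weights $\prod\cosh$-type terms, which are log-supermodular in $|\tau|$ when $\mathsf h\ge0$. In the limit $H\to\infty$ the $\zeta$-sites have fixed sign, and this only helps.
\item Monotonicity in $\mathsf h$ comes from differentiating in $\mathsf h_x$ and applying FKG to $\tau_x$ and $F$. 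For $|\tau|$ with $|\mathsf h_1|\le\mathsf h_2$, I will adapt \cite[Lemma 2.13]{GunaratnamPanagiotisPanisSeveroPhi42022} verbatim.
\item Monotonicity of $\langle\tau_A\rangle^+$ in $\beta$ and $\mathsf h$ follows from the GKS/Griffiths inequalities for even single-site measures with nonnegative fields, as in \cite[Proposition 3.20]{GunaratnamPanagiotisPanisSeveroPhi42022}, which again are stated sitewise.
\end{itemize}

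The last bullet, the comparison to the free measure, I will prove by interpolation. Let $J^{(s)}$ equal $1$ on $E(\Lambda)$ and $s$ on $\partial_E\Lambda$. At $s=0$ the $\zeta$-sites decouple, and the $\Lambda$-marginal is exactly $\nu_{\Lambda,\beta,\mathsf h}$. At $s=1$ it is $\nu^+_{\Lambda,\beta,\mathsf h}$. By Griffiths' second inequality in the inhomogeneous setting (at finite $H$, then letting $H\to\infty$), $s\mapsto\langle\tau_A\rangle^{(s)}$ is nondecreasing, which gives $\langle\tau_A\rangle_{\Lambda,\beta,\mathsf h}\le\langle\tau_A\rangle^+_{\Lambda,\beta,\mathsf h}$.

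The main obstacle is justifying the $H\to\infty$ limit, or more simply checking that the proofs accept an "infinite field" site directly. I expect the cleaner route to be verifying that the Griffiths and FKG arguments only use evenness of $\rho$ at the sites in $\Lambda$, and treating $\zeta$-sites as carrying a one-sided measure where $\tau_x\ge0$. The GKS expansion in powers of $\tau$ then gives nonnegative moments directly, since all boundary moments of $\zeta$ are nonnegative.
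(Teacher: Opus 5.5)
Your overall plan matches the paper's argument for the first three bullets. You view $\nu^+_{\Lambda,\beta,\mathsf h}$ as a free measure on $\overline\Lambda$ with single-site measures $\rho$ on $\Lambda$ and $\zeta$ on $\partial^{\textup{ext}}\Lambda$, and check that the FKG/GKS arguments only need each site measure to be even or supported on $\mathbb R^+$. This is Remark~\ref{rem:plus}$(iii)$.

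However, the concrete device you give for the non-even $\zeta$ fails. The normalised tilt $e^{\beta Ht}\tilde\zeta(\mathrm dt)$, with $\tilde\zeta=\tfrac12(\zeta+\zeta(-\,\cdot\,))$, does \emph{not} converge to $\zeta$. The factor $e^{\beta Ht}$ also reweights the positive half and pushes the mass to the top of the support. For $\zeta=\tfrac12(\delta_1+\delta_2)$ the limit is $\delta_2$. For unbounded $\zeta$ the mass escapes to infinity, and the sites of $\partial\Lambda$ feel a diverging field. So the finite-$H$ measures do not approximate $\nu^+_{\Lambda,\beta,\mathsf h}$, and a field $H\to\infty$ is not the same as conditioning on $\tau_y\ge0$.

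If you want the limiting argument, you must compensate the field inside the symmetrisation: take $\tilde\zeta_H(\mathrm dt)=e^{-\beta H|t|}\bigl(\zeta(\mathrm dt)+\zeta(-\mathrm dt)\bigr)$, halving any atom at $0$. This is even, has all Gaussian moments, and satisfies $e^{\beta Ht}\tilde\zeta_H(\mathrm dt)\to\zeta(\mathrm dt)$. It is the device in the Remark following the proof of Proposition~\ref{prop: lebowitz}.

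More simply, your fallback ``cleaner route'' is already correct and needs no limit. GKS~I only needs $\int t^n\,\mathrm d\zeta\ge0$. The Ginibre duplication for GKS~II only needs $\int u^mv^n\,\mathrm d\nu_y\ge0$, where $\nu_y$ is the pushforward of $\zeta\otimes\zeta$ under $(a,b)\mapsto(a+b,a-b)$. This holds because $u\ge0$ and $\nu_y$ is symmetric in $v$, which is exactly the dichotomy used in the proof of Proposition~\ref{prop: lebowitz}. FKG for $\tau$ uses nothing about the product reference measure. For the absolute-value statements, the fixed-sign boundary sites only add nonnegative fields to the conditional Ising model, so the free-measure proofs carry over.

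For the last bullet your route differs from the paper's. You interpolate the couplings on $\partial_E\Lambda$ from $0$ to $1$, which needs GKS~II in the inhomogeneous model with the one-sided $\zeta$-sites. The paper instead conditions on the boundary spins $\eta\ge0$ via the domain Markov property. The conditional law on $\Lambda$ is then a homogeneous free measure with field $\mathsf h_x+\sum_{y\in\partial^{\textup{ext}}\Lambda,\,y\sim x}\eta_y\ge\mathsf h_x$. Proposition~\ref{prop:monotonicity} gives $\langle\tau_A\rangle^\eta_{\Lambda,\beta,\mathsf h}\ge\langle\tau_A\rangle_{\Lambda,\beta,\mathsf h}$ for every such $\eta$, and one averages over $\eta$. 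That argument uses only the homogeneous result already available. Your interpolation is equally valid once the one-sided GKS~II is in hand, which you need for the third bullet anyway.
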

\begin{proof} The first three points follow similarly to Proposition \ref{prop:monotonicity} using Remark \ref{rem:plus}. The last one follows from a combination of the domain Markov property for the plus measure and Proposition \ref{prop:monotonicity}.
\end{proof}

We now turn to the monotonicity in the volume that we will need in the sequel. For sets $X,Y\subset \mathbb R^d$, we let $\mathrm{d}(X,Y):=\inf_{x\in X, y\in Y}|x-y|_\infty$, where $|\cdot|_\infty$ denotes the $\ell_\infty$ norm on $\mathbb R^d$. For a measure $\mu$, we write $\mu(|\cdot|)$ to denote the push-forward of $\mu$ by the map $\tau\mapsto |\tau|$.

\begin{Prop}[\hspace{1pt}{\cite[Proposition 6.11]{PV26}}]\label{prop:monotonicity in plus}
Let $\beta,h \geq 0$. There exists $r=r(\beta,h)\geq 0$ such that, for every $\Lambda'\subset \Lambda \subset \mathbb Z^d$ finite and such that $\mathrm{d}(\partial \Lambda,\Lambda')\geq r$, and every $\mathsf h \in (\mathbb R^+)^{\Lambda}$ and $\mathsf h' \in (\mathbb R^+)^{\Lambda'}$ such that $\mathsf h' \geq \mathsf h_{|_{\Lambda'}}$ and $\mathsf h,\mathsf h'\leq h$,
\begin{equation}\label{eq:spin-absolute value monotonicity}
\nu_{\Lambda,\beta, \mathsf h}^+ \preccurlyeq \nu_{\Lambda',\beta, \mathsf h'}^+ \quad \text{and} \quad \nu_{\Lambda,\beta, \mathsf h}^+(|\cdot|) \preccurlyeq \nu_{\Lambda',\beta, \mathsf h'}^+(|\cdot|).
\end{equation}
In particular, for every $\Lambda \subset \mathbb Z^d$ finite and every $\mathsf h \in (\mathbb R^+)^{\mathbb Z^d}$ with $\mathsf h \leq h$,
\begin{equation}\label{eq:plus monotonicity}
\nu_{\beta,\mathsf h}^+ \preccurlyeq \nu_{\Lambda,\beta, \mathsf h}^+ \quad \text{and} \quad \nu_{\beta, \mathsf h}^+(|\cdot|) \preccurlyeq \nu_{\Lambda,\beta, \mathsf h}^+(|\cdot|).
\end{equation}
\end{Prop}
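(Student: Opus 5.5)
The plan is to prove the first assertion by a Holley-type comparison on $\overline{\Lambda'}$, and then to obtain the ``in particular'' statement by passing to the limit using Proposition~\ref{prop:reg_plus}$(i)$.

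\textbf{Step 1: reduction to a boundary comparison.} Let $H_{\Lambda'}(\tau)=\beta\sum_{xy\in\overline E(\Lambda')}\tau_x\tau_y+\beta\sum_{x\in\Lambda'}\mathsf h_x\tau_x$. By the DLR property, the marginal of $\nu^+_{\Lambda,\beta,\mathsf h}$ on $\overline{\Lambda'}$ has density proportional to
\[
e^{H_{\Lambda'}(\tau)}\prod_{x\in\Lambda'}\mathrm d\rho(\tau_x)\; g(\tau_{\partial^{\textup{ext}}\Lambda'})\prod_{x\in\partial^{\textup{ext}}\Lambda'}\mathrm d\rho(\tau_x).
\]
Here $g(\xi)$ is the partition function of the region $\overline\Lambda\setminus\overline{\Lambda'}$, with its own $\zeta$-boundary on $\partial^{\textup{ext}}\Lambda$, given the spins $\xi$ on $\partial^{\textup{ext}}\Lambda'$. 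The measure $\nu^+_{\Lambda',\beta,\mathsf h'}$ has the same form, with $\mathsf h'\ge\mathsf h$ inside and with $g\prod\mathrm d\rho$ replaced by $\prod\mathrm d\zeta$. Since $\mathsf h'\ge \mathsf h$ is handled by Proposition~\ref{prop: monot plus}, the whole difficulty sits on the boundary.

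By Remark~\ref{rem:plus}$(iii)$, both measures are free measures of models with inhomogeneous single-site measures, and these satisfy FKG. A Holley/FKG argument then shows that it suffices to check the following: the ratio
\[
R(\xi):=\frac{\prod_{x}\frac{\mathrm d\zeta}{\mathrm d\rho}(\xi_x)}{g(\xi)}
\]
is increasing in $\xi\in(\mathbb R^+)^{\partial^{\textup{ext}}\Lambda'}$. On negative values $\zeta$ vanishes, which only helps. Given this, the first measure is obtained from the second by reweighting with a decreasing function. FKG for the second measure then gives domination of expectations of increasing functions. For the absolute-value field the same reweighting works, using the second FKG inequality of Proposition~\ref{prop: monot plus}, since $R$ depends only on $|\xi|$ once the sign issue is handled.

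\textbf{Step 2: monotonicity of $R$ (the main obstacle).} I expect this step to be the hardest. We have
\[
\partial_{\xi_x}\log g(\xi)=\beta\sum_{y\sim x,\ y\in\Lambda\setminus\overline{\Lambda'}}\langle\tau_y\rangle_{\mathrm{out},\xi},
\]
a conditional mean of outer neighbours given the boundary values $\xi$. The plan is to show that this is at most $\epsilon|\xi|_\infty+C_\epsilon$ for every $\epsilon>0$. The reason is that $\rho$ is super-Gaussian (Definition~\ref{def:single site}$(ii)$): under the tilt $e^{s t}\mathrm d\rho(t)$, the mean grows sublinearly in $s$. One then combines this with the regularity of $\nu^\zeta$ up to the boundary (Proposition~\ref{prop:reg_plus}$(ii)$) to control the remaining neighbours of $y$. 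This control should hold uniformly, provided the outer region has thickness at least $r$. This is where $r=r(\beta,h)$ is chosen: far enough that the $\zeta$-boundary of $\Lambda$ cannot push conditional means back up by more than a constant.

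On the other side, by Remark~\ref{rem:plus}$(i)$ and the freedom in choosing $\zeta$ in \cite{PV26}, we can take $\frac{\mathrm d\zeta}{\mathrm d\rho}$ with logarithmic derivative at least $\epsilon t+C_\epsilon$ on its support $[C,\infty)$, for example a Gaussian-type tilt. With this choice, $\partial_{\xi_x}\log R\ge0$. The delicate point is making the sublinear bound uniform in $\xi$ on all other boundary sites and in $\Lambda$. I would first try to do this via a single-site conditioning argument combined with the regularity bound.

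\textbf{Step 3: the infinite-volume statement.} Fix $\Lambda$ and take $\Lambda''\supset\Lambda$ with $\mathrm d(\partial\Lambda'',\Lambda)\ge r$, and $\mathsf h'=\mathsf h_{|\Lambda}$. Step 1 gives $\nu^+_{\Lambda'',\beta,\mathsf h}\preccurlyeq\nu^+_{\Lambda,\beta,\mathsf h}$, and likewise for absolute values. Letting $\Lambda''\nearrow\mathbb Z^d$, Proposition~\ref{prop:reg_plus}$(i)$ gives weak convergence to $\nu^+_{\beta,\mathsf h}$. Stochastic domination passes to the limit because it is tested on bounded continuous increasing functions of finitely many spins; a standard approximation suffices. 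This yields \eqref{eq:plus monotonicity}.
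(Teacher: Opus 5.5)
Your Step 3 is exactly how the paper obtains \eqref{eq:plus monotonicity}. For \eqref{eq:spin-absolute value monotonicity} the paper gives no argument and simply imports \cite[Proposition 6.11]{PV26}. Your attempt to reprove that finite-volume comparison breaks down in Step 2, and not only because uniformity is missing: the claimed monotonicity of $R$ is false.

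\textbf{The gap in Step 2.} Edges joining two sites of $\partial^{\textup{ext}}\Lambda'$ lie in $E(\Lambda)$ but not in $\overline E(\Lambda')$. So the $\overline{\Lambda'}$-marginal of $\nu^+_{\Lambda,\beta,\mathsf h}$ carries the extra factor $\exp\bigl(\beta\sum_{xy\in E(\partial^{\textup{ext}}\Lambda')}\xi_x\xi_y\bigr)$, which is absent from $\nu^+_{\Lambda',\beta,\mathsf h'}$ and which your formula for $\partial_{\xi_x}\log g$ omits. Consequently, at any $\xi\geq 0$ one has $\partial_{\xi_x}\log g(\xi)\geq\beta\sum_{y\in\partial^{\textup{ext}}\Lambda',\,y\sim x}\xi_y$. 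The remaining contributions, $\beta\mathsf h_x$ and the outer conditional means, are nonnegative by Griffiths' first inequality. Hence $\partial_{\xi_x}\log R\geq 0$ would require $\bigl(\log\frac{\mathrm d\zeta}{\mathrm d\rho}\bigr)'(\xi_x)\geq\beta\xi_y$ for all $\xi_y\in\mathrm{supp}\,\zeta$ with $\xi_x$ held fixed. This is impossible, because $\mathrm{supp}\,\zeta$ must be unbounded whenever $\mathrm{supp}\,\rho$ is; otherwise domination already fails at a single site of $\partial^{\textup{ext}}\Lambda'$.

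The same mismatch is visible in your own target bound. The estimate $\epsilon|\xi|_\infty+C_\epsilon$ depends on all boundary values, while a product tilt only supplies $\epsilon\xi_x+C_\epsilon$, a function of $\xi_x$ alone.

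No choice of $\zeta$ repairs this. Test Holley's condition, which is implied by your ``decreasing ratio plus FKG'' criterion, on an adjacent pair $x\sim y$ in $\partial^{\textup{ext}}\Lambda'$. Take common interior spins, nonnegative values elsewhere, $\xi=(a,B)$ and $\xi'=(A,b)$ with $a<A$ and $b<B$. The condition then forces $\log\frac{\mathrm d\zeta}{\mathrm d\rho}(B)-\log\frac{\mathrm d\zeta}{\mathrm d\rho}(b)\geq\beta a(B-b)$ for arbitrarily large $a$, which cannot hold. So for unbounded spins, no pointwise likelihood-ratio comparison on $\overline{\Lambda'}$ can yield \eqref{eq:spin-absolute value monotonicity}. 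Exploiting the gap $r(\beta,h)$ needs a genuinely different input, and that input is the content of \cite[Proposition 6.11]{PV26}.

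\textbf{Secondary problems in Steps 1--2.}
\begin{itemize}
\item On $\overline{\Lambda'}$, $\nu^+_{\Lambda,\beta,\mathsf h}$ is not absolutely continuous with respect to $\nu^+_{\Lambda',\beta,\mathsf h'}$, since the even measure $\rho$ charges negative boundary values. So ``reweighting by a decreasing function'' is not available as written; one would need a two-measure Holley argument.
\item $\mathrm d\zeta/\mathrm d\rho$ need not exist. For example, with bounded spins and $\mathrm{supp}\,\zeta\subset[C,\infty)$ as allowed by Remark~\ref{rem:plus}$(i)$, $\zeta$ and $\rho$ are mutually singular.
\item $\zeta$ is the fixed measure of Proposition~\ref{prop:reg_plus}. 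You cannot redesign it as a tilt of $\rho$ without re-establishing that proposition for your choice.
\item $g$ is not a function of $|\xi|$, because the outer region has a positive $\zeta$-boundary. So the absolute-value domination does not follow from the same reweighting.
\end{itemize}
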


\begin{proof}
The proof of \eqref{eq:spin-absolute value monotonicity} follows from \cite[Proposition 6.11]{PV26}, and the proof of
\eqref{eq:plus monotonicity} follows from \eqref{eq:spin-absolute value monotonicity} and the convergence of 
$\nu^+_{\Lambda,\beta, \mathsf{h}}$ to $\nu^+_{\beta, \mathsf{h}}$ of Proposition~\ref{prop:reg_plus}.
\end{proof}

\begin{Rem} Observe that this proposition matches known results in the setting of the Ising model, see for instance \cite[Chapter~3.6.3]{FriedliVelenikIntroStatMech2017}. In this easier setting, we can even choose $r=0$ above.
\end{Rem}

\subsection{Random cluster representation}\label{sec:randomcluster}
\subsubsection{Notations}

The random cluster representation of a one-component spin model samples an absolute value field on the vertices and a percolation configuration on the edges. This model has a natural domain Markov property; in order to state it, we define the model with both external magnetic fields and boundary conditions. Below, we record some standard notation, introduce boundary conditions, and define the state space of the measures. Throughout this subsection, $\Lambda$ is a finite subset of $\mathbb Z^d$.

\hfill

\noindent \textbf{Vertex sets.} Recall that $\partial \Lambda = \{ x\in \Lambda : \exists y \in \mathbb Z^d \setminus \Lambda, \, x \sim y \}$ and $\partial^{\textup {ext}} \Lambda=\{y \in \mathbb Z^d \setminus \Lambda: \exists x\in \Lambda, \, x\sim y\}$ denote the inner and outer vertex boundaries of $\Lambda$, respectively, and that $\overline \Lambda= \Lambda \sqcup \partial^{\textup{ext}} \Lambda$. In order to account for a positive external field, we add a ghost vertex $\mathfrak g$, and we set $\Lambda^\mathfrak g:= \Lambda \sqcup \{\mathfrak g\}$ and $\overline \Lambda^\mathfrak g:= \Lambda^{\mathfrak g}  \sqcup \partial^{\textup{ext}} \Lambda$.  Below, we also set $\tau_\fg=\mathsf{a}_\fg=1$.

\hfill

\noindent \textbf{Edge sets.} Recall that $E(\Lambda)= \{ xy : x,y \in \Lambda,\, x\sim y\}$ is the induced set of edges on $\Lambda$, and $\overline E(\Lambda) = \{ xy : x \in \Lambda, \, y \in \mathbb Z^d,\, x\sim y \}$ is the set of edges touching $\Lambda$. The edge boundary of $\Lambda$ is $\partial_E \Lambda= \overline E(\Lambda) \setminus E(\Lambda)$. For the graph with the ghost vertex, we augment the edge set as follows. Given $\mathsf h \in (\mathbb R^+)^\Lambda$, write $E(\Lambda[\mathsf h]):=E(\Lambda) \sqcup \{ x\mathfrak g: x \in \Lambda\}$ and let $\overline E(\Lambda[\mathsf h]) = E(\Lambda[\mathsf h]) \cup \overline E(\Lambda)$. We sometimes abuse notation and consider $(\Lambda^\fg,E(\Lambda[\mathsf{h}]))$ as a weighted graph (i.e.,~remembering the values of $\mathsf h$). 

\hfill

\noindent\textbf{Boundary conditions and state space.} 
A boundary condition on $\Lambda$ is a pair $(\xi,\mathsf{b})$, where  $\xi$ is a partition of $\partial^{\textup{ext}}\Lambda\cup \{\fg\}$ and $\mathsf b \in (\mathbb R^+)^{\partial^{\textup{ext}} \Lambda}$. The $\xi$ term will only play a role in the weight of a configuration but not the state space. We emphasise that all vertices $x\in \partial^{\textup{ext}}\Lambda\cup \{\fg\}$ are included in some partition class of $\xi$, even those with $\mathsf{b}_x=0$. A random cluster configuration is a pair $(\omega,\mathsf a) \in \{0,1\}^{\overline E(\Lambda[\mathsf h])}\times (\mathbb R^+)^{\overline{\Lambda}} $ such that $\mathsf a_x = 0$ implies $\omega_e=0$ for every edge $e$ incident to $x$. We say that $(\omega,\mathsf a)$ is compatible with $(\xi,\mathsf b)$ if $\mathsf a|_{\partial^{\textup{ext}} \Lambda} = \mathsf b$. Given a boundary condition, the set of all such compatible configurations is the state space for our model.

\begin{Rem}\label{rem:double ghost}
We can also treat external magnetic fields $\mathsf h \in \mathbb R^\Lambda$ which are signed by adding two ghost vertices $\mathfrak g^+, \mathfrak g^-$, where vertices with $\mathsf h_x<0$ have an edge to $\mathfrak g^-$ and those with $\mathsf h_x > 0$ have an edge to $\mathfrak g^+$. Many of the definitions and properties carry over to this setting with straightforward albeit tedious modifications.
\end{Rem}

\subsubsection{Definition of the random cluster measure on a finite graph}

The random cluster representation is obtained in two steps. First, we condition on the absolute value field on the vertices and sample an Ising random cluster model (or FK-Ising model) with inhomogeneous weights depending on the absolute value field: this is the \emph{quenched} model. Second, we integrate out the absolute value field: this is the \emph{annealed} model. Let $\Lambda\subset \mathbb Z^d$ be finite. 

\hfill

\noindent \textbf{Quenched model.} Let $\mathsf a \in (\mathbb R^+)^{\overline \Lambda}$, $
\mathsf h \in (\mathbb R^+)^\Lambda$, and $\xi$ be a partition of $\partial^{\textup{ext}}\Lambda \cup \{ \mathfrak g\}$. Recall that $\mathsf{a}_\fg=1$. Let $k^\xi(\omega,\mathsf{a})$ denote the number of connected components in the graph with vertex set $\{x\in \overline\Lambda^\fg: \mathsf{a}_x\neq 0\}$ and edge set $\{xy: \omega_{xy}=1\}$ after identifying vertices in $\partial^{\textup{ext}}\Lambda\cup \{\fg\}$ in the same partition class of $\xi$.
For every $\beta \geq 0$, every edge $xy\in \overline E(\Lambda)$, and every $x\in \Lambda$, define 
\begin{equation}\label{eq:def p(beta,a)}
p(\beta, \mathsf a)_{xy}:=1-e^{-2\beta \mathsf a_x \mathsf a_y}, \qquad p(\beta, \mathsf h, \mathsf a)_{x\mathfrak g}:=1-e^{-2\beta \mathsf h_x \mathsf a_x}.
\end{equation}
Denote by $\phi^\xi_{\Lambda,\beta,\mathsf h, \mathsf a}$ the probability measure on $\omega\in \{0,1\}^{\overline E(\Lambda[\mathsf h])}$ defined by
\begin{equation} \label{def: random weight rc}
\phi^\xi_{\Lambda,\beta,\mathsf h, \mathsf a}[\omega]
=
\frac{1}{{Z}^{\xi}_{\Lambda,\beta,\mathsf h, \mathsf a}}\prod_{xy \in \overline E(\Lambda)} \left(\frac{p(\beta, \mathsf a)_{xy}}{1-p(\beta, \mathsf a)_{xy}}\right)^{\omega_{xy}} \prod_{x \in \Lambda} \left(\frac{p(\beta, \mathsf{h},\mathsf a)_{x\fg}}{1-p(\beta,\mathsf{h}, \mathsf a)_{x\fg}}\right)^{\omega_{x\fg}} \, 2^{k^\xi(\omega,\mathsf{a})},
\end{equation} 
where $Z^{\xi}_{\Lambda,\beta,\mathsf h, \mathsf a}$ is such that $\phi^\xi_{\Lambda,\beta,\mathsf{h},\mathsf{a}}$ is a probability measure. 

\hfill

\noindent\textbf{Random environment.} We now define a measure on the absolute-value field $\mathsf{a}$. Let $(\xi,\mathsf{b})$ be a boundary condition on $\Lambda$.
Define $\mu^{(\xi,\mathsf{b})}_{\Lambda, \beta, \mathsf h}$ to be the probability measure on $(\mathbb R^+)^{\overline \Lambda}$ defined by the density
\begin{equation} \label{def: annealed rc weight}
\mathrm{d} \mu^{(\xi,\mathsf{b})}_{\Lambda,\beta,\mathsf h}[\mathsf a] = 
\prod_{xy \in \overline E(\Lambda)} \sqrt{1-p(\beta,\mathsf a)_{xy}} \prod_{x \in \Lambda} \sqrt{1-p(\beta,\mathsf h, \mathsf a)_{x\mathfrak g}}\frac{Z^{\xi}_{\Lambda, \beta, \mathsf h, \mathsf a}}{Z^{(\xi,\mathsf{b})}_{\Lambda, \beta, \mathsf h}} \prod_{x \in \Lambda} \mathrm{d}\rho(\mathsf a_x) \prod_{x \in \partial^{\textup{ext}} \Lambda} \delta_{\mathsf{b}_x}(\mathsf{a}_x),
\end{equation}
where $Z^{(\xi,\mathsf{b})}_{\Lambda, \beta, \mathsf h}$ is such that $\mu_{\Lambda,\beta,\mathsf{h}}^{(\xi,\mathsf{b})}$ is a probability measure.

\begin{Rem}
The factors in \eqref{def: annealed rc weight} involving square roots arise when relating the partition function of the inhomogeneous FK-Ising model to the partition function of the inhomogeneous Ising spin model (see \cite{gunaratnam2025supercritical}). 
\end{Rem}

\hfill

\noindent\textbf{Annealed model.} We are now in a position to define our random cluster representation.

\begin{Def}[Random cluster representation]
The generalised random cluster representation on a finite subset $\Lambda \subset \mathbb Z^d$ with parameters $\beta\geq  0$ and $\mathsf h\in (\mathbb R^+)^\Lambda$, and boundary conditions $(\xi,\mathsf{b})$ is the probability measure $\Psi^{(\xi,\mathsf{b})}_{\Lambda,\beta,\mathsf h}$ on $ \{0,1\}^{\overline E(\Lambda[\mathsf h])}\times (\mathbb R^+)^{\overline \Lambda} $ defined by
\begin{equation}
\mathrm{d}\Psi^{(\xi,\mathsf{b})}_{\Lambda,\beta,\mathsf h}[(\omega,\mathsf a)] = \phi^\xi_{\Lambda, \beta, \mathsf h, \mathsf a}[\omega] \mathrm{d}\mu^{(\xi,\mathsf{b})}_{\Lambda, \beta, \mathsf h}[\mathsf a].
\end{equation}
We write $\Phi^{(\xi,\mathsf{b})}_{\Lambda,\beta,\mathsf{h}}$ for the distribution of the $\omega$-marginal of $\Psi^{(\xi,\mathsf{b})}_{\Lambda,\beta,\mathsf h}$.
\end{Def}

Unpacking the definitions, it is easy to see that
\begin{equation}\label{eq: random cluster explicit density}
\begin{aligned}
\mathrm{d}\Psi_{\Lambda,\beta,\mathsf{h}}^{(\xi,\mathsf{b})}[(\omega,\mathsf{a})]=&\frac{1}{Z^{(\xi,\mathsf{b})}_{\Lambda,\beta,\mathsf{h}}}\prod_{xy \in \overline{E}(\Lambda)} \sqrt{1-p(\beta,\mathsf a)_{xy}}\left(\frac{p(\beta,\mathsf a)_{xy}}{1-p(\beta,\mathsf a)_{xy}}\right)^{\omega_{xy}} \, \\
& \times \prod_{x\in \Lambda} \sqrt{1-p(\beta, \mathsf h, \mathsf a)_{x\fg}}\left(\frac{p(\beta, \mathsf{h}, \mathsf a)_{x\fg}}{1-p(\beta,\mathsf{h}, \mathsf a)_{x\fg}}\right)^{\omega_{x\fg}} 2^{k^\xi(\omega,\mathsf{a})} 
\prod_{x\in \Lambda}\mathrm{d} \rho(\mathsf{a}_x)\prod_{x \in \partial^{\textup{ext}} \Lambda} \delta_{\mathsf{b}_x}(\mathsf{a}_x).
\end{aligned}
\end{equation}

\hfill

\noindent \textbf{Free and wired boundary conditions.} The following two random cluster measures are of particular importance and will play the role of \emph{minimal} and \emph{maximal} boundary conditions for this representation, respectively.

 The \emph{free random cluster measure} $\Psi_{\Lambda,\beta, \mathsf h}^{0}$ is defined by taking $\xi=\{\{x\}: x\in \partial^{\mathrm{ext}} \Lambda\cup \{\fg\}\}$, i.e.,\ each partition class consists of a single vertex,
and $\mathsf{b}\equiv 0$. Similarly, we denote the associated quenched and absolute value measures by  $\phi^{0}_{\Lambda,\beta,\mathsf{h}, \mathsf a}$ and $\mu_{\Lambda,\beta,\mathsf{h}}^{0}$.  In the case $\mathsf{h}\equiv0$, we simply write $\Psi^0_{\Lambda,\beta}$. The measure $\Psi^0_{\Lambda,\beta}$ naturally arises when considering $\nu_{\Lambda,\beta}$, see the Edwards--Sokal coupling of Corollary \ref{cor: ES correlations}.

The \emph{wired random cluster measure} $\Psi^1_{\Lambda,\beta, \mathsf h}$ is defined in a different way by the density
\begin{equation}\label{eq: random cluster explicit density wired}
\begin{aligned}
\mathrm{d}\Psi^1_{\Lambda,\beta,\mathsf{h}}[(\omega, \mathsf a)]=&\frac{1}{Z^{1}_{\Lambda,\beta,\mathsf{h}}}\prod_{xy \in \overline{E}(\Lambda)} \sqrt{1-p(\beta,\mathsf a)_{xy}}\left(\frac{p(\beta,\mathsf a)_{xy}}{1-p(\beta,\mathsf a)_{xy}}\right)^{\omega_{xy}}
\\&\times \prod_{x\in \Lambda} \sqrt{1-p(\beta, \mathsf h, \mathsf a)_{x\fg}}\left(\frac{p(\beta, \mathsf{h}, \mathsf a)_{x\fg}}{1-p(\beta,\mathsf{h}, \mathsf a)_{x\fg}}\right)^{\omega_{x\fg}} 2^{k^w(\omega,\mathsf{a})} 
\prod_{x\in \Lambda}\mathrm{d} \rho(\mathsf{a}_x) \prod_{x \in \partial^{\textup{ext}} \Lambda} \mathrm{d}\zeta(\mathsf a_x),
\end{aligned}
\end{equation}
where $w := \{ \partial^{\textup{ext}} \Lambda\cup \{\fg\}\}$, i.e.\ the partition where all vertices in $\partial^{\textup{ext}} \Lambda\cup \{\fg\}$ are in the same class and $\zeta$ is given by Proposition \ref{prop:reg_plus}. We denote the associated quenched and absolute value measures by $\phi^{1}_{\Lambda,\beta, \mathsf h, \mathsf a}$ and $\mu_{\Lambda,\beta, \mathsf h}^{1}$ respectively. Note also that
\begin{equation} \label{eq: psi-phi-mu plus}
    \mathrm d \Psi^1_{\Lambda, \beta, \mathsf h}[(\omega,\mathsf a)] = \phi^w_{\Lambda,\beta,\mathsf h, \mathsf a}[\omega] \mathrm d\mu^1_{\Lambda,\beta,\mathsf h}[\mathsf{a}].
\end{equation} 
In the case $\mathsf h\equiv 0$, we simply write $\Psi^1_{\Lambda,\beta}$. The measure $\Psi^1_{\Lambda,\beta}$ naturally arises when considering $\nu^+_{\Lambda,\beta}$, see the Edwards--Sokal coupling of Corollary \ref{cor: ES correlations for plus}.

\subsubsection{Properties of the random cluster model}\label{subsubsection: random cluster}

We now collect some fundamental properties of the random cluster representation that we will use throughout the paper. 

\hfill

\noindent\textbf{Law of the absolute value fields.}
Our random cluster measures are defined in such a way that they naturally relate to the spin measures of interest. 
Indeed, by a straightforward adaptation of the computations in \cite[Proposition~4.6]{gunaratnam2025supercritical}, we obtain that the law of $\mathsf{a}$ under $\Psi^0_{\Lambda,\beta,\mathsf{h}}$ (resp. under $\Psi^1_{\Lambda,\beta,\mathsf{h}}$) is the law of $|\tau|$ under $\nu_{\Lambda,\beta,\mathsf{h}}$ (resp. $\nu_{\Lambda,\beta,\mathsf{h}}^+$).

\hfill

\noindent\textbf{Edwards--Sokal coupling.} Let $\Lambda\subset \mathbb Z^d$ be finite. For every $t\in \mathbb R$, let $\textup{sgn}(t) = \mathds 1_{t> 0} - \mathds 1_{t<0}$. Recall that conditionally on $|\tau|$, on the set of vertices $x$ where $|\tau_x|\neq 0$, ${\rm sgn}(\tau)$  is distributed as an Ising model. Similarly, $\omega$ conditionally on $\mathsf a$ is distributed as an FK-Ising model in a random environment. Furthermore, by the previous observations, these random environments (the absolute value fields) coincide. Therefore, the standard Edwards-Sokal coupling between the Ising and FK-Ising models \cite[Chapter 1]{Grimmett2006RCM} yields the following couplings. We first describe the coupling between $\nu_{\Lambda,\beta,\mathsf{h}}$ and $\Psi^0_{\Lambda,\beta,\mathsf{h}}$. 

\medskip

\noindent\textit{From percolation to spin model.} 
Given a pair $(\omega,\mathsf{a})\sim \Psi^0_{\Lambda,\beta,\mathsf{h}}$, we can sample a field $\tau\sim \langle \cdot \rangle_{\Lambda,\beta,\mathsf{h}}$ as follows. Below, recall we set $\mathsf{a}_{\mathfrak{g}}=1$.
\begin{enumerate}
\item[$(i)$] For every $x\in \Lambda$ such that $\mathsf{a}_x=0$, set $\sigma_x=0$.
\item[$(ii)$] Consider a sequence of independent random variables $\sigma_{\mathcal{C}}\in \{\pm 1\}$, indexed by the connected components $\mathcal{C}$ of the graph with vertex set $\{x\in \Lambda^\fg: \mathsf{a}_x\neq 0\}$ and edge set $\{xy: \omega_{xy}=1\}$, such that $\mathbb{P}[\sigma_{\mathcal{C}}=1]=1$ if $\mathcal{C}$ is the cluster of $\fg$, and $\mathbb{P}[\sigma_{\mathcal{C}}=1]=\mathbb P[\sigma_{\mathcal{C}}=-1]=1/2$ otherwise. Given $x$, we write $\sigma_x = \sigma_{\mathcal C}\mathbbm{1}_{x \in \mathcal C}$.
\item[$(iii)$] For every $x\in \Lambda$, set $\tau_x = \mathsf{a}_x \sigma_{x}$.
\end{enumerate}

\medskip 
\noindent \textit{From spin model to percolation.} Conversely, given a field $\tau\sim \langle \cdot \rangle_{\Lambda,\beta,\mathsf{h}}$, we can sample a pair $(\omega, \mathsf{a})\sim \Psi^0_{\Lambda,\beta,\mathsf{h}}$ as follows. 
\begin{enumerate}
\item[$(i)$] Set $\mathsf{a}_x:=|\tau_x|$ for $x\in\Lambda$, and $\mathsf{a}_x:=0$ 
for $x\in\partial^{\textup{ext}}\Lambda$.
\item[$(ii)$] For each edge $xy\in \overline{E}(\Lambda[\mathsf{h}])$ such that $\textup{sgn}(\tau_x) \neq \textup{sgn}(\tau_y)$ or $\textup{sgn}(\tau_x)=\textup{sgn}(\tau_y)=0$, set $\omega_{xy}=0$. 
\item[$(iii)$] For each edge $xy\in E(\Lambda)$ such that $\textup{sgn}(\tau_x) = \textup{sgn}(\tau_y) \in \{\pm 1\}$, let $\omega_{xy}=1$ with probability equal to $p(\beta, \mathsf{a})_{xy}$, independently of  the other edges.
\item[$(iv)$] For each vertex $x\in \Lambda$ such that $\textup{sgn}(\tau_x) = 1$, let $\omega_{x\mathfrak g}=1$
with probability equal to $p(\beta,\mathsf{h}, \mathsf{a})_{x\fg}$, independently of the other edges.
\end{enumerate}

As a direct consequence of this coupling (see \cite[Theorem 1.16]{Grimmett2006RCM}), we obtain the following result for $\Psi^0_{\Lambda,\beta,\mathsf{h}}$.
Below, if $x,y\in \Lambda^\fg$, we write $\{x\connect{} y\}$ to denote the event that $x$ and $y$ lie in the same connected component of $\omega$.

\begin{Coro} \label{cor: ES correlations}
Let $\Lambda\subset \mathbb Z^d$ be finite, $\beta\geq 0$ and $\mathsf h \in (\mathbb R^+)^\Lambda$. For every $x,y\in \Lambda$, 
\begin{align}
\label{eq: es cor: 1}
\langle \tau_x \rangle_{\Lambda,\beta,\mathsf{h}}
&=
\Psi^0_{\Lambda,\beta,\mathsf{h}}[\mathsf{a}_x \mathds{1}\{x\longleftrightarrow \fg\}], 
\\
\label{eq: es cor: 2}
\langle \textup{sgn}(\tau_x) \rangle_{\Lambda,\beta,\mathsf{h}}
&=
\Psi^0_{\Lambda,\beta,\mathsf{h}}[x\longleftrightarrow \fg], 
\\ \label{eq: es cor: 3}
\langle \tau_x \tau_y \rangle_{\Lambda,\beta,\mathsf{h}}
&=
\Psi^0_{\Lambda,\beta,\mathsf{h}}[\mathsf{a}_x \mathsf{a}_y \mathds{1}\{x\longleftrightarrow y\}], 
\\ \label{eq: es cor: 4}
\langle \textup{sgn}(\tau_x) \textup{sgn}(\tau_y) \rangle_{\Lambda,\beta,\mathsf{h}}
&=
\Psi^0_{\Lambda,\beta,\mathsf{h}}[x\longleftrightarrow y]. 
\end{align}   
\end{Coro}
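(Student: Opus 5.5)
The four identities are consequences of the Edwards--Sokal coupling described just above (the direction ``from percolation to spin model''). The plan is to sample $(\omega,\mathsf a)\sim\Psi^0_{\Lambda,\beta,\mathsf h}$, attach the cluster signs $\sigma_{\mathcal C}$ as in steps $(i)$--$(iii)$, and set $\tau_x=\mathsf a_x\sigma_x$. Once we know this $\tau$ has law $\langle\cdot\rangle_{\Lambda,\beta,\mathsf h}$, each identity follows by conditioning on $(\omega,\mathsf a)$ and averaging over the signs.

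\textbf{Step 1: the coupling is valid.} Let $\mathbb P$ be the joint law of $(\omega,\mathsf a,\sigma)$. Its density is \eqref{eq: random cluster explicit density} times $2^{-(\#\text{clusters not containing }\fg)}$. Since $\mathsf b\equiv0$ and $\xi$ is free, boundary vertices are isolated and are not counted in $k^0$. Hence $2^{k^0(\omega,\mathsf a)}$ cancels against this sign factor, up to the constant factor $2$ coming from the cluster of $\fg$.

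Next we sum over $\omega$ for fixed $(\mathsf a,\sigma)$. The constraint is that $\omega_{xy}=1$ only if $\sigma_x=\sigma_y\neq0$, and $\omega_{x\fg}=1$ only if $\sigma_x=+1$. For each edge, we get
\[
\sqrt{1-p}\,\bigl(1+\tfrac{p}{1-p}\mathbbm 1\{\sigma_x=\sigma_y\}\bigr)=e^{\beta\mathsf a_x\mathsf a_y\sigma_x\sigma_y}
\]
whenever $\sigma_x\sigma_y\neq0$. If $\sigma_x\sigma_y=0$, one of $\mathsf a_x,\mathsf a_y$ vanishes, so $p=0$ and the factor is $1=e^{0}$. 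The ghost edges give $e^{\beta\mathsf h_x\mathsf a_x\sigma_x}$ by the same computation. Finally, the measure $\prod_{x\in\Lambda}\mathrm d\rho(\mathsf a_x)$ together with a uniform sign is the image of $\mathrm d\rho(\tau_x)$ under $\tau_x=\mathsf a_x\sigma_x$. This uses the evenness of $\rho$ and treats the atom at $0$ separately via the convention $\sigma_x=0$. Putting these together, the law of $\tau$ is $\nu_{\Lambda,\beta,\mathsf h}$. This is exactly the computation of \cite[Theorem 1.16]{Grimmett2006RCM} carried out with random couplings $\mathsf a_x\mathsf a_y$ and $\mathsf h_x\mathsf a_x$, and it is the only step requiring care (the atom at $0$ and the constant normalisations).

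\textbf{Step 2: sign averaging.} Conditionally on $(\omega,\mathsf a)$, the variable $\sigma_x$ equals $1$ if $x\leftrightarrow\fg$. It is a symmetric $\pm1$ variable if $x$ lies in another cluster with $\mathsf a_x\ne0$, and it is $0$ if $\mathsf a_x=0$. Hence $\mathbb E[\sigma_x\mid\omega,\mathsf a]=\mathbbm 1\{x\leftrightarrow\fg\}$. Multiplying by $\mathsf a_x$ and integrating gives \eqref{eq: es cor: 1}; on $\{x\leftrightarrow\fg\}$ we have $\mathsf a_x\neq0$, so $\mathrm{sgn}(\tau_x)=\sigma_x$, which gives \eqref{eq: es cor: 2}.

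For the two-point identities, $\mathbb E[\sigma_x\sigma_y\mid\omega,\mathsf a]$ equals $1$ if $x\leftrightarrow y$ (same sign, or both forced to $+1$ through $\fg$). Otherwise $x$ and $y$ lie in distinct clusters, and at least one of them is not the cluster of $\fg$ and has an independent symmetric sign, so the expectation is $0$. If $\mathsf a_x=0$, then $x$ is isolated and the expectation is also $0$. This yields \eqref{eq: es cor: 3} and \eqref{eq: es cor: 4}.

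The regularity estimates guarantee that $\mathsf a_x\mathsf a_y$ is integrable, so all expectations above are finite.
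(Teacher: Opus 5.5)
Your proposal is correct and is essentially the paper's argument. The paper simply invokes the Edwards--Sokal coupling (citing \cite[Theorem 1.16]{Grimmett2006RCM}) and averages over cluster signs; your Step~1 just carries out explicitly, in annealed form, the check that the paper delegates to the quenched Ising coupling together with the identification of the law of $\mathsf a$ with that of $|\tau|$.

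One point deserves more careful wording: after your cancellation of $2^{k^0}$ the signs carry counting measure, so $\mathrm d\rho(\mathsf a_x)$ must be read as $\rho$ restricted to $\mathbb R^+$, not as the law of $|\tau_x|$ paired with a uniform sign. With the latter reading, a non-constant factor $2^{\#\{x:\mathsf a_x\neq 0\}}$ appears whenever $\rho$ has an atom at $0$.
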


The above procedure extends mutatis mutandis to couple the measures $\nu^+_{\Lambda,\beta,\mathsf{h}}$ and $\Psi^1_{\Lambda,\beta,\mathsf{h}}$, the only difference is that we take the convention that $\sigma_\mathcal{C}=1$ where $\mathcal{C}$ is the cluster of $\partial^{\textup{ext}}\Lambda \cup \{\mathfrak g\}$.

\begin{Coro} \label{cor: ES correlations for plus}
Let $\Lambda\subset \mathbb Z^d$ be finite, $\beta\geq 0$ and $\mathsf h \in (\mathbb R^+)^\Lambda$. For every $x,y\in \Lambda$, 
\begin{align}
\label{eq: es cor: 1 plus}
\langle \tau_x \rangle_{\Lambda,\beta,\mathsf{h}}^+
&=
\Psi^1_{\Lambda,\beta,\mathsf{h}}[\mathsf{a}_x \mathds{1}\{x\connect{\:} \mathfrak g \}], 
\\
\label{eq: es cor: 2 plus}
\langle \textup{sgn}(\tau_x) \rangle_{\Lambda,\beta,\mathsf{h}}^+
&=
\Psi^1_{\Lambda,\beta,\mathsf{h}}[x\connect{\:}\mathfrak g], 
\\ \label{eq: es cor: 3 plus}
\langle \tau_x \tau_y \rangle_{\Lambda,\beta,\mathsf{h}}^+
&=
\Psi^1_{\Lambda,\beta,\mathsf{h}}[\mathsf{a}_x \mathsf{a}_y \mathds{1}\{x\connect{\:} y\}], 
\\ \label{eq: es cor: 4 plus}
\langle \textup{sgn}(\tau_x) \textup{sgn}(\tau_y) \rangle_{\Lambda,\beta,\mathsf{h}}^+
&=
\Psi^1_{\Lambda,\beta,\mathsf{h}}[x\connect{\:} y]. 
\end{align} 
\end{Coro}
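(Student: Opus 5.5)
The statement is Corollary~\ref{cor: ES correlations for plus}. I would prove it exactly as Corollary~\ref{cor: ES correlations}: build the Edwards--Sokal coupling between $\nu^+_{\Lambda,\beta,\mathsf h}$ and $\Psi^1_{\Lambda,\beta,\mathsf h}$, then read off the four identities by conditioning on $(\omega,\mathsf a)$ and averaging over the independent cluster signs.

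\textbf{Step 1: recast the plus measure as an inhomogeneous free measure.} By Remark~\ref{rem:plus}(iii), $\nu^+_{\Lambda,\beta,\mathsf h}$ is a spin measure on $\overline\Lambda$ with single-site measure $\rho$ on $\Lambda$ and $\zeta$ on $\partial^{\textup{ext}}\Lambda$. Since $\zeta$ is supported on $\mathbb R^+$, the boundary spins satisfy $\tau_x=|\tau_x|\ge 0$ almost surely. Conditionally on $|\tau|$, the signs form an Ising model with couplings $\beta|\tau_x||\tau_y|$, boundary signs equal to $+1$, and field $\beta\mathsf h_x|\tau_x|$ towards $\fg$.

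\textbf{Step 2: expand the weight.} I would write
\[
e^{\beta \mathsf a_x\mathsf a_y\sigma_x\sigma_y}=e^{\beta\mathsf a_x\mathsf a_y}\bigl[(1-p_{xy})+p_{xy}\mathbbm 1_{\sigma_x=\sigma_y}\bigr],
\]
and similarly for the ghost edges. Summing over signs compatible with $\omega$ then gives $2^{k}$, where every cluster containing a boundary vertex or $\fg$ is forced to have sign $+$.

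Two details need checking here.
\begin{enumerate}
\item[$\bullet$] The boundary vertices and $\fg$ all carry sign $+$, so they count as a single "forced" class. This is exactly the wired partition $w$, and it gives the factor $2^{k^w(\omega,\mathsf a)}$, up to a constant that depends only on $\mathsf a$.
\item[$\bullet$] The prefactors $e^{\beta\mathsf a_x\mathsf a_y}$ combine as $\sqrt{1-p}^{-1}\cdot(1-p)=\sqrt{1-p}$. This reproduces \eqref{eq: random cluster explicit density wired}.
\end{enumerate}
Hence the joint law of $(\omega,\mathsf a,\sigma)$ has $(\omega,\mathsf a)$-marginal equal to $\Psi^1_{\Lambda,\beta,\mathsf h}$ and $\tau$-marginal equal to $\nu^+_{\Lambda,\beta,\mathsf h}$. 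Given $(\omega,\mathsf a)$, the signs are constant on clusters, independent and uniform on clusters avoiding $\partial^{\textup{ext}}\Lambda\cup\{\fg\}$, and equal to $+1$ on the cluster of $\partial^{\textup{ext}}\Lambda\cup\{\fg\}$. Vertices with $\mathsf a_x=0$ have $\tau_x=0$ and are isolated.

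\textbf{Step 3: read off the correlations.} Write $\tau_x=\mathsf a_x\sigma_x$ and condition on $(\omega,\mathsf a)$. Then $\mathbb E[\sigma_x\mid\omega,\mathsf a]=\mathbbm 1\{x\leftrightarrow\fg\}$, where connection to $\fg$ includes connection to the boundary, since these are identified in the wired partition. Likewise $\mathbb E[\sigma_x\sigma_y\mid\omega,\mathsf a]=\mathbbm 1\{x\leftrightarrow y\}$. This holds whenever $\mathsf a_x,\mathsf a_y\neq0$, because if $x$ and $y$ lie in different clusters, at least one sign is an independent uniform $\pm1$.

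For vertices with $\mathsf a_x=0$ we have $\textup{sgn}(\tau_x)=0$, and such $x$ is isolated, so the indicator vanishes. The sign identities therefore hold with the convention $\textup{sgn}(0)=0$. Integrating over $(\omega,\mathsf a)$ yields \eqref{eq: es cor: 1 plus}--\eqref{eq: es cor: 4 plus}. Integrability is ensured by the regularity in Proposition~\ref{prop:reg_plus}.

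The only genuinely delicate point is the bookkeeping in Step 2: boundary spins are random with law $\zeta$ rather than deterministic, and one must check that the normalisations match. Since $\zeta$ lives on $\mathbb R^+$, the argument reduces verbatim to the free case with the boundary treated as part of the graph, glued to $\fg$.
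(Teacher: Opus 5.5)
Your proposal is correct and matches the paper's argument. The paper notes that the Edwards--Sokal coupling built for $\nu_{\Lambda,\beta,\mathsf h}$ and $\Psi^0_{\Lambda,\beta,\mathsf h}$ extends mutatis mutandis to $\nu^+_{\Lambda,\beta,\mathsf h}$ and $\Psi^1_{\Lambda,\beta,\mathsf h}$, with the sole change that the cluster of $\partial^{\textup{ext}}\Lambda\cup\{\mathfrak g\}$ is assigned sign $+1$, and then reads off the four identities as in the free case. Your explicit weight check fills in details the paper leaves implicit, as does your (correct, implicit) reading of all connectivity events, including $\{x\leftrightarrow y\}$, after identifying $\partial^{\textup{ext}}\Lambda\cup\{\mathfrak g\}$.
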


\hfill

\noindent\textbf{Domain Markov property and correlation inequalities.} We now state further useful properties of the random cluster measures, including positive association and domain Markov properties.

First, we treat properties with deterministic boundary conditions. We fix $\Lambda \subset \mathbb Z^d$ finite, $\beta \geq 0$, $\mathsf h \in (\mathbb R^+)^\Lambda$, and boundary conditions consisting of $\xi$ a partition of $\partial^{\textup{ext}} \Lambda\cup \{\fg\}$ and $\mathsf b \in (\mathbb R^+)^{\partial^{\textup{ext}} \Lambda}$. We will need two conventions. For boundary conditions $(\xi_1,\mathsf{b}_1), (\xi_2,\mathsf{b}_2)$ on $\Lambda$, we write $(\xi_1,\mathsf{b}_1) \leq (\xi_2,\mathsf{b}_2)$ if every partition class of $\xi_1$ is contained in a partition class of $\xi_2$, and $\mathsf{b}_1\leq \mathsf{b}_2$. For two measures $\mu_1$ and $\mu_2$, we write $\mu_1 \preccurlyeq \mu_2$ if $\mu_1(F)\leq \mu_2(F)$ for every increasing function $F$.
\begin{Prop}\label{prop: random cluster properties}
The following properties hold.
\begin{enumerate}
\item[$\bullet$]\textup{(Domain Markov Property)} For every $\Lambda' \subset \Lambda$, every $\theta\in \{0,1\}^{\overline E(\Lambda[\mathsf{h}]) \setminus \overline E(\Lambda'[\mathsf{h}])}$, and every $\mathsf{s}\in (\mathbb R^+)^{\overline{\Lambda}\setminus\Lambda'}$ satisfying $\mathsf{s}_{|\partial^{\textup{ext}} \Lambda}=\mathsf{b}$,
\begin{equation}
\Psi^{(\xi,\mathsf{b})}_{\Lambda,\beta,\mathsf{h}}[(\omega,\mathsf{a}) \mid (\omega|_{\overline E(\Lambda[\mathsf{h}])\setminus \overline E(\Lambda'[\mathsf{h}])},\mathsf{a}|_{\overline{\Lambda}\setminus \Lambda'})=(\theta,\mathsf{s})]=\Psi^{(\xi^{\theta},\mathsf{s})}_{\Lambda',\beta,\mathsf{h}_{|\Lambda'}}[(\omega,\mathsf{a})],
\end{equation}
where $\xi^{\theta}$ is the partition of $(\partial^{\mathrm{ext}} \Lambda')^{\fg}$ where $x,y$ are in the same partition class if and only if they are in the same connected component in the graph obtained from  $((\overline{\Lambda}\setminus \Lambda')^{\mathfrak g}, \{ e \in \overline E(\Lambda[\mathsf h])\setminus \overline E(\Lambda'[\mathsf h]):\theta_e=1\})$ by identifying the vertices in the elements of the partition $\xi$.
\item[$\bullet$]\textup{(Absolute value FKG inequality)} For every  $A \subset \Lambda$, $\eta \in (\mathbb R^+)^A$, and all increasing and square integrable functions $F,G: (\mathbb{R}^+)^{\overline{\Lambda} \setminus A}\to\mathbb{R}$, 
\begin{equation} \label{eq: conditional abs fkg}
\mu_{\Lambda,\beta,\mathsf{h}}^{(\xi,\mathsf{b})}[F\cdot G \mid \mathsf a|_A=\eta]\geq \mu_{\Lambda,\beta,\mathsf{h}}^{(\xi,\mathsf{b})}[F \mid \mathsf a|_A=\eta]\mu_{\Lambda,\beta,\mathsf{h}}^{(\xi,\mathsf{b})}[G \mid \mathsf a|_A=\eta].
\end{equation}
\item[$\bullet$]\textup{(FKG inequality)} For every $A \subset \Lambda$, $\eta \in (\mathbb R^+)^A$, and every pair of increasing and square-integrable functions $F,G: \{0,1\}^{\overline E(\Lambda[\mathsf{h}])}\times (\mathbb R^+)^{\overline{\Lambda}}\to\mathbb{R}$,
we have
\begin{equation}
\Psi_{\Lambda,\beta,\mathsf{h}}^{(\xi,\mathsf{b})}[F\cdot G \mid \mathsf a|_A=\eta]
\geq
\Psi_{\Lambda,\beta,\mathsf{h}}^{(\xi,\mathsf{b})}[F \mid \mathsf a|_A=\eta] \Psi_{\Lambda,\beta,\mathsf{h}}^{(\xi,\mathsf{b})}[G \mid \mathsf a|_A=\eta].
\end{equation}
\item[$\bullet$]\textup{(Monotonicity in parameters)} Let $\Lambda\subset \mathbb{Z}^d$ be finite and $A\subset \Lambda$. For every boundary conditions $(\xi_1,\mathsf{b}_1) \leq (\xi_2,\mathsf{b}_2)$ on $\Lambda$, every $\eta_1,\eta_2\in (\mathbb{R}^+)^{A}$ such that $\eta_1\leq \eta_2$, every external magnetic fields $\mathsf{h}_1,\mathsf{h}_2\in (\mathbb R^+)^\Lambda$ with $\mathsf h_1\leq \mathsf h_2$, and every $0\leq\beta_1\leq \beta_2$ we have
\begin{equation} \label{eq: conditional stochastic domiation}
\Psi_{\Lambda,\beta_1,\mathsf h_1}^{(\xi_1,\mathsf{b}_1)}[\:\cdot \mid \mathsf{a}|_A=\eta_1] \preccurlyeq \Psi_{\Lambda,\beta_2,\mathsf h_2}^{(\xi_2,\mathsf{b}_2)}[\:\cdot \mid \mathsf{a}|_A=\eta_2].
\end{equation}
In particular, 
\begin{equation}
\Psi^0_{\Lambda,\beta,\mathsf h} \preccurlyeq \Psi_{\Lambda,\beta,\mathsf h}^{(\xi,\mathsf{b})},
\end{equation}
for every boundary condition $(\xi,\mathsf{b})$ on $\Lambda$ and every $\mathsf{h}\in(\mathbb R^+)^\Lambda$. 

\item[$\bullet$]\textup{(Monotonicity in domain)} 
Let $\Lambda_1\subset \Lambda_2\subset \mathbb{Z}^d$ be finite and $\mathsf h_1\in (\mathbb{R}^+)^{\Lambda_1},\mathsf h_2\in (\mathbb{R}^+)^{\Lambda_2}$ such that $\mathsf h_1(x)\leq \mathsf h_2(x)$ for all $x\in \Lambda_1$. For every $\beta\geq 0$, we have
\begin{equation}
\Psi_{\Lambda_1,\beta,\mathsf{h}_1}^{0} \preccurlyeq \Psi_{\Lambda_2,\beta,\mathsf{h}_2}^{0}.
\end{equation}    
\end{enumerate}
\end{Prop}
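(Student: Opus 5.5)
The proof reduces everything to the quenched FK-Ising model in the random environment $\mathsf a$, for which each property is classical, and then integrates over $\mathsf a$ using the structure of $\mu^{(\xi,\mathsf b)}_{\Lambda,\beta,\mathsf h}$.

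\textbf{Domain Markov property.} We start from the explicit density \eqref{eq: random cluster explicit density} and fix $(\theta,\mathsf s)$ outside $\Lambda'$. The weight then factorises into three pieces:
\begin{itemize}
\item[$\bullet$] edge factors on $\overline E(\Lambda'[\mathsf h])$, which depend only on $\mathsf a|_{\overline{\Lambda'}}$;
\item[$\bullet$] factors depending only on $(\theta,\mathsf s)$;
\item[$\bullet$] the cluster term $2^{k^\xi(\omega,\mathsf a)}$.
\end{itemize}
The cluster term satisfies $k^\xi=k^{\xi^\theta}(\omega|_{\overline E(\Lambda'[\mathsf h])},\mathsf a|_{\overline{\Lambda'}})+\mathrm{const}(\theta,\mathsf s)$. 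This holds because clusters not touching $\partial^{\textup{ext}}\Lambda'\cup\{\fg\}$ are counted identically, while those that do are merged exactly according to $\xi^\theta$. After normalising, this is precisely the density of $\Psi^{(\xi^\theta,\mathsf s)}_{\Lambda',\beta,\mathsf h|_{\Lambda'}}$.

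\textbf{Absolute value FKG.} The natural route is through the spin model. Summing the ES coupling over $\omega$ gives that the $\mathsf a$-marginal is the law of $|\tau|$ for an (inhomogeneous) Ising-type model in which the boundary spins are fixed with absolute values $\mathsf b$ and with sign constraints encoded by $\xi$. Concretely, the sum over $\omega$ of the quenched weights equals $\prod\sqrt{1-p}\cdot Z^\xi_{\mathsf a}$, which by the ES identity is proportional to
\[
\sum_{\sigma}\exp\Big(\beta\sum \mathsf a_x\mathsf a_y\sigma_x\sigma_y+\beta\sum \mathsf h_x\mathsf a_x\sigma_x\Big),
\]
with $\sigma$ constant on the classes of $\xi$. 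We then check the FKG lattice condition for this density in $\mathsf a$, conditioned on $\mathsf a|_A$, which amounts to verifying that it is $\mathrm{MTP}_2$ in $\mathsf a$. This is where the proof of \cite[Lemma 2.13]{GunaratnamPanagiotisPanisSeveroPhi42022} (Ginibre/ferromagnetic structure of $|\tau|$ for nonnegative fields) enters, with $\rho$ general. We adapt that argument: the log-supermodularity follows from the Griffiths--Ginibre type property that $\log \sum_\sigma \exp(\cdots)$ has nonnegative mixed second derivatives in the couplings $\mathsf a_x\mathsf a_y$ and $\mathsf h_x\mathsf a_x$. Since $\xi$ only identifies signs, it is equivalent to adding infinite couplings between the corresponding boundary vertices, so it preserves this structure. \textbf{This is the main obstacle}: the point requiring care is handling the $\xi$-identification and the zero values of $\mathsf a$. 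If the direct supermodularity check is messy, a fallback is to discretise $\rho$ and apply Holley's criterion pair by pair.

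\textbf{Joint FKG, monotonicity in parameters, and monotonicity in domain.} These use a two-level Holley argument.
\begin{itemize}
\item[$\bullet$] \emph{Quenched level.} For fixed $\mathsf a$, $\phi^\xi_{\Lambda,\beta,\mathsf h,\mathsf a}$ is an FK model with $q=2$, so it is FKG. It is also increasing in $\xi$, in $\beta$, in $\mathsf h$ and in $\mathsf a$, since the edge parameters $p$ are increasing in these.
\item[$\bullet$] \emph{Joint FKG.} Combining the quenched FKG with the absolute value FKG gives joint FKG. Given increasing $F,G$, the functions $\mathsf a\mapsto\phi_{\mathsf a}[F]$ and $\mathsf a\mapsto\phi_{\mathsf a}[G]$ are increasing by the monotonicity of the quenched model in $\mathsf a$. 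We then apply the conditional absolute value FKG to these two functions.
\item[$\bullet$] \emph{Stochastic domination.} We show that $\mu^{(\xi_1,\mathsf b_1)}_{\beta_1,\mathsf h_1}[\cdot\mid\eta_1]\preccurlyeq\mu^{(\xi_2,\mathsf b_2)}_{\beta_2,\mathsf h_2}[\cdot\mid\eta_2]$ via Holley. Alternatively, we interpolate in each parameter and note that the derivative of the log-density is an increasing function of $\mathsf a$, so that FKG applies. For instance, $\partial_\beta$ gives $\sum \mathsf a_x\mathsf a_y\langle\sigma_x\sigma_y\rangle_{\mathsf a}+\dots$, which is increasing by Griffiths. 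Composing the two levels yields the domination.
\item[$\bullet$] \emph{Free is minimal.} The claim $\Psi^0\preccurlyeq\Psi^{(\xi,\mathsf b)}$ is the case $\mathsf b_1=0$ with singleton classes.
\item[$\bullet$] \emph{Monotonicity in domain.} $\Psi^0_{\Lambda_1}$ equals $\Psi^0_{\Lambda_2}$ conditioned on $\mathsf a\equiv0$ on $\Lambda_2\setminus\Lambda_1$, by the domain Markov property. Domination then follows from the conditional stochastic domination with $\eta_1=0\le\eta_2$, averaged over $\eta_2$, together with monotonicity in $\mathsf h$.
\end{itemize}
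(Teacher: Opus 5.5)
Your proposal is correct and follows essentially the same route as the argument the paper imports from \cite{gunaratnam2025supercritical}; the paper writes out the joint-FKG step itself for the wired analogue in Proposition~\ref{prop: wired random cluster properties}. Specifically: the domain Markov property comes from factorising the explicit density; absolute-value FKG comes from log-supermodularity in $\mathsf a$ of the quenched Ising partition function via Griffiths' inequalities (you need the first Griffiths inequality as well as the second, for the chain-rule term coming from $\partial_{\mathsf a_x}\partial_{\mathsf a_y}(\mathsf a_x\mathsf a_y)=1$); and joint FKG and stochastic domination follow by composing quenched FK-Ising monotonicity with the corresponding statements for the $\mathsf a$-marginal. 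Your derivation of monotonicity in the domain is also sound: you view $\Psi^0_{\Lambda_1}$ as $\Psi^0_{\Lambda_2}$ conditioned on $\mathsf a\equiv 0$ off $\Lambda_1$, then average the conditional domination.
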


\begin{proof}
    The proof follows as in \cite[Sections~4.2 and 4.3]{gunaratnam2025supercritical} (see Remark 4.3 therein). 
\end{proof}

We now consider the corresponding properties for the wired measure $\Psi^1_{\Lambda,\beta,\mathsf{h}}$. Similarly to $\nu_{\Lambda,\beta}^+$, $\Psi_{\Lambda,\beta,\mathsf{h}}^1$ inherits these properties by viewing it as a free measure with inhomogeneous single-site distributions for the absolute value field, see Remark \ref{rem:plus}.

\begin{Prop}\label{prop: wired random cluster properties} 
The following properties hold. Fix $\Lambda\subset \mathbb Z^d$ and $\beta\geq 0$.
\begin{enumerate}
\item[$\bullet$]\textup{(Domain Markov Property)} For every $\Lambda' \subset \Lambda$, every $\theta\in \{0,1\}^{\overline E(\Lambda[\mathsf{h}]) \setminus \overline E(\Lambda'[\mathsf{h}])}$, and every $\mathsf{s}\in (\mathbb R^+)^{\overline{\Lambda}\setminus\Lambda'}$ satisfying $\mathsf{s}_{|\partial^{\textup{ext}} \Lambda}=\mathsf{b}$,
\begin{equation}
\Psi^{1}_{\Lambda,\beta,\mathsf{h}}[(\omega,\mathsf{a}) \mid (\omega|_{\overline E(\Lambda[\mathsf{h}])\setminus \overline E(\Lambda'[\mathsf{h}])},\mathsf{a}|_{\overline{\Lambda}\setminus \Lambda'})=(\theta,\mathsf{s})]=\Psi^{(w^\theta,\mathsf{s})}_{\Lambda',\beta,\mathsf{h}_{|\Lambda'}}[(\omega,\mathsf{a})],
\end{equation}
where $w^{\theta}$ is the partition of $(\partial^{\mathrm{ext}} \Lambda')^{\fg}$ where $x,y$ are in the same partition class if and only if they are in the same connected component in the graph obtained from  $((\overline{\Lambda}\setminus \Lambda')^{\mathfrak g}, \{ e \in \overline E(\Lambda[\mathsf h])\setminus \overline E(\Lambda'[\mathsf h]):\theta_e=1\})$ by identifying the vertices in the elements of the wired partition $w$. 

\item[$\bullet$] \textup{(Absolute value FKG inequality)} For every  $A \subset \overline \Lambda$, $\eta \in (\mathbb R^+)^A$, and all increasing and square integrable functions $F,G: (\mathbb{R}^+)^{\overline{\Lambda} \setminus A}\to\mathbb{R}$, 
\begin{equation} \label{eq: conditional abs fkg plus}
\mu_{\Lambda,\beta,\mathsf{h}}^{1}[F\cdot G \mid \mathsf a|_A=\eta]\geq \mu_{\Lambda,\beta,\mathsf{h}}^{1}[F \mid \mathsf a|_A=\eta]\mu_{\Lambda,\beta,\mathsf{h}}^{1}[G \mid \mathsf a|_A=\eta].
\end{equation}
    \item[$\bullet$] \textup{(FKG inequality)} For every $A \subset \overline\Lambda$, $\eta \in (\mathbb R^+)^A$, and all increasing and square-integrable functions $F,G: \{0,1\}^{\overline E(\Lambda[\mathsf h])}\times (\mathbb R^+)^{\overline{\Lambda}}\to\mathbb{R}$,
we have
\begin{equation} \label{eq: fkg plus}
\Psi_{\Lambda,\beta,\mathsf{h}}^{1}[F\cdot G \mid \mathsf a|_A=\eta]
\geq
\Psi_{\Lambda,\beta,\mathsf{h}}^{1}[F \mid \mathsf a|_A=\eta] \Psi_{\Lambda,\beta,\mathsf{h}}^{1}[G \mid \mathsf a|_A=\eta].
\end{equation}
\item[$\bullet$] \textup{(Comparison to the free measure)} One has
    \begin{equation}\label{eq:phi1 dominates psi0 finite volume}
    \Psi^0_{\Lambda,\beta,\mathsf{h}}\preccurlyeq \Psi^{1}_{\Lambda,\beta,\mathsf{h}}.
\end{equation}
\item[$\bullet$]\textup{(Monotonicity in parameters)} For every $A\subset \overline\Lambda$, every $\eta_1,\eta_2\in (\mathbb{R}^+)^{A}$ such that $\eta_1\leq \eta_2$, every external magnetic fields $\mathsf{h}_1,\mathsf{h}_2\in (\mathbb R^+)^\Lambda$ with $\mathsf h_1\leq \mathsf h_2$, and every $0\leq\beta_1\leq \beta_2$ we have
\begin{equation} \label{eq: conditional stochastic domiation wired}
\Psi_{\Lambda,\beta_1,\mathsf h_1}^1[\:\cdot \mid \mathsf{a}|_A=\eta_1] \preccurlyeq \Psi_{\Lambda,\beta_2,\mathsf h_2}^1[\:\cdot \mid \mathsf{a}|_A=\eta_2].
\end{equation}
    \item[$\bullet$] \textup{(Monotonicity in domain with gap)} Let $\beta,h \geq 0$. There exists $r=r(\beta,h)>0$ such that, for every $\Lambda_1 \subset \Lambda_2 \subset \mathbb Z^d$ finite and such that $\mathrm{d}(\Lambda_1, \partial \Lambda_2) \geq r$, and every $\mathsf h_1 \in (\mathbb R^+)^{\Lambda_1}$ and $\mathsf h_2 \in (\mathbb R^+)^{\Lambda_2}$ such that $\mathsf h_1 \geq (\mathsf h_2)_{|_{\Lambda_1}}$, $\mathsf h_1,\mathsf{h}_2\leq h$, we have that
    \begin{equation}\label{eq: monot in domain wired FK}
        \Psi^1_{\Lambda_1, \beta, \mathsf h_1} \succcurlyeq \Psi^1_{\Lambda_2, \beta, \mathsf h_2}. 
    \end{equation}
\end{enumerate}
\end{Prop}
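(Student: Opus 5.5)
The plan is to view $\Psi^1_{\Lambda,\beta,\mathsf h}$ as a free random cluster measure for an inhomogeneous model, as suggested by Remark~\ref{rem:plus}$(iii)$. Concretely, set $\tilde\Lambda:=\overline\Lambda$, with single-site measure $\rho$ on $\Lambda$ and $\zeta$ on $\partial^{\textup{ext}}\Lambda$. Add one more ghost-type vertex $\mathfrak b$ which is identified with $\fg$. All vertices of $\partial^{\textup{ext}}\Lambda$ are wired to it through edges of infinite coupling, that is, $p=1$.

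The first step is to check this rewriting. Since $\zeta$ has all Gaussian moments, the construction of Section~\ref{sec:randomcluster} and the computations of \cite[Sections~4.2--4.3]{gunaratnam2025supercritical} apply verbatim with non-identical single-site measures. The density \eqref{eq: random cluster explicit density wired} then coincides with the free measure of the inhomogeneous model, because forcing the boundary edges open reproduces the count $k^w$. The only ingredients used in those proofs are evenness and super-Gaussian integrability of each single-site measure. For $\zeta$, which is supported on $\mathbb R^+$, one should use its symmetrisation; the sign of the boundary spins is fixed by the wiring to $\fg$ anyway. With this in place, the domain Markov property, the absolute value FKG inequality \eqref{eq: conditional abs fkg plus}, the FKG inequality \eqref{eq: fkg plus} and the monotonicity in parameters \eqref{eq: conditional stochastic domiation wired} all follow from Proposition~\ref{prop: random cluster properties} applied to this inhomogeneous free measure. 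Conditioning $\mathsf a|_A$ with $A\subset\overline\Lambda$ is allowed because $\partial^{\textup{ext}}\Lambda$ is now part of the bulk. For the domain Markov property, conditioning on the exterior of $\Lambda'$ reproduces the partition $w^\theta$ by definition.

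The comparison \eqref{eq:phi1 dominates psi0 finite volume} is the next step. By the domain Markov property, conditioning on $\mathsf a|_{\partial^{\textup{ext}}\Lambda}=\mathsf s$ turns $\Psi^1$ into $\Psi^{(w,\mathsf s)}_{\Lambda,\beta,\mathsf h}$. The monotonicity in parameters of Proposition~\ref{prop: random cluster properties}, using $(0,0)\le(w,\mathsf s)$, then gives $\Psi^0\preccurlyeq\Psi^{(w,\mathsf s)}$ for every $\mathsf s$. Integrating over $\mathsf s$ yields the claim.

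The main obstacle is the monotonicity in domain with gap \eqref{eq: monot in domain wired FK}. Here the inhomogeneous trick fails, since the boundary measure $\zeta$ sits at different places in $\Lambda_1$ and $\Lambda_2$. I would argue in two steps, taking $r$ from Proposition~\ref{prop:monotonicity in plus}.

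First, I would show that the law of $\mathsf a|_{\overline{\Lambda_1}}$ under $\Psi^1_{\Lambda_2,\beta,\mathsf h_2}$ is stochastically dominated by the law of $\mathsf a|_{\overline{\Lambda_1}}$ under $\Psi^1_{\Lambda_1,\beta,\mathsf h_1}$. For the part of $\mathsf a$ on $\Lambda_1$ this is \eqref{eq:spin-absolute value monotonicity}, since $\mathsf a$ has the law of $|\tau|$ under the plus measures. For the values on $\partial^{\textup{ext}}\Lambda_1$ I would need to revisit \cite[Proposition~6.11]{PV26}, where the domination is really built by comparing with the $\zeta$-tilted boundary; I expect that proof to cover $\overline{\Lambda_1}$ as well.

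Second, under $\Psi^1_{\Lambda_2}$, condition on everything outside $\Lambda_1$. Domain Markov gives $\Psi^{(w^\theta,\mathsf s)}_{\Lambda_1}$, whose boundary partition is at most wired. Monotonicity in parameters then gives domination by $\Psi^{(w,\mathsf s)}_{\Lambda_1,\beta,\mathsf h_1}$, using $\mathsf h_2\le\mathsf h_1$. This measure is a mixture over $\mathsf s$ of conditional laws that are increasing in $\mathsf s$. Combining this with the first step via a monotone coupling of the boundary values gives \eqref{eq: monot in domain wired FK}. The delicate point is that the mixture argument needs domination of the full joint law of $\mathsf s$, not just its marginals. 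That joint domination is what I would extract from \cite{PV26}.
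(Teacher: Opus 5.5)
Your proposal is correct, and for every item except the monotonicity in domain with gap it is the paper's argument. The paper likewise treats $\Psi^1_{\Lambda,\beta,\mathsf h}$ as a measure with inhomogeneous single-site measures ($\rho$ on $\Lambda$, $\zeta$ on $\partial^{\textup{ext}}\Lambda$). For \eqref{eq: fkg plus} it spells out quenched FKG for $\phi^w_{\Lambda,\beta,\mathsf h,\mathsf a}$ combined with the absolute-value FKG. It obtains \eqref{eq:phi1 dominates psi0 finite volume} by exactly your conditioning on the boundary absolute values. Your ``infinite coupling'' to $\fg$ should be read as the identification that defines $k^w$, not as a literal edge with $p=1$, since the weight $\sqrt{1-p}\,(p/(1-p))^{\omega}$ of such an edge degenerates.

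For the monotonicity in domain with gap you take a different route.

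\emph{The paper's route.} It conditions on the \emph{whole} absolute-value field. For $\eta_1\ge(\eta_2)|_{\overline{\Lambda_1}}$ it compares $\Psi^1_{\Lambda_1,\beta,\mathsf h_1}[\,\cdot\mid\mathsf a=\eta_1]\succcurlyeq\Psi^1_{\Lambda_2,\beta,\mathsf h_2}[\,\cdot\mid\mathsf a=\eta_2]$ using volume monotonicity of the ordinary (quenched) wired FK-Ising model. It then integrates against the domination of the law of $\mathsf a|_{\overline{\Lambda_1}}$ given by Proposition~\ref{prop:monotonicity in plus}.

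\emph{Your route.} You apply the annealed domain Markov property outside $\Lambda_1$, use $(w^\theta,\mathsf s)\le(w,\mathsf s)$, and use that $\mathsf b\mapsto\Psi^{(w,\mathsf b)}_{\Lambda_1,\beta,\mathsf h_1}[F]$ is increasing. This avoids the quenched comparison. It needs only domination of the joint law of the boundary layer $\mathsf a|_{\partial^{\textup{ext}}\Lambda_1}$; the part of your first step concerning $\Lambda_1$ is never used.

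You do not need to revisit \cite{PV26} for this input. The relation $\preccurlyeq$ in Proposition~\ref{prop:monotonicity in plus} is already a statement about joint laws. Since $\nu^+_{\Lambda_1,\beta,\mathsf h_1}$ lives on $\mathbb R^{\overline{\Lambda_1}}$, that proposition compares laws on $\overline{\Lambda_1}$, boundary layer included. This is exactly how the paper's own proof uses it.
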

\begin{proof}
The proof of the domain Markov property does not depend on the choice of single-site measures satisfying assumption $(ii)$ of Definition~\ref{def:single site}. Similarly, the proof of the absolute value FKG inequality for the random cluster measure with deterministic boundary conditions \eqref{eq: conditional abs fkg} also applies for any collection of single-site measures satisfying the assumption $(ii)$ of Definition~\ref{def:single site}. Hence, the same argument applies to establish \eqref{eq: conditional abs fkg plus}.

We now turn to \eqref{eq: fkg plus}. By \eqref{eq: psi-phi-mu plus}, we may condition on absolute value fields $\mathsf a$ satisfying $\mathsf a|_A=\eta$, and use the conditional FKG inequality for deterministic boundary conditions to obtain
\begin{equation}
    \Psi^1_{\Lambda,\beta,\mathsf h}[F \cdot G \mid \mathsf a] = \phi^w_{\Lambda, \beta, \mathsf h, \mathsf a}[ F(\cdot, \mathsf a) G(\cdot, \mathsf a) ] \geq \phi^w_{\Lambda, \beta, \mathsf h, \mathsf a}[F(\cdot,\mathsf a)] \phi^w_{\Lambda, \beta, \mathsf h, \mathsf a}[G(\cdot,\mathsf a)]. 
\end{equation}
The function $\mathsf a \mapsto \phi^w_{\Lambda, \beta, \mathsf h, \mathsf a}[F(\cdot,\mathsf a)]$ is increasing thanks to monotonicity in the coupling for deterministic random cluster measures and the fact that $F$ is jointly increasing in $(\omega, \mathsf a)$. We then obtain \eqref{eq: fkg plus} by averaging with respect to $\mu^1_{\Lambda,\beta, \mathsf h}[\:\cdot \mid \mathsf a|_A=\eta]$ and using the conditional absolute value FKG \eqref{eq: conditional abs fkg plus}. 

The stochastic domination of the free by the wired measure \eqref{eq:phi1 dominates psi0 finite volume} follows from \eqref{eq: conditional stochastic domiation} thanks to the domain Markov property, where we condition on the partition and absolute value in $\overline{\Lambda}\setminus \Lambda$.  

For \eqref{eq: conditional stochastic domiation wired}, note that \eqref{eq: conditional stochastic domiation} applies to measures with inhomogeneous single-site measures, in particular to the wired measure.

Finally, for \eqref{eq: monot in domain wired FK} we argue as follows. For any increasing $\overline \Lambda_1$-measurable function $F$, and any $\eta_1\in (\mathbb R^+)^{\overline \Lambda_1}$, $\eta_2\in (\mathbb R^+)^{\overline \Lambda_2}$ with $\eta_1\geq (\eta_2)_{|\overline\Lambda_1}$,
\begin{equation}
    \Psi^1_{\Lambda_1,\beta,\mathsf{h}_1}[F\: | \: \mathsf{a}=\eta_1]\geq \Psi^1_{\Lambda_1,\beta,(\mathsf{h}_2)_{|\Lambda_1}}[F\: | \: \mathsf{a}=\eta_1]\geq \Psi^1_{\Lambda_2,\beta,\mathsf{h}_2}[F\: | \:\mathsf{a}=\eta_2],
\end{equation}
where, in the second inequality, we used the monotonicity in the volume of the standard random cluster measure with wired boundary conditions. Now, integrating and using the stochastic monotonicity of the absolute value field of Proposition \ref{prop:monotonicity in plus} (recall that the law of $\mathsf{a}$ under $\Psi^1$ is exactly $\mu^1$ as seen in Section \ref{subsubsection: random cluster}) we get
\begin{align}
\begin{aligned}
     \Psi^1_{\Lambda_1,\beta,\mathsf{h}_1}[F]=\Psi^1_{\Lambda_1,\beta,\mathsf{h}_1}\Big[\Psi^1_{\Lambda_1,\beta,\mathsf{h}_1}[F\: | \:\mathsf{a}=\eta]\Big]&\geq \Psi^1_{\Lambda_1,\beta,\mathsf{h}_1}\Big[\Psi^1_{\Lambda_2,\beta,\mathsf{h}_2}[F\: | \:\mathsf{a}_{|\Lambda_1}=\eta]\Big]\\&\geq \Psi^1_{\Lambda_2,\beta,\mathsf{h}_2}\Big[\Psi^1_{\Lambda_2,\beta,\mathsf{h}_2}[F\: | \:\mathsf{a}_{|\Lambda_1}=\eta]\Big]  = \Psi^1_{\Lambda_2,\beta,\mathsf{h}_2}[F],
\end{aligned}
\end{align}
as desired.
\end{proof}

\subsubsection{Infinite volume measures}

We briefly turn to infinite volume random cluster measures. For compactness, let us assume that $\mathsf{h}\equiv 0$. There exist probability measures $\Psi^0_{\beta}$ and $\Psi^1_\beta$, respectively called the infinite volume free and wired random cluster measures, such that the following convergence holds: for every local event $A \subset \{0,1\}^{E(\mathbb Z^d)}\times (\mathbb R^+)^{\mathbb Z^d}$,
\begin{equation}
\lim_{\Lambda \nearrow \mathbb Z^d} \Psi^0_{\Lambda,\beta}[A] = \Psi^0_\beta[A], \quad \lim_{\Lambda\nearrow \mathbb Z^d} \Psi^1_{\Lambda,\beta}[A] = \Psi^1_{\beta}[A]. 
\end{equation}
This convergence is a consequence of the convergence of the spin measures $\nu^0_{\Lambda,\beta} \rightarrow \nu^0_\beta$ and $\nu^+_{\Lambda,\beta} \rightarrow \nu^+_\beta$, and the Edwards--Sokal coupling. As a result, we also have analogous couplings between $\Psi^0_\beta$ and $\nu^0_\beta$, and $\Psi^1_\beta$ and $\nu^+_\beta$, respectively. Moreover, thanks to \eqref{eq:phi1 dominates psi0 finite volume}, we obtain
\begin{equation}\label{eq:psi1 dominates psi0}
    \Psi^0_{\beta}\preccurlyeq \Psi^{1}_{\beta}.
\end{equation}

\subsubsection{Finite energy and sprinkling property}

The random cluster measures we consider have a finite-energy property. Recall that the edge-marginal of $\Psi^0_{\Lambda,\beta}$ is denoted $\Phi^0_{\Lambda,\beta}$.
\begin{Prop}[Finite-energy property] \label{prop:finite-energy}
Let $\beta > 0$. There exists $\varepsilon > 0$ such that the following holds. Let $\Lambda \subset \mathbb Z^d$ and $e=xy \in E(\Lambda)$. For every $\omega' \in \{0,1\}^{\overline E(\Lambda)\setminus \{e\}}$,
\begin{equation} \label{eq: finite energy phi}
\varepsilon \leq \Phi^0_{\Lambda,\beta}[\omega_{xy}=1 \mid \omega|_{\overline E(\Lambda)\setminus \{xy\}} = \omega']\leq 1-\varepsilon.
\end{equation}
\end{Prop}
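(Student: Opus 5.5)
The plan is to fix $\omega'$ and write the odds ratio of opening $e=xy$ as an annealed average, then bound that average above and below uniformly in $\Lambda$, $e$ and $\omega'$. Let $\delta(\omega',\mathsf a)\in\{0,1\}$ be the indicator that $x$ and $y$ are not already connected in $\omega'$ (it depends on $\mathsf a$ only through which vertices carry $\mathsf a=0$). Comparing the integrands of \eqref{eq: random cluster explicit density} for $\omega'\cup\{e\}$ and for $\omega'$ at the same $\mathsf a$, the only changes are the factor $p/(1-p)=e^{2\beta\mathsf a_x\mathsf a_y}-1$ and the factor $2^{-\delta}$ coming from $k$. This gives
\begin{equation*}
\frac{\Phi^0_{\Lambda,\beta}[\omega_e=1,\ \omega'] }{\Phi^0_{\Lambda,\beta}[\omega_e=0,\ \omega']}
=\Psi^0_{\Lambda,\beta}\Big[\big(e^{2\beta \mathsf a_x\mathsf a_y}-1\big)2^{-\delta(\omega',\mathsf a)}\ \Big|\ \omega_e=0,\ \omega|_{\overline E(\Lambda)\setminus\{e\}}=\omega'\Big].
\end{equation*}
Hence \eqref{eq: finite energy phi} reduces to showing that this conditional expectation lies in $[c,C]$ for constants $c,C>0$ depending only on $\beta$ and $\rho$. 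We then take $\varepsilon=\min(c,1/C)/(1+\max(C,1/c))$.

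Upper bound. Since $2^{-\delta}\le1$ and $e^{2\beta\mathsf a_x\mathsf a_y}\le e^{\beta(\mathsf a_x^2+\mathsf a_y^2)}$, it suffices to bound $\mathbb E[e^{\beta\mathsf a_x^2}e^{\beta\mathsf a_y^2}]$ under the measure $\mathbb P:=\Psi^0_{\Lambda,\beta}[\,\cdot\mid\omega_e=0,\omega']$. The plan is a \emph{regularity estimate conditioned on the percolation configuration}. Given $\omega$, the law of $\mathsf a$ is a Gibbs measure on $(\mathbb R^+)^{\overline\Lambda}$ with single-site measure $\rho$ and pair weights that depend only on $\omega$:
\begin{itemize}
\item a closed edge $zw$ carries the weight $e^{-\beta\mathsf a_z\mathsf a_w}\le 1$;
\item an open edge $zw$ carries the weight $2\sinh(\beta\mathsf a_z\mathsf a_w)\le e^{\beta\mathsf a_z\mathsf a_w}$;
\item there is a hard constraint $\mathsf a_z>0$ at every endpoint of an open edge, and the $2^k$ factor contributes one factor $2$ per isolated vertex with $\mathsf a\ne0$.
\end{itemize}
All of these weights are bounded above by those of a ferromagnetic spin system on the $\omega$-clusters with couplings at most $\beta$. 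Following the Lebowitz--Presutti/Ruelle superstability argument behind Proposition~\ref{prop: existence of plus state extremal}, applied to this measure, the super-Gaussian assumption $(ii)$ of Definition~\ref{def:single site} will be used to absorb the interaction via $\mathsf a_z\mathsf a_w\le (K\mathsf a_z^2+\mathsf a_w^2/K)/2$. The goal is an $(A,B)$-regularity bound for the two-site marginal at $\{x,y\}$ that is uniform in $\omega$ (degrees are at most $2d$). The delicate point is that the neighbours' absolute values are unbounded, so a naive single-site conditioning is \emph{not} uniform. For this reason the estimate must be run at the annealed level, for instance by an induction or Ruelle-type probability estimate on the sum of $\mathsf a_z^2$ over growing boxes, exactly as in the regularity proofs cited from \cite{LebowitzPresutti1976,PV26}.

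Lower bound. Since $2^{-\delta}\ge1/2$, it suffices to show that $\mathbb P[\mathsf a_x\mathsf a_y\ge s^2]\ge c'$ for some $s>0$ in the support of $\rho$ away from $0$, which exists by assumption $(iii)$. The intended route is to compare with a configuration where $(\mathsf a_x,\mathsf a_y)$ is resampled from $\rho\otimes\rho$ restricted to $[s,t]^2$. When only $\mathsf a_x,\mathsf a_y$ change, the Radon--Nikodym factor is a product over at most $4d$ incident edges of the weights listed above, evaluated at the neighbours' values. On the event that these neighbours' values are bounded by some $M$, the factor is bounded above and below, and this event has conditional probability at least $1/2$ once $M$ is chosen large, by the regularity bound just obtained. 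One must also handle the possibility that $x$ or $y$ is forced to have $\mathsf a=0$; this cannot happen for an endpoint of an open edge of $\omega'$. If $\mathsf a_x=0$ is possible, reweighting it by a value in $[s,t]$ changes $k$ by at most a bounded amount, which costs at most a factor $2$.

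The main obstacle is the upper bound: proving regularity of the absolute value field uniformly in the conditioning on the full edge configuration $\omega'$. The hyperbolic-sine weights on open edges push the values up, so a direct single-site argument fails and a genuinely collective superstability estimate is needed. The first thing I would try is to dominate the $\omega$-conditional law of $\mathsf a$ by the absolute value field of an inhomogeneous free spin measure with couplings $J_{zw}=\mathbbm 1\{\omega_{zw}=1\}\le1$, using the monotonicity in $J$ from Proposition~\ref{prop:stoc_monotonicity}, and then to import its $(a_0,b_0)$-regularity from Proposition~\ref{prop: existence of plus state extremal}.
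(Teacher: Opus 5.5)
Your reduction is sound, and it rests on the same key input as the paper: a bound on the density of $\mathsf a$ near $\{x,y\}$, conditional on the \emph{full} edge configuration and uniform in it. That is exactly Lemma~\ref{lem:conditional regularity} (with Remark~\ref{remark: conditional reg phi}). The paper imports it from \cite{PV26} rather than proving it, so the step you call the main obstacle should simply be cited.

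The local argument differs from the paper's. The paper also conditions on $\mathsf a$ off $\{x,y\}$. Via the domain Markov property, it writes the conditional probability as a positive continuous function $c_1(\omega',\mathsf a')$ of the neighbouring values (and of one of finitely many induced partitions). It then bounds this below on the compact set $\mathcal A(\mathcal V,\delta)$, whose conditional probability is bounded below by conditional regularity; deletion tolerance is left as ``similar''. Your annealed odds-ratio identity is correct and makes deletion tolerance immediate: the density bound gives $\Psi^0_{\Lambda,\beta}[e^{2\beta\mathsf a_x\mathsf a_y}\mid\omega]\le e^{2B}\bigl(\int e^{(A+\beta)t^2}\,\mathrm d\rho(t)\bigr)^2$, with no compactness argument. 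Your insertion-tolerance step is essentially the paper's argument done by hand.

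Two corrections are needed.

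First, the domination you propose at the end goes the wrong way. Proposition~\ref{prop:stoc_monotonicity} is about spin measures, not $\omega$-conditional laws. Moreover, by the joint FKG inequality of Proposition~\ref{prop: random cluster properties}, the law of $\mathsf a$ under $\Psi^0_{\Lambda,\beta}[\,\cdot\mid\omega\equiv1\text{ on }E(\Lambda)]$ stochastically \emph{dominates} the $\mathsf a$-marginal of $\Psi^0_{\Lambda,\beta}$, which is the law of $|\tau|$ for the spin model with $J\equiv1$. Already on a single edge the density ratio is proportional to $\tanh(\beta\mathsf a_x\mathsf a_y)$, which is increasing. The uniform estimate really needs the superstability argument of \cite{PV26}.

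Second, in the lower bound it is not true that the reweighting factor is bounded above and below as soon as the neighbours' values are at most $M$. For an open edge $xu$ of $\omega'$, the weight $2\sinh(\beta\mathsf a_x\mathsf a_u)$ degenerates as $\mathsf a_u\to0$. There are two ways to fix this:
\begin{enumerate}
\item Also impose $\mathsf a_u\ge\delta$ for such $u$. This has conditional probability close to $1$ by the same density bound, because $\rho(\{0<|t|<\delta\})\to0$; it is the role of $\mathcal A(\mathcal V,\delta)$ in the paper.
\item Work with the normalised two-site conditional density and use $\beta\mathsf a_x\mathsf a_u\le\sinh(\beta\mathsf a_x\mathsf a_u)\le\beta\mathsf a_x\mathsf a_u e^{\beta\mathsf a_x\mathsf a_u}$, so that the factors $\mathsf a_u$ cancel between numerator and denominator.
\end{enumerate}
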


\begin{proof}
    We prove insertion-tolerance, corresponding to the first inequality  in \eqref{eq: finite energy phi}. The proof of deletion-tolerance, the second inequality in \eqref{eq: finite energy phi}, follows by similar arguments. Let us fix $\Lambda \subset \mathbb Z^d$ and fix $xy \in E(\Lambda)$. For convenience, let us write $\mathcal N(x,y):= \{ u \in \Lambda \setminus \{x,y\} : u \sim x \text{ or } u \sim y \}$. 
    
    Let $\omega' \in \{0,1\}^{\overline E(\Lambda)\setminus \{xy\}}$ and let $\mathsf a' \in (\mathbb R^+)^{\Lambda\setminus\{x,y\}}$ be compatible with $\omega'$ in the sense that $\omega'_{uv}=1$ implies that $\mathsf a'_u\mathsf a'_v > 0$. By the domain Markov property, 
        \begin{equation}
        \Psi^0_{\Lambda,\beta}[\omega_{xy}=1 \mid \mathsf a|_{\Lambda \setminus \{x,y\}} = \mathsf a', \omega|_{\overline E(\Lambda)\setminus \{xy\}}=\omega']
        =\Psi_{\{x,y\},\beta}^{(\xi(\omega'),\mathsf{a}'_{|\partial^{\textup{ext}}\{x,y\}})}[\omega_{xy}=1 \mid \mathcal{E}(\omega')]=:c_1(\omega',\mathsf a'),
    \end{equation}
    where $\xi(\omega')$ denotes the partition of $\partial^{\textup{ext}}\{x,y\}$ induced by $\omega'$, and where
    \begin{equation}
        \mathcal{E}(\omega')=\{\omega_e=\omega_e', \: \forall e\in \partial_E \{x,y\}\}.
    \end{equation}
    We note that $c_1(\omega',\mathsf a')>0$ since $\rho(\mathbb R\setminus \{0\})>0$, and that $c_1(\omega',\cdot)$ is defined on 
    \[\bigtimes_{\mathcal{V}(\omega';x,y)} (0,\infty)\bigtimes_{\mathcal{N}(x,y)\setminus \mathcal{V}(\omega';x,y)}[0,\infty),\] 
    where $\mathcal V(\omega';x,y) := \{ u \in \mathcal N(x,y) : \exists u'\sim u : \omega_{uu'}'=1 \}$ is the set of vertices $u$ neighbouring $x$ and $y$ that must have $\mathsf a'_u>0$. Let us consider any possible realisation of this set, $\mathcal V$. For $\delta > 0$, consider the event $\mathcal A(\mathcal V, \delta)$ such that $\mathsf a_u' \in [\delta,1/\delta]$ for every $u \in \mathcal V$ and $\mathsf a'_u \in [0,1/\delta]$ for every $u \in \mathcal N(x,y)\setminus \mathcal V$. By conditional regularity (see Lemma \ref{lem:conditional regularity} and Remark \ref{remark: conditional reg phi} below), there exists $c_2(\mathcal V)>0$ such that, for every $\delta=\delta(\mathcal V) > 0$ sufficiently small and for every $\omega'$ such that $\mathcal V(\omega';x,y)=\mathcal V$,
    \begin{equation}
        \Psi^0_{\Lambda,\beta}\Big[ \mathcal A(\mathcal V, \delta) \Big| \:\omega|_{\overline E(\Lambda)\setminus \{xy\}} = \omega'\Big] > c_2(\mathcal V).
    \end{equation}
    Note that we can take $\delta$ and $c_2$ uniform over $\mathcal V$ since there are only finitely many possibilities. Furthermore, since the function $c_1(\omega',\cdot)$ is continuous on the compact set $\mathcal A(\mathcal V(\omega';x,y),\delta)$ and there are finitely many possibilities for $\xi(\omega')$, we have that it is bounded from below by some $c_1(\mathcal V)>0$. This gives the first inequality in \eqref{eq: finite energy phi} and concludes the proof.
    \end{proof}

\begin{Rem}
Recall from Remark \ref{rem:plus} that the wired measure is a free measure with inhomogeneous single-site measure at the boundary. Therefore, the finite-energy property holds for the edge-marginal of $\Psi^1_{\Lambda,\beta}$. It also applies to the half-space measures considered in Section \ref{sec:Gibbs_FK}.
\end{Rem}

The proof of Proposition \ref{prop:finite-energy} relies on a conditional regularity statement for the measures $\Phi^0_{\Lambda,\beta}$. We state a slightly stronger conditional regularity statement that holds for a measure with sprinkling. It is natural to view  sprinkled measures, obtained by considering a union of two percolations, on a multigraph where each edge is duplicated. More precisely, consider the multigraph $\Lambda[\beta,\beta']$ with vertex set $\Lambda$ and edge set $\{ xy(\beta-\beta') : xy \in E(\Lambda)\} \sqcup \{xy(\beta') : xy \in E(\Lambda) \}$.

We consider\footnote{Note that the analogous measure considered in \cite{gunaratnam2025supercritical} was denoted $\Psi^0_{\Lambda,\beta,\beta'}$.} a probability measure $\Upsilon^0_{\Lambda,\beta,\beta'}$ on $\{0,1\}^{E(\Lambda[\beta,\beta'])}\times (\mathbb R^+)^\Lambda $ defined by
\begin{equation}
\mathrm{d}\Upsilon^0_{\Lambda,\beta,\beta'}[(\omega,\mathsf a)] =  \gamma_{\Lambda,\beta,\beta',\mathsf a}^{0}[\omega] \mathrm{d}\mu^0_{\Lambda,\beta'}[\mathsf a].
\end{equation}
Above, $\gamma_{\Lambda,\beta,\beta',\mathsf a}^0$ is a percolation measure on $\Lambda[\beta,\beta']$, where the $xy(\beta')$ edges are sampled according to $\phi^0_{\Lambda,\beta',\mathsf{a}}$ and the $xy(\beta-\beta')$ edges are sampled according to an independent inhomogeneous Bernoulli percolation $\textup{Ber}(\varepsilon(\beta,\beta', \mathsf a))$ with edge-parameters
\begin{equation}
\varepsilon(\beta,\beta',\mathsf a)_{xy} = \tanh ((\beta -\beta')\mathsf a_x\mathsf a_y).
\end{equation}

\begin{Lem}[{\hspace{1pt}\cite[Theorem~4.3, Remark 4.4]{PV26}}]\label{lem:conditional regularity}
Let $0\leq\beta'\leq\beta$. There exist $A,B>0$ such that, for every $\Lambda' \subset \Lambda$ and for every $\omega$, the restriction of $\Upsilon^0_{\Lambda,\beta,\beta'}$ to $\Lambda'[\beta,\beta']$, which we denote $\Upsilon_{\Lambda,\beta,\beta'}^{0, (\Lambda')}$, has a density with respect to $\prod_{x \in \Lambda'} \mathrm{d}\rho(\mathsf a_x)$ satisfying
 \begin{equation}
\frac{\mathrm{d}\Upsilon_{\Lambda,\beta,\beta'}^{0, (\Lambda')}[ \mathsf a \mid \omega]}{\prod_{x\in \Lambda'}\mathrm{d}\rho(\mathsf a_x)} \leq \prod_{x \in \Lambda'}e^{A \mathsf a_x^2 + B} .
\end{equation}
\end{Lem}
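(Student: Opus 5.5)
The plan is to follow the Ruelle / Lebowitz--Presutti strategy for regularity, which is the route of \cite{PV26}. First reduce to single sites, then control the interaction with the surrounding absolute values using Gaussian tails that are themselves obtained inductively.

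\textbf{Step 1: explicit joint density.} Fix $\omega\in\{0,1\}^{E(\Lambda[\beta,\beta'])}$ and write down the joint density of $(\omega,\mathsf a)$ under $\Upsilon^0_{\Lambda,\beta,\beta'}$. It is $\prod_x \mathrm d\rho(\mathsf a_x)$ times, for each edge $xy$:
\begin{enumerate}
\item[$\bullet$] the $\beta'$-copy factor, which is $e^{-\beta'\mathsf a_x\mathsf a_y}$ if the edge is closed and $e^{\beta'\mathsf a_x\mathsf a_y}-e^{-\beta'\mathsf a_x\mathsf a_y}$ if it is open;
\item[$\bullet$] the Bernoulli factor $\varepsilon_{xy}^{\omega}(1-\varepsilon_{xy})^{1-\omega}$ for the $(\beta-\beta')$-copy;
\item[$\bullet$] the global factor $2^{k(\omega,\mathsf a)}$.
\end{enumerate}
Here $\varepsilon_{xy}=\tanh((\beta-\beta')\mathsf a_x\mathsf a_y)\le 1$. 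Note that $k$ depends on $\mathsf a$ only through which vertices are zero, and this is frozen on clusters of size at least $2$.

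Conditioning additionally on $\mathsf a|_{\Lambda\setminus\Lambda'}$, the conditional density of $\mathsf a|_{\Lambda'}$ with respect to $\prod_{x\in\Lambda'}\mathrm d\rho(\mathsf a_x)$ is a ratio $W(\mathsf a)/\int W$. The numerator is easy: every factor is at most $e^{\beta'\mathsf a_x\mathsf a_y}\cdot 2^{|\Lambda'|}$, and
\begin{equation*}
e^{\beta'\mathsf a_x\mathsf a_y}\le e^{\beta'(\mathsf a_x^2+\mathsf a_y^2)/2}.
\end{equation*}
The whole difficulty is a lower bound on the normaliser $\int W$, which must be uniform in $\omega$ and in the outside field.

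\textbf{Step 2: single-site conditional estimate.} For $\Lambda'=\{x\}$, I will bound $\int W$ from below by restricting $\mathsf a_x$ to a window $[\delta,1/\delta]$, which has positive $\rho$-mass because $\rho(\mathbb R\setminus\{0\})>0$. On this window:
\begin{enumerate}
\item[$\bullet$] Open-edge factors are bounded below by $c\,\mathsf a_y\wedge 1$.
\item[$\bullet$] Closed-edge factors are bounded below by $e^{-C\mathsf a_y}$.
\end{enumerate}
This gives a one-site bound of the form
\begin{equation*}
\frac{\mathrm d\Upsilon[\mathsf a_x\mid\cdot]}{\mathrm d\rho}\le \exp\Big(A_0\mathsf a_x^2+C\sum_{y\sim x}(\mathsf a_y\mathsf a_x+\mathsf a_y)+B_0\Big),
\end{equation*}
up to inverse powers of the factors $\mathsf a_y\wedge1$ at open neighbours. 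These inverse powers are harmless, since they are bounded by $e^{C/\mathsf a_y}$ only where $\mathsf a_y$ is small, and there they are absorbed once $\mathsf a_y$ is averaged. Using $\mathsf a_x\mathsf a_y\le \eta\mathsf a_x^2+\eta^{-1}\mathsf a_y^2$, the problem reduces to showing that neighbouring absolute values have conditional Gaussian moments of \emph{every} order. The super-Gaussian assumption (Definition~\ref{def:single site}(ii)) is exactly what allows the Gaussian price $\eta^{-1}\mathsf a_y^2$ to be paid.

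\textbf{Step 3 (main obstacle): closing the bootstrap for general $\Lambda'$.} Outside values $\mathsf a_y$ may be arbitrarily large, and the conditional law given $\omega$ is not obviously FKG-comparable to a spin model. The open-edge factor $1-e^{-2\beta'\mathsf a_x\mathsf a_y}$ is increasing while the closed factor is decreasing. I would try two approaches, in this order.
\begin{enumerate}
\item[$\bullet$] \emph{Domination argument (tried first).} Integrate out $\omega$ and the $\mathsf a$ outside $\Lambda'$ against the annealed measure, after dominating the absolute values, in the sense of Gaussian moments, by those of the free spin measure at parameter $2\beta$. The latter is $(a_0,b_0)$-regular by Proposition~\ref{prop: existence of plus state extremal}(i). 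The point is that the $\omega$-weights are bounded by $e^{\beta'\mathsf a_x\mathsf a_y}$ up to factors in $[0,1]$ whose effect only lowers the mass of large values.
\item[$\bullet$] \emph{Ruelle-type superstability iteration (if the first fails).} Order the sites of $\Lambda'$, apply Step~2 successively, and use the resulting bounds on each already-integrated site to control the cross terms at the next. Sites on the interior and exterior boundary of $\Lambda'$ are treated separately, via the Lebowitz--Presutti ``large field'' decomposition: regions where $\mathsf a$ exceeds $\log$-scale thresholds are exponentially penalised by the super-Gaussian tail of $\rho$.
\end{enumerate}
Either way, the constants $A,B$ depend only on $\beta,\beta',d,\rho$, and not on $\omega$, $\Lambda$ or $\Lambda'$, as the statement requires. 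I expect the uniformity of this bootstrap in $\omega$ to be the delicate point.
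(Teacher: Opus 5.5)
Your Steps 1--2 (the explicit density of $\mathsf a$ given $\omega$, and a one-site bound given $\omega$ and the neighbouring values) are essentially fine. But the whole content of the lemma is the uniformity in $\Lambda$, $\Lambda'$ and $\omega$, i.e.\ your Step~3, and there you list two strategies without carrying out either. The first one does not work, for two reasons.

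The heuristic that the $\omega$-dependent weights are at most $e^{\beta'\mathsf a_x\mathsf a_y}$ ``up to factors in $[0,1]$ whose effect only lowers the mass of large values'' is false after normalisation. Relative to $e^{\beta'\mathsf a_x\mathsf a_y}$, an edge open in the $\beta'$-copy carries the \emph{increasing} factor $1-e^{-2\beta'\mathsf a_x\mathsf a_y}$, and an open sprinkled edge carries the increasing factor $\tanh((\beta-\beta')\mathsf a_x\mathsf a_y)$. Once you renormalise, these tilt the conditional law upwards. An edge closed in the $\beta'$-copy carries the decreasing, submodular factor $e^{-\beta'\mathsf a_x\mathsf a_y}$. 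So the law of $\mathsf a$ given $\omega$ does not satisfy the FKG lattice condition, and there is no stochastic comparison with a spin measure at $2\beta$ or at any other parameter.

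Nor can you pass through the annealed measure. One has $\mathrm d\Upsilon^0_{\Lambda,\beta,\beta'}[\mathsf a\mid\omega]/\mathrm d\mu^0_{\Lambda,\beta'}[\mathsf a]=\gamma^0_{\Lambda,\beta,\beta',\mathsf a}[\omega]/\Upsilon^0_{\Lambda,\beta,\beta'}[\omega]$. The denominator can be exponentially small in $|\Lambda|$, so regularity of $\mu^0_{\Lambda,\beta'}$ gives nothing uniform.

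The second route is only named, and the naive form you describe does not close (applying the one-site bound successively and feeding back bounds already obtained). Controlling the cross terms $\beta'\mathsf a_x\mathsf a_y$ at the sites of $\Lambda'$ costs high Gaussian moments of $\mathsf a$ jointly on $\partial^{\textup{ext}}\Lambda'$, a set of size up to $2d|\Lambda'|$. This is a statement of the same type as the lemma.
\begin{itemize}
\item If you feed in a bound $\prod e^{A\mathsf a^2+B}$ there, the per-site constant you get back on $\Lambda'$ is, up to additive terms, $2d$ times $B$. So the induction does not reproduce its constants.
\item If instead you bound the boundary moments by moving outward layer by layer, each layer's normaliser bound costs a factor bounded away from $1$ per site, over an ever-growing region.
\end{itemize}
What is missing is an estimate in which the interaction with the exterior is paid only once and is beaten by the super-Gaussian self-energy of the region where the field is large, a volume-versus-surface comparison. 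Concretely, this is a Ruelle / Lebowitz--Presutti \cite{LebowitzPresutti1976} type superstability estimate for the nearest-neighbour Markov field of $\mathsf a$ given $\omega$ (whose edge weights are bounded above by $e^{\beta'\mathsf a_x\mathsf a_y}$), with constants independent of $\omega$. The paper does not prove the lemma either: it is quoted from \cite{PV26} (Theorem 4.3 and Remark 4.4), and that uniform estimate is exactly what is imported.

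A smaller point concerns the inverse powers of $\mathsf a_y\wedge 1$ in your Step~2. They are not ``absorbed once $\mathsf a_y$ is averaged'', since negative moments of $\mathsf a_y$ need not be finite. They are, however, an artefact. For $u\ge v>0$ one has $\sinh u/\sinh v\le (u/v)e^{u-v}$ and $\tanh u/\tanh v\le u/v$. Hence comparing $\mathsf a_x$ with $\mathsf a_x'\in[\delta,1/\delta]$ at fixed $\mathsf a_y$ makes $\mathsf a_y$ cancel.
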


\begin{Rem} \label{remark: conditional reg phi}
The edge marginal  $\Phi^0_{\Lambda,\beta}$ can be realised as the projection of a random cluster measure on $\Lambda[\beta,\beta']$ with parameters $p(\beta-\beta',\mathsf a)_{xy}$ on $xy(\beta-\beta')$ and $p(\beta', \mathsf a)_{xy}$ on $xy(\beta')$. By projection, we stress that we mean union of the percolation configurations on the duplicated edges. This coincides with the edge marginal of $\Upsilon^0_{\Lambda,\beta,\beta'}$ when $\beta=\beta'$. Hence, we obtain a conditional regularity statement for the full random cluster measure $\Psi^0_{\Lambda,\beta}$ from the conditional regularity statement for $\Upsilon^0_{\Lambda,\beta,\beta}$. 
\end{Rem}

We now turn to a comparison between sprinkled random cluster measures and the original random cluster measure with a slightly modified parameter. We recall from the Introduction that this is a crucial step in deducing Theorem~\ref{thm:main} from Theorem \ref{thm:sharpness}. Let us write ${\rm Ber_\Lambda}(\varepsilon)$ to denote Bernoulli bond percolation of parameter $\varepsilon$.  
\begin{Prop}\label{prop:sprinkling}
Let $0<\beta'<\beta$. There exists $\varepsilon=\varepsilon(\beta,\beta')\in (0,1)$ such that for every $\Lambda\subset \mathbb{Z}^d$, 
\begin{equation}\label{eq:sprinkling}
\Phi_{\Lambda,\beta'}^0 \sqcup \textup{ Ber}_{\Lambda}(\varepsilon) \preccurlyeq \Phi_{\Lambda,\beta}^0.
\end{equation}
\end{Prop}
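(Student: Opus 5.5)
We follow the standard sprinkling argument for FK-type measures (cf.\ \cite[Proposition~4.?]{gunaratnam2025supercritical}), working through the doubled multigraph $\Lambda[\beta,\beta']$ introduced above. The plan has three steps: a quenched comparison at fixed environment, an annealed comparison of environments, and a uniform lower bound on the sprinkled edge probabilities.

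\emph{Step 1 (quenched comparison).} Fix an environment $\mathsf a$. The edge marginal $\Phi^0_{\Lambda,\beta}$, conditionally on $\mathsf a$, is the projection (union over duplicated edges) of an inhomogeneous FK-Ising measure on $\Lambda[\beta,\beta']$. In that measure the $xy(\beta')$ edges carry parameter $p(\beta',\mathsf a)_{xy}$ and the $xy(\beta-\beta')$ edges carry $p(\beta-\beta',\mathsf a)_{xy}$, and the identity $(1-p(\beta',\mathsf a))(1-p(\beta-\beta',\mathsf a))=1-p(\beta,\mathsf a)$ makes the projection exact. The classical comparison for random cluster models with $q=2$ then shows that this measure dominates $\phi^0_{\Lambda,\beta',\mathsf a}$ on the $\beta'$-copies together with an independent Bernoulli percolation on the other copies. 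The Bernoulli parameter is the worst-case conditional probability of opening an edge, namely $p/(p+2(1-p))$ with $p=p(\beta-\beta',\mathsf a)_{xy}$, which equals $\tanh((\beta-\beta')\mathsf a_x\mathsf a_y)=\varepsilon(\beta,\beta',\mathsf a)_{xy}$. This is exactly the quenched part of $\Upsilon^0_{\Lambda,\beta,\beta'}$.

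\emph{Step 2 (annealed comparison).} We must also compare the environments. The law of $\mathsf a$ under $\Psi^0_{\Lambda,\beta}$ is $\mu^0_{\Lambda,\beta}$, whereas $\Upsilon^0_{\Lambda,\beta,\beta'}$ uses $\mu^0_{\Lambda,\beta'}$. By Proposition~\ref{prop:stoc_monotonicity} (with $J_1=\beta'$, $J_2=\beta$), $\mu^0_{\Lambda,\beta'}\preccurlyeq\mu^0_{\Lambda,\beta}$. The quenched measures in Step~1 are increasing in $\mathsf a$, so combining this with the absolute value FKG inequality and the joint monotonicity from Proposition~\ref{prop: random cluster properties} gives
\[
\Upsilon^0_{\Lambda,\beta,\beta'}\ (\text{projected})\preccurlyeq\Phi^0_{\Lambda,\beta}.
\]

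\emph{Step 3 (uniform Bernoulli parameter; main obstacle).} It remains to show that the projection of $\Upsilon^0_{\Lambda,\beta,\beta'}$ dominates $\Phi^0_{\Lambda,\beta'}\sqcup\mathrm{Ber}_\Lambda(\varepsilon)$ with a deterministic $\varepsilon>0$. The difficulty is that $\tanh((\beta-\beta')\mathsf a_x\mathsf a_y)$ can be arbitrarily small when $\mathsf a$ is small. We therefore seek a bound of the form
\[
\Upsilon^0_{\Lambda,\beta,\beta'}\big[\omega_{xy(\beta-\beta')}=1 \,\big|\, \text{all other edges},\ \text{the $\beta'$-configuration}\big]\ge \varepsilon
\]
that is uniform in the conditioning. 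This follows if, conditionally on the $\omega$'s, $\mathsf a_x$ and $\mathsf a_y$ are both at least $\delta$ with probability bounded below; it is here that the non-triviality of $\rho$ is used. To obtain such a bound, we will use Lemma~\ref{lem:conditional regularity}, which gives a conditional density bound for $\Upsilon^0$ uniform in $\omega$ and in the outer environment. Combined with a matching lower bound on the density of $(\mathsf a_x,\mathsf a_y)$ on a compact set $[\delta,1/\delta]^2$, argued as in the finite-energy proof (Proposition~\ref{prop:finite-energy}) by continuity and compactness, this bounds the conditional probability from below uniformly.

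One subtlety must be handled carefully. Conditioning on the sprinkled edges affects the environment, so the Bernoulli edges are not independent of $\mathsf a$ after averaging. We handle this with a Holley-type criterion: the conditional probability of each sprinkled edge, given everything else, is at least $\varepsilon$, and the $\beta'$-configuration's conditional law is unaffected. Since Lemma~\ref{lem:conditional regularity} is conditional on the full $\omega$ on the doubled graph, this gives a uniform $\varepsilon$, and sequential coupling then yields \eqref{eq:sprinkling}.
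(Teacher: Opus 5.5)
Your proposal is correct and follows the paper's proof. First you show that $\Phi^0_{\Lambda,\beta}$ dominates the projected edge marginal of $\Upsilon^0_{\Lambda,\beta,\beta'}$, using $\mu^0_{\Lambda,\beta'}\preccurlyeq\mu^0_{\Lambda,\beta}$ and the monotonicity of the quenched measures in $\mathsf a$; the paper takes this step from \cite[Lemma~6.7]{gunaratnam2025supercritical}, while you derive it via the doubled-graph FK comparison, correctly identifying $\tanh((\beta-\beta')\mathsf a_x\mathsf a_y)=p/(p+2(1-p))$. Then, like the paper, you obtain a uniform lower bound $\varepsilon$ on each sprinkled edge's conditional probability from Lemma~\ref{lem:conditional regularity} and the finite-energy argument.

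One small precision. What is ``unaffected'' is the marginal of the $\beta'$-edges, which is exactly $\Phi^0_{\Lambda,\beta'}$. Their conditional law given the sprinkled edges does change, since the sprinkled edges carry information on $\mathsf a$. So your sequential coupling should reveal the $\beta'$-edges first, and then the sprinkled edges one at a time.
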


\begin{proof}
Recall the definition of $\Upsilon^0_{\Lambda,\beta,\beta'}$ above. We write $\Gamma^0_{\Lambda,\beta,\beta'}$ to denote the corresponding edge marginal on the edges of $\Lambda[\beta,\beta']$. By the stochastic domination $\mu^0_{\Lambda,\beta} \succcurlyeq \mu^0_{\Lambda,\beta'}$ and the fact that the usual random cluster model and Bernoulli bond percolation are monotone in their edge parameters, we obtain (see \cite[Lemma 6.7]{gunaratnam2025supercritical}) that
\begin{equation}\label{eq:firststepsprinkling}
    \Phi^0_{\Lambda,\beta} \succcurlyeq \Gamma^0_{\Lambda,\beta,\beta'},
\end{equation}
where we view $\Phi^0_{\Lambda,\beta}$ as a percolation measure on $\{0,1\}^{E(\Lambda[\beta,\beta'])}$ as described above.

The goal is to replace the inhomogeneous random sprinkling parameters $\{ \varepsilon(\beta,\beta',\mathsf a)_{xy} \}$ by a uniform parameter $\varepsilon > 0$. In order to do this, we consider the state of a single $\beta-\beta'$ edge $xy(\beta-\beta')$ conditional on the state of every other edge. By arguing as in Proposition \ref{prop:finite-energy} and leveraging Lemma \ref{lem:conditional regularity}, it follows that there exists $\varepsilon > 0$ such that, uniformly in the state of the other edges, the probability that $xy(\beta-\beta')$ is open is uniformly bounded from below by $\varepsilon$. In particular, one can show that $\Gamma^0_{\Lambda,\beta,\beta'}$ stochastically dominates $\Phi^0_{\Lambda,\beta'} \sqcup \textup{Ber}_\Lambda(\varepsilon)$. Combining this with \eqref{eq:firststepsprinkling}, we obtain \eqref{eq:sprinkling}. 
\end{proof}

\section{Uniqueness of half-space measures}\label{sec:Gibbs_FK}

In this section, we prove the almost everywhere uniqueness of random cluster measures on the half-space, Theorem \ref{thm:half-space} below. In order to state this, we begin by extending the definitions of the preceding section to the half-space setting. 

\medskip

\noindent\textbf{Spin models on $\mathbb H$.} Let $\mathbb H:= \mathbb Z^{d-1}\times \mathbb N$ be the half-space and $\partial \mathbb H:= \mathbb Z^{d-1}\times \{0\}$ be its boundary. Fix $\Lambda\subset \mathbb H$ finite. For every $h \geq 0$, let $\mathsf h_{\Lambda}^h \in (\mathbb R^+)^{\Lambda}$ be given by
\begin{equation}\label{eq:def mag field on halfspace}
\mathsf h_{\Lambda}^h(x)= \begin{cases} h, & x \in \Lambda\cap \partial \mathbb H, \\ 0, & \text{otherwise.} \end{cases}
\end{equation}
We define $\nu^{0,h}_{\Lambda,\beta}:= \nu_{\Lambda, \beta, \mathsf h^h_\Lambda}$. The free measure on $\mathbb H$ with inverse temperature $\beta \geq 0$ and external field $h\geq 0$ on $\partial \mathbb H$ is the probability measure $\nu^{0,h}_{\mathbb H,\beta}$ with expectation  $\langle \cdot \rangle^{0,h}_{\mathbb H,\beta}$ defined as the infinite volume limit as $\Lambda\nearrow \mathbb H$ of the probability measures $\nu^{0,h}_{\Lambda, \beta}$. This limit exists by standard monotonicity and regularity arguments. 

Defining the half-space plus measure requires more care. Let $\partial^{\textup{ext},+}\Lambda:=\{y\in \mathbb H\setminus \Lambda: \exists x\in \Lambda, \: x\sim y\}$. We define the finite volume half-space plus measure $\nu^{+,h}_{\Lambda, \beta}$ on $\Lambda \subset \mathbb H$ by the density
\begin{equation}
\mathrm{d}\nu^{+,h}_{\Lambda, \beta}(\tau) = \frac{1}{\mathbf{Z}^{+,h}_{\Lambda,\beta}}\exp\Big(\beta \sum_{xy\in \overline{E}^+(\Lambda)} \tau_x \tau_y +  \beta h \sum_{x \in \Lambda \cap \partial \mathbb H} \tau_x
\Big) \prod_{x \in \Lambda}\mathrm d \rho(\tau_x) \prod_{x \in \partial^{\textup{ext},+}\Lambda}  \mathrm d\zeta(\tau_x),
\end{equation}
where $\overline{E}^+(\Lambda) =  \{xy: x\in \Lambda , \: y\in \mathbb H, \: x\sim y\}$, and where $\zeta$ was defined in Proposition \ref{prop:reg_plus}. Additionally, we let $\partial_E^+\Lambda=\overline{E}^+(\Lambda)\setminus E(\Lambda)$.

These half-space plus measures satisfy the following monotonicity properties.

\begin{Prop}[\hspace{1pt}{\cite[Proposition 6.11, Theorem 4.1]{PV26}}]\label{prop:reg_plus half-space} 
Let $\beta,h \geq 0$. There exists $r\geq 0$ such that, for every $\Lambda'\subset \Lambda \subset \mathbb H$ finite satisfying $\mathrm{d}(\partial^{\textup{ext},+} \Lambda,\Lambda')\geq r$,
\begin{equation}\label{eq:spin-absolute value monotonicity half-space}
\nu_{\Lambda,\beta}^{+,h} \preccurlyeq \nu_{\Lambda',\beta}^{+,h} \quad \text{and} \quad \nu_{\Lambda,\beta}^{+,h}(|\cdot|) \preccurlyeq \nu_{\Lambda',\beta}^{+,h}(|\cdot|).
\end{equation} Moreover, there exist $a=a(\beta,h),b=b(\beta,h)>0$ such that, for every $\Lambda\subset \mathbb H$, the measure $\nu^{+,h}_{\Lambda,\beta}$ is $(a,b)$-regular. In particular, $\nu^{+,h}_{\Lambda,\beta}$ converges weakly to an infinite-volume measure $\nu^{+,h}_{\mathbb{H},\beta}=\langle \cdot\rangle^{+,h}_{\mathbb H,\beta}$ as $\Lambda\nearrow \mathbb H$. 
\end{Prop}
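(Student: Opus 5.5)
The half-space statement should follow from the full-space results of \cite{PV26}, namely Proposition~\ref{prop:monotonicity in plus} and Proposition~\ref{prop:reg_plus}. The idea is to view the half-space plus measure as a full-space-type measure with inhomogeneous single-site measures and fields. Concretely, $\nu^{+,h}_{\Lambda,\beta}$ is the free measure on $\Lambda$ with the following data: single-site measure $\rho$ on $\Lambda$ and $\zeta$ on $\partial^{\textup{ext},+}\Lambda$, couplings $J\equiv 1$ on $\overline E^+(\Lambda)$ and zero couplings across $\partial\mathbb H$, and field $\mathsf h^h_\Lambda$, which is bounded by $h$. Because the couplings across $\partial\mathbb H$ vanish, the boundary vertices of $\Lambda$ on $\partial\mathbb H$ are not boundary points where $\zeta$ is imposed. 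This explains why the hypothesis only involves $\mathrm d(\partial^{\textup{ext},+}\Lambda,\Lambda')$ rather than $\mathrm d(\partial\Lambda,\Lambda')$.

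\emph{Monotonicity.} I will rerun the argument of \cite[Proposition~6.11]{PV26} in this setting. Its mechanism is as follows. Condition $\nu^{+,h}_{\Lambda,\beta}$ on the spins in $\overline\Lambda\setminus\Lambda'$. By the DLR/domain Markov property, the restriction to $\Lambda'$ is then the spin model on $\Lambda'$ with random boundary condition $\tau|_{\partial^{\textup{ext},+}\Lambda'}$, distributed according to the marginal of $\nu^{+,h}_{\Lambda,\beta}$. The key input from \cite{PV26} is the choice of $\zeta$: it is a tilt of $\rho$ chosen large enough that it stochastically dominates the law of $\tau_x$, or of $|\tau_x|$, conditioned on everything outside $x$, provided that $x$ is at distance at least $r$ from the region where $\zeta$ is imposed. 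The gap $r$ is what lets regularity kick in, so that the conditional field has controlled Gaussian tails.

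Given this domination, the marginal of the outside configuration on $\partial^{\textup{ext},+}\Lambda'$ is dominated by $\zeta^{\otimes}$. The spin FKG inequality and monotonicity in boundary conditions (Proposition~\ref{prop:stoc_monotonicity}, with a boundary condition acting as a field) then give $\nu^{+,h}_{\Lambda,\beta}\preccurlyeq \nu^{+,h}_{\Lambda',\beta}$. For the absolute values, the same argument uses the absolute-value FKG inequality, Proposition~\ref{prop: FKG phi4}. This is valid because both the field and the boundary terms are nonnegative, and $|\tau|$ is monotone in the couplings and fields. Since \cite{PV26} treats general inhomogeneous bounded nonnegative couplings and fields, I will apply its result with these data, checking that the constants depend only on $(\beta,h)$ and the degree bound $2d$.

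\emph{Regularity and convergence.} For regularity I will invoke \cite[Theorem~4.1]{PV26} for the inhomogeneous model above. It gives a bound $\prod_{x\in\Delta}e^{a\tau_x^2+b}$, uniform in $\Lambda$, on the restricted densities, since the vertex degrees and the fields are bounded by constants depending on $(\beta,h)$. For convergence, take an increasing sequence $\Lambda_n\nearrow\mathbb H$ and pass to a subsequence with gaps at least $r$. Along this subsequence, the expectations of increasing bounded local functions are monotone nonincreasing by \eqref{eq:spin-absolute value monotonicity half-space}, so they converge. Tightness follows from uniform regularity, and increasing bounded local functions determine the limit, which therefore exists. Independence of the sequence follows by interlacing any two sequences. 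Regularity also lets me pass to unbounded correlations.

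\emph{Main obstacle.} The hard step is the domination of conditional single-site laws by $\zeta$ near the flat boundary $\partial\mathbb H$, where sites have fewer neighbours and carry the extra field $h$. I expect to handle this simply because fewer neighbours only weaken the interaction, and the constant field $h$ is absorbed by choosing $\zeta=\zeta(\beta,h)$ with the enlarged support allowed by Remark~\ref{rem:plus}$(i)$.
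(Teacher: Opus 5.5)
The paper gives no argument of its own for this statement; it simply cites \cite{PV26}. Your reduction is in line with that: you view $\nu^{+,h}_{\Lambda,\beta}$ as an inhomogeneous instance of the setting of \cite{PV26}, take regularity from \cite[Theorem~4.1]{PV26}, and obtain convergence from monotone limits of increasing local functions, tightness and interlacing. The convergence part is fine.

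The problem is the mechanism you propose for the monotonicity step, where you say you will \emph{rerun} \cite[Proposition~6.11]{PV26}. The law of $\tau_x$ conditioned on everything outside $x$ is the single-site specification, proportional to $e^{\beta\tau_x\sum_{y\sim x}\tau_y+\beta\mathsf h_x\tau_x}\,\mathrm d\rho(\tau_x)$.
\begin{itemize}
\item It does not depend on $\Lambda$, on $r$ or on $\zeta$.
\item When $\rho$ has unbounded support, it is not dominated by any fixed probability measure as the neighbouring spins range over $\mathbb R$.
\end{itemize}
So no gap $r$ and no choice of $\zeta$ can make your key input hold. Regularity is an annealed statement and cannot be fed into a pointwise conditional law.

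If you mean instead the unconditioned one-site marginal, such a domination can be arranged via regularity, but it does not give your next sentence. Domination of each one-site marginal does not imply that the joint law of $\tau|_{\partial^{\textup{ext},+}\Lambda'}$ is dominated by $\zeta^{\otimes}$. Positive association works against you here, since $\nu[\tau_x>t\ \forall x\in S]\geq\prod_{x\in S}\nu[\tau_x>t]$.

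What is really needed is a genuinely multi-site domination, by a product measure, of the marginal of $\nu^{+,h}_{\Lambda,\beta}$ on a layer at distance at least $r$ from $\partial^{\textup{ext},+}\Lambda$. That is the actual content of \cite[Proposition~6.11]{PV26}, and it is where the freedom to take $\zeta$ supported on $[C,\infty)$ (Remark~\ref{rem:plus}$(i)$) enters. The behaviour near $\partial\mathbb H$, which you single out as the main obstacle, is not where the difficulty lies.

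There is also a smaller omission in the same step. Under $\nu^{+,h}_{\Lambda',\beta}$ the boundary spins are not distributed as $\zeta^{\otimes}$. They are distributed as $\zeta^{\otimes}$ reweighted by the partition function $Z^{\eta}_{\Lambda'}$ of the model on $\Lambda'$ with boundary condition $\eta$. To handle this you need:
\begin{itemize}
\item that $\eta\mapsto Z^{\eta}_{\Lambda'}$ is increasing on $(\mathbb R^+)^{\partial^{\textup{ext},+}\Lambda'}$, since its logarithmic derivative in $\eta_y$ is $\beta\sum_{x\in\Lambda',\,x\sim y}\langle\tau_x\rangle^{\eta}_{\Lambda'}\geq 0$ by Griffiths' first inequality;
\item Harris' inequality for $\zeta^{\otimes}$, to conclude that this reweighted law dominates $\zeta^{\otimes}$.
\end{itemize}
Only then can you combine it with the domination of the $\nu^{+,h}_{\Lambda,\beta}$-marginal and monotonicity in the boundary condition.

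For the absolute values, the boundary field coming from $\tau|_{\Lambda\setminus\Lambda'}$ can be negative. The relevant input is therefore the second part of Proposition~\ref{prop:stoc_monotonicity} with $|\mathsf h_1|\leq\mathsf h_2$, not nonnegativity of the fields.

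In short, invoking \cite{PV26} for the model with vanishing couplings across $\partial\mathbb H$, as the paper does, is fine, provided you check that its hypotheses cover this situation. The self-contained argument you sketch would fail at its key step.
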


The plus measure on $\mathbb H$ with inverse temperature $\beta \geq 0$ and external field $h\geq 0$ on $\partial \mathbb H$ is the probability measure $\nu^{+,h}_{\mathbb H, \beta}$ defined by the previous proposition.

\medskip

\noindent\textbf{Random cluster measures on $\mathbb H$.} Let $\Lambda \subset \mathbb H$ finite. For every $h \geq 0$, recall $\mathsf h^h_\Lambda$ from \eqref{eq:def mag field on halfspace}. The free random cluster measure on $\Lambda$ with inverse temperature $\beta \geq 0$ and magnetic field $h \geq 0$ on $\partial \mathbb H$ is the probability measure
$\Psi^{0,h}_{\Lambda, \beta}:=\Psi^0_{\Lambda, \beta, \mathsf h^h_\Lambda}$. The half-space free random cluster measure
$\Psi^{0,h}_{\mathbb H,\beta}$ is the probability measure on $\{0,1\}^{E(\mathbb H)} \times (\mathbb R^+)^\mathbb H$ obtained as the weak limit of $\Psi^{0,h}_{\Lambda, \beta}$ as $\Lambda\nearrow \mathbb H$. It exists by standard monotonicity and regularity arguments.

For the wired random cluster measure on $\mathbb H$, we have to adapt the definition slightly to account for wired measures with external magnetic fields. Consider the finite-volume wired measures $\Psi^{1,h}_{\Lambda, \beta}$ defined by the density
\begin{multline}\label{eq: random cluster explicit density half-space}
\mathrm{d}\Psi^{1,h}_{\Lambda,\beta}[(\omega, \mathsf a)]=\frac{1}{Z^{1,h}_{\Lambda,\beta}}\prod_{xy \in \overline E^+(\Lambda)} \sqrt{1-p(\beta,\mathsf a)_{xy}}\left(\frac{p(\beta,\mathsf a)_{xy}}{1-p(\beta,\mathsf a)_{xy}}\right)^{\omega_{xy}}\,
\\
 \prod_{x \in \Lambda \cap \partial \mathbb H}  \sqrt{1-p(\beta, h, \mathsf a)_{x\fg}}\left(\frac{p(\beta, h, \mathsf a)_{x\fg}}{1-p(\beta,h, \mathsf a)_{x\fg}}\right)^{\omega_{x\fg}}  2^{k^w(\omega,\mathsf{a})} 
\prod_{x\in \Lambda}\mathrm{d} \rho(\mathsf{a}_x) \prod_{x \in \partial^{\textup{ext},+} \Lambda} \mathrm{d}\zeta(\mathsf a_x),
\end{multline}
where $p(\beta,h,\mathsf a)_{x\mathfrak g} = 1-e^{-2\beta h\mathsf a_x}$ for $x \in \Lambda \cap \partial \mathbb H$. 

Thanks to the convergence statements in Proposition \ref{prop:reg_plus half-space} and the Edwards--Sokal coupling for wired measures, there exists a probability measure $\Psi^{1,h}_{\mathbb H, \beta}$ on $\{0,1\}^{E(\mathbb H)}\times (\mathbb R^+)^{\mathbb H}$ such that, for every local event $A$,
\begin{equation}
\lim_{\Lambda \nearrow \mathbb H} \Psi^{1,h}_{\Lambda,\beta}[A] = \Psi^{1,h}_{\mathbb H,\beta}[A].
\end{equation}
We call $\Psi^{1,h}_{\mathbb H,\beta}$ the wired half-space random cluster measure with inverse temperature $\beta$ and external field $h\geq 0$ on $\partial \mathbb H$.

\begin{Rem} \label{rem: half space general rc}
Observe that we could similarly define the half-space measure $\Psi^{\xi,\mathsf{b},h}_{\Lambda,\beta}$ on $\Lambda\subset \mathbb H$, with arbitrary boundary conditions $(\xi,\mathsf{b})$. For the sake of clarity, we define these measures locally when they are needed. 
\end{Rem}

The following result is crucial to prove Theorem \ref{thm:sharpness}. It is the main innovation in our paper.
\begin{Thm}\label{thm:half-space}
For almost every $(\beta,h) \in (\mathbb R^+)^2$, we have $\Psi^{1,h}_{\mathbb{H},\beta}= \Psi^{0,h}_{\mathbb{H},\beta}$.
\end{Thm}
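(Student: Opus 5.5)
Following the introduction, the strategy is to adapt the convexity/free-energy argument behind Theorem~\ref{thm:full-space uniqueness} to the half-space. The argument runs in two directions: from full-space uniqueness to uniqueness deep in the bulk, and from there to the boundary.

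\textbf{Step 1: a convex pressure.} First I would introduce the finite-volume pressures $f_\Lambda(\beta,h)=|\Lambda\cap\partial\mathbb H|^{-1}\log \mathbf Z^{0,h}_{\Lambda,\beta}$ for boxes $\Lambda=\Lambda_n^+:=\Lambda_n\cap\mathbb H$. These are jointly convex in $(\beta,h)$, since $\log \mathbf Z$ is a log-Laplace transform in the linear parameters. I would subtract the bulk term $|\Lambda|\,p(\beta)/|\Lambda\cap\partial\mathbb H|$, where $p$ is the full-space pressure, to isolate a boundary free energy $g(\beta,h)$. Its partial derivative in $h$ is the boundary magnetisation $\langle\tau_x\rangle$ for $x\in\partial\mathbb H$. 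Its $\beta$-derivative is an energy density, which contains a divergent bulk part; that part is handled by $p'(\beta)$ and by $\Xi$ being countable, via Theorem~\ref{thm:full-space uniqueness}.

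By Alexandrov/Rademacher, the limiting convex function $g$ is differentiable at almost every $(\beta,h)$. At such points, the usual sandwich argument for convex functions shows that the derivatives computed with free and with wired boundary conditions coincide. Since it is unclear that $g$ is convex after the bulk subtraction, I may instead work with $h\mapsto g$ at fixed $\beta\notin\Xi$ and use Fubini. This yields
\[\langle\tau_x\rangle^{+,h}_{\mathbb H,\beta}=\langle\tau_x\rangle^{0,h}_{\mathbb H,\beta}\]
averaged over $x\in\partial\mathbb H$, and hence pointwise by translation invariance along $\partial\mathbb H$. A similar argument gives equality of nearest-neighbour energies $\Psi[\mathsf a_x\mathsf a_y\mathbbm 1\{\omega_{xy}=1\}]$-type quantities for all edges.

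\textbf{Step 2: from these identities to equality of measures.} The identities alone do not suffice, because the half-space lacks translation invariance in the vertical direction. Here I would follow the introduction's outline.

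(a) For $\beta\notin\Xi$, the two half-space measures restricted to translates far from $\partial\mathbb H$ converge to the unique $\Psi_\beta$. This follows by sandwiching $\Psi^0_\beta\preccurlyeq\Psi^{0,h}_{\mathbb H,\beta}\preccurlyeq\Psi^{1,h}_{\mathbb H,\beta}\preccurlyeq\Psi^1_\beta$ on events located at height $\to\infty$, using the monotonicity in domain (with gap) results.

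(b) I would then show that, for a large box $B=\Lambda_n^+$, the boundary conditions induced on $B$ by the two measures are close with high probability. The absolute-value fields are handled by stochastic ordering together with the equality of one-point functions established in Step 1. The partitions require showing that, under the wired measure, the set of vertices of $\partial^{\mathrm{ext},+}B$ connected to infinity outside $B$, or to $\fg$, coincides w.h.p.\ with that under the free measure. I expect this to be the main obstacle.

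For (b), the plan is to use a Burton--Keane style argument. Under the stochastic coupling $\omega^0\le\omega^1$, a discrepancy means that an $\omega^1$-connection outside $B$ is not present in $\omega^0$. Translation invariance along $\partial\mathbb H$, finite energy (Proposition~\ref{prop:finite-energy}), and step (a) should show that such discrepancies have density zero. Indeed, a positive density of distinct infinite clusters, or of clusters separated from $\fg$ in one measure but not the other, would create trifurcation-like structures whose count in a box exceeds its boundary size.

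Once the induced boundary conditions agree up to $o(1)$, the domain Markov property shows that local events have equal probability under the two measures. This gives $\Psi^{1,h}_{\mathbb H,\beta}=\Psi^{0,h}_{\mathbb H,\beta}$ for almost every $h$ at every $\beta\notin\Xi$, which is a full-measure set of $(\beta,h)$.
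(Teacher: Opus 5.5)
There are two genuine gaps, and they are linked.

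\textbf{Step 1 does not go through.} The Lebowitz--Martin-L\"of sandwich for convex functions only works if, under your normalisation, the free and wired pressures have the same limit (up to an $h$-independent constant). You normalise by $|\Lambda_n^+\cap\partial\mathbb H|\asymp n^{d-1}$. But replacing free by plus boundary conditions on the top and lateral faces of $\Lambda_n^+$ changes $\log\mathbf Z$ by an amount that is in general of the same order $n^{d-1}$, and this remains true for $\beta\notin\Xi$. The $h$-derivative of that difference is
\[
\beta\sum_{x\in\Lambda_n^+\cap\partial\mathbb H}\bigl(\langle\tau_x\rangle^{+,h}_{\Lambda_n^+,\beta}-\langle\tau_x\rangle^{0,h}_{\Lambda_n^+,\beta}\bigr),
\]
which is exactly the quantity you want to show is $o(n^{d-1})$. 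So convexity plus Alexandrov/Rademacher yields nothing, and Step 1 is circular; the same applies to your fixed-$\beta$ variant in $h$. This is precisely the obstruction the paper singles out: a parameter supported on one layer of $\mathbb H$ has only a surface-order effect, which is the same order as the change of boundary condition. Likewise, equality of nearest-neighbour energies on every edge cannot be read off a global $\beta$-derivative. You must perturb the coupling along a single orbit $\Gamma uv$, as in $J(\beta,\beta',uv)$.

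\textbf{The final deduction in Step 2 fails.} Agreement of the induced boundary conditions up to a vanishing density of discrepancies does not give equal probabilities of local events via the domain Markov property. Density-zero agreement is exactly what full-space uniqueness plus a Burton--Keane argument yield for every $\beta\notin\Xi$ and every $h$, with no half-space uniqueness input (compare Lemmas~\ref{lem:abs value approx} and~\ref{lem:partitions approx}, proved there on thin boxes $R(n,\ell)$). It is therefore compatible with non-uniqueness. The discrepancies that could influence the origin sit on the $O(\ell n^{d-2})$ sites of the lateral faces near $\partial\mathbb H$, which have density zero. What the domain Markov step would need is exact agreement with high probability, and that is essentially the theorem itself.

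\textbf{The missing idea} is how to combine your ingredients, which are otherwise largely the right ones.
\begin{enumerate}
\item The paper uses density-zero agreement only to show that the Edwards--Sokal measures on $R(n,\ell)$ with the two induced boundary conditions assign comparable probabilities to the concentration event $A_n$, up to a factor $e^{O(\varepsilon n^{d-1})}$; this is \eqref{eq: claim A_nD_n bound}.
\item This bound is then played against the $e^{cn^{d-1}}$ energy effect of lowering $\beta$ to $\beta'$ along one whole layer, via concentration events exactly as in the full-space Lebowitz--Martin-L\"of argument. The result is Proposition~\ref{prop: key prop uniqueness half space}, namely $\langle\tau_u\tau_v\rangle^{+,h}_{\mathbb H,J(\beta,\beta',uv)}\le\langle\tau_u\tau_v\rangle^{0,h}_{\mathbb H,\beta}$.
\item Left-continuity of the monotone map $\beta\mapsto\langle\tau_u\tau_v\rangle^{+,h}_{\mathbb H,\beta}$, together with Fubini, gives equality of all nearest-neighbour correlations at almost every $(\beta,h)$. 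Proposition~\ref{prop: two-pt classification half-space} then concludes.
\end{enumerate}
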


A stronger version of Theorem~\ref{thm:half-space}, holding for every 
$\beta, h > 0$, was obtained for the Ising model in \cite{frohlich1987semi} 
and more recently for the $\varphi^4$ model in \cite{gunaratnam2025supercritical}. 
In these works, the key tools are either the Lee--Yang theorem or a switching 
principle for the underlying random currents, neither of which is available 
at our level of generality. Our proof instead relies on a purely probabilistic 
percolation argument, using only the generalised random cluster measures.

\subsection{Warmup: uniqueness of the full-space measures}

In this subsection, we give a proof of Theorem~\ref{thm:full-space uniqueness}, thus establishing almost everywhere uniqueness in the full space.
This result will play a key role in the proof of Theorem \ref{thm:half-space}. While the two proofs follow similar high-level strategies, they differ in essential conceptual ways.

In order to establish Theorem~\ref{thm:full-space uniqueness}, we will use the following criterion, which is reminiscent of a criterion available for the standard random cluster model, see \cite[Theorem~4.63]{Grimmett2006RCM}.
\begin{Prop}\label{prop: two-pt classification}
Let $\beta > 0$. Assume that for every pair of neighbours $x,y \in \mathbb Z^d$,
\begin{equation} \label{eq: 2pt fcn equality}
\langle \tau_x \tau_y\rangle_\beta^+ = \langle \tau_x \tau_y \rangle_\beta. 
\end{equation}
Then,
\begin{equation}
\Psi^1_\beta = \Psi^0_\beta.
\end{equation}
\end{Prop}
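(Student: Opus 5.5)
Since $\Psi^0_\beta\preccurlyeq\Psi^1_\beta$ by \eqref{eq:psi1 dominates psi0}, it suffices to exhibit a coupling $(\omega^0,\mathsf a^0)\le(\omega^1,\mathsf a^1)$ of the two measures under which the configurations agree almost surely. By Strassen's theorem such a monotone coupling exists. The plan is to show that the expected discrepancy at each edge and each vertex vanishes, and then conclude that the two marginals coincide on all local increasing events, hence everywhere.

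\textbf{Step 1: the identity in terms of the coupling.} By the Edwards--Sokal couplings (Corollaries \ref{cor: ES correlations} and \ref{cor: ES correlations for plus}, passed to infinite volume), for neighbours $x,y$ we have
\[
\langle\tau_x\tau_y\rangle^\#_\beta=\Psi^\#_\beta[\mathsf a_x\mathsf a_y\mathbbm 1\{x\leftrightarrow y\}].
\]
The limit is justified by uniform regularity, Propositions \ref{prop: existence of plus state extremal} and \ref{prop:reg_plus}, which give uniform integrability of $\mathsf a_x\mathsf a_y$. The function $F_{xy}(\omega,\mathsf a)=\mathsf a_x\mathsf a_y\mathbbm 1\{x\leftrightarrow y\}$ is increasing. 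Hence \eqref{eq: 2pt fcn equality} gives $F_{xy}(\omega^0,\mathsf a^0)=F_{xy}(\omega^1,\mathsf a^1)$ almost surely. A more useful form comes from conditioning on $\mathsf a$ and exploiting the explicit edge density: given $\mathsf a$ and the configuration off $xy$, the edge $xy$ is open with probability $p(\beta,\mathsf a)_{xy}$ if $x\leftrightarrow y$ off $xy$, and with probability $p/(2-p)$ otherwise. Writing $p:=p(\beta,\mathsf a)_{xy}$, I would use the local quantity
\[
G_{xy}=\mathsf a_x\mathsf a_y\,\omega_{xy}\,\frac{1}{p},
\]
or, more simply, the identity $\Psi^\#[\mathsf a_x\mathsf a_y\omega_{xy}/p]=\Psi^\#[\mathsf a_x\mathsf a_y\mathbbm 1\{x\leftrightarrow y\}]$ under each measure. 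This is the analogue of the Grimmett criterion $\phi[\omega_e]$ versus $\phi[x\leftrightarrow y]$.

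\textbf{Step 2: equality of the absolute value fields.} I would next show that $\mathsf a^0=\mathsf a^1$ almost surely. The function $F_{xy}$ mixes the $\mathsf a$-marginal with connectivity, so I would argue as follows. In the coupling, $\mathsf a^1_x\ge\mathsf a^0_x$ and $\mathbbm 1^1\{x\leftrightarrow y\}\ge\mathbbm 1^0$. Almost sure equality of $F_{xy}$ forces $\mathsf a^1_x=\mathsf a^0_x$ whenever $x\overset{\omega^0}\leftrightarrow y$ and $\mathsf a^0_y>0$. On the complementary event the difference is not directly controlled. I expect this to be the \textbf{main obstacle}. The plan is to use the DLR/domain Markov structure: conditionally on everything outside $\{x\}$, the law of $\mathsf a_x$ depends only on the neighbours' values of $\mathsf a$ and on which neighbours are attached. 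The discrepancy in $\mathsf a_x$ must therefore be driven by a discrepancy at neighbouring edges. By finite energy (Proposition \ref{prop:finite-energy}), the event that $x$ is connected to some neighbour $y$ with $\mathsf a^0_y>0$ has conditional probability bounded below. I would then try to iterate this: show $\Psi^1_\beta[\mathsf a_x]=\Psi^0_\beta[\mathsf a_x]$ by rewriting $\mathsf a_x$ through the single-site conditional density in terms of edge statistics $\mathsf a_x\mathsf a_y\omega_{xy}$, which are determined by the two-point functions.

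\textbf{Step 3: equality of the edge marginals.} Once the $\mathsf a$-marginals agree, condition on $\mathsf a$ to reduce to inhomogeneous FK-Ising measures in a common environment. For these, the quenched versions of the free and wired infinite-volume measures satisfy Grimmett's criterion edge by edge, since $\Psi[\omega_{xy}\mid\mathsf a]$ is an increasing function of $\Psi[x\leftrightarrow y\mid\mathsf a]$ (the map $t\mapsto pt+\frac{p}{2-p}(1-t)$ is strictly increasing). Averaging the equality from Step 1 then yields $\Psi^1_\beta[\omega_{xy}]=\Psi^0_\beta[\omega_{xy}]$. Combined with the monotone coupling, this gives $\omega^0=\omega^1$ almost surely, and hence $\Psi^1_\beta=\Psi^0_\beta$.
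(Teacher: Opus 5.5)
There is a genuine gap in Step 2, exactly where you flag the main obstacle. The almost sure identity $F_{xy}(\omega^0,\mathsf a^0)=F_{xy}(\omega^1,\mathsf a^1)$ is correct but too weak. On $\{x\centernot\longleftrightarrow y\}$ both sides vanish whatever the absolute values are. You also cannot repair this by conditioning, since a Strassen coupling gives no control of the law of $\omega^1$ given the pair $(\mathsf a^1,\mathsf a^0)$.

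The remedy you sketch (finite energy, ``iterating'', and rewriting $\mathsf a_x$ through edge statistics $\mathsf a_x\mathsf a_y\omega_{xy}$ that are ``determined by the two-point functions'') is not an argument, and its last ingredient is false. By Edwards--Sokal, $\Psi^\#_\beta[\mathsf a_x\mathsf a_y\omega_{xy}]=\Psi^\#_\beta\bigl[\mathsf a_x\mathsf a_y\,p(\beta,\mathsf a)_{xy}\,\tfrac12\bigl(1+\Psi^\#_\beta[x\longleftrightarrow y\mid\mathsf a]\bigr)\bigr]$. This contains the term $\tfrac12\langle|\tau_x\tau_y|(1-e^{-2\beta|\tau_x\tau_y|})\rangle^\#$, which $\langle\tau_x\tau_y\rangle^\#$ does not determine. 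Using it to prove that the $\mathsf a$-marginals agree would be circular. Relatedly, the identity in your Step 1 is wrong: the correct one is $\Psi[\omega_{xy}/p\mid\mathsf a]=\tfrac12\bigl(1+\Psi[x\longleftrightarrow y\mid\mathsf a]\bigr)$, not $\Psi[x\longleftrightarrow y\mid\mathsf a]$. Your Step 3 uses the right affine relation, so the slip is harmless there.

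The missing idea is to work with the $\mathsf a$-measurable quantities $g^i:=\mathsf a^i_x\mathsf a^i_y\,\Psi^i_\beta[x\longleftrightarrow y\mid\mathsf a=\mathsf a^i]$, $i\in\{0,1\}$, rather than with $F_{xy}$.
\begin{itemize}
\item Monotonicity of the quenched FK-Ising measures in the environment and in boundary conditions gives $g^1\geq g^0$ pointwise in the coupling. Both have expectation equal to the respective two-point function, so $g^1=g^0$ almost surely.
\item By Edwards--Sokal, the edge $xy$ is open with quenched probability at least $p(\beta,\mathsf a)_{xy}/2$. So the quenched connection probability of neighbours is positive if and only if $\mathsf a_x\mathsf a_y>0$. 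Since both factors of $g^i$ are ordered, this forces $\mathsf a^1_x\mathsf a^1_y=\mathsf a^0_x\mathsf a^0_y$ almost surely.
\item If $\rho$ has no atom at $0$, this already gives $\mathsf a^1=\mathsf a^0$. In general, write $0=(\mathsf a^1_x-\mathsf a^0_x)\mathsf a^1_y+\mathsf a^0_x(\mathsf a^1_y-\mathsf a^0_y)$, a sum of two non-negative terms. This shows $\mathsf a^1_x=\mathsf a^0_x$ off the decreasing event $\mathcal Z_x=\{\mathsf a_y=0\ \forall y\sim x\}$ (evaluated at $\mathsf a^1$). It also shows $\mathsf a^1_x-\mathsf a^0_x=\mathsf a^1_x\mathbbm 1_{\{\mathsf a^1\in\mathcal Z_x\}}-\mathsf a^0_x\mathbbm 1_{\{\mathsf a^0\in\mathcal Z_x\}}$.
\item On $\mathcal Z_x$, the domain Markov property makes the conditional law of $\mathsf a_x$ a fixed law $\bar\rho$, the same under both measures. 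Hence $\mathbb E[\mathsf a^1_x-\mathsf a^0_x]=\bar\rho[\mathsf a_x]\bigl(\Psi^1_\beta[\mathcal Z_x]-\Psi^0_\beta[\mathcal Z_x]\bigr)\leq 0$, which forces $\mathsf a^1_x=\mathsf a^0_x$ almost surely.
\end{itemize}
With a common environment in hand, your Step 3 then goes through, essentially as the paper's final step does.
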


\begin{proof} 
Observe that by \eqref{eq:psi1 dominates psi0}, one has that $\Psi^0_\beta \preccurlyeq \Psi^1_\beta$. By Strassen's theorem \cite{Strassen1965}, there exists a monotone coupling between these measures $(\mathbb Q, (\mathsf{a}^1, \omega^1), (\mathsf a^0, \omega^0))$ with $\mathsf a^1 \geq \mathsf a^0$ and $\omega^1\geq \omega^0$ almost surely. 
Note that $\mathbb Q$-almost surely we have $\Psi^1_{\beta}[ \;\cdot \mid \mathsf a = \mathsf{a}^1 ] \succcurlyeq \Psi^0_{\beta}[ \; \cdot \mid \mathsf a = \mathsf{a}^0 ]$ by the monotonicity of the usual FK-model in finite volume and taking limits. Therefore, $\mathbb Q$-almost surely
$\mathsf{a}^1_x\mathsf{a}^1_y \mathbb{Q}[ x\connect{\:\omega^1} y \mid \mathsf{a}^1] \geq \mathsf{a}^0_x\mathsf{a}^0_y \mathbb{Q}[ x\connect{\:\omega^0} y \mid \mathsf{a}^0]$  for every pair of neighbours $x,y\in\mathbb{Z}^d$. Moreover, the Edwards--Sokal coupling and \eqref{eq: 2pt fcn equality} yield $\mathbb{Q}[\mathsf{a}^1_x\mathsf{a}^1_y \mathbb{Q}[ x\connect{\:\omega^1} y \mid \mathsf{a}^1]] = \langle \tau_x \tau_y\rangle_\beta^+ = \langle \tau_x \tau_y \rangle_\beta = \mathbb{Q}[\mathsf{a}^0_x\mathsf{a}^0_y \mathbb{Q}[ x\connect{\:\omega^0} y \mid \mathsf{a}^0]]$. As a result, $\mathbb Q$-almost surely we have
\begin{equation}\label{eq: 2pt fcn equality FK}
    \mathsf{a}^1_x\mathsf{a}^1_y \mathbb{Q}[ x\connect{\:\omega^1} y \mid \mathsf{a}^1] = \mathsf{a}^0_x\mathsf{a}^0_y \mathbb{Q}[ x\connect{\:\omega^0} y \mid \mathsf{a}^0],
\end{equation}
for every pair of neighbours $x,y\in \mathbb Z^d$. 
Observe that, for each $i \in \{0,1\}$ and $y\sim x$, $\mathbb{Q}[ x\connect{\:\omega^i} y \mid \mathsf{a}^i]=\Psi^i_{\beta}[ x\connect{\:} y \mid \mathsf a = \mathsf{a}^i] = 0$ if and only if $\mathsf a^i_x\mathsf a^i_y = 0$ (to see this, recall that $p(\beta, \mathsf a)_{uv}=1-e^{-2\beta \mathsf a_u \mathsf a_v}$). Since $\mathsf a^1_x \mathsf a^1_y \geq \mathsf a^0_x \mathsf a^0_y$ and $\mathbb{Q}[ x\connect{\:\omega^1} y \mid \mathsf{a}^1]\geq \mathbb{Q}[ x\connect{\:\omega^0} y \mid \mathsf{a}^0]$, we deduce from \eqref{eq: 2pt fcn equality FK} that $\mathbb Q$-almost surely, 
\begin{equation}\label{eq:2 point neighbours}
\mathsf a^1_x \mathsf a^1_y = \mathsf a^0_x \mathsf a^0_y.
\end{equation}
If $\mathsf a^i$ had no atom at $0$---which by regularity happens if and only if the single-site measure $\rho$ has no atom at $0$, and it is independent of whether $i=1$ or $0$---we would immediately obtain that $\mathsf a^1_x=\mathsf a^0_x$ from \eqref{eq:2 point neighbours} by monotonicity. The case where zeros are allowed requires more care and is treated below.

Fix $x\in \mathbb{Z}^d$. Let $\mathcal Z_x :=
\left\{
\mathsf a :
\mathsf a_y=0\text{ for every }y\sim x
\right\}$ and, under the coupling $\mathbb Q$, write $Z_0$ and $Z_1$ for the events $\{\mathsf a^0\in\mathcal Z_x\}$ and $\{\mathsf a^1\in\mathcal Z_x\}$ respectively. Since for any neighbour $y\sim x$ we $\mathbb{Q}$-almost surely have
\[
0 = \mathsf a^1_x \mathsf a^1_y - \mathsf a^0_x \mathsf a^0_y
=
(\mathsf a_x^1-\mathsf a_x^0)\mathsf a^1_y
+
\mathsf a_x^0(\mathsf a^1_y-\mathsf a^0_y)
\]
by \eqref{eq:2 point neighbours}
and both summands are non-negative, we conclude that $\mathsf a_x^1=\mathsf a_x^0$ outside $Z_1$ (i.e.\ when $\mathsf a_y^1>0$ for some $y\sim x$), and that both vanish on
$Z_0\setminus Z_1$. Hence, $\mathsf a_x^1-\mathsf a_x^0 =
(\mathsf a_x^1-\mathsf a_x^0)\mathbf 1_{Z_1}$ and $\mathsf a_x^0\mathbf 1_{Z_1} = \mathsf a_x^0\mathbf 1_{Z_0}$, where for the latter we used that $Z_1\subseteq Z_0$ by monotonicity of the coupling. Combining these two facts we get
$\mathsf a_x^1-\mathsf a_x^0 =
\mathsf a_x^1\mathbf 1_{Z_1}
-\mathsf a_x^0\mathbf 1_{Z_0}$. Therefore,  
\begin{equation}
0
\leq \mathbb E_{\mathbb Q}[\mathsf a_x^1-\mathsf a_x^0]
=
\Psi_\beta^1[\mathsf a_x \mathbbm{1}_{\mathcal Z_x}]
-
\Psi_\beta^0[\mathsf a_x \mathbbm{1}_{\mathcal Z_x}]
=
\rho[\mathsf a_x]
\bigl(
\Psi_\beta^1[\mathcal Z_x]
-
\Psi_\beta^0[\mathcal Z_x]
\bigr)
\leq0,
\end{equation}
where we used the domain Markov property and the fact that $\Psi_\beta^0\preccurlyeq\Psi_\beta^1$. Since $\mathsf a_x^1-\mathsf a_x^0\geq0$, we conclude that $\mathsf a_x^1=\mathsf a_x^0$ holds $\mathbb{Q}$-almost surely. Since $x\in\mathbb{Z}^d$ is arbitrary, $\mathsf a^1=\mathsf a^0$ almost surely.

We now prove that $\omega^1=\omega^0$ holds $\mathbb{Q}$-almost surely. 
For simplicity, we henceforth write $\mathsf{a}':=\mathsf{a}^1=\mathsf{a}^0$. 
Using again the fact that $\mathbb{Q}[ x\connect{\:\omega^i} y \mid \mathsf{a}']=0$ if and only if $\mathsf{a}'_x\mathsf{a}'_y=0$, \eqref{eq: 2pt fcn equality FK} yields
\begin{equation} \label{eq: 2pt fcn equality fk2}
    \mathbb{Q}[ x\connect{\:\omega^1} y \mid \mathsf{a}'] = \mathbb{Q}[ x\connect{\:\omega^0} y \mid \mathsf{a}']
\end{equation}
$\mathbb{Q}$-almost surely for every $xy\in E(\mathbb Z^d)$.
By the Edwards--Sokal coupling, for $i=0,1$, we have
\begin{align}
\mathbb{Q}[ x\connect{\:\omega^i} y \mid \mathsf{a}'] 
&=\langle \textup{sgn}(\tau_x) \textup{sgn}(\tau_y) \mid |\tau|=\mathsf a' \rangle^i_{\beta} ~~~~~~\text{  and  }  \\ 
\mathbb{Q}[ \omega^i_{xy}=1 \mid \mathsf{a}'] 
&= p_{xy}(\beta,\mathsf{a}') \langle \mathds{1}_{\textup{sgn}(\tau_x) \textup{sgn}(\tau_y)=1}\mid |\tau|=\mathsf a' \rangle^i_{\beta},
\end{align}
where we wrote for convenience $\langle \cdot \rangle^1_\beta = \langle \cdot \rangle^+_\beta$ and $\langle \cdot \rangle^0_\beta = \langle \cdot \rangle_\beta$, and we recall that $\textup{sgn}(t) = \mathds 1_{t> 0} - \mathds 1_{t<0}$. 
Now, since $\mathds{1}_{\textup{sgn}(\tau_x) \textup{sgn}(\tau_y)=1}=\tfrac{1}{2}(\textup{sgn}(\tau_x) \textup{sgn}(\tau_y)+1)$ when $\mathsf{a}'_x\mathsf{a}'_y\neq 0$, one has
\begin{equation} \label{eq: 2pt fcn equality fk3}
    \mathbb{Q}[ \omega^i_{xy}=1 \mid \mathsf{a}'] = \frac{p_{xy}(\beta,\mathsf{a}') }{2}\Big( \mathbb{Q}[ x\connect{\:\omega^i} y \mid \mathsf{a}']+ 1 \Big).
\end{equation}
Combining \eqref{eq: 2pt fcn equality fk2} and \eqref{eq: 2pt fcn equality fk3} yields that $\mathbb{Q}[ \omega^1_{xy}=1 \mid \mathsf{a}'] = \mathbb{Q}[ \omega^0_{xy}=1 \mid \mathsf{a}']$ almost surely. In particular, averaging over $\mathsf a'$ yields $\mathbb{Q}[ \omega^1_{xy}=1] = \mathbb{Q}[\omega^0_{xy}=1]$ for every $xy\in E(\mathbb Z^d)$. Since $\omega^1\geq\omega^0$, this implies that $\omega^1=\omega^0$ almost surely. This concludes the proof that $\Psi^1_\beta = \Psi^0_\beta$. 
\end{proof}

We follow a classical argument originally due to Lebowitz and Martin-Löf in the case of the Ising model \cite{lebowitz1972uniqueness} (see also \cite[Theorem~1.12]{DuminilLecturesOnIsingandPottsModels2019}), and reduce the proof of Theorem \ref{thm:full-space uniqueness} to the following proposition.
\begin{Prop}\label{prop: key prop uniqueness full space} For every $0<\beta'<\beta$,
\begin{equation}\label{eq:2 point ineq full space}
\langle \tau_0 \tau_{\mathbf{e}_1} \rangle^+_{\beta'}\leq \langle \tau_0 \tau_{\mathbf{e}_1} \rangle_{\beta},
\end{equation}
where $\mathbf{e}_1=(1,0,\ldots,0)$.
\end{Prop}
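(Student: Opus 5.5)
This is the Lebowitz--Martin-Löf convexity argument, phrased as a comparison between a boundary-condition effect and a bulk derivative. Write $\tau_{\mathbf e}:=\tau_0\tau_{\mathbf{e}_1}$ and, for finite $\Lambda$, define the free-energy-type functions
\[
F^0_\Lambda(\beta):=\log \mathbf Z^0_{\Lambda,\beta},\qquad F^+_\Lambda(\beta):=\log \mathbf Z^\zeta_{\Lambda,\beta}.
\]
Both are convex in $\beta$, with derivatives
\[
\partial_\beta F^\#_\Lambda=\sum_{xy\in \overline E(\Lambda)}\langle \tau_x\tau_y\rangle^\#_{\Lambda,\beta},
\]
where for $\#=+$ the sum includes the boundary edges carrying $\zeta$-spins. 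The two functions differ only through boundary terms. Precisely, $F^+_\Lambda-F^0_\Lambda$ is $\log$ of an expectation of $\exp(\beta\sum_{xy\in\partial_E\Lambda}\tau_x\tau_y)$ integrated against $\zeta$. We aim to show
\begin{equation*}
|F^+_\Lambda(\beta)-F^0_\Lambda(\beta)|\leq C|\partial\Lambda|
\end{equation*}
uniformly for $\beta$ in a compact interval. The upper bound should follow from $\tau_x\tau_y\leq(\tau_x^2+\tau_y^2)/2$ together with the regularity estimates up to the boundary of Proposition~\ref{prop:reg_plus}$(ii)$ and the Gaussian moments of $\zeta$. The lower bound should follow from Jensen's inequality, or from positivity of the boundary interaction with $\zeta\geq0$ via Griffiths-type monotonicity, together with a normalisation of $\zeta$.

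By convexity of $F^+_\Lambda$ and $F^0_\Lambda$, for $\beta'<\beta$ we get
\[
(\beta-\beta')\,\partial_\beta F^+_\Lambda(\beta')\leq F^+_\Lambda(\beta)-F^+_\Lambda(\beta')\leq F^0_\Lambda(\beta)-F^0_\Lambda(\beta')+2C|\partial\Lambda|\leq (\beta-\beta')\,\partial_\beta F^0_\Lambda(\beta)+2C|\partial\Lambda|.
\]
Take $\Lambda=\Lambda_L$ and divide by $(\beta-\beta')d|\Lambda_L|$. The right side becomes an average over edges of $\langle\tau_x\tau_y\rangle_{\Lambda_L,\beta}$. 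By monotonicity in the volume (Proposition~\ref{prop:monotonicity} applied with $J$ increasing) and translation invariance of the limit, each term is at most $\langle \tau_{\mathbf e}\rangle_\beta$. Here we use that free finite-volume correlations increase to the infinite-volume ones, and that in the limit all nearest-neighbour directions give the same value by lattice symmetry. The boundary error contributes $O(1/L)$.

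For the left side we need a lower bound on the averaged plus correlations. By Proposition~\ref{prop:monotonicity in plus}, $\nu^+_\beta\preccurlyeq\nu^+_{\Lambda,\beta}$ only for increasing functions, and $\tau_x\tau_y$ is not increasing in $\tau$. Instead we use the Edwards--Sokal representation (Corollary~\ref{cor: ES correlations for plus}): $\langle\tau_x\tau_y\rangle^+_{\Lambda,\beta'}=\Psi^1_{\Lambda,\beta'}[\mathsf a_x\mathsf a_y\mathbbm 1\{x\leftrightarrow y\}]$, which is an increasing function of $(\omega,\mathsf a)$. By the monotonicity in domain with gap (Proposition~\ref{prop: wired random cluster properties}), for edges at distance at least $r$ from $\partial\Lambda_L$ this is at least $\Psi^1_{\beta'}[\mathsf a_x\mathsf a_y\mathbbm1\{x\leftrightarrow y\}]=\langle\tau_{\mathbf e}\rangle^+_{\beta'}$. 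The roughly $r|\partial\Lambda_L|$ edges near the boundary have nonnegative contribution by Griffiths' inequality, so they can be dropped, costing only $O(r/L)$ in the normalisation. Letting $L\to\infty$ then yields \eqref{eq:2 point ineq full space}.

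The main obstacle is the uniform $O(|\partial\Lambda|)$ comparison of the two partition functions, because the boundary spins under $\zeta$ are unbounded. For the upper bound on $F^+_\Lambda-F^0_\Lambda$, which is the direction actually used, we write
\[
\frac{\mathbf Z^\zeta}{\mathbf Z^0}=\Big\langle \int \exp\Big(\beta\sum_{xy\in\partial_E\Lambda}\tau_x\tau_y\Big)\prod \mathrm d\zeta\Big\rangle^0_{\Lambda,\beta}.
\]
We then bound $\tau_x\tau_y\leq (\tau_x^2+\tau_y^2)/2$ and control $\langle \prod_{x\in\partial\Lambda}e^{c\tau_x^2}\rangle^0$ by the uniform $(a_0,b_0)$-regularity of Proposition~\ref{prop: existence of plus state extremal}$(i)$. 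This requires $c$ small relative to the regularity constant, so we may first need to split the boundary interaction into $2d$ sublattices and apply Hölder's inequality. The lower bound is not needed for the chain of inequalities above, since only $F^+(\beta)-F^+(\beta')\leq F^0(\beta)-F^0(\beta')+\text{error}$ is used. That inequality in turn needs both an upper bound on $(F^+-F^0)(\beta)$ and a lower bound on $(F^+-F^0)(\beta')$, and the latter follows from Jensen's inequality combined with $\langle\tau_x\rangle^0=0$ by symmetry.
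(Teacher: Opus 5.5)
Your argument is correct, but it takes a different route from the paper. You run the classical Lebowitz--Martin-L\"of convexity argument directly on $\log \mathbf Z^\zeta_{\Lambda,\beta}$ and $\log\mathbf Z^0_{\Lambda,\beta}$. The bulk comparison between $\beta'$ and $\beta$ comes from convexity in $\beta$. The boundary comparison is an \emph{averaged} bound $|\log \mathbf Z^\zeta_{\Lambda,\beta}-\log \mathbf Z^0_{\Lambda,\beta}|=O(|\partial\Lambda|)$. Its upper half needs an exponential moment of the boundary interaction, which is available here from the uniform regularity of $\nu^0_{\Lambda,\beta}$ and the super-Gaussian tails of $\rho$ and $\zeta$. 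Its lower half is Jensen plus the spin-flip symmetry.

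The paper instead works with events. It uses the Markov-type Lemma~\ref{lem:random variables} to get energy events $A_n$, $B_n$ of probability at least $\delta$ under $\nu^+_{\Lambda_n,\beta'}$ and $\nu^0_{\Lambda_n,\beta}$, and performs the $\beta\to\beta'$ tilt only on these events. It compares plus and free boundary conditions only on the event $C_n$ where the boundary interaction is $O(n^{d-1})$. So it never needs exponential moments of boundary terms; its lower bound uses the same symmetry trick, $\nu^0[e^{\beta' H^\eta}]=\nu^0[\cosh(\beta' H^\eta)]\ge 1$.

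Your proof is shorter and closer to the textbook argument. The paper's event-based formulation is chosen because it is the template for the half-space Proposition~\ref{prop: key prop uniqueness half space}. There the coupling is changed only on one layer, so the bulk gain is itself of surface order. Moreover, the comparison between the induced boundary conditions $(\xi^1_{n,\ell},\mathsf b^1_{n,\ell})$ and $(\xi^0_{n,\ell},\mathsf b^0_{n,\ell})$ is only available on high-probability events, not in expectation.

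Two minor remarks on your write-up.
\begin{itemize}
\item The H\"older splitting is unnecessary. Regularity gives $\langle \prod_{x\in\partial\Lambda}e^{c\tau_x^2}\rangle^0_{\Lambda,\beta}\le \big(e^{b_0}\int e^{(a_0+c)t^2}\,\mathrm d\rho(t)\big)^{|\partial\Lambda|}$, which is finite for every $c>0$ by admissibility.
\item In the lower bound on the plus correlations, the gap in Proposition~\ref{prop: wired random cluster properties} is between $\Lambda_L$ and the boundary of a larger box $\Lambda_{L'}$. So it applies to every edge of $\Lambda_L$, and discarding edges near $\partial\Lambda_L$ is harmless but not needed. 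To handle the non-locality of $\{x\leftrightarrow y\}$, read it in the wired sense. The $\Lambda_L$-wired connection indicator, evaluated on a configuration in $\Lambda_{L'}$, dominates the $\Lambda_{L'}$-wired one pointwise, and this gives the comparison.
\end{itemize}
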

With this result and Proposition \ref{prop: two-pt classification}, it is easy to complete the proof of Theorem \ref{thm:full-space uniqueness}.
\begin{proof}[Proof of Theorem~\textup{\ref{thm:full-space uniqueness}}] By Proposition \ref{prop: monot plus}, the map $\beta\mapsto \langle \tau_0 \tau_{\mathbf{e}_1} \rangle^+_{\beta}$ is increasing. As a consequence, its set of points of discontinuity is at most countable. If $\beta$ is a point of continuity of this map, \eqref{eq:2 point ineq full space} gives $\langle \tau_0 \tau_{\mathbf{e}_1} \rangle^+_{\beta}\leq \langle \tau_0 \tau_{\mathbf{e}_1} \rangle_{\beta}$, and since the reverse inequality holds (by Proposition \ref{prop: monot plus}), we get $\langle \tau_0 \tau_{\mathbf{e}_1} \rangle^+_{\beta}=\langle \tau_0 \tau_{\mathbf{e}_1} \rangle_{\beta}$. By symmetry, this equality extends to every neighbouring $x,y\in \mathbb Z^d$. Proposition \ref{prop: two-pt classification} then allows us to conclude. 
\end{proof}

In order to prove Proposition \ref{prop: key prop uniqueness full space}, we study concentration events for the energy $\sum_{xy\in E(\Lambda_n)}\tau_x\tau_y$ under the measures $\nu^+_{\Lambda_n,\beta'}$ and $\nu^0_{\Lambda_n,\beta}$. We exploit the symmetries of $\mathbb Z^d$ to argue that it concentrates around $|E(\Lambda_n)|\langle \tau_0\tau_{\mathbf{e}_1}\rangle_{\beta'}^+$ and $|E(\Lambda_n)|\langle\tau_0\tau_{\mathbf{e}_1}\rangle_{\beta}$, respectively. We compare the probabilities of these concentration events under the two measures and make the following observations: (1) going from plus to free boundary conditions costs at most $\exp(C|\partial \Lambda_n|)$ for some $C<\infty$; (2) if \eqref{eq:2 point ineq full space} were not true, going from $\beta'$ to $\beta$ would induce an energy cost of order at least $\exp(c|E(\Lambda_n)|)$ for some $c>0$.  Since all probabilities involved are bounded away from $0$, the two competing effects cannot hold true simultaneously, which yields a contradiction.

\begin{proof}[Proof of Proposition \textup{\ref{prop: key prop uniqueness full space}}]

As discussed above, we aim to prove \eqref{eq:2 point ineq full space}. Let us write $a=\langle \tau_0 \tau_{\mathbf{e}_1} \rangle^+_{\beta'}$ and $b=\langle \tau_0 \tau_{\mathbf{e}_1}\rangle_{\beta}$, so our goal is to prove that $a\leq b$.

Let $\varepsilon,C>0$ to be fixed a posteriori and define the events \begin{align}
A_n&= \Big\{ \sum_{xy\in E(\Lambda_n)} \tau_x\tau_y \geq |E(\Lambda_n)|(a-\varepsilon) \Big\}, \\
B_n&= \Big\{ \sum_{xy\in E(\Lambda_n)} \tau_x\tau_y \leq |E(\Lambda_n)|(b+\varepsilon) \Big\}, \\
C_n&=\Big\{\sum_{xy\in \partial_E \Lambda_n} |\tau_x \tau_y|\leq Cn^{d-1}\Big\},
\end{align}
where we recall that $\partial_E\Lambda_n=\{xy\in E(\mathbb Z^d) : x\in \Lambda_n, \: y\notin \Lambda_n\}$. 

\begin{Claim} \label{claim: technical full space}
Let $0 < \beta' < \beta$. For every $\varepsilon>0$, and every $C>0$ sufficiently large, there exists $\delta > 0$ such that, for every $n$ large enough,
\begin{equation}\label{eq:technical lemma full space}
\nu^+_{\Lambda_n,\beta'}[A_n] \geq \delta, \quad \nu^0_{\Lambda_n,\beta}[B_n] \geq \delta, \quad \nu^+_{\Lambda_n,\beta'}[C_n] \geq 1-\frac \delta 2. 
\end{equation}
\end{Claim}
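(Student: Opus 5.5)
The three bounds are proved separately: the first two by a law of large numbers for the bond energy in the infinite-volume measures, transferred to finite volume by monotonicity and regularity, and the third by Markov's inequality.

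\emph{First bound.} The energy density under $\nu^+_{\Lambda_n,\beta'}$ should be at least $a$ in the limit. By Proposition~\ref{prop:monotonicity in plus}, for edges $xy$ at distance at least $r$ from $\partial\Lambda_n$ we have $\langle\tau_x\tau_y\rangle^+_{\Lambda_n,\beta'}\geq \langle \tau_x\tau_y\rangle^+_{\beta'}=a$; the inequality holds because $\tau_x\tau_y$ is not monotone, but by the Edwards--Sokal formula \eqref{eq: es cor: 3 plus} it equals $\Psi^1[\mathsf a_x\mathsf a_y\mathbbm 1\{x\leftrightarrow y\}]$, an increasing function, and \eqref{eq: monot in domain wired FK} applies. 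The remaining $O(n^{d-1})$ edges have uniformly bounded expectation by regularity (Proposition~\ref{prop:reg_plus}). Hence
\[
\langle S_n\rangle^+_{\Lambda_n,\beta'}\geq |E(\Lambda_n)|(a-\varepsilon/2),\qquad S_n:=\sum_{xy\in E(\Lambda_n)}\tau_x\tau_y,
\]
for $n$ large. Regularity also gives $\langle S_n^2\rangle^+\leq K|E(\Lambda_n)|^2$. A Paley--Zygmund type argument then yields $\nu^+_{\Lambda_n,\beta'}[A_n]\geq\delta$. Concretely, we write $\langle S_n\rangle\leq |E|(a-\varepsilon)+\langle S_n\mathbbm 1_{A_n}\rangle$ and apply Cauchy--Schwarz to obtain $\langle S_n\mathbbm 1_{A_n}\rangle\leq \sqrt{K}|E|\sqrt{\nu[A_n]}$, so that $\nu[A_n]\geq (\varepsilon/2)^2/K$.

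\emph{Second bound.} The same mechanism applies with the inequalities reversed. For the free measure, monotonicity in the volume (Proposition~\ref{prop:monotonicity}, or domain monotonicity of $\Psi^0$) gives $\langle\tau_x\tau_y\rangle_{\Lambda_n,\beta}\leq\langle\tau_x\tau_y\rangle_\beta=b$ for every edge, by translation invariance of $\nu_\beta$ and symmetry between coordinate directions. Hence $\langle S_n\rangle_{\Lambda_n,\beta}\leq |E|b$. We need a lower bound on $\nu[B_n]=\nu[S_n\leq |E|(b+\varepsilon)]$, and Markov's inequality is not directly available because $S_n$ can be negative. Instead we write
\[
\langle S_n\rangle\geq |E|(b+\varepsilon)(1-\nu[B_n])+\langle S_n\mathbbm 1_{B_n}\rangle .
\]
Here $\langle S_n\mathbbm 1_{B_n}\rangle\geq -\sqrt{K}|E|\sqrt{\nu[B_n]}$ by Cauchy--Schwarz and regularity. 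This gives $|E|(b+\varepsilon)(1-\nu[B_n])-\sqrt K|E|\sqrt{\nu[B_n]}\leq |E|b$, which forces $\nu[B_n]$ to be bounded below by some $\delta(\varepsilon,K,b)>0$. The point $b\geq0$ holds by Griffiths-type positivity. We take $\delta$ to be the minimum of the two constants obtained.

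\emph{Third bound.} The boundary sum involves edges $xy$ with $y\in\partial^{\rm ext}\Lambda_n$, distributed according to $\zeta$. Regularity up to the boundary (Proposition~\ref{prop:reg_plus}(ii)) gives $\langle|\tau_x\tau_y|\rangle^+_{\Lambda_n,\beta'}\leq M$ uniformly in $n$. Markov's inequality then yields
\[
\nu^+[C_n^c]\leq M|\partial_E\Lambda_n|/(Cn^{d-1})\leq M c_d/C ,
\]
which is at most $\delta/2$ once $C$ is large, since $\delta$ does not depend on $C$.

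The main obstacle is the first step: checking that the monotonicity with gap, Proposition~\ref{prop:monotonicity in plus}, together with Edwards--Sokal, really yields $\langle\tau_x\tau_y\rangle^+_{\Lambda_n,\beta'}\geq\langle\tau_x\tau_y\rangle^+_{\beta'}$. In addition, the second moment bounds must hold uniformly up to the boundary, where the spins are $\zeta$-distributed.
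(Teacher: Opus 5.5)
Your proposal is correct and follows essentially the paper's route: per-edge bounds $\langle\tau_x\tau_y\rangle^+_{\Lambda_n,\beta'}\geq a$ and $\langle\tau_x\tau_y\rangle_{\Lambda_n,\beta}\leq b$ from monotonicity, anti-concentration from the uniform moment control given by regularity, and Markov's inequality for $C_n$ with $\delta$ independent of $C$. The only real difference is that you use a second-moment Cauchy--Schwarz bound where the paper uses the truncation Lemma~\ref{lem:random variables}. Two small remarks on your write-up:

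\begin{itemize}
\item Your justification of the first per-edge bound via Edwards--Sokal and \eqref{eq: monot in domain wired FK} is the right one. It in fact holds for every edge of $\Lambda_n$, since the gap $r$ is only needed between $\Lambda_n$ and the boundary of the larger box.
\item Bounding the contribution of $A_n^c$ to $\langle S_n\rangle$ by $|E(\Lambda_n)|(a-\varepsilon)$ implicitly uses $\varepsilon\leq a$. This is harmless, since $a>0$ and $A_n,B_n$ only grow with $\varepsilon$.
\end{itemize}
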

The first two bounds in \eqref{eq:technical lemma full space} are concentration estimates, while the last one follows from regularity. 
Before proving Claim \ref{claim: technical full space}, we conclude the proof of Proposition~\ref{prop: key prop uniqueness full space}. 
We collect two easy consequences of \eqref{eq:technical lemma full space}. First, letting $G(\tau)=\prod_{xy\in E(\Lambda_n)} e^{(\beta'-\beta)\tau_x \tau_y}$, one has
\begin{equation} \label{eq: full space: An}
    \nu^0_{\Lambda_n, \beta'}[A_n]=\frac{\langle G(\tau) \mathbbm{1}_{A_n} \rangle ^0_{\Lambda_n,\beta}}{\langle G(\tau) \rangle^0_{\Lambda_n,\beta}}\leq \frac{\langle G(\tau) \mathbbm{1}_{A_n} \rangle ^0_{\Lambda_n,\beta}}{\langle G(\tau) \mathbbm{1}_{B_n}\rangle^0_{\Lambda_n,\beta}}\leq \frac{1}{\delta}\cdot e^{(\beta'-\beta)|E(\Lambda_n)|(a-b-2\varepsilon)}.
\end{equation}
Second, for any $\eta\in \mathbb R^{\partial^{\textup{ext}}\Lambda_n}$,
\begin{equation}\label{eq: full space: AnCn}
   \nu^\eta_{\Lambda_n,\beta'}[A_n\cap C_n(\eta)]= \frac{ \nu^0_{\Lambda_n,\beta'}[e^{\beta' H^{\eta}(\tau)}\mathbbm{1}_{A_n \cap C_n(\eta)}]}{\nu^0_{\Lambda_n,\beta'}[e^{\beta' H^{\eta}(\tau)}]}\leq e^{C\beta' n^{d-1}}\nu^0_{\Lambda_n,\beta'}[A_n],
\end{equation}
where 
\[
H^\eta(\tau)
:=
\sum_{\substack{xy\in\partial_E\Lambda_n\\
x\in\Lambda_n,\;y\in\partial^{\mathrm{ext}}\Lambda_n}}
\tau_x\eta_y, \qquad C_n(\eta):=\Big\{\sum_{xy\in \partial_E \Lambda_n} |\tau_x \eta_y|\leq Cn^{d-1}\Big\},
\]
and we used that $\nu^0_{\Lambda_n,\beta'}
[e^{\beta' H^\eta}]=
\nu^0_{\Lambda_n,\beta'}
\left[\cosh\left(\beta'H^\eta\right)\right]\geq 1$.
Using the domain Markov property and integrating appropriately over $\eta$ (recall Definition \ref{def:+measure spin finite volume}) yields
\begin{equation}\label{eq: full space:An bis}
    \nu^+_{\Lambda_n,\beta'}[A_n\cap C_n]\leq e^{C\beta' n^{d-1}}\nu^0_{\Lambda_n,\beta'}[A_n].
\end{equation}
Moreover, by \eqref{eq:technical lemma full space}, $\frac{\delta}{2}\leq \nu^+_{\Lambda_n,\beta'}[A_n\cap C_n]$. Hence, combining \eqref{eq: full space: An} and \eqref{eq: full space:An bis}, we obtain
\begin{equation} \label{eq: full space cornd}
\frac{\delta}{2} \leq \frac{e^{C\beta' n^{d-1}}}{\delta}e^{(\beta'-\beta)|E(\Lambda_n)|(a-b-2\varepsilon)}. 
\end{equation}
Since $|E(\Lambda_n)|\geq cn^d$ and $\beta'<\beta$, sending $n$ to infinity, we get that $a-b-2\varepsilon\leq 0$. Since $\varepsilon>0$ is arbitrary, we conclude that \eqref{eq:2 point ineq full space} holds, as desired. 
\end{proof}

Finally, we prove Claim \ref{claim: technical full space}. We rely on a simple concentration result, stated in Lemma \ref{lem:random variables} below.

\begin{proof}[Proof of Claim~\textup{\ref{claim: technical full space}}] 

Fix $\varepsilon>0$. Observe that for every $xy\in E(\Lambda_n)$, Proposition \ref{prop: monot plus} gives that $\langle\tau_x\tau_y\rangle_{\Lambda_n,\beta'}^+\geq a$. Moreover, by regularity (see Proposition \ref{prop:reg_plus}), there exists $M>0$ large enough such that $\nu_{\Lambda_n,\beta'}^+[|\tau_x\tau_y|\mathds{1}_{|\tau_x\tau_y|\geq M}]\leq \tfrac{\varepsilon}{2}$. Therefore, Lemma \ref{lem:random variables}, applied to the random variables $(\tau_x\tau_y)_{xy\in E(\Lambda_n)}$, $a$, and $M$ yields the existence of $\delta'=\delta'(a,M)>0$ such that
\begin{equation}
    \nu_{\Lambda_n,\beta'}^+[A_n]\geq \delta'.
\end{equation}
The proof of the lower bound on $\nu_{\Lambda_n,\beta}^0[B_n]$ follows the same strategy. By Proposition \ref{prop:monotonicity}, one has $\langle \tau_x\tau_y\rangle_{\Lambda_n,\beta}\leq b$. Moreover, there exists $M>0$ (which can be chosen to be the same as above) which satisfies $\nu_{\Lambda_n,\beta}^0[|\tau_x\tau
_y|\mathds{1}_{|\tau_x\tau_y|\geq M}]\leq \tfrac{\varepsilon}{2}$.  Lemma \ref{lem:random variables} applied to the random variables $(-\tau
_x\tau_y)_{xy\in E(\Lambda_n)}$ yields the existence of $\delta''=\delta''(b,M)>0$ such that
\begin{equation}
    \nu_{\Lambda_n,\beta}^0[B_n]\geq \delta''.
\end{equation}
We define $\delta:=\delta'\wedge \delta''$. Finally, the bound on $\nu_{\Lambda_n,\beta'}^+[C_n]$ is a straightforward consequence of regularity (Proposition \ref{prop:reg_plus}), by choosing $C=C(\delta)$ large enough. This concludes the proof.
\end{proof}

\begin{Lem}\label{lem:random variables} Let $X_1,\ldots, X_n$ be integrable real-valued random variables on a probability space $(\Omega, \mathcal F, \mathbb P)$. Let $S_n= X_1 + \ldots +X_n$. 
Assume the following.
\begin{enumerate}
    \item[(i)] There exists $a\in\mathbb R$ such that $\mathbb E[X_i] \geq a$ for every $1 \leq i \leq n$.
    \item[(ii)] There exist $M\geq \max\{a,0\}$ and $\delta > 0$ such that $\mathbb E[X_i \mathbbm{1}_{X_i>M}]\leq \delta$ for every $1 \leq i \leq n$.
\end{enumerate}
Then,
\begin{equation}
    \mathbb P[S_n \geq (a-2\delta)n]\geq \delta':=\frac{\delta}{M-a+2\delta}.
\end{equation}

\end{Lem}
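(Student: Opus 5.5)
The plan is a reverse-Markov (Paley--Zygmund type) argument applied to a truncated sum. Truncating from above makes the sum bounded above, so its expectation controls the probability that it is large. Hypothesis (ii) ensures that the truncation costs at most $\delta$ in expectation per variable.

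First, I set $Y_i:=\min(X_i,M)$ and $T_n:=Y_1+\ldots+Y_n$. Then $Y_i\leq X_i$ pointwise, so $T_n\leq S_n$. It therefore suffices to show $\mathbb P[T_n\geq (a-2\delta)n]\geq \delta'$.

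Next, I bound the mean of each $Y_i$. We have $X_i-Y_i=(X_i-M)\mathbbm 1_{X_i>M}$. Since $M\geq 0$, this is at most $X_i\mathbbm 1_{X_i>M}$. By (i) and (ii),
\begin{equation*}
\mathbb E[Y_i]\geq \mathbb E[X_i]-\mathbb E[X_i\mathbbm 1_{X_i>M}]\geq a-\delta,
\end{equation*}
and hence $\mathbb E[T_n]\geq n(a-\delta)$.

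Then I split the expectation of $T_n$ according to the event $E:=\{T_n\geq (a-2\delta)n\}$. On $E$ we use the deterministic bound $T_n\leq nM$. On $E^c$ we have $T_n<(a-2\delta)n$; this holds whatever the sign of $a-2\delta$. Therefore
\begin{equation*}
n(a-\delta)\leq \mathbb E[T_n]\leq nM\,\mathbb P[E]+n(a-2\delta)\bigl(1-\mathbb P[E]\bigr),
\end{equation*}
which rearranges to $n\delta\leq n(M-a+2\delta)\,\mathbb P[E]$.

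Finally, the assumption $M\geq a$ gives $M-a+2\delta\geq 2\delta>0$, so we may divide. This yields $\mathbb P[S_n\geq (a-2\delta)n]\geq\mathbb P[E]\geq \delta/(M-a+2\delta)=\delta'$.

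There is no real obstacle here. The only points needing care are that $M\geq 0$ is used for the truncation estimate, and that $M\geq a$ keeps the final denominator positive.
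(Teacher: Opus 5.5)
Your proof is correct and is essentially the paper's argument. The paper truncates with $Y_i = X_i\mathbbm{1}_{X_i\le M}$ instead of $\min(X_i,M)$, and applies Markov's inequality to the nonnegative variable $\sum_i (M-Y_i)$, which is the same reverse-Markov computation you carry out by splitting $\mathbb E[T_n]$ over $E$ and $E^c$.
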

\begin{proof} Let $Y_i:=X_i\mathbbm{1}_{X_i\leq M}$ and notice that by $(i)$ and $(ii)$ we have $\mathbb E[Y_i] = \mathbb E[X_i] - \mathbb E[X_i\mathbbm{1}_{X_i>M}] \geq a-\delta$. In particular, the non-negative random variable $Z:=\sum_{i=1}^n (M-Y_i)$ satisfies $\mathbb{E} [Z]\leq (M-a+\delta)n$. By Markov's inequality we have
\begin{equation}
\mathbb P[S_n \leq (a-2\delta)n] \leq \mathbb P[Z \geq (M-a+2\delta)n ]\leq \frac{(M-a+\delta)n}{(M-a+2\delta)n}=1-\delta',
\end{equation}
which concludes the proof.
\end{proof}

\subsection{Proof of Theorem \ref{thm:half-space}}

We now turn our attention to proving uniqueness of the half-space random cluster measures, as stated in Theorem \ref{thm:half-space}. It is natural to follow the strategy of Lebowitz and Martin-L\"of used to prove Theorem \ref{thm:full-space uniqueness}. We first observe that the following counterpart of Proposition \ref{prop: two-pt classification} holds in our setting.
\begin{Prop}\label{prop: two-pt classification half-space} Let $(\beta,h)\in (\mathbb R^+)^2$. Assume that for every pair of neighbours $u,v\in \mathbb H$,
\begin{equation}
    \langle \tau_u\tau_v\rangle_{\mathbb H,\beta}^{+,h}=\langle \tau_u\tau_v\rangle_{\mathbb H,\beta}^{0,h}.
\end{equation}
Then,
\begin{equation}
    \Psi^{1,h}_{\mathbb H,\beta}=\Psi^{0,h}_{\mathbb H,\beta}.
\end{equation}
\end{Prop}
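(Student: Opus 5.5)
We follow the proof of Proposition~\ref{prop: two-pt classification} essentially verbatim, with $\mathbb Z^d$ replaced by $\mathbb H$ and the ghost vertex $\fg$ carried along. First we check the half-space analogue of \eqref{eq:psi1 dominates psi0}, namely $\Psi^{0,h}_{\mathbb H,\beta}\preccurlyeq\Psi^{1,h}_{\mathbb H,\beta}$. This follows from the finite-volume comparison \eqref{eq:phi1 dominates psi0 finite volume}, which holds equally for the half-space measures by Remark~\ref{rem:plus}. We then pass to the limit using the weak convergence of $\Psi^{0,h}_{\Lambda,\beta}$ and $\Psi^{1,h}_{\Lambda,\beta}$. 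Strassen's theorem now gives a monotone coupling $\mathbb Q$ of $(\mathsf a^1,\omega^1)\sim\Psi^{1,h}_{\mathbb H,\beta}$ and $(\mathsf a^0,\omega^0)\sim\Psi^{0,h}_{\mathbb H,\beta}$ with $\mathsf a^1\ge \mathsf a^0$ and $\omega^1\ge\omega^0$. Monotonicity of the quenched FK-Ising measures in the couplings $\mathsf a_x\mathsf a_y$ and $h\mathsf a_x$ gives, $\mathbb Q$-a.s., $\Psi^{1,h}_{\mathbb H,\beta}[\cdot\mid\mathsf a=\mathsf a^1]\succcurlyeq\Psi^{0,h}_{\mathbb H,\beta}[\cdot\mid \mathsf a=\mathsf a^0]$.

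Next, the Edwards--Sokal identities (Corollaries~\ref{cor: ES correlations} and \ref{cor: ES correlations for plus}, passed to the infinite half-space) express $\langle\tau_u\tau_v\rangle^{\#,h}_{\mathbb H,\beta}$ as $\mathbb Q[\mathsf a^i_u\mathsf a^i_v\,\mathbb Q[u\connect{\omega^i}v\mid\mathsf a^i]]$. The conditional connection probability is monotone in the pair, so the assumed equality of expectations forces the pointwise identity \eqref{eq: 2pt fcn equality FK} for every pair of neighbours $u,v\in\mathbb H$. Because the conditional probability of a neighbouring connection vanishes exactly when $\mathsf a_u\mathsf a_v=0$, this yields $\mathsf a^1_u\mathsf a^1_v=\mathsf a^0_u\mathsf a^0_v$ a.s.

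To upgrade this to $\mathsf a^1=\mathsf a^0$, we reuse the argument with the events $\mathcal Z_x$ (all $\mathbb H$-neighbours of $x$ have zero absolute value). Every $x\in\mathbb H$ has at least one neighbour in $\mathbb H$, so the decomposition $\mathsf a^1_x-\mathsf a^0_x=\mathsf a^1_x\mathbf 1_{Z_1}-\mathsf a^0_x\mathbf 1_{Z_0}$ still holds. By the domain Markov property, conditionally on $\mathcal Z_x$ the vertex $x$ is isolated from all other vertices, but for $x\in\partial\mathbb H$ it still interacts with $\fg$ through the field $h$. This is the one point where the full-space argument needs adjustment. Conditionally on $\mathcal Z_x$, the law of $\mathsf a_x$ is the same single-site law under both measures: it is $\rho$ when $x\notin\partial\mathbb H$, and $\rho$ tilted by $\cosh(\beta h\mathsf a_x)$ (the single-site Ising weight with field $h$) when $x\in\partial\mathbb H$. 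In either case it does not depend on the boundary condition. Hence $\mathbb E_{\mathbb Q}[\mathsf a^1_x-\mathsf a^0_x]=c_x(\Psi^{1,h}[\mathcal Z_x]-\Psi^{0,h}[\mathcal Z_x])\le 0$ with $c_x\geq 0$, since $\mathcal Z_x$ is decreasing. Combined with $\mathsf a^1_x\geq\mathsf a^0_x$, this gives $\mathsf a^1=\mathsf a^0=:\mathsf a'$ a.s.

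Finally, we identify the edges. For $e=uv\in E(\mathbb H)$ the identity \eqref{eq: 2pt fcn equality fk3} holds verbatim, so $\mathbb Q[\omega^1_e=1]=\mathbb Q[\omega^0_e=1]$. The edges $x\fg$ with $x\in\partial\mathbb H$ need a separate step. The same Edwards--Sokal reasoning gives $\mathbb Q[\omega^i_{x\fg}=1\mid\mathsf a']=\tfrac{p(\beta,h,\mathsf a')_{x\fg}}{2}(\mathbb Q[x\connect{\omega^i}\fg\mid\mathsf a']+1)$. We therefore need equality of the conditional probabilities of $\{x\connect{}\fg\}$, i.e.\ of $\langle\mathrm{sgn}\,\tau_x\rangle$ given $|\tau|$. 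We expect this to be the main obstacle, since the hypothesis only concerns two-point functions. Our first attempt is the following. $\omega^1\ge\omega^0$ and all edges of $E(\mathbb H)$ agree a.s., so $\omega^1$ and $\omega^0$ can differ only on ghost edges. Given the common edge configuration and $\mathsf a'$, the ghost edges of distinct $E(\mathbb H)$-clusters are conditionally independent, with a law depending only on that cluster. For finite clusters this law is the same under both measures, by the domain Markov property, and infinite clusters are handled with the finite-energy property (Proposition~\ref{prop:finite-energy}). It follows that $\omega^1_{x\fg}$ and $\omega^0_{x\fg}$ have equal laws. Monotonicity then gives $\omega^1=\omega^0$ a.s., and hence $\Psi^{1,h}_{\mathbb H,\beta}=\Psi^{0,h}_{\mathbb H,\beta}$.
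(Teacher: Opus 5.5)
Everything up to and including the identification of the $E(\mathbb H)$ edges is the paper's argument. The paper proves this proposition only by saying it follows exactly the lines of Proposition~\ref{prop: two-pt classification}. Your adjustment in the $\mathcal Z_x$ step for $x\in\partial\mathbb H$ is correct and is a detail the paper leaves implicit: given $\mathcal Z_x$, the law of $\mathsf a_x$ is the $\cosh(\beta h\mathsf a_x)$-tilted single-site law, the same under both boundary conditions. Note also that the paper defines $\Psi^{0,h}_{\mathbb H,\beta}$ and $\Psi^{1,h}_{\mathbb H,\beta}$ as measures on $\{0,1\}^{E(\mathbb H)}\times(\mathbb R^+)^{\mathbb H}$. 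So for the statement as written you can stop after the $E(\mathbb H)$ edges, and the paper's one-line proof does not discuss ghost edges. Your concern about them is not idle, though: $\omega_{x\fg}$ does enter later, through $F^1_x$ in the proof of Lemma~\ref{lem: large to free bc}.

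Your final paragraph, which tries to include the ghost edges $x\fg$, has a genuine gap: the finite-energy property does not handle infinite clusters. Let $C$ be an infinite $E(\mathbb H)$-cluster with $C\cap\partial\mathbb H$ finite and non-empty.
\begin{itemize}
\item Under the wired measure, $C$ reaches $\partial^{\textup{ext},+}\Lambda$ in every finite-volume approximation, and that boundary is wired to $\fg$. Hence the ghost edges of $C$ are independent with parameters $p(\beta,h,\mathsf a)_{x\fg}$.
\item Under the free measure, the all-closed configuration keeps its extra factor $2$. For instance, if $C\cap\partial\mathbb H=\{x\}$, the conditional probability that $\omega_{x\fg}=1$ is $\tanh(\beta h\mathsf a_x)$, which is strictly less than $1-e^{-2\beta h\mathsf a_x}$.
\end{itemize}
So the two conditional laws really differ on such clusters, and finite energy cannot exclude them. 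What you would need is that almost surely every infinite cluster meeting $\partial\mathbb H$ meets it infinitely often, so that under both measures it is joined to $\fg$. Proving this calls for something like uniqueness of the infinite cluster in $\mathbb H$ (a Burton--Keane type argument) plus ergodicity under horizontal translations. You would also need a limiting argument identifying the free conditional law on clusters that meet $\partial\mathbb H$ infinitely often.

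Equivalently, what is missing is the equality of boundary magnetisations $\langle\tau_x\rangle^{+,h}_{\mathbb H,\beta}=\langle\tau_x\rangle^{0,h}_{\mathbb H,\beta}$ for $x\in\partial\mathbb H$. Viewing $x\fg$ as a neighbouring pair with $\tau_{\fg}\equiv 1$, this is exactly the input the analogue of \eqref{eq: 2pt fcn equality fk3} for the edge $x\fg$ requires. The hypothesis, which only concerns pairs $u,v\in\mathbb H$, does not supply it.
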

\begin{proof} The proof follows exactly the same lines as the one of Proposition \ref{prop: two-pt classification}.
\end{proof}

Establishing the right counterpart of Proposition \ref{prop: key prop uniqueness full space} is significantly more difficult. Indeed, we cannot reduce to only considering the edge $0\mathbf e_1$ as our setting is no longer invariant under all automorphisms of $\mathbb Z^d$. Therefore, in order to extend the concentration argument used in the full-space setting, we are forced to average separately along each ``layer'' of $\mathbb H$, or, more specifically, along the orbits of a given edge under the automorphism group of $\mathbb H$. As a result, we change $\beta$ to $\beta'$ only along such sets. These considerations motivate the following definitions.  

Let $\Gamma$ be the group of automorphisms of $\mathbb{H}$. 
Given $0<\beta'<\beta$ and $e=uv\in E(\mathbb{H})$, let $(J_{xy}
(\beta,\beta',e))_{xy\in E(\mathbb H)}$ be the coupling constants given by
\begin{equation}
J_{xy}(\beta,\beta',e)=\beta'\mathbbm{1}_{xy\in \Gamma uv} + \beta\mathbbm{1}_{xy\notin \Gamma uv},
\end{equation}
where $\Gamma uv$ is the orbit of $uv$ under $\Gamma$. Let $\langle \cdot 
\rangle^{+,h}_{\mathbb{H},J(\beta,\beta',e)}$ be 
the corresponding plus measure on $\mathbb{H}$ with 
coupling constants $J(\beta,\beta',e)$. If $e$ lies in $\partial\mathbb{H}$, we simply write $\langle \cdot 
\rangle^{+,h}_{\mathbb{H},\beta,\beta'}$.

Finally, to work out the concentration argument as we did in the setting of $\mathbb Z^d$ we crucially leverage that $\Psi^1_\beta=\Psi^0_\beta$. This important restriction is explained in more detail below.  We recall that the set $\Xi$, where these measures do not coincide, was defined above Theorem \ref{thm:full-space uniqueness}.

\begin{Prop}\label{prop: key prop uniqueness half space} For every $\beta \notin \Xi$, every $0<\beta'<\beta$, and every $h\geq 0$, 
\begin{equation}\label{eq:2 point ineq hs}
    \langle \tau_{u}\tau_v \rangle^{+,h}_{\mathbb{H},J(\beta,\beta',uv)}\leq \langle \tau_u \tau_v  \rangle^{0,h}_{\mathbb{H},\beta} \quad \forall u,v\in \mathbb H, \: u\sim v.
\end{equation}
\end{Prop}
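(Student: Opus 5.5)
The plan is to run the Lebowitz--Martin-L\"of argument of Proposition~\ref{prop: key prop uniqueness full space} inside $\mathbb H$, averaging only along the orbit $\Gamma uv$ and changing $\beta$ to $\beta'$ only there. Fix $u\sim v$ and write $a:=\langle \tau_u\tau_v\rangle^{+,h}_{\mathbb H,J(\beta,\beta',uv)}$ and $b:=\langle\tau_u\tau_v\rangle^{0,h}_{\mathbb H,\beta}$. The orbit $\Gamma uv$ lives in a bounded number of layers near height $k$, the height of $uv$. The volume gain from the tilt is therefore only of order $n^{d-1}$, which is the same order as the boundary of a box. So I would work in flat boxes $R_{n,m}:=([-n,n]^{d-1}\times[0,m])\cap\mathbb H$ with $m\geq k$ fixed but large, and $n\to\infty$. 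Let $O_n:=\Gamma uv\cap E(R_{n,m})$, so $|O_n|\asymp n^{d-1}$.

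\emph{Step 1 (concentration).} Define $A_n:=\{\sum_{xy\in O_n}\tau_x\tau_y\geq |O_n|(a-\varepsilon)\}$ and $B_n:=\{\sum_{xy\in O_n}\tau_x\tau_y\leq |O_n|(b+\varepsilon)\}$. By monotonicity in the volume of the half-space plus measures (Proposition~\ref{prop:reg_plus half-space}, with its gap $r$; edges within distance $r$ of the lateral boundary are discarded at cost $O(mn^{d-2})$), every edge $xy\in O_n$ satisfies $\langle\tau_x\tau_y\rangle^{+,h}_{R_{n,m},J}\geq a$, up to an error from the top face. I would handle that error by taking the box tall, i.e.\ letting $m$ grow slowly. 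On the free side, Proposition~\ref{prop:monotonicity} gives $\langle\tau_x\tau_y\rangle^{0,h}_{R_{n,m},\beta}\leq b$. Regularity together with Lemma~\ref{lem:random variables} then yields $\nu^{+,h}_{R_{n,m},J}[A_n]\geq\delta$ and $\nu^{0,h}_{R_{n,m},\beta}[B_n]\geq\delta$. Tilting by $G=\prod_{xy\in O_n}e^{(\beta'-\beta)\tau_x\tau_y}$ exactly as in \eqref{eq: full space: An} gives
\begin{equation*}
\nu^{0,h}_{R_{n,m},J}[A_n]\leq \delta^{-1}e^{(\beta'-\beta)|O_n|(a-b-2\varepsilon)}.
\end{equation*}

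\emph{Step 2 (boundary comparison; the main obstacle).} It remains to show that, for $m$ large, passing from plus to free boundary conditions on $R_{n,m}$ costs only $e^{o(n^{d-1})}$ on an event of probability bounded below, i.e.\ $\nu^{+,h}_{R_{n,m},J}[A_n]\leq e^{\eta n^{d-1}}\nu^{0,h}_{R_{n,m},J}[A_n]+\delta/2$ with $\eta=\eta(m)\to0$. The lateral faces are harmless: arguing as in \eqref{eq: full space: AnCn} with regularity, they cost $e^{Cmn^{d-2}}$. The top face at height $m$, of size $n^{d-1}$, is the difficulty, and this is where $\beta\notin\Xi$ enters. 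Since $\Psi^0_\beta=\Psi^1_\beta$, both $\Psi^{0,h}_{\mathbb H,J}$ and $\Psi^{1,h}_{\mathbb H,J}$, seen far from $\partial\mathbb H$ (the coupling constants equal $\beta$ there), should be close to the unique full-space measure. I would show that the random-cluster boundary conditions (partition and $\mathsf a$) induced on the top face by the complement of the box agree for the two measures, except at a fraction $\eta(m)$ of top-face vertices. Concretely, I would take the monotone (Strassen) coupling of the two half-space measures. Convergence to the full-space limit forces $\mathsf a^1=\mathsf a^0$ and $\omega^1=\omega^0$ on edges near height $m$ with high density. The remaining issue is that the partitions could differ through long connections outside the box. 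Here I would use a Burton--Keane-type counting argument: distinct outside clusters touching the top face that merge in $\omega^1$ but not in $\omega^0$ require ``trifurcation-like'' disagreement points, whose number can be at most of order their boundary. This gives a bound $o(n^{d-1})$ on the number of mismatched partition classes. Each mismatch is then paid for with finite energy (Proposition~\ref{prop:finite-energy}) and regularity at a constant cost, giving $e^{\eta(m)n^{d-1}}$.

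\emph{Step 3 (conclusion).} Combining the steps gives
\begin{equation*}
\delta/2\leq \delta^{-1}e^{(\eta(m)+Cmn^{-1})n^{d-1}}e^{(\beta'-\beta)|O_n|(a-b-2\varepsilon)}.
\end{equation*}
Since $|O_n|\geq c n^{d-1}$ with $c$ independent of $m$, letting first $n\to\infty$ and then $m\to\infty$ forces $(\beta-\beta')c(a-b-2\varepsilon)\leq\eta(m)\to0$. Hence $a\leq b+2\varepsilon$ for every $\varepsilon>0$, which is \eqref{eq:2 point ineq hs}. The delicate point is Step 2, i.e.\ quantifying the density of partition disagreements on the top face purely from $\Psi^0_\beta=\Psi^1_\beta$.
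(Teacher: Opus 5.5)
Your overall architecture (averaging along the orbit in flat boxes, an exact tilt, using $\beta\notin\Xi$ for bulk agreement, Burton--Keane for partitions) is the paper's. However, Step~2 does not connect to Steps~1 and~3, and this is a genuine gap.

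\textbf{The mismatch in Step 2.} Steps~1 and~3 concern the finite-volume measures $\nu^{+,h}_{R_{n,m},J}$ and $\nu^{0,h}_{R_{n,m},J}$. Their boundary conditions on the top face are deterministic: $\zeta$-distributed values wired to $\fg$, versus zero. These differ at every one of the $\asymp n^{d-1}$ top-face sites, for every $m$. A direct comparison, via the factor $e^{\beta\sum\tau_x\eta_y}$ as in \eqref{eq: full space: AnCn}, or via the change in absolute values and cluster counts in the random cluster weights, costs $e^{cn^{d-1}}$ with $c$ not tending to $0$ as $m\to\infty$. This is exactly the obstruction.

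The mechanism you invoke is the monotone coupling of the two infinite half-space measures, with boundary conditions induced on the box by its complement. It says nothing about these two finite-volume measures: averaging over the induced boundary conditions reproduces $\nu^{\pm,h}_{\mathbb H,J}$ restricted to the box, not $\nu^{+,h}_{R_{n,m},J}$ or $\nu^{0,h}_{R_{n,m},J}$. So your displayed Step~2 inequality is not proved.

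You also cannot simply move Steps~1 and~3 to infinite volume. The tilt by $G$ is exact only when the box's boundary condition is held fixed and the couplings outside are unchanged, whereas $\nu^{0,h}_{\mathbb H,J}$ and $\nu^{0,h}_{\mathbb H,\beta}$ differ along the whole infinite orbit.

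\textbf{What is needed.} The whole argument must run conditionally on induced boundary conditions. In the paper these are $(\xi^i_{n,\ell},\mathsf b^i_{n,\ell})$, induced on $R(n,\ell)$ by a monotone coupling $\mathbb P_\beta$ of $\Psi^{1,h}_{\mathbb H,\beta}$ and $\Psi^{0,h}_{\mathbb H,\beta}$. The coupling constant is $\beta$ outside and $\beta'$ only on $E_n$ inside, so the tilt is exact given $(\xi^0_{n,\ell},\mathsf b^0_{n,\ell})$. One must then find, with positive $\mathbb P_\beta$-probability, boundary conditions that simultaneously:
\begin{itemize}
\item are close (Lemmas~\ref{lem:abs value approx} and~\ref{lem:partitions approx});
\item make $A_n$ non-rare on the wired side: event $\mathcal C$, of probability $>c_0$, obtained from Lemma~\ref{lem:random variables} plus monotonicity in parameters, since the averaged measure is no longer the $J$-plus measure;
\item make $B_n$ non-rare on the free side at $\beta$: event $\mathcal D$, of probability $\geq 1-2\delta$, from ergodicity of $\nu^{0,h}_{\mathbb H,\beta}$, with $\delta\ll c_0$.
\end{itemize}
This balancing is absent from your plan. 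Alternatively, to keep your finite-volume Steps~1 and~3, Step~2 would have to be proved by applying the coupling and Burton--Keane machinery to the two finite-volume random cluster measures on $R_{n,m}$ themselves, comparing what they induce on an inner sub-box near height $m/2$.

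\textbf{A smaller gap: absolute values.} For non-atomic $\rho$, a monotone coupling does not give $\mathsf a^1=\mathsf a^0$ on a set of high density, so absolute-value mismatches cannot be paid site by site at constant cost. One needs two separate facts:
\begin{itemize}
\item few sites have $\mathsf b^0_y=0<\mathsf b^1_y$ (these change edge compatibility and cluster counts);
\item $\sum_{xy}\mathsf a_x(\mathsf b^1_y-\mathsf b^0_y)\leq\varepsilon n^{d-1}$ with high conditional probability.
\end{itemize}
The paper gets the second from $\sum_y(\mathsf a^1_y-\mathsf a^0_y)^2\leq\sum_y\bigl((\mathsf a^1_y)^2-(\mathsf a^0_y)^2\bigr)$, the ergodic theorem and Cauchy--Schwarz. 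That bound is also what gives the partition-function comparison in \eqref{eq:pf4.5}.
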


The combination of Proposition \ref{prop: two-pt classification half-space} and Proposition \ref{prop: key prop uniqueness half space} readily yields Theorem \ref{thm:half-space}.

\begin{proof}[Proof of Theorem~\textup{\ref{thm:half-space}}]
Denote by $\mathfrak{U}$ the set of $(\beta,h)\in (0,\infty)^2$ such that 
\begin{equation}
\beta\notin \Xi, \qquad 
\lim_{\beta'\uparrow \beta}\langle \tau_u \tau_v\rangle^{+,h}_{\mathbb{H},\beta'}=\langle \tau_u \tau_v\rangle^{+,h}_{\mathbb{H},\beta} \quad \forall u,v\in \mathbb H, \: u\sim v.
\end{equation}
On the set $\mathfrak U$, we have, for every $u,v\in \mathbb H$ with $u\sim v$, 
\begin{equation}
\lim_{\beta'\uparrow \beta}\langle \tau_u \tau_v \rangle^{+,h}_{\mathbb{H},J(\beta,\beta',uv)}=\langle \tau_u \tau_v\rangle^{+,h}_{\mathbb{H},\beta},
\end{equation}
since (by Proposition \ref{prop: monot plus}) $\langle \tau_u \tau_v \rangle^{+,h}_{\mathbb{H},\beta'}\leq \langle \tau_u \tau_v \rangle^{+,h}_{\mathbb{H},J(\beta,\beta',uv)}\leq \langle \tau_u \tau_v \rangle^{+,h}_{\mathbb{H},\beta}$. Hence, by Proposition \ref{prop: key prop uniqueness half space}, for every $u,v\in \mathbb H$ with $u\sim v$, $\langle \tau_u \tau_v \rangle^{+,h}_{\mathbb{H},\beta}\leq \langle \tau_u \tau_v \rangle^{0,h}_{\mathbb{H},\beta}$, and the converse inequality holds by Proposition \ref{prop: monot plus}. Proposition \ref{prop: two-pt classification half-space} then allows us to conclude that $\Psi^{1,h}_{\mathbb{H},\beta}=\Psi^{0,h}_{\mathbb{H}, \beta}$.

Finally, we need to argue that $\mathfrak U$ has full measure. Let $e=uv$ be an edge with $u,v\in \mathbb{H}$. Note that for every $h>0$, the function $\beta\mapsto \langle \tau_u \tau_v\rangle^{+,h}_{\mathbb{H},\beta}$ is increasing, hence it has at most countably many discontinuities. Combined with Theorem~\ref{thm:full-space uniqueness} and Fubini--Tonelli's theorem,
\begin{equation}
\int_{(\mathbb{R}^+)^2} \mathbbm{1}_{(\beta,h)\not\in \mathfrak{U}}\mathrm{d}(\beta,h)=\int_{\mathbb{R}^+}\Big(\int_{\mathbb{R}^+} \mathbbm{1}_{(\beta,h)\not\in \mathfrak{U}}\mathrm{d}\beta\Big)\mathrm{d}h=\int_{\mathbb R^+} 0\:\mathrm{d}h=0.
\end{equation}   
Since there are countably many edges, this concludes the proof. 
\end{proof}

We now explain the main conceptual ideas involved in the proof of Proposition \ref{prop: key prop uniqueness half space}. If we were to run a similar argument as in the proof of Proposition \ref{prop: key prop uniqueness full space}, there would be a significant obstacle in the comparison of the probabilities of the finite-volume concentration events. Notably, due to the reduced symmetries, the change in $\beta$ now induces a surface-order exponential cost $\exp(Cn^{d-1})$, for some $C<\infty$, which is a priori the same exponential order as the change in boundary conditions. This invalidates the rest of the argument. Our aim is to improve the change in boundary condition cost to $\exp(\varepsilon n^{d-1})$ for $\varepsilon>0$ arbitrarily small. Rather than comparing the concentration events directly for the finite-volume free and plus spin measures, we leverage the Edwards--Sokal coupling, and work with finite-volume random cluster measures with random boundary conditions induced by $\Psi^{1,h}_{\mathbb H,\beta,\beta'}$ and $\Psi^{0,h}_{\mathbb H,\beta}$ respectively. Thanks to our assumption $\beta \not\in\Xi$, we know that the full-space random cluster measures coincide: $\Psi^0_\beta = \Psi^1_\beta$. Hence, it is natural to expect that the boundary conditions induced ``far away from the boundary of $\mathbb H$'' look the same under these two measures. Indeed, we show that, up to a small surface-order exponential contribution, the following hold: (1) the induced absolute values are approximately the same under the wired and free measures---see Lemma \ref{lem:abs value approx}; (2) the induced partitions are approximately the same under the wired and free measures---see Lemma \ref{lem:partitions approx}. Whilst (1) is intuitive due to Theorem \ref{thm:full-space uniqueness}, (2) is less obvious since the induced partition is not a local event and depends on the whole complement of the domain. In order to justify (2), we adapt ideas from Burton and Keane \cite{BurtonKeane1989density}. In order to state what (1) and (2) correspond to precisely, we first fix notation and introduce the standard couplings between the measures discussed above.

\vspace{5pt}

For technical reasons, we work on random cluster measures on thin hyperrectangles. For $n,\ell \in \mathbb N^*$ (think $n\gg \ell$), let 
$$R(n,\ell):= \{-n,\dots,n\}^{d-1}\times \{0,\dots, \ell\} \subset \mathbb H.$$ 
Given $n,\ell \in \mathbb N^*$, $h \geq 0$, and boundary conditions $(\xi, \mathsf b)$ on $R(n,\ell)$, we recall that we denote the associated random cluster measure $\Psi^{\xi,\mathsf b, h}_{R(n,\ell),\beta}$, see Remark \ref{rem: half space general rc}. For $\beta'<\beta$, we write $\Psi^{\xi,\mathsf b, h}_{R(n,\ell),\beta,\beta'}$ to denote the measure with parameter $\beta'$ for every edge $xy$ with $x,y \in \partial \mathbb H$ and $\beta$ elsewhere. 
\begin{enumerate}
    \item[$\bullet$] \textit{Monotone coupling.} Let $(\mathbb P_{\beta}, (\mathsf a^1, \omega^1), (\mathsf a^0, \omega^0))$ be a monotone coupling of $\Psi^{1,h}_{\mathbb H, \beta}$ and $\Psi^{0,h}_{\mathbb H, \beta}$. It can be defined by Strassen's theorem since $\Psi^{1,h}_{\mathbb H, \beta} \succcurlyeq \Psi^{0,h}_{\mathbb H, \beta}$.
    
    \item[$\bullet$] \textit{Edwards--Sokal coupling.}  Let $n,\ell \in \mathbb N^*$, $\beta,h \geq 0$. Given a boundary condition $(\xi,\mathsf b)$ on $R(n,\ell)$, we let $\mathbf P^{\xi,\mathsf b,h}_{R(n,\ell), \beta}$ denote the Edwards--Sokal coupling of $\Psi^{\xi,\mathsf b,h}_{R(n,\ell),\beta}$.
    More precisely, we say that a triplet $(\mathsf a, \omega, \sigma)$ with $\sigma_{\mathfrak g}=\mathsf{a}_{\mathfrak g}=1$ is compatible if $\sigma \in \{-1,0,1\}^{R(n,\ell)^\mathfrak g\sqcup \partial^{\textup{ext},+}R(n,\ell)}$ is constant non-zero on the connected components of 
the graph with vertex set $\{x\in R(n,\ell)^\fg\sqcup \partial^{\textup{ext},+}R(n,\ell): \mathsf{a}_x\neq 0\}$ and edge set $\{xy: \omega_{xy}=1\}$ 
after identifying vertices of $\partial^{\textup{ext},+}R(n,\ell)\cup \{\fg\}$ in the same partition class of $\xi$, and $\sigma_x=0$ whenever $\mathsf{a}_x=0$. The coupling measure $\mathbf P^{\xi,\mathsf b,h}_{R(n,\ell), \beta}$ is the probability measure on compatible triplets $(\mathsf{a},\omega,\sigma)$ given by
\begin{align}\label{eq:formula ES}
\begin{aligned}
\mathrm{d}\mathbf P^{\xi,\mathsf b, h}_{R(n,\ell), \beta}[(&\mathsf{a}, \omega,\sigma)]=
\frac{1}{Z^{\xi,\mathsf{b},h}_{R(n,\ell),\beta}} 
\prod_{xy\in \overline{E}^+(R(n,\ell))} \sqrt{1-p_{xy}(\mathsf{a})}\left(\frac{p_{xy}(\mathsf{a})}{1-p_{xy}(\mathsf{a})}\right)^{\omega_{xy}}  \\&\prod_{x\in R(n,\ell)\cap \partial \mathbb H} \sqrt{1-p_{x\fg}(\mathsf a)}\left(\frac{p_{x\fg}(\mathsf a)}{1-p_{x\fg}(\mathsf a)}\right)^{\omega_{x\fg}} \prod_{x\in R(n,\ell)} \mathrm{d}\rho(\mathsf{a}_x)\prod_{x \in \partial^{\textup{ext},+} R(n,\ell)} \delta_{\mathsf{b}_x}(\mathsf{a}_x).
\end{aligned}
\end{align}
For convenience, above we abbreviated $p_{xy}(\mathsf{a})=p(\beta,\mathsf{a})_{xy}$. Note that under this measure, the spin configuration $\tau\in \mathbb R^{R(n,\ell)\sqcup \partial^{\textup{ext},+}R(n,\ell)}$ defined as $\tau_x:=\mathsf a_x \sigma_x$ is indeed distributed as $\nu^{0,h}_{R(n,\ell)\sqcup \partial^{\textup{ext},+}R(n,\ell), \beta}[\: \cdot \mid \mathcal S(\xi, \mathsf b)]$, where $\mathcal S(\xi, \mathsf b)$ is the event that the spins on $\partial^{\textup{ext},+}R(n,\ell)$ have absolute value field $\mathsf b$ and $\textup{sgn}(\tau_u)=\textup{sgn}(\tau_v)$ for every $u,v \in \partial^{\textup{ext},+}R(n,\ell)$ in the same partition class of $\xi$.
 For $\beta'\leq\beta$, we may also define the Edwards--Sokal coupling $\mathbf P^{\xi,\mathsf b, h}_{R(n,\ell),\beta, \beta'}$ analogously.
\end{enumerate}

\begin{Def} Given a random cluster configuration $(\omega,\mathsf a)\in \{0,1\}^{E(\mathbb H)}\times (\mathbb R^+)^{\mathbb H}$, for every $n,\ell \in \mathbb N^*$, we may define a map $(\omega, \mathsf a) \mapsto (\xi_{n,\ell}(\omega), \mathsf b_{n,\ell}(\mathsf a))$ to denote the induced boundary condition on $R(n,\ell)$. Given $n,\ell$, we will denote by $(\xi^1_{n,\ell}, \mathsf b^1_{n,\ell})$ and $(\xi^0_{n,\ell}, \mathsf b^0_{n,\ell})$ the boundary conditions on $R(n,\ell)$ induced by $(\omega^1,\mathsf a^1)$ and $(\omega^0,\mathsf a^0)$, respectively. 
\end{Def}

Following the strategy described above, we introduce events that help us measure how close the induced boundary conditions $(\xi^1_{n,\ell}, \mathsf b^1_{n,\ell})$ and $(\xi^0_{n,\ell}, \mathsf b^0_{n,\ell})$ are under the monotone coupling. The first event captures how close $\mathsf{b}_{n,\ell}^1$ and $\mathsf{b}_{n,\ell}^0$ are. Let $\varepsilon>0$. For every $n,\ell \in \mathbb N^*$, define
\begin{equation}
\mathcal A_0(\varepsilon,n,\ell):= \Big\{ (\mathsf a^1,\omega^1,  \mathsf a^0,\omega^0) : \sum_{y \in \partial^{\mathrm{ext},+} R(n,\ell)} \Big(\mathds{1}_{(\mathsf{b}_{n,\ell}^0)_y=0}-\mathds{1}_{(\mathsf{b}_{n,\ell}^1)_y=0}\Big)\leq \varepsilon n^{d-1}\Big\}, 
\end{equation}
and, for $\delta>0$
\begin{equation}
\mathcal A_1(\varepsilon,\delta,n,\ell):= \Big\{ (\mathsf a^1,\omega^1,  \mathsf a^0,\omega^0) : \mathbf P^{\xi^1_{n,\ell}, \mathsf b^1_{n,\ell},h}_{R(n,\ell), \beta, \beta'}\Big[\sum_{xy \in \partial^{+}_E R(n,\ell)} \mathsf a_x \Big((\mathsf b^1_{n,\ell})_y - (\mathsf b^0_{n,\ell})_y\Big)\leq \varepsilon n^{d-1}\Big]\geq 1-\delta \Big\}, 
\end{equation}
where we recall that $\partial^+_E R(n,\ell)=\overline{E}^+(R(n,\ell))\setminus E(R(n,\ell))$ and where we implicitly assume $x \in R(n,\ell)$ and $y \in \partial^{\rm ext,+}R(n,\ell)$. Finally, let
\begin{equation}
    \mathcal{A}(\varepsilon,\delta,n,\ell):=\mathcal A_0(\varepsilon,n,\ell)\cap \mathcal A_1(\varepsilon,\delta,n,\ell).
\end{equation}

The second event measures how close the induced bulk partitions are on the level of the number of clusters. Let $\varepsilon > 0$. For every $n, \ell \in \mathbb N^*$, define
\begin{multline}
\mathcal P(\varepsilon, n,\ell) = \Big\{ (\mathsf a^1, \omega^1, \mathsf a^0, \omega^0) : \forall \, \omega \in \{0,1\}^{\overline E^+(R(n,\ell))},\forall \mathsf{a}\in (\mathbb{R}^+)^{R(n,\ell)}, \,  
\\
\overline{k}^{(\xi^0_{n,\ell},\mathsf{b}^0_{n,\ell})}(\omega,\mathsf{a}) \leq \overline{k}^{(\xi^1_{n,\ell},\mathsf{b}^1_{n,\ell})}(\omega,\mathsf{a}) + \varepsilon n^{d-1} \Big\}, 
\end{multline}
where we have set $\overline{k}^{(\xi,\mathsf{b})}(\omega,\mathsf{a})$ to be equal to the number of connected components in the graph with vertex set $\{x\in R(n,\ell): \mathsf{a}_x\neq 0\}\cup \partial^{\textup{ext},+}R(n,\ell)$ and edge set $\{e: \omega_e\neq 0\}$ after identifying vertices in the same partition class of $\xi$. Note that this definition differs from the one near \eqref{eq:def p(beta,a)} in that the connected components of vertices $y\in \partial^{\rm{ext},+}R(n,\ell)$ such that $\mathsf{b}_y=0$ contribute one to $\overline{k}^{(\xi,\mathsf{b})}(\omega,\mathsf{a})$. Observe that with this definition, one has $\mathbb{P}_{\beta}$-almost surely
\begin{equation}\label{eq: k bar are ordered}
    \overline{k}^{(\xi^1_{n,\ell},\mathsf{b}^1_{n,\ell})}(\omega,\mathsf{a})\leq   \overline{k}^{(\xi^0_{n,\ell},\mathsf{b}^0_{n,\ell})}(\omega,\mathsf{a}).
\end{equation}
We prove the following results.
\begin{Lem}\label{lem:abs value approx}
Let $\beta >0$ such that $\beta \not\in \Xi$. For every $\varepsilon,\delta > 0$, there exists $\ell$ large enough such that
\begin{equation}
\liminf_{n \rightarrow \infty} \mathbb P_{\beta}[\mathcal A(\varepsilon,\delta,n, \ell)] \geq 1-\delta.
\end{equation}
\end{Lem}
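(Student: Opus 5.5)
The plan is to reduce both events to statements about single sites far from $\partial\mathbb H$, where $\beta\notin\Xi$ forces $\mathsf a^1$ and $\mathsf a^0$ to agree.

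\textbf{Step 1: agreement of absolute values in the bulk.} The key input is that, far from $\partial\mathbb H$, the half-space measures $\Psi^{1,h}_{\mathbb H,\beta}$ and $\Psi^{0,h}_{\mathbb H,\beta}$ are sandwiched between the full-space measures. Since $h\ge0$, we expect $\Psi^{0}_\beta \preccurlyeq \Psi^{0,h}_{\mathbb H,\beta}$ locally near a site at height $k\to\infty$. This should follow from the monotonicity in domain of Proposition~\ref{prop: random cluster properties} together with the monotonicity in $\mathsf h$. In the other direction, $\Psi^{1,h}_{\mathbb H,\beta}\preccurlyeq\Psi^1_{\Lambda,\beta,\mathsf h}$ for boxes $\Lambda$ around the site at distance $\geq r$ from $\partial\mathbb H$, by Proposition~\ref{prop: wired random cluster properties} (monotonicity in domain with gap). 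The latter measures converge to $\Psi^1_\beta$ as the box grows, after the field is absorbed.

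Since $\Psi^0_\beta=\Psi^1_\beta$, translation invariance and regularity then give, for $y$ at height $k$,
\[
\mathbb E_{\mathbb P_\beta}\big[(\mathsf a^1_y-\mathsf a^0_y)\big] + \mathbb P_\beta\big[\mathsf a^0_y=0\big]-\mathbb P_\beta\big[\mathsf a^1_y=0\big]\;\longrightarrow\;0 \qquad (k\to\infty),
\]
uniformly in the horizontal coordinate. We also need a uniform second-moment bound on $\mathsf a^1_y-\mathsf a^0_y$, which comes from regularity (Propositions~\ref{prop:reg_plus} and~\ref{prop:reg_plus half-space}). By Cauchy--Schwarz this upgrades to $\mathbb E[(\mathsf a^1_y-\mathsf a^0_y)^2]\to0$ along the same sequence.

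\textbf{Step 2: splitting the boundary.} The boundary $\partial^{\mathrm{ext},+}R(n,\ell)$ consists of two parts.
\begin{itemize}[label=\textbullet]
\item The top face at height $\ell+1$, of size $\asymp n^{d-1}$. Its sites are at height $\ell+1$, so by Step 1 each has small expected discrepancy once $\ell$ is large.
\item The lateral faces, of size $O(\ell n^{d-2})=o(n^{d-1})$ for fixed $\ell$. These are negligible after dividing by $n^{d-1}$, using only uniform moment bounds from regularity.
\end{itemize}

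\textbf{Step 3: the event $\mathcal A_0$.} The expected value of the sum in $\mathcal A_0$ is at most $o(1)\,n^{d-1}+O(\ell n^{d-2})$. This sum is nonnegative by monotonicity of the coupling, so Markov's inequality gives $\mathbb P_\beta[\mathcal A_0^c]\le\delta/2$ for $\ell$ large and then $n$ large.

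\textbf{Step 4: the event $\mathcal A_1$.} Here an inner expectation under the Edwards--Sokal measure appears. I would bound
\[
\mathbb E_{\mathbb P_\beta}\Big[\mathbf P^{\xi^1,\mathsf b^1,h}_{R,\beta,\beta'}\big[\textstyle\sum \mathsf a_x(\mathsf b^1_y-\mathsf b^0_y)\big]\Big].
\]
Conditionally on the boundary, the expectation of $\mathsf a_x$ is controlled by conditional regularity up to the boundary, in the spirit of Lemma~\ref{lem:conditional regularity} and its wired analogue. This gives $\mathbf E[\mathsf a_x\mid\cdot]\le C(1+\sum_{y'\sim x}\mathsf b^1_{y'})$, roughly. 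The outer expectation is then at most $C\sum_y\mathbb E[(1+\mathsf b^1_{y'})(\mathsf b^1_y-\mathsf b^0_y)]$. By Cauchy--Schwarz, the uniform second moments, and Step 1, this is $o(n^{d-1})$.

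Two applications of Markov's inequality then finish the argument. The inner one gives probability at least $1-\delta$ on the event that the conditional expectation is at most $\varepsilon\delta n^{d-1}$. The outer one shows that this event has $\mathbb P_\beta$-probability at least $1-\delta/2$.

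\textbf{Expected obstacle.} The main obstacle is Step 4. The Edwards--Sokal measure is taken with the boundary values $\mathsf b^1$, and under the $\beta'$-modification the first moment of $\mathsf a_x$ near the boundary must be bounded linearly, or at least polynomially, in the neighbouring boundary values, uniformly in the partition $\xi^1$. I would try to obtain this from the explicit density: the contribution of the boundary field on $x$ is at most $e^{\beta \mathsf a_x\sum_y \mathsf b_y}$, which combined with the super-Gaussian tail of $\rho$ gives $\mathbf E[\mathsf a_x\mid\cdot]\le C(1+\sum_y\mathsf b_y)$.

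A secondary point is the sandwich in Step 1. It requires care with the magnetic field on $\partial\mathbb H$, but only sites at height $\ge\ell$ are needed, so the gap $r$ and the field play no role there.
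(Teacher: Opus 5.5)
Your treatment of $\mathcal A_0$ is the same as the paper's: Markov's inequality on $\sum_y(\mathbbm{1}_{\mathsf a^0_y=0}-\mathbbm{1}_{\mathsf a^1_y=0})$, splitting into the top face and the lateral faces, and the sandwich between box measures using $\Psi^0_\beta=\Psi^1_\beta$.

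For $\mathcal A_1$ your route is genuinely different. The paper never estimates the inner measure at a \emph{fixed} boundary condition. Its steps are:
\begin{itemize}
\item It applies Cauchy--Schwarz to the whole sum. On $\mathcal J_{n,\ell}=\{\sum(\mathsf b^1_y-\mathsf b^0_y)^2\le\varepsilon^2n^{d-1}/C\}$ this gives $D_{n,\ell}\supset I_{n,\ell}=\{\sum\mathsf a_x^2\le Cn^{d-1}\}$.
\item It bounds $I_{n,\ell}$ under the inner measure at parameter $\beta$ rather than $\beta'$. The reason is that averaging that measure over $\mathbb P_\beta$ gives back $\Psi^{1,h}_{\mathbb H,\beta}$ by the domain Markov property, so plain regularity applies.
\item It bounds $\mathcal J_{n,\ell}$ via Birkhoff's theorem along the top face, the inequality $(\mathsf a^1_y-\mathsf a^0_y)^2\le(\mathsf a^1_y)^2-(\mathsf a^0_y)^2$, and the sandwich.
\item It returns to $(\beta,\beta')$ using that $D_{n,\ell}$ is decreasing.
\end{itemize}
Your first-moment version (edge-wise Cauchy--Schwarz, then an outer and an inner Markov inequality) avoids the ergodic theorem and the $\beta$-monotonicity step, which is a real simplification. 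The price is that you need a moment bound for the inner measure at a fixed, possibly large, boundary value $\mathsf b^1_{n,\ell}$, which the paper's argument sidesteps.

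That bound is the one step your proposal does not establish. When $\rho$ has unbounded support, $\mathbf E[\mathsf a_x\mid\cdot]\le C(1+\sum_{y'\sim x}\mathsf b^1_{y'})$ cannot hold with $C$ uniform. Huge boundary values at distance two from $x$ inflate the interior neighbours of $x$, and hence $\mathsf a_x$. Your justification via the tilt $e^{\beta\mathsf a_x\sum_y\mathsf b_y}$ only handles the boundary neighbours, while the interior neighbours of $x$ are random and unbounded. Neither Lemma~\ref{lem:conditional regularity} (free boundary) nor Proposition~\ref{prop:reg_plus} (averaged over $\zeta$) covers this case.

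There are two ways to repair it.
\begin{itemize}
\item Direct proof. The inner measure is a mixture of spin measures with boundary values $\pm\mathsf b^1_{n,\ell}$. Evenness plus super-Gaussianity give, for every $K>0$, $\mathbf E[|\tau_x|\mid \tau_z,\, z\sim x]\le K\beta(\sum_{z\sim x}|\tau_z|+h)+C_K$. Choose $2d\beta K<1$ and iterate on the finite box. This yields $\mathbf E[\mathsf a_x]\le C+\sum_y G(x,y)\mathsf b^1_y$ with $G$ exponentially decaying, which is enough for the rest of your Step 4.
\item Bypass. By monotonicity in the couplings, the inner mean of $\mathsf a_x$ at $(\beta,\beta')$ is at most that at $\beta$. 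Then Cauchy--Schwarz under $\mathbb P_\beta$, Jensen, and the domain Markov property bound $\mathbb E_\beta\big[\mathbf E^{\xi^1_{n,\ell},\mathsf b^1_{n,\ell},h}_{R(n,\ell),\beta}[\mathsf a_x]\,(\mathsf b^1_y-\mathsf b^0_y)\big]$ by $\Psi^{1,h}_{\mathbb H,\beta}[\mathsf a_x^2]^{1/2}\,\mathbb E_\beta[(\mathsf a^1_y-\mathsf a^0_y)^2]^{1/2}$. This is exactly the paper's mechanism.
\end{itemize}

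A minor point in Step 1: with $X=\mathsf a^1_y-\mathsf a^0_y\ge0$, Cauchy--Schwarz only gives $\mathbb E[X^2]\le(\mathbb E[X]\,\mathbb E[X^3])^{1/2}$, so you need a third moment, not a second one. Regularity provides it, but it is simpler to use $(\mathsf a^1_y-\mathsf a^0_y)^2\le(\mathsf a^1_y)^2-(\mathsf a^0_y)^2$ directly.
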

\begin{Lem}\label{lem:partitions approx}
Let $\beta >0$ such that $\beta \not\in \Xi$. For every $\varepsilon, \delta > 0$, there exists $\ell$ large enough such that
\begin{equation}
\liminf_{n \rightarrow \infty} \mathbb P_\beta[\mathcal P(\varepsilon,n,\ell)] \geq 1-\delta. 
\end{equation}
\end{Lem}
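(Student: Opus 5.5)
The plan is to reduce the statement to two counting estimates on the top face of $R(n,\ell)$. First I note that, under the monotone coupling, $\xi^0_{n,\ell}$ refines $\xi^1_{n,\ell}$. Also $\{(\mathsf b^1_{n,\ell})_y=0\}\subset\{(\mathsf b^0_{n,\ell})_y=0\}$, and vertices with $\mathsf b=0$ count as singletons in $\overline k$. For any fixed inner configuration $(\omega,\mathsf a)$, gluing boundary vertices lowers the number of components by at most the number of gluings. Hence
\begin{equation*}
\overline k^{(\xi^0_{n,\ell},\mathsf b^0_{n,\ell})}(\omega,\mathsf a)-\overline k^{(\xi^1_{n,\ell},\mathsf b^1_{n,\ell})}(\omega,\mathsf a)\le |\xi^0_{n,\ell}|-|\xi^1_{n,\ell}|=:D_{n,\ell},
\end{equation*}
where $|\xi|$ is the number of classes. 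The right-hand side does not depend on $(\omega,\mathsf a)$, so it suffices to show $\mathbb P_\beta[D_{n,\ell}>\varepsilon n^{d-1}]\le\delta$ for $\ell$ large and $n$ large. The lateral faces of $\partial^{\textup{ext},+}R(n,\ell)$ have $O(\ell n^{d-2})=o(n^{d-1})$ vertices, so only the top face $T=\{-n,\dots,n\}^{d-1}\times\{\ell+1\}$ matters.

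Next I use $\beta\notin\Xi$ to show that $\omega^1$ and $\omega^0$ rarely differ at height around $\ell$. The half-space wired measure, with field only on $\partial\mathbb H$, is dominated far from $\partial\mathbb H$ by $\Psi^1_\beta$; this follows from the monotonicity in domain with gap in Proposition~\ref{prop: wired random cluster properties}, after shifting so that the field sits far away. The half-space free measure dominates the free measures on boxes at distance $\ell/2$ from $\partial\mathbb H$ (Proposition~\ref{prop: random cluster properties}), and these converge to $\Psi^0_\beta$. Since $\Psi^0_\beta=\Psi^1_\beta$ and the coupling is monotone, for each fixed $r$ the probability
\begin{equation*}
q_\ell(r)=\mathbb P_\beta\big[(\omega^1,\mathsf a^1)\neq(\omega^0,\mathsf a^0)\text{ somewhere in }\Lambda_r(y)\big],\qquad y\in T,
\end{equation*}
tends to $0$ as $\ell\to\infty$, uniformly in $y$ by invariance under translations along $\partial\mathbb H$. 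By Markov's inequality, the number of $y\in T$ with a discrepancy in $\Lambda_r(y)$ is at most $\varepsilon n^{d-1}/3$ with probability at least $1-\delta/3$, once $\ell$ is large.

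It remains to charge each merged class to such a bad point. I split the $\xi^0$-classes on $T$ into small ones, whose $\omega^0$-cluster in $\mathbb H\setminus R(n,\ell)$ stays within distance $r$ of a point of $T$, and large ones. A small class that becomes part of a strictly larger $\xi^1$-class must have a discrepancy within distance $r$ of its points, so the number of such merges is bounded by the bad count above. For large classes the total number of classes is the issue, and I would use a Burton--Keane style argument. Fix $r$ and call $y\in T$ an \emph{encounter point} if $y$ belongs to a large $\omega^0$-class that is disjoint, inside $\Lambda_r(y)$, from the other large classes crossing $\Lambda_r(y)\setminus\Lambda_{r/2}(y)$. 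The number of distinct large classes is at most $C n^{d-1}r^{-(d-1)}$ plus the number of encounter points. The first term is at most $\varepsilon n^{d-1}/3$ once $r$ is large.

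For encounter points, the plan is to use finite energy (Proposition~\ref{prop:finite-energy} and the remark after it) together with the coincidence of $\omega^0$ and $\omega^1$ near $y$. Two disjoint large $\omega^0$-clusters meeting a box far from $\partial\mathbb H$ would, by local modifications, produce with comparable probability a local ``trifurcation'' structure of the infinite-volume measure $\Psi_\beta$. By amenability of $T$, the density of such trifurcations is $0$. Ergodicity and translation invariance along $\partial\mathbb H$ would then make the density of encounter points at most $\varepsilon/3$ for $r$ and $\ell$ large, and Markov's inequality gives the required bound.

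I expect this last step to be the main obstacle. The induced partition is non-local, and infinitely many $\omega^0$-clusters may touch $T$ a priori. Turning the Burton--Keane intuition into a quantitative density bound, uniformly in $n$ and robust to the random field $\mathsf a$ (where zeros of $\mathsf a$ can disconnect clusters), will need care. The first things I would try are conditional regularity (Lemma~\ref{lem:conditional regularity}) to control the local insertions of $\mathsf a$-values and edges, and a reduction to the full-space measure $\Psi_\beta$ via the convergence established in the second step.
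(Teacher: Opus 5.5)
Your first three steps are sound: the reduction to $|\xi^0_{n,\ell}|-|\xi^1_{n,\ell}|$ on the top face $T$, the use of $\beta\notin\Xi$ to make $\omega^0,\omega^1$ agree near height $\ell$, and charging merged small classes to nearby discrepancies. The gap is in the treatment of large classes.

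Your count allows essentially one large class per $r$-box and charges every additional one to an encounter point. So you need the density of points $y\in T$ near which two distinct large classes cross $\Lambda_r(y)\setminus\Lambda_{r/2}(y)$ to vanish. This is a local-uniqueness statement, and the Burton--Keane mechanism you invoke cannot give it. Joining two crossing clusters locally creates a vertex with only two arms, not a trifurcation, so the bound ``number of trifurcations $\le$ boundary size'' says nothing about two clusters. In the global Burton--Keane argument, the case of $2\le k<\infty$ infinite clusters is excluded by ergodicity and merging, which gives no quantitative local density bound. Moreover, the relevant clusters live in $\mathbb H\setminus R(n,\ell)$, with $T$ on the boundary of that region. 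Trifurcations of the full-space $\Psi_\beta$ are therefore not the right objects: clusters that are distinct above $R(n,\ell)$ may be joined through it.

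The missing observation is that a bounded number of classes per box suffices, since $2(2n+1)^{d-1}/(2k+1)^{d-1}\le\varepsilon n^{d-1}/3$ once $k$ is large. The paper tiles $T$ by boxes $S_k(x)$ and shows that, for most of them, at most two clusters of $\omega\cap R_m(x)$ join $S_k(x)$ to $\partial^{\textup{up}}R_m(x)$, where $R_m(x)$ sits on the hyperplane through $x$.
\begin{itemize}
\item Three such clusters can be joined by a finite-energy surgery near $S_k(x)$, at a cost depending only on $k$, into an $R_{2m}(x)$-trifurcation.
\item The number of vertices of $R_m(\ell\mathbf e_d)$ that are $R_{2m}$-trifurcations is at most $|\partial^{\textup{up}}R_m(\ell\mathbf e_d)|$. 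This uses a spanning forest built by always deleting a lowest horizontal edge of a cycle, which preserves these half-space trifurcations.
\item This bound controls only a sum over $m+1$ heights. It gives $\Psi^{1,h}_{\mathbb H,\beta}[\mathcal T_{2m}((\ell+i)\mathbf e_d)]\le C_1/m$ for some $i\le m$, so the three-cluster event is rare at infinitely many heights, which is enough.
\end{itemize}
On a box where this holds and where $\omega^0=\omega^1$ (with the same zero set of $\mathsf a$) on $R_m(x)$, classes meeting $S_k(x)$ without reaching $\partial^{\textup{up}}R_m(x)$ coincide for $\xi^0_{n,\ell}$ and $\xi^1_{n,\ell}$, and at most two others meet $S_k(x)$. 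This replaces your small/large dichotomy and encounter points entirely.

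Three smaller points:
\begin{itemize}
\item Uniformity of $q_\ell(r)$ should come from a union bound over the marginal differences $\Psi^{1,h}_{\mathbb H,\beta}[\omega_e=1]-\Psi^{0,h}_{\mathbb H,\beta}[\omega_e=1]$. The Strassen coupling itself need not be translation invariant.
\item Near height $\ell$ the wired half-space measure is dominated by $\Psi^1_{\Lambda_{\ell/2}(x),\beta}$, which tends to $\Psi^1_\beta$. It is not dominated by $\Psi^1_\beta$ itself.
\item You should require agreement only of $\omega$ and of the zero set of $\mathsf a$. Exact equality $\mathsf a^1_y=\mathsf a^0_y$ need not become likely for continuous spins.
\end{itemize}
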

In words, Lemma \ref{lem:abs value approx} says that \emph{bulk absolute values under the wired and free measures are approximately equal}, and Lemma \ref{lem:partitions approx} says that \emph{induced bulk partitions under wired and free measures are approximately equal} under the monotone coupling.

\begin{proof}[Proof of Proposition \textup{\ref{prop: key prop uniqueness half space}}]

We now turn to the proof of \eqref{eq:2 point ineq hs}. We prove this for the case $u,v\in \partial \mathbb{H}$ for ease of notation, and the proof of the general case is similar. We fix an edge $uv$ included in $\partial \mathbb H$. Set 
\begin{equation}
a_{uv}=a_{uv}(\beta,\beta')=\langle \tau_u \tau_v \rangle ^{+,h}_{\mathbb{H},\beta,\beta'}
\quad \text{and} \quad
b_{uv}=b_{uv}(\beta)=\langle \tau_u \tau_v \rangle ^{0,h}_{\mathbb{H},\beta}.
\end{equation}
Our goal is to prove that $a_{uv}\leq b_{uv}$. Let $\varepsilon>0$ to be chosen small enough below. For every $n \in \mathbb N^*$, consider the events
\begin{equation}\label{eq: def a_n b_n half space}
A_n=\Big\{\sum_{xy\in E_n}\tau_x \tau_y\ge |E_n|(a_{uv}-\varepsilon)\Big\}, \qquad B_n=\Big\{\sum_{xy\in E_n}\tau_x \tau_y\le |E_n| (b_{uv}+\varepsilon)\Big\},
\end{equation}
where $E_n=\Gamma uv \cap E(R(n,\ell))$ denotes the set of edges $xy$ in $R(n,\ell)$ with $x,y\in \partial \mathbb{H}$. We would like to understand the probabilities of these events on large hyperrectangles with induced boundary conditions from the half-space free and wired random cluster measures. To this end, for $\delta>0$, let us introduce the following events:
\begin{align}
\begin{aligned}
\mathcal C(\varepsilon,\delta,n,\ell)& := \Big\{ (\mathsf a^1, \omega^1, \mathsf a^0, \omega^0) : \mathbf P^{\xi^1_{n,\ell},\mathsf b^1_{n,\ell}, h}_{R(n,\ell), \beta,\beta'}[A_n] \geq \delta \Big\},
\\ 
\mathcal{D}(\varepsilon,\delta,n,\ell)&:=\Big\{(\mathsf a^1, \omega^1, \mathsf a^0, \omega^0): \mathbf P^{\xi^0_{n,\ell},\mathsf b^0_{n,\ell}, h}_{R(n,\ell), \beta}[B_n] \geq \delta\Big\}.
\end{aligned}
\end{align}
We claim the following bounds.
\begin{Claim}\label{claim:mathcalCD} For every $\varepsilon>0$, there exists $c_0>0$ such that the following holds. For every $\ell\geq 1$, every $\delta\in (0,c_0)$, and every $n$ large enough,
\begin{equation}
    \mathbb P_\beta[\mathcal C(\varepsilon,\delta,n,\ell)] > c_0,
\end{equation}
and
\begin{equation}
\mathbb P_\beta[\mathcal D(\varepsilon,\delta,n,\ell)]\geq 1-2\delta.  
\end{equation}
\end{Claim}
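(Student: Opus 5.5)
The plan is to reduce both bounds to the concentration mechanism of Claim~\ref{claim: technical full space}, but applied to the \emph{annealed} finite-volume measures obtained by averaging the Edwards--Sokal couplings over the random boundary conditions. By the domain Markov property and the Edwards--Sokal coupling, averaging $\mathbf P^{\xi^1_{n,\ell},\mathsf b^1_{n,\ell},h}_{R(n,\ell),\beta,\beta'}$ over $(\omega^1,\mathsf a^1)$ returns the law of $\tau$ restricted to $R(n,\ell)$ under a half-space plus-type measure. Here the coupling must be set up so that $(\omega^1,\mathsf a^1)$ is sampled with the $\beta'$ couplings on $\partial\mathbb H$, i.e.\ from the wired half-space measure at $J(\beta,\beta',uv)$. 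Since $\beta'<\beta$, this measure is still dominated by $\Psi^{1,h}_{\mathbb H,\beta}$, so it can be placed in a monotone coupling. Similarly, averaging $\mathbf P^{\xi^0_{n,\ell},\mathsf b^0_{n,\ell},h}_{R(n,\ell),\beta}$ over $(\omega^0,\mathsf a^0)$ returns $\nu^{0,h}_{\mathbb H,\beta}$ restricted to $R(n,\ell)$. Write $X:=\mathbf P^{\xi^1,\mathsf b^1,h}_{R(n,\ell),\beta,\beta'}[A_n]$ and $Y:=\mathbf P^{\xi^0,\mathsf b^0,h}_{R(n,\ell),\beta}[B_n]$. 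These are $[0,1]$-valued random variables, and $\mathbb E_{\mathbb P_\beta}[X]$ and $\mathbb E_{\mathbb P_\beta}[Y]$ are the probabilities of $A_n$ and $B_n$ under the corresponding infinite-volume half-space measures.

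The first step is the bound for $\mathcal D$. Under $\nu^{0,h}_{\mathbb H,\beta}$, the variables $\sum_{xy\in E_n}\tau_x\tau_y/|E_n|$ should concentrate around $b_{uv}$. By translation invariance along $\partial\mathbb H$, $\langle\tau_x\tau_y\rangle^{0,h}_{\mathbb H,\beta}=b_{uv}$ for every $xy\in E_n$. Moreover, the free half-space measure is a monotone limit of free measures on growing boxes, and is therefore ergodic (indeed mixing) under translations of $\mathbb Z^{d-1}$. Using regularity for uniform integrability, the $L^1$ ergodic theorem along the layer $\partial\mathbb H$ gives $\nu^{0,h}_{\mathbb H,\beta}[B_n]\to1$ as $n\to\infty$, uniformly in $\ell$ since $E_n$ does not depend on $\ell$. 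Hence $\mathbb E[1-Y]\to0$, and Markov's inequality gives $\mathbb P_\beta[Y<\delta]\le \mathbb P_\beta[1-Y>1-\delta]\le \mathbb E[1-Y]/(1-\delta)\le 2\delta$ for $n$ large. This is the second inequality. If ergodicity of the free half-space measure turns out to be awkward, a fallback is the one-sided bound: apply Lemma~\ref{lem:random variables} to $-\tau_x\tau_y$ as in Claim~\ref{claim: technical full space}, but then only a positive probability is obtained, which would be too weak here. So I would rely on ergodicity, proved via the standard argument for increasing limits of finite-volume measures together with FKG.

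The second step is the bound for $\mathcal C$. Translation invariance of the plus measure along $\partial\mathbb H$ gives $\langle\tau_x\tau_y\rangle^{+,h}_{\mathbb H,J(\beta,\beta',uv)}=a_{uv}$ for every $xy\in E_n$. I will only use a one-sided concentration estimate. Regularity (Proposition~\ref{prop:reg_plus half-space}) provides $M$ with $\langle|\tau_x\tau_y|\mathbbm 1_{|\tau_x\tau_y|>M}\rangle\le\varepsilon/2$. Lemma~\ref{lem:random variables} then yields $\mathbb E[X]=\nu^{+,h}_{\mathbb H,J}[A_n]\ge c_1:=\frac{\varepsilon/2}{M-a_{uv}+\varepsilon}$, independently of $n$ and $\ell$. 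Since $X\le1$, we get $c_1\le \mathbb E[X]\le \delta+\mathbb P_\beta[X\ge\delta]$. Setting $c_0:=c_1/3$ and taking $\delta<c_0$ gives $\mathbb P_\beta[\mathcal C]\ge c_1-\delta>c_0$.

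I expect the main obstacle to be the identification of the averaged Edwards--Sokal measures with the infinite-volume spin measures. This requires checking that the spin signs on boundary clusters are assigned consistently with the global measure, in particular that boundary classes connected to $\fg$ or to infinity get sign $+$ in the plus case. The averaged signs must then reproduce the half-space spin law. A second possible obstacle is that ergodicity of the free half-space measure requires some care, because the spins are unbounded.
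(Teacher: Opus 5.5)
Your argument for $\mathcal D$ is essentially the paper's: ergodicity of $\nu^{0,h}_{\mathbb H,\beta}$ on even functions along $\partial\mathbb H$, the domain Markov identity $\nu^{0,h}_{\mathbb H,\beta}[B_n]=\mathbb E_\beta\big[\mathbf P^{\xi^0_{n,\ell},\mathsf b^0_{n,\ell},h}_{R(n,\ell),\beta}[B_n]\big]$, and then a Markov-type bound.

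The $\mathcal C$ part has a genuine gap. You handle the mismatch between the induced boundary conditions and the interior parameters by \emph{redefining} the coupling, so that $(\omega^1,\mathsf a^1)$ is sampled from $\Psi^{1,h}_{\mathbb H,\beta,\beta'}$. But the claim concerns the fixed coupling $\mathbb P_\beta$ of $\Psi^{1,h}_{\mathbb H,\beta}$ and $\Psi^{0,h}_{\mathbb H,\beta}$, and the change is not harmless. First, $\mathcal C$ must be intersected with $\mathcal A$, $\mathcal P$, $\mathcal D$ and $\{\mathsf b^1_{n,\ell}\geq\mathsf b^0_{n,\ell}\}$ under one and the same measure, and Lemmas~\ref{lem:abs value approx} and~\ref{lem:partitions approx} are proved for $\mathbb P_\beta$. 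Second, the ordering $\omega^1\geq\omega^0$, $\mathsf a^1\geq\mathsf a^0$ would then require $\Psi^{1,h}_{\mathbb H,\beta,\beta'}\succcurlyeq\Psi^{0,h}_{\mathbb H,\beta}$. This is false in general: for small $\beta'$, compare the probability that a given edge of $\partial\mathbb H$ is open. The domination by $\Psi^{1,h}_{\mathbb H,\beta}$ that you invoke goes the wrong way for this purpose.

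Under the genuine $\mathbb P_\beta$, averaging $\mathbf P^{\xi^1_{n,\ell},\mathsf b^1_{n,\ell},h}_{R(n,\ell),\beta,\beta'}$ yields a hybrid law: boundary conditions come from the $\beta$-wired measure, while the interior has $\beta'$ on $\partial\mathbb H$. So the identity $\mathbb E[X]=\nu^{+,h}_{\mathbb H,J(\beta,\beta',uv)}[A_n]$ fails. Nor can you transfer from $J$-induced to $\beta$-induced boundary conditions at the level of $A_n$. The quantity $\mathbf P^{\xi,\mathsf b,h}_{R(n,\ell),\beta,\beta'}[A_n]$ is not monotone in $(\xi,\mathsf b)$: merging clusters can lower the probability that a weighted sum of sign products exceeds a threshold.

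The missing idea is that Lemma~\ref{lem:random variables} needs only a \emph{lower} bound on the means and a uniform tail bound. Both survive monotonicity, so you can apply it directly to the hybrid measure $\mathbb P:=\mathbb E_\beta\big[\mathbf P^{\xi^1_{n,\ell},\mathsf b^1_{n,\ell},h}_{R(n,\ell),\beta,\beta'}[\cdot]\big]$. The ingredients are as follows:
\begin{itemize}
\item By Edwards--Sokal, $\mathbf E^{\xi,\mathsf b,h}_{R(n,\ell),\beta,\beta'}[\tau_x\tau_y]$ is the expectation of the increasing function $\mathsf a_x\mathsf a_y\mathbbm{1}\{x\longleftrightarrow y\}$, hence increasing in $(\xi,\mathsf b)$.
\item The induced boundary condition is increasing in $(\omega^1,\mathsf a^1)$.
\item $\Psi^{1,h}_{\mathbb H,\beta}\succcurlyeq\Psi^{1,h}_{\mathbb H,\beta,\beta'}$ by monotonicity in parameters.
\end{itemize}
Together with the domain Markov property for $\Psi^{1,h}_{\mathbb H,\beta,\beta'}$, these give $\mathbb E[\tau_x\tau_y]\geq a_{uv}$ for every $xy\in E_n$. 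Monotonicity and regularity also give $\mathbb E[|\tau_x\tau_y|\mathbbm{1}_{|\tau_x\tau_y|\geq M}]\leq\nu^{+,h}_{\mathbb H,\beta}[|\tau_x\tau_y|\mathbbm{1}_{|\tau_x\tau_y|\geq M}]\leq\varepsilon/2$. Lemma~\ref{lem:random variables} then yields $\mathbb P[A_n]\geq c_1$, and $\mathbb P[A_n]\leq\mathbb P_\beta[\mathcal C]+\delta$ finishes as you planned.

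This asymmetry is the point of the paper's proof. Ergodicity is used only for $\mathcal D$, where boundary conditions and interior parameters match. For $\mathcal C$, which only needs positive probability, the one-sided Lemma~\ref{lem:random variables} absorbs the mismatch.
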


Fix $\varepsilon>0$. Let $\delta\in (0,1)$ to be chosen small enough, and $\ell$ to be chosen large enough (in terms of $\varepsilon$ and $\delta$). Let us work on the event 
\begin{equation}\label{eq:intersection of events}
\mathcal A(\varepsilon,\delta/2,n,\ell)\cap \mathcal P(\varepsilon,n,\ell)\cap \mathcal C(\varepsilon,\delta,n,\ell) \cap \mathcal D(\varepsilon, \delta, n,\ell)\cap \{\mathsf{b}_{n,\ell}^1\geq \mathsf{b}_{n,\ell}^0\}.
\end{equation}
We will need the following technical claim which allows to transfer information of $\overline{k}^{\xi,\mathsf{b}}$ into information on $k^\xi$.

\begin{Claim}\label{claim: from k bar to k} Assume that the event on \eqref{eq:intersection of events} occurs. Let $\mathsf{a}\in (\mathbb R^+)^{R(n,\ell)}$ and for $i \in \{0,1\}$ define $\mathsf{a}^i$ as follows: $\mathsf{a}_x^i=\mathsf{a}_x$ for every $x\in R(n,\ell)$, and $\mathsf{a}^i_y=(\mathsf{b}_{n,\ell}^i)_y$ for every $y\in \partial^{\textup{ext},+}R(n,\ell)$. Let $\omega\in \overline{E}^+(R(n,\ell))$ such that $(\omega,\mathsf{a}^0)$ is $(\xi_{n,\ell}^0,\mathsf{b}_{n,\ell}^0)$-compatible. Finally, let $\omega'\in \overline{E}^+(R(n,\ell))$ be a percolation configuration which potentially differs from $\omega$ only on edges having one endpoint in $\{y\in \partial^{\textup{ext},+}R(n,\ell):(\mathsf{b}^0_{n,\ell})_y=0 \textup{ and }(\mathsf{b}_{n,\ell}^1)_y>0\}$. Then,
 \begin{equation}
    \Big|k^{\xi^1_{n,\ell}}(\omega',\mathsf{a}^1)-k^{\xi^0_{n,\ell}}(\omega,\mathsf{a}^0)\Big|\leq 2\varepsilon n^{d-1},
\end{equation}
\end{Claim}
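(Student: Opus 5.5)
The plan is to compare $k$ with $\overline k$ exactly, so that the two error terms come separately from $\mathcal P(\varepsilon,n,\ell)$ and $\mathcal A_0(\varepsilon,n,\ell)$. I then pay once more for passing from $\omega$ to $\omega'$, again controlled by $\mathcal A_0$. Write $Z_i:=\#\{y\in\partial^{\textup{ext},+}R(n,\ell):(\mathsf b^i_{n,\ell})_y=0\}$ and $D:=\{y:(\mathsf b^0_{n,\ell})_y=0<(\mathsf b^1_{n,\ell})_y\}$. On $\{\mathsf b^1_{n,\ell}\geq\mathsf b^0_{n,\ell}\}$ we have $Z_0-Z_1=|D|\geq 0$, and $\mathcal A_0$ gives $|D|\leq\varepsilon n^{d-1}$. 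The ghost $\fg$ is treated identically in $k$ and $\overline k$ (in both it belongs to its $\xi$-class, with $\mathsf a_\fg=1$), so it plays no role in the comparison.

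\emph{Step 1: $k$ versus $\overline k$.} If $(\mathsf b^i_{n,\ell})_y=0$, then under the coupling $\mathsf a^i_y=0$. Hence every edge at $y$ is closed in $\omega^i$, and $y$ forms a singleton class of the induced partition $\xi^i_{n,\ell}$. Since $(\omega,\mathsf a^0)$ is compatible, $\omega$ has no open edge at any $y$ with $(\mathsf b^0_{n,\ell})_y=0$, and in particular none at any vertex with $(\mathsf b^1_{n,\ell})_y=0\leq(\mathsf b^0_{n,\ell})_y$. Therefore each such $y$ is an isolated singleton counted in $\overline k$ but excluded from $k$, and the other components coincide. This gives
\begin{equation*}
k^{\xi^i_{n,\ell}}(\omega,\mathsf a^i)=\overline k^{(\xi^i_{n,\ell},\mathsf b^i_{n,\ell})}(\omega,\mathsf a)-Z_i,\qquad i\in\{0,1\}.
\end{equation*}
Subtracting the two identities,
\begin{equation*}
k^{\xi^1_{n,\ell}}(\omega,\mathsf a^1)-k^{\xi^0_{n,\ell}}(\omega,\mathsf a^0)=(Z_0-Z_1)-\big(\overline k^{(\xi^0_{n,\ell},\mathsf b^0_{n,\ell})}-\overline k^{(\xi^1_{n,\ell},\mathsf b^1_{n,\ell})}\big)(\omega,\mathsf a).
\end{equation*}
The first bracket lies in $[0,\varepsilon n^{d-1}]$ by $\mathcal A_0$. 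The second lies in $[0,\varepsilon n^{d-1}]$ by \eqref{eq: k bar are ordered} and $\mathcal P(\varepsilon,n,\ell)$. Hence the difference has absolute value at most $\varepsilon n^{d-1}$.

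\emph{Step 2: from $\omega$ to $\omega'$.} Each $y\in\partial^{\textup{ext},+}R(n,\ell)$ is incident to exactly one edge of $\overline E^+(R(n,\ell))$, since an exterior vertex of a box has a unique neighbour in the box. Ghost edges only touch vertices of $R(n,\ell)$. So $\omega'$ differs from $\omega$ on at most $|D|$ edges. Toggling a single edge changes the number of components $k^{\xi^1_{n,\ell}}(\cdot,\mathsf a^1)$ by at most one; every $y\in D$ has $\mathsf a^1_y>0$, so the toggled edges are admissible. Hence $|k^{\xi^1_{n,\ell}}(\omega',\mathsf a^1)-k^{\xi^1_{n,\ell}}(\omega,\mathsf a^1)|\leq|D|\leq\varepsilon n^{d-1}$. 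The triangle inequality with Step 1 then yields the claimed bound $2\varepsilon n^{d-1}$.

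The main point requiring care is Step 1. There one must check that vertices with zero boundary absolute value are genuinely isolated singletons in both induced partitions and in $\omega$. Only then does the identity $k=\overline k-Z_i$ hold exactly, without any dimension-dependent loss. This is exactly where compatibility of $(\omega,\mathsf a^0)$ and the ordering $\mathsf b^1_{n,\ell}\geq\mathsf b^0_{n,\ell}$ are used.
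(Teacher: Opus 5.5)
Your proof is correct and follows essentially the same route as the paper: both rest on the identity $k^{\xi^i_{n,\ell}}(\omega,\mathsf a^i)=\overline{k}^{(\xi^i_{n,\ell},\mathsf b^i_{n,\ell})}(\omega,\mathsf a)-Z_i$, control the $\overline{k}$-difference by \eqref{eq: k bar are ordered} and $\mathcal P(\varepsilon,n,\ell)$, control $Z_0-Z_1$ by $\mathcal A_0(\varepsilon,n,\ell)$, and pay at most $|D|\leq\varepsilon n^{d-1}$ for replacing $\omega$ by $\omega'$. The differences are cosmetic. The paper gets one direction from $\omega'\succcurlyeq\omega$ and monotonicity of $\overline{k}$ rather than from edge-toggling, which yields the asymmetric bounds $\varepsilon n^{d-1}$ and $2\varepsilon n^{d-1}$. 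It also leaves implicit the checks you make explicit: zero-$\mathsf b$ vertices are isolated singleton classes, and each exterior vertex carries a single edge.
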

\begin{proof} Observe that $(\omega',\mathsf{a}^1)$ is $(\xi^1_{n,\ell},\mathsf{b}_{n,\ell}^1)$-compatible and that $\omega'\succcurlyeq \omega$. As a consequence,
\begin{multline}
    k^{\xi^1_{n,\ell}}(\omega',\mathsf{a}^1)+|\{y: (\mathsf{b}^1_{n,\ell})_1=0\}|= \overline{k}^{(\xi^1_{n,\ell},\mathsf{b}^1_{n,\ell})}(\omega',\mathsf{a})\leq \overline{k}^{(\xi^1_{n,\ell},\mathsf{b}^1_{n,\ell})}(\omega,\mathsf{a})\leq \overline{k}^{(\xi^0_{n,\ell},\mathsf{b}^0_{n,\ell})}(\omega,\mathsf{a})\\= k^{\xi^0_{n,\ell}}(\omega,\mathsf{a}^0)+|\{y: (\mathsf{b}^0_{n,\ell})_y=0\}|,
\end{multline}
where we used \eqref{eq: k bar are ordered} in the second inequality. Under the occurrence of $\mathcal{A}_0(\varepsilon,n,\ell)$, one has $|\{y: (\mathsf{b}^0_{n,\ell})_y=0\}|-|\{y: (\mathsf{b}^1_{n,\ell})_1=0\}|\leq \varepsilon n^{d-1}$. Combined with the above display, this yields
\begin{equation}
    k^{\xi^1_{n,\ell}}(\omega',\mathsf{a}^1)\leq k^{\xi^0_{n,\ell}}(\omega,\mathsf{a}^0)+\varepsilon n^{d-1}.
\end{equation}
Now, under the occurrence of $\mathcal{P}(\varepsilon,n,\ell)$ one has $\overline{k}^{(\xi^0_{n,\ell},\mathsf{b}^0_{n,\ell})}(\omega,\mathsf{a})\leq \overline{k}^{(\xi^1_{n,\ell},\mathsf{b}^1_{n,\ell})}(\omega,\mathsf{a})+\varepsilon n^{d-1}$. Hence,
\begin{multline}
    k^{\xi^0_{n,\ell}}(\omega,\mathsf{a}^0)+|\{y: (\mathsf{b}^0_{n,\ell})_y=0\}|=\overline{k}^{(\xi^0_{n,\ell},\mathsf{b}^0_{n,\ell})}(\omega,\mathsf{a})\leq \overline{k}^{(\xi^1_{n,\ell},\mathsf{b}^1_{n,\ell})}(\omega,\mathsf{a})+\varepsilon n^{d-1}\\=k^{\xi^1_{n,\ell}}(\omega,\mathsf{a}^1)+|\{y: (\mathsf{b}^1_{n,\ell})_y=0\}|+\varepsilon n^{d-1},
\end{multline}
so that
\begin{equation}
    k^{\xi^0_{n,\ell}}(\omega,\mathsf{a}^0)\leq k^{\xi^1_{n,\ell}}(\omega,\mathsf{a}^1)+\varepsilon n^{d-1}.
\end{equation}
It remains to compare $k^{\xi^1_{n,\ell}}(\omega,\mathsf{a}^1)$ to $k^{\xi^1_{n,\ell}}(\omega',\mathsf{a}^1)$. By definition $\omega'$ and $\omega$ can only differ on edges having an endpoint in $\{y\in \partial^{\textup{ext},+}R(n,\ell):(\mathsf{b}^0_{n,\ell})_y=0 \textup{ and }(\mathsf{b}_{n,\ell}^1)_y>0\}$. Again, since $\mathcal{A}_0(\varepsilon,n,\ell)$ occurs, this set has cardinality smaller than $\varepsilon n^{d-1}$, which gives that 
\begin{equation}
    k^{\xi^1_{n,\ell}}(\omega,\mathsf{a}^1)\leq k^{\xi^1_{n,\ell}}(\omega',\mathsf{a}^1)+\varepsilon n^{d-1},
\end{equation}
and concludes the proof.
\end{proof}

By Lemmas \ref{lem:abs value approx}--\ref{lem:partitions approx} and Claim \ref{claim:mathcalCD}, we can find $\delta>0$ small enough and $\ell\geq 1$ large enough such that, for $n$ large enough, the event in \eqref{eq:intersection of events} has positive probability.
On this event, recall that $\mathbf P^{\xi^1_{n,\ell}, \mathsf b^1_{n,\ell},h}_{R(n,\ell), \beta, \beta'}[D_{n,\ell}] \geq 1-\delta/2$, where
\begin{equation}
    D_{n,\ell}=D_{n,\ell}(\mathsf{b}_{n,\ell}^1,\mathsf{b}_{n,\ell}^0):= \Big\{ \sum_{xy \in \partial^{+}_E R(n,\ell)} \mathsf a_x \Big((\mathsf b^1_{n,\ell})_y - (\mathsf b^0_{n,\ell})_y\Big)\leq \varepsilon n^{d-1}\Big\}.
\end{equation}
Our goal is to establish that
\begin{equation} \label{eq: claim A_nD_n bound}
\mathbf P^{\xi^1_{n,\ell}, \mathsf{b}^1_{n,\ell}, h}_{R(n,\ell),\beta,\beta'}[A_n \cap D_{n,\ell}] \leq 2^{5\varepsilon n^{d-1}+1} e^{3\beta \varepsilon n^{d-1}}  \mathbf P^{\xi^0_{n,\ell}, \mathsf{b}^0_{n,\ell}, h}_{R(n,\ell),\beta,\beta'}[A_n ].
\end{equation}
Intuitively, this follows by two observations. First, the individual weights in the coupling can be compared up to costs that are exponential in the sum of the absolute values along the boundary, which are in turn controlled by $D_{n,\ell}$. Second, the cardinalities of the sets we sum $\sigma$ over are determined by the number of connected clusters under each measure in \eqref{eq: claim A_nD_n bound}, 
which in turn are comparable up to $2\varepsilon n^{d-1}$ thanks to Claim \ref{claim: from k bar to k}. 

Before proving \eqref{eq: claim A_nD_n bound}, let us first see that it implies the result. Indeed, we can argue almost exactly as we did in the full space argument. By the definition of the events $\mathcal{A}(\varepsilon,\delta/2,n,\ell)$ and $\mathcal{C}(\varepsilon,\delta,n,\ell)$, if $n$ is large enough,
\begin{equation} \label{eq:D_n}
\frac{\delta}{2}\leq \mathbf P^{\xi^1_{n,\ell}, \mathsf b^1_{n,\ell},h}_{R(n,\ell), \beta,\beta'}[A_n \cap D_{n,\ell}] \leq 2^{5\varepsilon n^{d-1}+1} e^{3\beta \varepsilon n^{d-1}} \mathbf P^{\xi^0_{n,\ell}, \mathsf b^0_{n,\ell}, h}_{R(n,\ell),\beta,\beta'}[A_n].
\end{equation}
Moreover,
\begin{equation}
\mathbf P^{\xi^0_{n,\ell}, \mathsf b^0_{n,\ell}, h}_{R(n,\ell),\beta,\beta'}[A_n]=\frac{\mathbf E^{\xi^0_{n,\ell}, \mathsf b^0_{n,\ell}, h}_{R(n,\ell),\beta}[G(\tau) \mathbbm{1}_{A_n}]}{\mathbf E^{\xi^0_{n,\ell}, \mathsf b^0_{n,\ell}, h}_{R(n,\ell),\beta}[G(\tau)]}\le \frac{\mathbf E^{\xi^0_{n,\ell}, \mathsf b^0_{n,\ell}, h}_{R(n,\ell),\beta}[G(\tau) \mathbbm{1}_{A_n}]}{\mathbf E^{\xi^0_{n,\ell}, \mathsf b^0_{n,\ell}, h}_{R(n,\ell),\beta}[G(\tau)\mathbbm{1}_{B_n}]},
\end{equation}
where $G(\tau)=\prod_{xy\in E_n} e^{-(\beta-\beta')\tau_x \tau_y}$. Thus, using \eqref{eq:D_n} and the definitions of $A_n$ and $B_n$, we get
\begin{equation}
\frac{\delta}{2}\le \frac{e^{(3\beta+5\log 2) \varepsilon n^{d-1}+\log 2}}{\delta} e^{|E_n|(\beta'-\beta)(a_{uv}-b_{uv}-2\varepsilon)}.
\end{equation}
Since $\beta'<\beta$ and $|E_n|\geq n^{d-1}$, by taking $n$ to be large enough we get that 
\begin{equation}
a_{uv}-b_{uv}-2\varepsilon \leq  C\varepsilon,
\end{equation}
where $C=\tfrac{3\beta+5\log 2}{\beta-\beta'}$. Since $\varepsilon>0$ is arbitrary, we conclude that $a_{uv}(\beta',\beta)\leq b_{uv}(\beta)$, which implies \eqref{eq:2 point ineq hs}, as desired.

It remains to prove \eqref{eq: claim A_nD_n bound}. Recall \eqref{eq:formula ES} and fix boundary conditions $(\xi^1_{n,\ell}, \mathsf b^1_{n,\ell})$ and $(\xi^0_{n,\ell}, \mathsf b^0_{n,\ell})$. Consider $\mathsf{a}^1$ and $\mathsf{a}^0$ whose values on $\partial^{\textup{ext},+}R(n,\ell)$ coincide with $\mathsf{b}^1_{n,\ell}$ and $\mathsf{b}^0_{n,\ell}$, respectively, and such that $\mathsf a^0=\mathsf a^1=\mathsf{a}$ in $R(n,\ell)$. Let $(\omega,\sigma)$ be such that $(\mathsf{a}^1,\omega,\sigma)$ is $\xi^1_{n,\ell}$-compatible. To this triplet, we can associate $(\mathsf{a}^0,\tilde \omega,\tilde \sigma)$ which is also $\xi^0_{n,\ell}$-compatible, and defined as follows. For every $e\in E(R(n,\ell))$, we let $\tilde \omega_e=\omega_e$; and for every $e=xy\in \partial^{+}_E R(n,\ell)$ with $x\in R(n,\ell)$ and $y\notin R(n,\ell)$, we let $\tilde \omega_e=\omega_e$ if $\mathsf{a}^0_y\neq 0$, and $\tilde \omega_e=0$ otherwise. Moreover, for $x\in {R}(n,\ell)$ we let
$\tilde\sigma_x=\sigma_x$, and for $y\in \partial^{\textup{ext},+}R(n,\ell)$, we let $\tilde \sigma_y=\sigma_y$ if $\mathsf{a}^0_y\neq 0$, and $\tilde \sigma_y=0$ otherwise.

We now perform some calculations with the aim of comparing $\mathrm{d}\mathbf P^{\xi^1_{n,\ell},\mathsf b^1_{n,\ell}, h}_{R(n,\ell), \beta,\beta'}[(\mathsf{a}^1,\omega,\sigma)]$ with $\mathrm{d}\mathbf P^{\xi^0_{n,\ell},\mathsf b^0_{n,\ell}, h}_{R(n,\ell), \beta,\beta'}[(\mathsf{a}^0,\tilde\omega,\tilde\sigma)]$ on the event $A_n\cap D_{n,\ell}$. Recall that we work under the event in \eqref{eq:intersection of events}. 

First, we bound $\mathds{1}_{(\mathsf{a}^1,\sigma)\in A_n\cap D_{n,\ell}} Z^{\xi^1_{n,\ell},\mathsf{b}_{n,\ell}^1,h}_{R(n,\ell),\beta,\beta'}\mathrm{d}\mathbf P^{\xi^1_{n,\ell},\mathsf b^1_{n,\ell}, h}_{R(n,\ell), \beta,\beta'}[(\mathsf{a}^1,\omega,\sigma)]$ in terms of a multiple of $\mathds{1}_{(\mathsf{a}^0,\tilde \sigma)\in A_n} Z^{\xi^0_{n,\ell},\mathsf{b}_{n,\ell}^0,h}_{R(n,\ell),\beta,\beta'}\mathrm{d}\mathbf P^{\xi^0_{n,\ell},\mathsf b^0_{n,\ell}, h}_{R(n,\ell), \beta,\beta'}[(\mathsf{a}^0,\tilde \omega,\tilde \sigma)]$, where $(\tilde \omega,\tilde \sigma)$ was defined above. Recall \eqref{eq:formula ES}. Note that if an edge $xy\in \overline{E}^+(R(n,\ell))$ is such that $\sigma_x\neq \sigma_y$, then any compatible $\omega$ satisfies $\omega_{xy}=0$. Thus, we trivially have 
\begin{equation} \label{eq:pf5}
\left(\frac{p_{xy}(\mathsf{a}^1)}{1-p_{xy}(\mathsf{a}^1)
}\right)^{\omega_{xy}}\leq \left(\frac{p_{xy}(\mathsf{a}^0)}{1-p_{xy}(\mathsf{a}^0)}\right)^{\tilde\omega_{xy}}. 
\end{equation}
Otherwise, both $\omega_{xy}=1$ and $\omega_{xy}=0$ may occur, and the monotonicity is less obvious. Let us first restrict to edges on which $\omega_{xy}=\tilde \omega_{xy}$. Observe that if we sum over these possibilities, we have that
\begin{multline} \label{eq:pf6}
\left(\frac{p_{xy}(\mathsf{a}^1)}{1-p_{xy}(\mathsf{a}^1)}\right)^{\omega_{xy}}+\left(\frac{p_{xy}(\mathsf{a}^1)}{1-p_{xy}(\mathsf{a}^1)}\right)^{1-\omega_{xy}}\\\leq e^{2\beta(\mathsf{a}^1_x \mathsf{a}^1_y-\mathsf{a}^0_x \mathsf{a}^0_y)}\left(\left(\frac{p_{xy}(\mathsf{a}^0)}{1-p_{xy}(\mathsf{a}^0)}\right)^{\tilde\omega_{xy}}+\left(\frac{p_{xy}(\mathsf{a}^0)}{1-p_{xy}(\mathsf{a}^0)}\right)^{1-\tilde\omega_{xy}}\right),
\end{multline}
where the inequality comes from edges with parameter $\beta'$, i.e. using that $\beta'\leq \beta$. Again, similar bounds hold for edges of the form $x\fg$ with $x\in R(n,\ell)$. If now we work at a boundary edge $xy$ such that $\tilde \omega_{xy}=0$ and $\omega_{xy}=1$, then
\begin{equation}
    \left(\frac{p_{xy}(\mathsf{a}^1)}{1-p_{xy}(\mathsf{a}^1)}\right)^{\omega_{xy}}\leq e^{2\beta(\mathsf{a}^1_x \mathsf{a}^1_y-\mathsf{a}^0_x \mathsf{a}^0_y)}\left(\frac{p_{xy}(\mathsf{a}^0)}{1-p_{xy}(\mathsf{a}^0)}\right)^{\tilde\omega_{xy}}.
\end{equation}
Combining \eqref{eq:pf2} and \eqref{eq:pf5}--\eqref{eq:pf6}, we get that, for every triplet $(\mathsf{a}^1,\mathsf{a}^0,\sigma)$ as before,
\begin{align}\label{eq:pf7}
\begin{aligned}
    \mathds{1}_{(\mathsf{a}^1,\sigma)\in A_n\cap D_{n,\ell}}&\sum_{\omega}\prod_{xy\in \overline{E}^+(R(n,\ell))} \sqrt{1-p_{xy}(\mathsf{a}^1)}\left(\frac{p_{xy}(\mathsf{a}^1)}{1-p_{xy}(\mathsf{a}^1)}\right)^{\omega_{xy}} 
    \\&\qquad\qquad\prod_{x\in R(n,\ell)\cap \partial \mathbb H} \sqrt{1-p_{x\fg}(\mathsf a^1)}\left(\frac{p_{x\fg}(\mathsf a^1)}{1-p_{x\fg}(\mathsf a^1)}\right)^{\omega_{x\fg}}
    \\&\leq e^{2\beta \varepsilon n^{d-1}}\mathds{1}_{(\mathsf{a}^0,\tilde\sigma)\in A_n}\sum_{\omega} \prod_{xy\in \overline{E}^+(R(n,\ell))}\sqrt{1-p_{xy}(\mathsf{a}^0)}\left(\frac{p_{xy}(\mathsf{a}^0)}{1-p_{xy}(\mathsf{a}^0)}\right)^{\tilde \omega_{xy}} 
    \\& \qquad\qquad\prod_{x\in R(n,\ell)\cap \partial \mathbb H} \sqrt{1-p_{x\fg}(\mathsf a^0)}\left(\frac{p_{x\fg}(\mathsf a^0)}{1-p_{x\fg}(\mathsf a^0)}\right)^{\tilde \omega_{x\fg}},
    \end{aligned}
\end{align}
where above we are summing over  $\omega$ such that $(\omega, \mathsf{a}^1,\sigma)$ is compatible. Now, observe that under the occurrence of $\mathcal{A}_0(\varepsilon,n,\ell)$, any element in the image of $\omega\mapsto \tilde
 \omega$ has at most $2^{\varepsilon n^{d-1}}$ preimages.
A similar observation holds for the map $\sigma\mapsto \tilde \sigma$.
Integrating over $(\mathsf{a}^1,\mathsf{a}^0,\sigma)$ above therefore yields
\begin{equation}\label{eq:pf8}
     Z^{\xi^1_{n,\ell},\mathsf{b}_{n,\ell}^1,h}_{R(n,\ell),\beta,\beta'}\mathbf P^{\xi^1_{n,\ell},\mathsf b^1_{n,\ell}, h}_{R(n,\ell), \beta,\beta'}[A_n\cap D_n]\leq  e^{2\beta \varepsilon n^{d-1}} 2^{2\varepsilon n^{d-1}}\cdot Z^{\xi^0_{n,\ell},\mathsf{b}_{n,\ell}^0,h}_{R(n,\ell),\beta,\beta'} \mathbf P^{\xi^0_{n,\ell},\mathsf b^0_{n,\ell}, h}_{R(n,\ell), \beta,\beta'}[A_n].
\end{equation}

Next, we prove that, if $n$ is large enough
\begin{equation}\label{eq:pf4.5}
    Z^{\xi^0_{n,\ell},\mathsf{b}_{n,\ell}^0,h}_{R(n,\ell),\beta,\beta'}\leq 2\cdot 2^{3\varepsilon n^{d-1}}e^{\beta \varepsilon n^{d-1}}Z^{\xi^1_{n,\ell},\mathsf{b}_{n,\ell}^1,h}_{R(n,\ell),\beta,\beta'}.
\end{equation}
It suffices to prove that
\begin{equation}\label{eq:pf1}
Z^{\xi^0_{n,\ell},\mathsf{b}_{n,\ell}^0,h}_{R(n,\ell),\beta,\beta'}[D_{n,\ell}]\leq 2^{3\varepsilon n^{d-1}}e^{\beta \varepsilon n^{d-1}}Z^{\xi^1_{n,\ell},\mathsf{b}_{n,\ell}^1,h}_{R(n,\ell),\beta,\beta'}.
\end{equation}
Indeed, since we work under $\mathcal{A}(\varepsilon,\delta/2,n,\ell)$ with $\delta\in (0,1)$, we have that $\mathbf P^{\xi^1_{n,\ell},\mathsf{b}_{n,\ell}^1,h}_{R(n,\ell),\beta,\beta'}[D_{n,\ell}]\geq 1-\delta/2\geq \tfrac{1}{2}$ if $n$ is large enough. Since $D_{n,\ell}$ is decreasing in $\mathsf{a}^i$, Proposition \ref{prop: random cluster properties} gives that $\mathbf P^{\xi^0_{n,\ell},\mathsf{b}_{n,\ell}^0,h}_{R(n,\ell),\beta,\beta'}[D_{n,\ell}]\geq \mathbf P^{\xi^1_{n,\ell},\mathsf{b}_{n,\ell}^1,h}_{R(n,\ell),\beta,\beta'}[D_{n,\ell}]\geq \tfrac{1}{2}$, so that (by definition) $Z^{\xi^0_{n,\ell},\mathsf{b}_{n,\ell}^0,h}_{R(n,\ell),\beta,\beta'}[D_{n,\ell}]\geq \tfrac{1}{2}Z^{\xi^0_{n,\ell},\mathsf{b}_{n,\ell}^0,h}_{R(n,\ell),\beta,\beta'}$. This last inequality gives \eqref{eq:pf4.5} when combined with \eqref{eq:pf1}.

We turn to the proof of \eqref{eq:pf1}. We use the following mapping. If $\omega$ is such that $(\omega,\mathsf{a}^0)$ is $(\xi^0_{n,\ell},\mathsf{b}^0_{n,\ell})$-compatible, then we can associate at most $2^{\varepsilon n^{d-1}}$ configurations $\omega'$ such that $(\omega',\mathsf{a}^1)$ is $(\xi^1_{n,\ell},\mathsf{b}^1_{n,\ell})$-compatible constructed as follows: they coincide with $\omega$ on $E(R(n,\ell))$, and potentially differ from it only on the set $\{xy\in \partial^+_E R(n,\ell): x\in R(n,\ell), \: y\notin R(n,\ell), \: \mathsf{a}^0_y=0 \textup{ and }\mathsf{a}^1_y>0\}$, which is of cardinality at most $\varepsilon n^{d-1}$ under the occurrence of $\mathcal{A}_0(\varepsilon,n,\ell)$. Note that any $\omega'$ such that $(\omega',\mathsf{a}^1)$ is $(\xi^1_{n,\ell},\mathsf{b}^1_{n,\ell})$-compatible can be obtained according to the above procedure.

Observe that, for $xy\in \overline{E}^+(R(n,\ell))$,
\begin{equation} \label{eq:pf2}
\sqrt{1-p_{xy}(\mathsf a^0)}= e^{\beta(\mathsf{a}^1_x \mathsf{a}^1_y- \mathsf{a}^0_x \mathsf{a}^0_y)} \sqrt{1-p_{xy}(\mathsf{a}^1)},
\end{equation}
and, for any $\omega'$ obtained from $\omega$, 
\begin{equation}\label{eq:pf3}
\left(\frac{p_{xy}(\mathsf{a}^0)}{1-p_{xy}(\mathsf{a}^0)
}\right)^{\omega_{xy}}\leq \left(\frac{p_{xy}(\mathsf{a}^1)}{1-p_{xy}(\mathsf{a}^1)}\right)^{\omega_{xy}'}.
\end{equation}
Similar bounds hold for edges of the form $x\fg$ with $x\in R(n,\ell)$.
Notice also that for every $xy\in \partial^{+}_E R(n,\ell)$ with $x\in R(n,\ell)$ and $y\notin R(n,\ell)$, $\mathsf{a}^1_x \mathsf{a}^1_y- \mathsf{a}^0_x \mathsf{a}^0_y= \mathsf{a}^1_x((\mathsf{b}^1_{n,\ell})_y-(\mathsf{b}^0_{n,\ell})_y)$, since $\mathsf{a}^1=\mathsf{a}^0$ in $R(n,\ell)$. Hence,
\begin{equation}\label{eq:pf4}
\sum_{xy \in \partial^{+}_E R(n,\ell)}(\mathsf{a}^1_x \mathsf{a}^1_y- \mathsf{a}^0_x \mathsf{a}^0_y)\leq \varepsilon n^{d-1},
\end{equation}
by definition of the event $D_{n,\ell}$. Moreover, by Claim \ref{claim: from k bar to k} one has that $|k^{\xi^1_{n,\ell}}(\omega',\mathsf{a}^1)-k^{\xi^0_{n,\ell}}(\omega,\mathsf{a}^0)|\leq 2\varepsilon n^{d-1}$. Finally, the set of $\omega'$ obtained from a given $\omega$ is of cardinality at most $2^{\varepsilon n^{d-1}}$ under the occurrence of $\mathcal{P}(\varepsilon,n,\ell)$.
Hence, combining \eqref{eq:pf2}--\eqref{eq:pf4} with the above observations, we obtain \eqref{eq:pf1}.

Combining \eqref{eq:pf8} and \eqref{eq:pf4.5} gives
\begin{equation}
\mathbf P^{\xi^1_{n,\ell},\mathsf b^1_{n,\ell}, h}_{R(n,\ell), \beta,\beta'}[A_n\cap D_{n,\ell}]\leq 2\cdot  2^{5\varepsilon n^{d-1}}e^{3\beta\varepsilon n^{d-1}}\mathbf P^{\xi^0_{n,\ell},\mathsf b^0_{n,\ell}, h}_{R(n,\ell), \beta,\beta'}[A_n],
\end{equation}
which concludes the proof.
\end{proof}

We now prove Claim \ref{claim:mathcalCD}.
\begin{proof}[Proof of Claim~\textup{\ref{claim:mathcalCD}}] Let $\varepsilon>0$ and $\ell\geq 1$. Let $\delta>0$ to be chosen small enough. Recall the definitions of $A_n,B_n$ from \eqref{eq: def a_n b_n half space}.
Let us first bound $\mathbb P_\beta[\mathcal D(\varepsilon,\delta,n,\ell)]$. Observe that the measure $\nu^{0,h}_{\mathbb H,\beta}$ is ergodic on even functions with respect to translations in $\Gamma$. Indeed, this follows from the exact same argument as \cite[Proposition~2.10]{GunaratnamPanagiotisPanisSeveroPhi42022}. Therefore, Birkhoff's ergodic theorem implies that, almost surely
\begin{equation}
\lim_{n \rightarrow \infty} \frac{1}{|E_n|} \sum_{xy \in E_n} \tau_x \tau_y = \langle \tau_u\tau_v\rangle^{0,h}_{\mathbb H,\beta}= b_{uv}. 
\end{equation}
Hence $\nu^{0,h}_{\mathbb H, \beta}[B_n]=1-o(1)$, where $o(1)$ tends to $0$ as $n$ tends to infinity. Furthermore, observe that by the domain Markov property,
\begin{equation}
\nu^{0,h}_{\mathbb H, \beta}[B_n] = \mathbb E_\beta\Big[\mathbf P^{\xi^0_{n,\ell}, \mathsf b^0_{n,\ell},h}_{R(n,\ell),\beta}[B_n]\Big]\leq \mathbb P_\beta[\mathcal D(\varepsilon,\delta,n,\ell)] + \delta. 
\end{equation}
As a result, if $n$ is large enough, $\mathbb P_\beta[\mathcal{D}(\varepsilon,\delta,n,\ell)]\geq 1-2\delta$.

We now analyse $\mathbb P_\beta[\mathcal C(\varepsilon,\delta,n,\ell)]$. Note that, by applying Birkhoff's ergodic theorem as above, we could get that $\mathbb P_{\beta,\beta'}[\mathcal C(\varepsilon,\delta,n,\ell)]=1-o(1)$, where $\mathbb P_{\beta,\beta'}$ is the monotone coupling between $\Psi^{0,h}_{\mathbb H, \beta, \beta'}$ and $\Psi^{1,h}_{\mathbb H, \beta, \beta'}$. However, it is not clear how to compare this with $\mathbb P_\beta[\mathcal C(\varepsilon,\delta,n,\ell)]$. Instead, we establish a weaker bound, relying on Lemma \ref{lem:random variables}, as we did in the full-space case. Let $\mathbb P = \mathbb P_\beta\big[\mathbf P^{\xi^1_{n,\ell}, \mathsf b^1_{n,\ell}, h}_{R(n,\ell),\beta,\beta'}[\cdot]\big]$. Let the $X_i$ correspond to the random variables $\tau_x\tau_y$ where $xy$ ranges over $E_n$. Note that, since $\mathbf P^{\xi^1_{n,\ell}, \mathsf b^1_{n,\ell}, h}_{R(n,\ell),\beta,\beta'}$ is measurable only with respect to the $(\mathsf a^1,\omega^1)$ marginal, we can apply the monotonicity in parameters of Proposition \ref{prop: wired random cluster properties} to obtain
\begin{equation}
\mathbb E[\tau_x\tau_y]=\mathbb E_\beta\Big[\mathbf E^{\xi^1_{n,\ell}, \mathsf b^1_{n,\ell}, h}_{R(n,\ell),\beta,\beta'}[\tau_x\tau_y]\Big] \geq \mathbb E_{\beta,\beta'}\Big[\mathbf E^{\xi^1_{n,\ell}, \mathsf b^1_{n,\ell}, h}_{R(n,\ell),\beta,\beta'}[\tau_x\tau_y]\Big] = a_{uv},
\end{equation}
where the last equality follows by the domain Markov property.
Furthermore, using the monotonicity in parameters of Proposition \ref{prop: random cluster properties} and the regularity of Proposition \ref{prop:reg_plus half-space}, we obtain that there exists $M>0$ large enough such that
\begin{equation}
\mathbb E[|\tau_x\tau_y|\mathds{1}_{|\tau_x\tau_y|\geq M}]\leq \nu_{\mathbb H,\beta}^{+,h}[|\tau_x\tau_y|\mathds{1}_{|\tau_x\tau_y|\geq M}]\leq \frac{\varepsilon}{2}.
\end{equation}
By Lemma \ref{lem:random variables}, we find that
\begin{equation}
    \mathbb P[A_n]\geq \frac{(\varepsilon/2)}{M-a_{uv}+\varepsilon}=:c_1>0.
\end{equation}
Then,
\begin{equation}
    \mathbb P[A_n]=\mathbb P_\beta\Big[\mathbf P^{\xi^1_{n,\ell}, \mathsf b^1_{n,\ell}, h}_{R(n,\ell),\beta,\beta'}[A_n]\Big]\leq \mathbb P_{\beta}[\mathcal{C}(\varepsilon,\delta,n,\ell)]+\delta.
\end{equation}
Choosing $\delta>0$ small enough so that $\delta\leq \tfrac{c_1}{2}$ and setting $c_0:=\tfrac{c_1}{2}$ concludes the proof.
\end{proof}

\subsection{Bulk absolute values are approximately equal}

We now turn to the proof of Lemma \ref{lem:abs value approx}. 

\begin{proof}[Proof of Lemma \textup{\ref{lem:abs value approx}}] Assume that $\beta>0$ is such that $\beta\notin \Xi$. Fix $\varepsilon,\delta>0$. Let $\ell\geq 1$ to be chosen large enough. We will prove that, for $n$ large enough,
\begin{equation}\label{eq:A_0 and A_1 ineq}
    \mathbb P_\beta[\mathcal{A}_0(\varepsilon,n,\ell)]\geq 1-\delta/2, \qquad \mathbb P_\beta[\mathcal{A}_1(\varepsilon,\delta,n,\ell)]\geq 1-\delta/2,
\end{equation}
starting with the first inequality.

Introduce $X=\sum_{y\in \partial^{\textup{ext},+} R(n,\ell)}(\mathds{1}_{\mathsf{a}^0_y=0}-\mathds{1}_{\mathsf{a}^1_y=0})$ and note that $X\geq 0$ $\mathbb P_\beta$-almost surely. By Markov's inequality,
\begin{equation}\label{eq:pf a0 1}
    \mathbb P_\beta[\mathcal{A}_0(\varepsilon,n,\ell)^c]=\mathbb P_\beta[X>\varepsilon n^{d-1}]\leq \frac{\mathbb E_\beta[X]}{\varepsilon n^{d-1}}.
\end{equation}
Observe that there exists $C_1=C_1(d)>0$ such that,
\begin{equation}\label{eq:pf a0 2}
    \mathbb E_\beta[X]\leq C_1n^{d-1}\Big(\Psi^0_{\mathbb H,\beta}[\mathsf{a}_{\ell' \mathbf{e}_d}=0]-\Psi^1_{\mathbb H,\beta}[\mathsf{a}_{\ell' \mathbf{e}_d}=0]\Big) +C_1\ell n^{d-2},
\end{equation}
where we used translation invariance in the hyperplane $y_d=\ell':=\ell+1$.
Moreover, if $\ell$ is large enough, 
\begin{equation}\label{eq:pf a0 3}
    \Psi^0_{\mathbb H,\beta}[\mathsf{a}_{\ell' \mathbf{e}_d}=0]\leq \Psi^0_{\Lambda_{\ell'/2}(\ell' \mathbf{{e}_d)},\beta}[\mathsf{a}_{\ell' \mathbf{e}_d}=0], \qquad \Psi^1_{\mathbb H,\beta}[\mathsf{a}_{\ell' \mathbf{e}_d}=0]\geq \Psi^1_{\Lambda_{\ell'/2}(\ell' \mathbf{{e}_d)},\beta}[\mathsf{a}_{\ell' \mathbf{e}_d}=0],
\end{equation}
where we used the domain Markov property and monotonicity in boundary conditions in the first inequality, and Proposition \ref{prop: monot plus} in the second one. Since $\beta\notin \Xi$, one has
\begin{equation}\label{eq:pf a0 4}
    \Psi^0_{\mathbb H,\beta}[\mathsf{a}_{\ell' \mathbf{e}_d}=0]-\Psi^1_{\mathbb H,\beta}[\mathsf{a}_{\ell' \mathbf{e}_d}=0]\leq \Psi^0_{\Lambda_{\ell'/2}(\ell' \mathbf{{e}_d)},\beta}[\mathsf{a}_{\ell' \mathbf{e}_d}=0]-\Psi^1_{\Lambda_{\ell'/2}(\ell' \mathbf{{e}_d)},\beta}[\mathsf{a}_{\ell' \mathbf{e}_d}=0]\underset{\ell\rightarrow \infty}{\longrightarrow}0.
\end{equation}
As a result, if $\ell$ is large enough so that $\Psi^0_{\mathbb H,\beta}[\mathsf{a}_{\ell' \mathbf{e}_d}=0]-\Psi^1_{\mathbb H,\beta}[\mathsf{a}_{\ell' \mathbf{e}_d}=0]\leq (\delta/2)(\varepsilon/2)C_1^{-1}$, and $n$ is large enough so that $C_1\ell n^{d-2}\leq (\delta/2)(\varepsilon/2)n^{d-1}$, we find that $\mathbb E_{\beta}[X]\leq (\delta/2)\varepsilon n^{d-1}$. Plugging this inequality in \eqref{eq:pf a0 1} yields that $\mathbb P_\beta[\mathcal{A}_0(\varepsilon,n,\ell)^c]\leq \delta/2$.

We now turn to the second inequality in \eqref{eq:A_0 and A_1 ineq}. Recall that, for a sample of $(\mathsf{b}_{n,\ell}^1,\mathsf{b}_{n,\ell}^0)$, we have defined
\begin{equation}
    D_{n,\ell} =  D_{n,\ell}(\mathsf{b}_{n,\ell}^1,\mathsf{b}_{n,\ell}^0)= \Big\{ \sum_{xy \in \partial^{+}_E R(n,\ell)} \mathsf a_x \Big((\mathsf b^1_{n,\ell})_y - (\mathsf b^0_{n,\ell})_y\Big)\leq \varepsilon n^{d-1}\Big\}.
\end{equation}
Introduce, for some $C>0$ to be chosen large enough below,
\begin{equation}
I_{n,\ell}:=\Big\{\sum_{xy\in \partial^+_E R(n,\ell)} \mathsf{a}_x^2\leq Cn^{d-1}\Big\},
\end{equation}
and
\begin{equation}
\mathcal J_{n,\ell}:=\Big\{\sum_{xy\in \partial^+_E R(n,\ell)}\Big((\mathsf b^1_{n,\ell})_y - (\mathsf b^0_{n,\ell})_y\Big)^2\leq \frac{\varepsilon^2}{C}n^{d-1} \Big\}.
\end{equation}
We claim that it suffices to show that for $C$ large enough, and every $n$ large enough,
\begin{equation}\label{eq: D_n bound}
   \mathbb E_\beta\Bigl[ \mathbbm{1}_{\mathcal{J}_{n,\ell}} \mathbf P^{\xi^1_{n,\ell}, \mathsf b^1_{n,\ell},h}_{R(n,\ell),\beta}[I_{n,\ell}]\Bigr]\geq 1-(\delta/2)^2.
\end{equation}
Indeed, let us assume that \eqref{eq: D_n bound} holds. By the Cauchy--Schwarz inequality, for every $(\mathsf b^1_{n,\ell},\mathsf b^0_{n,\ell}) \in \mathcal J_{n,\ell}$, we have that 
\begin{equation}
    I_{n,\ell} \subset D_{n,\ell},
\end{equation}
so that, letting $\mathbb P = \mathbb P_\beta\big[\mathbf P^{\xi^1_{n,\ell}, \mathsf b^1_{n,\ell}, h}_{R(n,\ell),\beta}[\cdot]\big]$, one has
\begin{equation}
\mathbb P[D_{n,\ell}]\geq \mathbb E_\beta\Bigl[ \mathbbm{1}_{\mathcal{J}_{n,\ell}} \mathbf P^{\xi^1_{n,\ell}, \mathsf b^1_{n,\ell},h}_{R(n,\ell),\beta}[I_{n,\ell}]\Bigr]\geq 1-(\delta/2)^2.
\end{equation}
Thus, applying Markov's inequality to the complement of the event 
\begin{equation}
\mathcal A'_1(\varepsilon,\delta,n,\ell):= \Big\{ (\mathsf a^1, \omega^1,  \mathsf a^0, \omega^0) : \mathbf P^{\xi^1_{n,\ell}, \mathsf b^1_{n,\ell},h}_{R(n,\ell), \beta}[D^c_{n,\ell}]\leq (\delta/2)^{-1} \mathbb{P}[D^c_{n,\ell}] \Big\}, 
\end{equation}
we obtain 
\begin{equation}
\mathbb{P}_\beta [\mathcal A'_1(\varepsilon,\delta,n,\ell)] = 1- \mathbb P_\beta \Big[ \mathbf P^{\xi^1_{n,\ell}, \mathsf b^1_{n,\ell},h}_{R(n,\ell), \beta}[D^c_{n,\ell}] > (\delta/2)^{-1}\mathbb E_\beta\big[\mathbf P^{\xi^1_{n,\ell}, \mathsf b^1_{n,\ell},h}_{R(n,\ell), \beta}[D^c_{n,\ell}]\big] \Big]\geq 
1-\delta/2.
\end{equation}
Since $\mathsf{b}^0_{n,\ell}\leq \mathsf{b}^1_{n,\ell}$, $D_{n,\ell}$ is a decreasing event (for $\mathsf{a}^1$) so that we have $\mathbf P^{\xi^1_{n,\ell}, \mathsf b^1_{n,\ell},h}_{R(n,\ell), \beta,\beta'}[D_{n,\ell}]\geq \mathbf P^{\xi^1_{n,\ell}, \mathsf b^1_{n,\ell},h}_{R(n,\ell), \beta}[D_{n,\ell}]$ by Proposition \ref{prop:stoc_monotonicity}. Combining the above with the fact that $(\delta/2)^{-1} \mathbb{P}[D^c_{n,\ell}]\leq (\delta/2) $ by our assumption \eqref{eq: D_n bound}, we get that
\begin{equation}
\mathbb{P}_{\beta}[\mathcal A_1(\varepsilon,\delta,n,\ell)]\geq \mathbb{P}_{\beta}[\mathcal A'_1(\varepsilon,\delta,n,\ell)]\geq 1- \delta/2.
\end{equation}

We now turn to the proof of \eqref{eq: D_n bound}. By a union bound it suffices to prove that
\begin{equation}
    \mathbb P_\beta[\mathcal{J}_{n,\ell}^c]\leq \frac{\delta^2}{8}, \quad\textup{and}\quad\mathbb P_\beta\Big[\mathbf P^{\xi^1_{n,\ell}, \mathsf b^1_{n,\ell},h}_{R(n,\ell), \beta}[I_{n,\ell}^c]\Big]= \mathbb P_\beta[I_{n,\ell}^c]\leq \frac{\delta^2}{8},
\end{equation}
where for the second inequality we used the domain Markov property, noting that $I_{n,\ell}$ is measurable with respect to the $\mathsf a^1$-marginal. Observe that the bound on $\mathbb P_\beta[I_{n,\ell}^c]$ follows by regularity by choosing $C$ large enough. Below, we potentially take $C$ even larger.

We now turn to the bound on $\mathbb P_\beta [\mathcal J_{n,\ell}^c]$. Consider the events 
\begin{equation}
\mathcal F_{n,\ell}=\Big\{\sum_{xy\in \partial^+_E R(n,\ell): \: x_d=\ell}(\mathsf{a}^1_y)^2 \le \sum_{xy\in \partial^+_E R(n,\ell):\: x_d=\ell} \Big(\Psi^{1,h}_{\mathbb{H},\beta}[(\mathsf{a}_y)^2 ]+\frac{\varepsilon}{C}\Big)\Big\}, 
\end{equation}
\begin{equation}
\mathcal G_{n,\ell}=\Big\{\sum_{xy\in \partial^+_E R(n,\ell):\: x_d=\ell}(\mathsf{a}^0_y)^2 \ge \sum_{xy\in \partial^+_E R(n,\ell):\:x_d=\ell} \Big(\Psi^{0,h}_{\mathbb{H},\beta}[(\mathsf{a}_y)^2]-\frac{\varepsilon}{C}\Big)\Big\},
\end{equation}
\begin{equation}
\mathcal H_{n,\ell}=\Big\{\sum_{xy\in \partial^+_E R(n,\ell):\: x_d<\ell} (\mathsf{a}_y^1)^2\leq C\ell n^{d-2}\Big\}.
\end{equation}
Applying Birkhoff's ergodic theorem as in the proof of Claim~\ref{claim:mathcalCD} and using (again) regularity, we see that (for each fixed $\ell\geq 1$) for every $C$ large enough, if $n$ is large enough
\begin{equation}
\mathbb{P}_{\beta}[\mathcal F_{n,\ell}\cap \mathcal G_{n,\ell}\cap \mathcal H_{n,\ell}]\geq 1-\frac{\delta^2}{8}.
\end{equation}
We now show that, for a good choice of $\ell$, for every $n$ and $C$ large enough, we have
\begin{equation}\label{eq:inclusion}
\mathcal F_{n,\ell}\cap \mathcal G_{n,\ell}\cap \mathcal H_{n,\ell}\subset \mathcal J_{n,\ell},    
\end{equation}
from which the desired result follows.
Consider $k$ large enough so that
\begin{equation}
    \Psi^1_{\Lambda_k,\beta}[(\mathsf{a}_0)^2] \leq \Psi^1_\beta[(\mathsf{a}_0)^2] + \frac{\varepsilon}{C}, \quad \Psi^0_{\Lambda_k,\beta}[(\mathsf a_0)^2]\geq \Psi_\beta^0[(\mathsf{a}_0)^2]-\frac{\varepsilon}{C}.
\end{equation}
For $\ell$ large enough, if $x \in \mathbb Z^{d-1}\times \{\ell\}$, then by monotonicity (see Propositions \ref{prop:monotonicity} and \ref{prop: monot plus}),
\begin{equation}
\Psi^{1,h}_{\mathbb H,\beta}[(\mathsf a_x)^2] \leq \Psi^1_{\Lambda_k(x),\beta}[(\mathsf a_x)^2], \quad \Psi^{0,h}_{\mathbb H,\beta}[(\mathsf a_x)^2] \geq \Psi^0_{\Lambda_k(x),\beta}[(\mathsf a_x)^2],
\end{equation}
where $\Lambda_k(x) = x + \Lambda_k$. As a result, if $x \in \mathbb Z^{d-1}\times \{\ell\}$, and $y\sim x$,
\begin{equation}
   \Psi^{1,h}_{\mathbb H,\beta}[(\mathsf a_y)^2] - \Psi^{0,h}_{\mathbb H, \beta}[(\mathsf a_y)^2] \leq \Psi^{1}_{\beta}[(\mathsf a_y)^2] - \Psi^{0}_{\beta}[(\mathsf a_y)^2]  + \frac{2\varepsilon}{C}.
\end{equation}
Observe that, since $\mathsf{a}^1_y\geq \mathsf{a}^0_y\geq 0$,
\begin{equation}\label{eq: sum a1a0diff}
\sum_{xy\in \partial^+_E R(n,\ell)} (\mathsf a^1_y - \mathsf a^0_y)^2 \leq \sum_{xy\in \partial^+_E R(n,\ell)} (\mathsf a^1_y - \mathsf a^0_y)(\mathsf a^1_y+\mathsf a^0_y)=\sum_{xy\in \partial^+_E R(n,\ell)} (\mathsf a^1_y)^2 - (\mathsf a^0_y)^2. 
\end{equation}
Thus, on the event $\mathcal F_{n,\ell} \cap \mathcal G_{n,\ell}\cap \mathcal H_{n,\ell}$, the left-hand side of \eqref{eq: sum a1a0diff} is bounded from above by
\begin{multline}\label{eq: last equation 3.4}
\sum_{xy\in \partial^+_E R(n,\ell): \: x_d=\ell} \Big(\Psi^{1,h}_{\mathbb{H},\beta}[(\mathsf{a}_y)^2]+\frac{\varepsilon}{C}\Big)-\sum_{xy\in \partial^+_E R(n,\ell): \: x_d=\ell} \Big(\Psi^{0,h}_{\mathbb{H},\beta}[(\mathsf{a}_y)^2]-\frac{\varepsilon}{C}\Big)+C\ell n^{d-2}\\\leq \sum_{xy\in \partial^+_E R(n,\ell):\: x_d=\ell} \Big(\Psi^{1}_{\beta}[(\mathsf{a}_y)^2]-\Psi^{0}_{\beta}[(\mathsf{a}_y)^2] +\frac{4\varepsilon}{C}\Big)+C\ell n^{d-2},
\end{multline}
where the last term on the right-hand side is the contribution coming from $x_d<\ell$. 
Since $\Psi^1_{\beta}=\Psi^0_{\beta}$ by our assumption, and since $\ell$ is fixed, for every $C$ sufficiently large and $n$ sufficiently large in terms of $C$, the right-hand side of \eqref{eq: last equation 3.4} is at most $\varepsilon n^{d-1}$. Thus
we obtain \eqref{eq:inclusion}, which concludes the proof.
\end{proof}

\subsection{Bulk induced partitions are approximately equal}

We conclude this section, and the proof of Theorem \ref{thm:half-space}, by proving Lemma \ref{lem:partitions approx}.

\begin{proof}[Proof of Lemma \textup{\ref{lem:partitions approx}}] Fix $\beta>0$ such that $\Psi^1_\beta=\Psi^0_\beta$. 
We will prove that, for every $\varepsilon,\delta > 0$, there exists $\ell$ large enough such that for every $n$ large enough, for every $\omega\in \{0,1\}^{\overline E^+(R(n,\ell))}$ and every $\mathsf{a}\in (\mathbb{R}^+)^{R(n,\ell)}$, 
$\overline{k}^{(\xi^0_{n,\ell},\mathsf{b}^0_{n,\ell})}(\omega,\mathsf{a}) \leq \overline{k}^{(\xi^1_{n,\ell},\mathsf{b}^1_{n,\ell})}(\omega,\mathsf{a}) + \varepsilon n^{d-1}$ with probability at least $1-\delta$. We begin with two preliminary observations.

\vspace{5pt}

Let $m>0$. For $x\in \mathbb{Z}^d$, write $S_m(x):=x+\{-m,\ldots,m\}^{d-1}\times\{0\}$, $R_m(x):=x+R(m,m)$ and $\partial^{\textup{up}}R_m(x):=\partial R_m(x)\setminus S_{m-1}(x)$. See Figure \ref{fig:BK} for an illustration. Define $\mathcal{A}_m(x)=\{\omega^0_e=\omega^1_e \, \forall \, e\in E(R_m(x)), \mathbbm{1}_{\mathsf{a}^0_y=0}=\mathbbm{1}_{\mathsf{a}^1_y=0} \, \forall \, y\in R_m(x)\}$. Observe that for every $m>0$, 
\begin{equation}\label{eq: A bound}
\lim_{\ell\to\infty}\mathbb{P}_{\beta}[\mathcal{A}_m(\ell \cdot \mathbf{e}_d)]=1,
\end{equation}
where $\mathbf{e}_d=(0,\ldots,0,1)\in \mathbb{Z}^d$.
Indeed, by the defining property of the monotone coupling and a union bound,
\begin{equation}
\mathbb{P}_{\beta}[\exists \, e\in E(R_m(\ell\cdot \mathbf{e}_d)): \omega^0_e\neq \omega^1_e]\le \sum_{e\in E(R_m(\ell\cdot \mathbf{e}_d))}\left(\Psi^{1,h}_{\mathbb{H},\beta}[\omega_e=1]-\Psi^{0,h}_{\mathbb{H},\beta}[\omega_e=1]\right).
\end{equation}
To handle the right-hand side, note that by monotonicity properties of the free and wired measures (see Propositions \ref{prop: random cluster properties} and \ref{prop: wired random cluster properties}), and our assumption that $\Psi^1_{\beta}=\Psi^0_{\beta}$, for every $e=xy\in E(R_m(\ell\cdot \mathbf{e}_d))$
\begin{equation}
\Psi^{1,h}_{\mathbb{H},\beta}[\omega_e=1]-\Psi^{0,h}_{\mathbb{H},\beta}[\omega_e=1]\le \Psi^1_{\Lambda_{\ell/2}(x),\beta}[\omega_e=1]-\Psi^0_{\Lambda_{\ell/2}(x),\beta}[\omega_e=1]\underset{\ell\rightarrow \infty}{\longrightarrow} 0.
\end{equation}
Similarly, the event $\{\mathbbm{1}_{\mathsf{a}^0_y=0}=\mathbbm{1}_{\mathsf{a}^1_y=0} \, \forall \, y\in R_m(\ell \cdot \mathbf{e}_d)\}$ happens with probability tending to $1$ as $\ell$ tends to infinity, which proves \eqref{eq: A bound}.

    \begin{figure}[htb]
        \centering
        \includegraphics[width=0.6\linewidth]{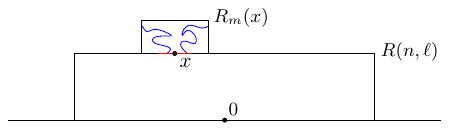}
        \caption{An illustration of the relevant objects appearing in the proof of Lemma \ref{lem:partitions approx}. The bold red line corresponds to $S_k(x)$. The blue lines are paths in disjoint clusters connecting $S_k(x)$ to $\partial^{\textup{up}}R_m(x)$. We prove that with high probability there are at most two such clusters in $R_m(x)$.}
        \label{fig:BK}
    \end{figure}

We now fix $k,m,n \in \mathbb N^*$ such that $k\leq m \leq n$ (think $k \ll m \ll n$). Let us define $\mathcal{N}_{k,m}(x)$ to be the number of clusters of $\omega\cap R_m(x)$ that intersect both $S_k(x)$ and $\partial^{\textup{up}}R_m(x)$, and write $\mathcal{B}_{k,m}(x)=\{\mathcal{N}_{k,m}(x)\le 2\}$. 
\begin{Claim} \label{claim: num-clusters-leq2}
For every $c>0$ and $k \in \mathbb N^*$, there exist $m>k$ and infinitely many $\ell>0$ such that
\begin{equation} \label{eq: num-clusters-prob}
\Psi^{1,h}_{\mathbb{H},\beta}[\mathcal{B}_{k,m}(\ell \cdot \mathbf{e}_d)]\ge 1-c.  
\end{equation}
\end{Claim}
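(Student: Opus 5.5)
The plan is to pass to the limit $\ell\to\infty$, where the shifted half-space measure converges to the full-space measure, and then run a Burton--Keane argument in the full space. I argue by contradiction. Suppose there are $c>0$ and $k$ such that for every $m>k$ we have $\Psi^{1,h}_{\mathbb H,\beta}[\mathcal B_{k,m}(\ell\mathbf e_d)^c]>c$ for all $\ell$ large enough.

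\emph{Step 1: limit $\ell\to\infty$.} The event $\mathcal B_{k,m}(\ell\mathbf e_d)$ depends only on $\omega$ in $R_m(\ell\mathbf e_d)$. By the domain Markov property and the monotonicity of Propositions~\ref{prop: random cluster properties}--\ref{prop: wired random cluster properties}, the law of $\omega$ restricted to $R_m(\ell\mathbf e_d)$, translated by $-\ell\mathbf e_d$, is sandwiched between $\Psi^0_{\Lambda_{\ell/2},\beta}$ and $\Psi^1_{\Lambda_{\ell/2},\beta}$. Both of these converge to $\Psi_\beta:=\Psi^0_\beta=\Psi^1_\beta$, since $\beta\notin\Xi$. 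Hence $\Psi_\beta[\mathcal N_{k,m}(0)\ge3]\ge c$ for every $m$, where $R_m(0)=\{-m,\dots,m\}^{d-1}\times\{0,\dots,m\}\subset\mathbb Z^d$.

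\emph{Step 2: $m\to\infty$.} Every cluster of $\omega\cap R_{m+1}(0)$ joining $S_k(0)$ to $\partial^{\textup{up}}R_{m+1}(0)$ contains a path that meets $\partial^{\textup{up}}R_m(0)$ without leaving $R_m(0)$. Distinct such clusters in $R_{m+1}(0)$ stay distinct in $R_m(0)$, so $\mathcal N_{k,m}$ is non-increasing in $m$. Letting $m\to\infty$, with $\Psi_\beta$-probability at least $c$ the graph $\omega\cap\mathbb H$ (with $\mathbb H$ viewed inside $\mathbb Z^d$) has at least three distinct infinite clusters intersecting $S_k(0)$.

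\emph{Step 3: creating encounter boxes.} I use the finite-energy property (Proposition~\ref{prop:finite-energy}, which holds for the limit $\Psi_\beta$ with constants uniform in the conditioning). Resampling the finitely many edges of $R_k(0)$ then costs at most a constant factor depending on $k$. I choose three of the infinite half-space clusters. I close all edges of $R_k(0)$ except three disjoint paths that join their entry points on $S_k(0)$ to a common vertex. This produces, with probability $c'=c'(c,k)>0$, the event $T_0$: the box $R_k(0)$ is an \emph{encounter box} for $\omega\cap\mathbb H$. Concretely, removing $R_k(0)$ from its cluster in $\mathbb H$ leaves at least three infinite pieces. By translation invariance of $\Psi_\beta$, the event $T_y$, defined with $\mathbb H$ replaced by $y+\mathbb H$, has probability $c'$ for every $y$.

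\emph{Step 4: counting.} Take the points $y\in(3k+3)\mathbb Z^d\cap\Lambda_N$. The expected number of $y$ with $T_y$ is at least $c''N^d$. I then want the standard Burton--Keane bound: the number of encounter boxes in $\Lambda_{2N}$ is at most the number of vertices of $\partial\Lambda_{2N}$ reachable by disjoint branches, which is $O(N^{d-1})$. This gives a contradiction for $N$ large.

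The main obstacle is that the encounter boxes are defined with respect to different half-spaces $y+\mathbb H$, so the usual forest/partition argument does not apply directly. Branches that are disjoint inside $y+\mathbb H$ may merge below height $y_d$. I would handle this by restricting to $y$ at a common height, i.e.\ a horizontal layer $\{y_d=j\}$, and using only horizontal translation invariance. Within one layer all boxes use the same half-space $\{x_d\ge j\}$, so the classical combinatorial lemma (trifurcations in a finite graph number at most the boundary points) applies to $\omega\cap\{x_d\ge j\}\cap\Lambda_{2N}$. The only difficulty left is that a single layer gives $c''N^{d-1}$ encounter boxes against a boundary of order $N^{d-1}$. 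So I would take the boxes in the thin slab $\{j\le x_d\le j+K\}$ with $K$ large, still all relative to $\{x_d\ge j\}$, and use a box $\Lambda_{2N}$ much shorter in the vertical direction. The branches then exit through the top or the sides, and the boundary count is $O(N^{d-2}\cdot N+N^{d-1})$ against about $KN^{d-1}$ boxes. Choosing the vertical extent appropriately, with the box height a large multiple of $K$ so that the top face is small relative to the count, yields the contradiction. Balancing these scales is where I expect most of the care to be needed.
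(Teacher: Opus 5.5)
\paragraph{Steps 1--3 are sound; Step 4 has a genuine gap.}
Step 1 legitimately uses the standing assumption $\Psi^0_\beta=\Psi^1_\beta$ of Lemma~\ref{lem:partitions approx}. The sandwich does give convergence for the non-monotone event $\mathcal B_{k,m}$, because on finitely many edges every event is a signed combination of increasing events. In Step 2, $\mathcal N_{k,m}$ is indeed non-increasing in $m$, and it eventually equals the number of infinite clusters of $\omega\cap\mathbb H$ meeting $S_k(0)$. (A routine point in Step 3: the routes of these clusters to infinity may pass through $R_k(0)$. So you should attach your three paths where the infinite pieces, after removing the edges of the box, touch the box, not at their points on $S_k(0)$.)

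The fix you sketch for Step 4 does not work. The lower bound $c'$ is for $T_y$, which is defined relative to $y+\mathbb H$. Suppose that, for $y$ at height $j+i$ in the slab, you declare the encounter property relative to $\{x_d\ge j\}$. This is a different event: branches disjoint in $\{x_d\ge j+i\}$ may be joined through $\{j\le x_d<j+i\}$. So $T_y$ does not imply it, and you have no lower bound on its probability. One even expects this probability to vanish as $i$ grows, since the infinite cluster of $\Psi_\beta$ in $\mathbb Z^d$ is unique. Suppose instead you keep each $T_y$ relative to its own half-space. Then the classical bound ``trifurcations $\le$ boundary points'' is unavailable, because it concerns trifurcations of one fixed graph. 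This is exactly the obstacle you identified, and choosing scales does not resolve it. A single layer, as you note, gives only order $N^{d-1}$ boxes against a top face of order $N^{d-1}$.

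\paragraph{The missing idea is a counting lemma for upward trifurcations, which is the core of the paper's proof.}
In the paper, $x$ is an $R_{2m}(x)$-trifurcation, defined using only the configuration in the upward box $R_{2m}(x)$. One builds a spanning forest of $\omega\cap R_m(\ell\mathbf e_d)$ by deleting, from each cycle, a horizontal edge at the lowest height of that cycle. For such a trifurcation $x$, either the deleted edge lies strictly below $x$, hence outside $R_{2m}(x)$, or the whole cycle lies in $R_{2m}(x)$ and the deletion does not change connectivity there. Thus all upward trifurcations in $R_m(\ell\mathbf e_d)$, at all heights, become degree-$\ge 3$ vertices of one pruned forest whose leaves lie on $\partial^{\rm up}R_m(\ell\mathbf e_d)$. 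With horizontal invariance this gives
\[
\sum_{i=0}^m\Psi^{1,h}_{\mathbb H,\beta}\bigl[\mathcal T_{2m}((\ell+i)\mathbf e_d)\bigr]\le C_1 .
\]
So some height $\ell+i$ has trifurcation probability at most $C_1/m$, and finite energy finishes the argument. This is why the paper gets only ``infinitely many $\ell$'' and never needs the full-space limit.

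If you replace your Step 4 by this lemma, your full-space route goes through. The lemma also works for trifurcations relative to upward half-spaces with infinite branches. The vertical invariance of $\Psi_\beta$ would then even give the bound for all large $\ell$.
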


\vspace{5pt}

We assume the claim for now and conclude the proof. Let $c>0$ to be chosen small enough and note that by \eqref{eq: num-clusters-prob} and \eqref{eq: A bound}, for every $k \in \mathbb N^*$, for $m$ as in \eqref{eq: num-clusters-prob}, for infinitely many $\ell$ we have 
\begin{equation}\label{eq:AB bound}
\mathbb P_\beta[\mathcal{A}_{m}((\ell+1)\cdot \mathbf{e}_d)\cap \mathcal{B}_{k,m}((\ell+1)\cdot \mathbf{e}_d)]\ge 1-c,
\end{equation}
where we abuse the notation to write $\mathcal{B}_{k,m}((\ell+1)\cdot\mathbf{e}_d)$ for the corresponding event for $\omega^1$ in the coupling (note, however, that on the event $\mathcal{A}_m((\ell+1)\cdot \mathbf{e}_d)$, $\omega^1$ and $\omega^0$ agree on the relevant domain).
Let $n':=(n-k)/(2k+1)$, where we assume it is an integer without loss of generality, and let us now partition $S_n((\ell+1)\cdot \mathbf{e}_d)$ into boxes of the form $S_k(x)$ for $x\in (2k+1)S_{n'}((\ell+1)\cdot \mathbf{e}_d)$. 

Let $\mathcal{E}_{k,m,n,\ell}$ denote the event that there are at most $\varepsilon n^{d-1}/(4k)^{d-1}$ vertices $x\in (2k+1)S_{n'}((\ell+1)\cdot \mathbf{e}_d)$ such that the event $(\mathcal{A}_{m}(x)\cap \mathcal{B}_{k,m}(x))^c$ happens. Using \eqref{eq:AB bound}, translation invariance in the half-space, and Markov's inequality, we deduce that if $c>0$ is small enough (as a function of $\varepsilon$ and $\delta$ that have been fixed at the beginning of the proof, and also $k$) in \eqref{eq:AB bound}, then for $m$ sufficiently large, and infinitely many $\ell$, we have
\begin{equation}\label{eq:bound E k m n ell}
\mathbb{P}_{\beta}[\mathcal{E}_{k,m,n,\ell}]\geq 1-\delta,
\end{equation}
for $n$ large enough (depending on $k$ and $\ell$).

We now show that for every $\varepsilon>0$ there exists $k$ such that, for every $m>k$ and $\ell$, one has
\begin{equation}
    \mathcal{E}_{k,m,n,\ell}\subset \mathcal P(\varepsilon, n,\ell)
\end{equation}
for $n$ large enough (depending on $m$ and $\ell$). 
Upon choosing $m>k$ sufficiently large and then $\ell$ sufficiently large (in terms of $\delta,k,m$), 
the desired result follows from~\eqref{eq:bound E k m n ell}.

First, observe that any cluster of $\omega \in \{0,1\}^{\overline{E}^+(R(n,\ell))}$ which does not intersect $\partial^{\textup{ext},+} R(n,\ell)$ contributes $0$ to $
\overline{k}^{(\xi^0_{n,\ell},\mathsf{b}^0_{n,\ell})}(\omega,\mathsf{a}) - \overline{k}^{(\xi^1_{n,\ell},\mathsf{b}^1_{n,\ell})}(\omega,\mathsf{a})$. The same holds for any $\omega$ cluster that intersects $\partial^{\textup{ext},+} R(n,\ell)$ \emph{only} at vertices whose $\xi^0_{n,\ell}$ and $\xi^1_{n,\ell}$ partition classes coincide. 
All remaining $\omega$ clusters need to intersect $\partial^{\textup{ext},+} R(n,\ell)$ in some partition class of $\xi^0_{n,\ell}$ that does not coincide with a $\xi^1_{n,\ell}$ partition class. 
Thus, we can upper bound $
\overline{k}^{(\xi^0_{n,\ell},\mathsf{b}^0_{n,\ell})}(\omega,\mathsf{a}) - \overline{k}^{(\xi^1_{n,\ell},\mathsf{b}^1_{n,\ell})}(\omega,\mathsf{a})$ by the number of partition classes of $\xi^0_{n,\ell}$ that do not coincide with a $\xi^1_{n,\ell}$ partition class. 
To estimate the latter, note that for every $x\in S_{n-m}((\ell+1)\cdot \mathbf e_d)$, on the event $\mathcal{A}_m(x)$, the clusters of $\omega^0\cap R_m(x)$ and $\omega^1\cap R_m(x)$ that intersect $S_k(x)$ (in fact even $S_m(x)$) but do not intersect $\partial^{\textup{up}}R_m(x)$ coincide. 
This implies that the corresponding partition classes of $\xi^0_{n,\ell}$ and $\xi^1_{n,\ell}$ also coincide, and there are at most $2$ partition classes intersecting $S_k(x)$ that do not coincide. Thus, we can conclude that, for some constant $C>0$,
\begin{equation}
\overline{k}^{(\xi^0_{n,\ell},\mathsf{b}^0_{n,\ell})}(\omega,\mathsf{a}) - \overline{k}^{(\xi^1_{n,\ell},\mathsf{b}^1_{n,\ell})}(\omega,\mathsf{a})\le (2k+1)^{d-1}\cdot\varepsilon  \frac{n^{d-1}}{(4k)^{d-1}}+2\cdot\frac{(2n+1)^{d-1}}{(2k+1)^{d-1}}+C\ell n^{d-2}
\end{equation}
where the last term corresponds to contributions to $
\overline{k}^{(\xi^0_{n,\ell},\mathsf{b}^0_{n,\ell})}(\omega,\mathsf{a}) - \overline{k}^{(\xi^1_{n,\ell},\mathsf{b}^1_{n,\ell})}(\omega,\mathsf{a})$ coming from $\partial^{\textup{ext},+}R(n,\ell)\setminus S_{n-m}((\ell+1)\mathbf e_d)$. For every $\varepsilon>0$ there exists $k$ such that the term on the right-hand side above is at most $\varepsilon n^{d-1}$ for $n$ large enough (depending on $k$ and $\ell$).
\end{proof}

\begin{proof}[Proof of Claim \textup{\ref{claim: num-clusters-leq2}}]
We apply an argument similar to that of Burton and Keane \cite{BurtonKeane1989density} for the uniqueness of the infinite cluster. 

Let $m\geq 1$. Let us write $\mathcal{C}_m(x)$ to denote the cluster of $x$ in $\omega\cap R_m(x)$, where $\omega\in \{0,1\}^{E(\mathbb H)}$. We say that $x$ is an $R_m(x)$-trifurcation if $\mathcal{C}_m(x)$ intersects $\partial^{\textup{up}} R_m(x)$ and $\mathcal{C}_m(x)\setminus \{x\}$ contains at least 3 connected components that intersect $\partial^{\textup{up}} R_m(x)$. We denote the event that $x$ is an $R_m(x)$-trifurcation by $\mathcal{T}_m(x)$. Let us write $\mathcal{K}_{m,\ell}$ for the (random) number of vertices $x\in R_m(\ell\cdot \mathbf{e}_d)$ which are $R_{2m}(x)$-trifurcations. Note that by definition, each of these $R_{2m}(x)$-trifurcations is connected in $R_m(\ell\cdot \mathbf{e}_d)$ to $\partial^{\textup{up}} R_m(\ell\cdot \mathbf{e}_d)$. See Figure \ref{fig:forest}.

    \begin{figure}[htb] 
        \centering
        \includegraphics[width=0.88\linewidth]{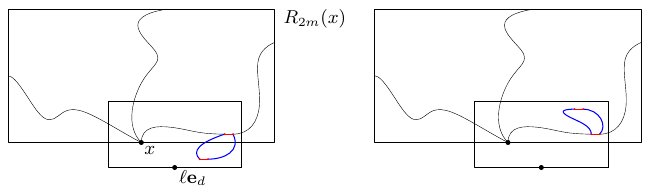}
        \caption{An illustration of the procedure involved in the construction of the forest $\mathcal{F}$. The cycle $\mathsf{C}$ is represented in blue. On the left, if we decide to erase the top red edge at step $j$, we destroy the fact that $x$ is an $R_{2m}(x)$-trifurcation. However, if we erase the one on the bottom, the property is preserved. On the right, since the cycle is entirely included in $R_{m}(\ell\cdot\mathbf{e}_d)\cap R_{2m}(x)$, we can erase either of the two red edges without changing the fact that $x$ is an $R_{2m}(x)$-trifurcation.}
        \label{fig:forest}
    \end{figure}

By translation invariance in the half-space,
\begin{equation}\label{eq: K_m}
\Psi^{1,h}_{\mathbb{H},\beta}[\mathcal{K}_{m,\ell}]=(2m+1)^{d-1}\sum_{i=0}^m \Psi^{1,h}_{\mathbb{H},\beta}[\mathcal{T}_{2m}((\ell+i)\cdot\mathbf{e}_d)].
\end{equation}
We claim that $\mathcal{K}_{m,\ell}\le |\partial^{\textup{up}} R_m(\ell\cdot \mathbf{e}_d)|$. Indeed, let us construct a spanning forest of $\omega\cap R_m(\ell\cdot \mathbf{e}_d)$ as follows. We pick a cycle $\mathsf C$ of $\omega\cap R_m(\ell\cdot \mathbf{e}_d)$ in an arbitrary way and among all edges $xy$ of $\mathsf C$ we remove one such that both endpoints $x$ and $y$ have the same $d$-th coordinate and this coordinate is the smallest possible among vertices in $\mathsf C$ (such an edge always exists). We repeat this procedure until no cycles are left in $\omega\cap R_m(\ell\cdot \mathbf{e}_d)$, at which step we obtain a spanning forest $\mathcal{F}$. We claim that all of the $\mathcal{K}_{m,\ell}$ vertices $x\in R_m(\ell\cdot \mathbf{e}_d)$ which are $R_{2m}(x)$-trifurcations in $\omega$ are still $R_{2m}(x)$-trifurcations in $\mathcal{F}$. Indeed, let us argue by induction. Let us consider an edge $e$ we removed from a cycle $\mathsf C$ at some step $j$, and consider a vertex $x\in R_m(\ell\cdot \mathbf{e}_d)$ which is an $R_{2m}(x)$-trifurcation at step $j-1$. If $x$ has strictly larger $d$-th coordinate than $e$, then removing $e$ leaves the connected component of $x$ in $R_{2m}(x)$ at step $j$ intact, so that $x$ is an $R_{2m}(x)$-trifurcation at step $j$. If not, then $x$ is again an $R_{2m}(x)$-trifurcation at step $j$, since the endpoints of $e$ are still connected in $\mathsf C\setminus\{e\}$, which lies in $R_{2m}(x)\cap R_m(\ell\cdot \mathbf{e}_d)$ by the way $e$ was chosen. The claim follows from induction on $j$. See Figure \ref{fig:forest} for an illustration.

Now we successively remove all leaves of $\mathcal{F}$ which do not lie in $\partial^{\textup{up}} R_m(\ell \cdot \mathbf{e}_d)$ until we are left with a forest $\mathcal{F}'$. It is not hard to see that all of the vertices $x\in R_m(\ell \cdot \mathbf{e}_d)$ which are $R_{2m}(x)$-trifurcations in $\omega$ are vertices of degree at least $3$ in $\mathcal{F}'$, and all leaves of $\mathcal{F}'$ lie in $\partial^{\textup{up}} R_m(\ell\cdot \mathbf{e}_d)$. In a forest, there are fewer vertices of degree at least $3$ than leaves, hence it follows that
\begin{equation} \label{eq: bound on K_m}
\mathcal{K}_{m,\ell}\le |\partial^{\textup{up}} R_m(\ell \cdot \mathbf{e}_d)|, 
\end{equation}
as desired.

Combining \eqref{eq: bound on K_m} with \eqref{eq: K_m}, we deduce that for every $\ell\ge 0$,
\begin{equation}\sum_{i=0}^m \Psi^{1,h}_{\mathbb{H},\beta}[\mathcal{T}_{2m}((\ell+i)\cdot\mathbf{e}_d)]\le C_1,
\end{equation}
for some constant $C_1>0$ depending only on $d$. It follows that there exists $i\in \{0,1,\ldots,m\}$ such that 
\begin{equation}
\Psi^{1,h}_{\mathbb{H},\beta}[\mathcal{T}_{2m}((\ell+i)\cdot \mathbf{e}_d)]\le \frac{C_1}{m}.
\end{equation}

Fix $c>0$ and $k\in \mathbb N^*$, and let $m\geq k$ to be chosen large enough. We now relate $\mathcal{T}_{2m}((\ell+i)\cdot \mathbf{e}_d)$ to the event $\mathcal{B}_{k,2m}((\ell+i)\cdot \mathbf{e}_d)$ that we are interested in. Note that on the event $\mathcal{B}_{k,2m}((\ell+i)\cdot \mathbf{e}_d)^c$ we can do a standard surgery (on a slight thickening of $S_k((\ell+i)\cdot\mathbf{e}_d)$) to create an $R_{2m}((\ell+i)\cdot \mathbf{e}_d)$-trifurcation. By the finite energy property of Proposition~\ref{prop:finite-energy}, there exists $C_2=C_2(k,\beta,C_1,d)>0$ such that $\Psi^{1,h}_{\mathbb{H},\beta}[\mathcal{B}_{k,2m}((\ell+i)\cdot \mathbf{e}_d)]\ge 1-C_2/m$. Choosing $m\geq C_2/c$ we get that $\Psi^{1,h}_{\mathbb{H},\beta}[\mathcal{B}_{k,2m}((\ell+i)\cdot \mathbf{e}_d)]\ge 1-c$, as desired.
\end{proof}

\begin{Rem}\label{rem:FK_adaptation}
The proofs of Theorems~\ref{thm:half-space} and \ref{thm:full-space uniqueness} rely crucially on certain monotonicity properties that apply more generally to the standard random-cluster model with cluster weight $q\ge 1$. In that setting, the proofs simplify considerably, since there is no need to deal with the absolute-value field: one can avoid using the connection with a spin model in the Radon--Nikodym derivative calculation and work directly with the random cluster measure itself as in the proof of \cite[Theorem 1.12]{DuminilLecturesOnIsingandPottsModels2019}. 
\end{Rem}

\section{Strict positivity of the surface tension}\label{sec:surface_tension}

The goal of this section is to prove that the \emph{surface tension} of the spin
model is strictly positive when $\beta > \beta_c$ and use this result to prove that a certain connection event happens with very high probability under the random cluster measure with wired boundary conditions. We begin by defining the 
surface tension and relating it to a relevant percolation event. Strict 
positivity is then established via a differentiation argument similar to 
that of \cite{lebowitz1981surface,gunaratnam2025supercritical}, 
which requires constructing suitable \emph{strip measures} in a manner 
slightly different from \cite{gunaratnam2025supercritical}. A key ingredient 
is Lebowitz's inequality, originally derived for Ising-type systems in 
\cite{Lebowitz1977CoexistencePhasesIsing,lebowitz1978number}, which we adapt to our setting at the end of this section.

Let us now state the main result of this section. To this end, we first introduce some notation. Let $L,M\geq 1$. Define the vertical strip $\mathcal R(L,M):= \{-L,\ldots,L\}^{d-1}\times \{-M,\dots,M\}$ and recall the definition of $\partial^{\textup{ext}} \mathcal{R}(L,M)$. Introduce the 
upper and lower boundaries
\begin{equation}
\partial^+ \mathcal R(L,M) := \{x\in \partial^{\textup{ext}} \mathcal{R}(L,M): x\in\{-L-1,\ldots,L+1\}^{d-1} \times \{ 0, \ldots ,M+1\}\},
\end{equation}
\begin{equation}
\partial^- \mathcal R(L,M) := \{x\in \partial^{\textup{ext}} \mathcal{R}(L,M): x\in\{-L-1,\ldots,L+1\}^{d-1} \times \{ -M-1, \ldots ,-1 \}\}.
\end{equation}
See Figure \ref{fig:strip} for an illustration. 

\begin{figure}[htb]
 \centering
    \includegraphics[width=0.35\linewidth]{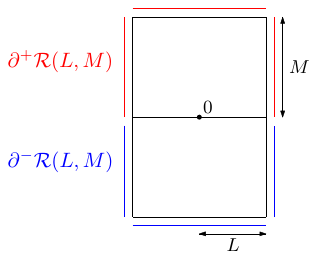}
    \caption{An illustration of $\mathcal{R}(L,M)$ and its boundaries $\partial^{+}\mathcal{R}(L,M)$ (in red) and $\partial^{-}\mathcal{R}(L,M)$ (in blue).}
    \label{fig:strip}
\end{figure}

We define the infinite vertical strip $\mathcal R(L):= \{-L,\dots,L\}^{d-1} \times \mathbb Z$, with upper and lower boundaries 
\begin{equation}
\partial^+ \mathcal R(L) := \{x\in \partial^{\textup{ext}} \mathcal{R}(L): x\in\{-L-1,\ldots,L+1\}^{d-1} \times \{ 0, 1, \ldots \}\},
\end{equation}
\begin{equation}
\partial^- \mathcal R(L) := \{x\in \partial^{\textup{ext}} \mathcal{R}(L): x\in\{-L-1,\ldots,L+1\}^{d-1} \times \{\ldots ,-1 \}\}.
\end{equation}

Below we write $\Psi^{1}_{\mathcal R(L),\beta}$ for the wired measure on $\mathcal R(L)$ that arises as the weak limit of $\Psi^{1}_{\mathcal R(L,M),\beta}$ as $M\to\infty$. This limit is well-defined as justified in Lemma \ref{lem:++} below.

\begin{Thm}\label{thm: wired disconnection}
Let $\beta>\beta_c$. There exists $c_1>0$ such that for every $L \geq 1$,
\begin{equation}\label{eq: proba strip}
\Psi^{1}_{\mathcal R(L),\beta}[ \partial^+\mathcal{R}(L)\longleftrightarrow\partial^-\mathcal{R}(L)] \geq 1 - e^{-c_1L^{d-1}}.
\end{equation}
\end{Thm}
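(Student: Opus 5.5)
We will follow the strategy of Lebowitz–Pfister \cite{lebowitz1981surface}, in the form used in \cite{gunaratnam2025supercritical}. We define the \emph{surface tension} through the ratio of partition functions of the plus measure on $\mathcal R(L,M)$ and of the measure where the random boundary spins on $\partial^-\mathcal R(L,M)$ are flipped. Concretely, let $\nu^{\pm}_{\mathcal R(L,M),\beta}$ be defined as in Definition~\ref{def:+measure spin finite volume}, except that the boundary spins on $\partial^-\mathcal R(L,M)$ are sampled from $\zeta(-\,\cdot)$. Set
\[
\tau_L(\beta):=-\frac{1}{(2L+1)^{d-1}}\lim_{M\to\infty}\log \frac{\mathbf Z^{\pm}_{\mathcal R(L,M),\beta}}{\mathbf Z^{+}_{\mathcal R(L,M),\beta}} .
\]
The Edwards--Sokal coupling for the wired measure, with the convention that $\partial^+$ and $\partial^-$ are wired to each other as in $\Psi^1$, gives the identity
\[
\frac{\mathbf Z^{\pm}}{\mathbf Z^{+}}=\Psi^{1}_{\mathcal R(L,M),\beta}[\partial^+\mathcal R(L,M)\centernot\longleftrightarrow \partial^-\mathcal R(L,M)].
\]
This holds because flipping the sign of the lower boundary kills exactly the configurations in which the two boundaries are connected, and the two partition functions agree on the complement. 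Taking $M\to\infty$ (justified via Lemma~\ref{lem:++} and regularity), it therefore suffices to show $\inf_L \tau_L(\beta)\ge c_1>0$ for $\beta>\beta_c$.

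To get positivity we differentiate in $\beta$. The derivative of $\log(\mathbf Z^{\pm}/\mathbf Z^{+})$ is $\sum_{xy}(\langle\tau_x\tau_y\rangle^{\pm}-\langle\tau_x\tau_y\rangle^{+})$. The key input is a Lebowitz-type inequality, proved at the end of the section via the duplicated-variables trick $t=(\tau+\tau')/\sqrt2$, $q=(\tau-\tau')/\sqrt2$ together with GKS/FKG, which requires only that $\rho$ be even. It gives, for every edge,
\[
\langle\tau_x\tau_y\rangle^{+}-\langle\tau_x\tau_y\rangle^{\pm}\ \ge\ \bigl(\langle\tau_x\rangle^{+}-\langle\tau_x\rangle^{\pm}\bigr)\bigl(\langle\tau_y\rangle^{+}-\langle\tau_y\rangle^{\pm}\bigr)\ \ge 0 .
\]
Hence $\beta\mapsto \tau_L(\beta)$ is nondecreasing. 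Integrating from $\beta'\in(\beta_c,\beta)$ to $\beta$, summing over the horizontal edges in the layer $\{x_d=0\}$, and using symmetry $\langle\tau_x\rangle^{\pm}\le 0$ for $x_d<0$ (with $\ge$ on the other side) yields
\[
\tau_L(\beta)\ \ge\ \int_{\beta'}^{\beta}\frac{1}{(2L+1)^{d-1}}\sum_{x:\,x_d=0}\langle\tau_x\rangle^{+}_{\mathcal R(L),s}\,\langle\tau_{x-\mathbf e_d}\rangle^{+}_{\mathcal R(L),s}\,\mathrm ds .
\]
Here we use the vertical edges across the interface, where $\langle\tau_x\rangle^{+}-\langle\tau_x\rangle^{\pm}\ge\langle\tau_x\rangle^{+}$ by the sign considerations.

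It remains to bound $\langle\tau_x\rangle^+_{\mathcal R(L),s}\ge m^*(s)\ge m^*(\beta')>0$ uniformly in $L$, which gives $c_1=(\beta-\beta')m^*(\beta')^2$. This bound is the main obstacle. For Ising it is immediate, since finite-volume plus measures dominate $\nu^+$. Here we instead invoke the monotonicity with gap, Proposition~\ref{prop:monotonicity in plus} and Proposition~\ref{prop: monot plus}: the plus measure on the strip dominates $\nu^+_\beta$ at distance $\ge r$ from the lateral boundary. This forces us to restrict the sum to $|x|_\infty\le L-r$, costing only $O(L^{d-2})$, and to handle the random boundary $\zeta$ carefully, for example by choosing $\zeta$ supported on $[C,\infty)$ as in Remark~\ref{rem:plus}. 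Verifying that the strip measures (and their $M\to\infty$ limits) retain regularity, so that differentiation under the limit and the Lebowitz inequality are legitimate, is the technical core we expect to require the most care.
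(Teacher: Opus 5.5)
\textbf{There is a genuine gap: the key inequality you state is false.} Your architecture matches the paper: surface tension as a partition-function ratio equal to the wired disconnection probability, differentiation in $\beta$, a Lebowitz inequality, and integration above $\beta_c$. But the single-layer argument you build on that inequality cannot work.

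The inequality $\langle\tau_x\tau_y\rangle^{+}-\langle\tau_x\tau_y\rangle^{\pm}\ge(\langle\tau_x\rangle^{+}-\langle\tau_x\rangle^{\pm})(\langle\tau_y\rangle^{+}-\langle\tau_y\rangle^{\pm})$ already fails for two uncoupled Ising spins with fields $\mathsf h=(h,h)$ and $\mathsf h'=(-h,-h)$. There the left side is $\tanh^2 h-\tanh^2 h=0$, while the right side is $4\tanh^2 h$.

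The same thing happens in the strip. For an edge at height $-K$ with $K\to\infty$, the $(+,-)$ measure looks like the flipped plus measure. So the left side tends to $0$, while the right side tends to $4(\langle\tau_{(x,0)}\rangle^{+,+}_{\mathcal R(L),\beta})^2$.

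What the duplicated-variables trick actually gives is $\mathbb E[(\tau_A+\tau'_A)(\tau_B-\tau'_B)]\ge 0$. This is Proposition~\ref{prop: lebowitz}: $\langle\tau_x\tau_y\rangle^{+,+}-\langle\tau_x\tau_y\rangle^{+,-}\ge|\langle\tau_x\rangle^{+,+}\langle\tau_y\rangle^{+,-}-\langle\tau_x\rangle^{+,-}\langle\tau_y\rangle^{+,+}|$.

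Even granting your product form on the interface edges, the sign considerations only help at the lower endpoint. For $x_d=0$ one has $\langle\tau_x\rangle^{+}-\langle\tau_x\rangle^{\pm}\le\langle\tau_x\rangle^{+}$, not $\ge$. With the correct inequality, a single interface edge $\{(x,-1),(x,0)\}$ only yields $2\langle\tau_{(x,0)}\rangle^{+,+}\langle\tau_{(x,0)}\rangle^{+,-}$. There is no lower bound on $\langle\tau_{(x,0)}\rangle^{+,-}_{\mathcal R(L),\beta}$ uniform in $L$: the interface is pinned only at the lateral boundary and fluctuates in the bulk. In $d=2$ one expects this magnetisation to vanish at the centre as $L\to\infty$.

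\textbf{The missing idea is to use every vertical edge of each column and telescope.} For $-N\le j\le N$, Proposition~\ref{prop: lebowitz} and vertical translation invariance of $\langle\cdot\rangle^{+,+}_{\mathcal R(L),\beta}$ give
\[
\langle\tau_{(x,j)}\tau_{(x,j+1)}\rangle^{+,+}_{\mathcal R(L),\beta}-\langle\tau_{(x,j)}\tau_{(x,j+1)}\rangle^{+,-}_{\mathcal R(L),\beta}\ge\langle\tau_{(x,0)}\rangle^{+,+}_{\mathcal R(L),\beta}\big(\langle\tau_{(x,j+1)}\rangle^{+,-}_{\mathcal R(L),\beta}-\langle\tau_{(x,j)}\rangle^{+,-}_{\mathcal R(L),\beta}\big).
\]
Summing over $j$ telescopes to $\langle\tau_{(x,0)}\rangle^{+,+}(\langle\tau_{(x,N+1)}\rangle^{+,-}-\langle\tau_{(x,-N)}\rangle^{+,-})$. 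You then need a genuinely new input, \eqref{eq:magnetisation convergence}: $\langle\tau_{(x,\pm N)}\rangle^{+,-}_{\mathcal R(L),\beta}\to\pm\langle\tau_{(x,0)}\rangle^{+,+}_{\mathcal R(L),\beta}$ as $N\to\infty$. The paper proves this through the random height at which a fully positive layer appears, combined with regularity. It yields $2(\langle\tau_{(x,0)}\rangle^{+,+}_{\mathcal R(L),\beta})^2\ge 2m^*(\beta)^2$ per column, wherever the interface sits.

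\textbf{Smaller points.}
\begin{itemize}
\item The bound $\langle\tau_{(x,0)}\rangle^{+,+}_{\mathcal R(L),\beta}\ge m^*(\beta)$ holds for every $x$ directly from \eqref{eq:plus monotonicity}. No gap argument or restriction to $|x|_\infty\le L-r$ is needed.
\item The Lebowitz inequality must accommodate the boundary measure $\zeta$, which is supported on $\mathbb R^+$ rather than even.
\item Passing to $M\to\infty$ in the disconnection probability needs a separate finite-energy argument, because that event is not local.
\end{itemize}
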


\subsection{Definition of the surface tension}

Our definition of the surface tension is analogous to the one in \cite{gunaratnam2025supercritical} except that we need to modify it to accommodate the new definition of the plus measure.

Denote by $\nu^{+,-}_{\mathcal{R}(L,M),\beta}$ the probability measure on $\mathbb R^{\overline{\mathcal{R}(L,M)}}$ with expectation $\langle \cdot \rangle^{+,-}_{\mathcal{R}(L,M),\beta}$ and defined by the Radon--Nikodym density
\begin{multline}
\mathrm{d}\nu^{+,-}_{\mathcal{R}(L,M),\beta}(\tau) \\= \frac{1}{\mathbf{Z}^{+,-}_{\mathcal{R}(L,M),\beta}} \exp\Big(\beta \sum_{xy\in \overline E(\mathcal{R}(L,M))} \tau_x \tau_y
\Big)\prod_{x\in \mathcal{R}(L,M)}\mathrm{d}\rho(\tau_x) \prod_{x \in \partial^+ \mathcal{R}(L,M)} \mathrm{d}\zeta (\tau_x) \prod_{x \in \partial^- \mathcal{R}(L,M)} \mathrm{d}\zeta (-\tau_x),
\end{multline}
where $\mathbf{Z}^{+,-}_{\mathcal{R}(L,M),\beta}$ is a normalisation constant and $\zeta$ is the same measure as in Proposition \ref{prop:reg_plus}. We also write $\mathbf{Z}^{+,+}_{\mathcal{R}(L,M),\beta}=\mathbf{Z}^{+}_{\mathcal{R}(L,M),\beta}$ and $\nu^{+,+}_{\mathcal{R}(L,M),\beta}=\nu^{+}_{\mathcal{R}(L,M),\beta}$.

\begin{Def}[Surface tension] Let $L,M\geq 1$ and $\beta>0$. The surface tension of the model on $\mathcal{R}(L,M)$ at inverse temperature $\beta$ is defined as
\begin{equation}
    \boldsymbol{\tau}^{L,M}(\beta):=-\frac{1}{L^{d-1}}\log  \frac{\mathbf{Z}^{+,-}_{\mathcal{R}(L,M),\beta}}{\mathbf{Z}^{+,+}_{\mathcal{R}(L,M),\beta}}.
\end{equation}
Moreover, we also define
\begin{equation}
    \boldsymbol{\tau}^L(\beta):=\liminf_{M\rightarrow \infty}\boldsymbol{\tau}^{L,M}(\beta), \qquad \boldsymbol{\tau}(\beta):=\liminf_{L\rightarrow \infty}\boldsymbol{\tau}^{L}(\beta).
\end{equation}
\end{Def}

The surface tension can be re-expressed in terms of a disconnection event under the measure $\Psi^1_{\mathcal{R}(L,M),\beta}$. We emphasise that below the connection takes place before identifying the vertices in $\partial^{\rm ext} \mathcal{R}(L,M)$ to a single vertex.

\begin{Lem}\label{lem: surface tension expression} 
Let $L,M\geq 1$ and $\beta>0$. One has
\begin{equation}
    \frac{\mathbf{Z}^{+,-}_{\mathcal{R}(L,M),\beta}}{\mathbf{Z}^{+,+}_{\mathcal{R}(L,M),\beta}}=\Psi^1_{\mathcal{R}(L,M),\beta}[\partial^+\mathcal{R}(L,M)\centernot\longleftrightarrow \partial^-\mathcal{R}(L,M)].
\end{equation}
In particular, one has $\boldsymbol{\tau}^{L,M}(\beta)\geq 0$. 
\end{Lem}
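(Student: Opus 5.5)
We will prove the identity by comparing the two partition functions through the Edwards--Sokal representation, exactly as in the Ising case. The key observation is that both $\nu^{+,+}_{\mathcal R(L,M),\beta}$ and $\nu^{+,-}_{\mathcal R(L,M),\beta}$ are \emph{free} measures with inhomogeneous single-site measures (Remark~\ref{rem:plus}$(iii)$). The single-site measure is $\rho$ in $\mathcal R(L,M)$ and $\zeta$ on $\partial^{\textup{ext}}\mathcal R(L,M)$, the latter conditioned to carry a fixed sign: $+$ on $\partial^+$, and $+$ resp.\ $-$ on $\partial^-$. We write each boundary spin as $\tau_x=\mathsf a_x\sigma_x$, with $\mathsf a_x\sim\zeta$ and $\sigma_x$ prescribed. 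Since $\zeta$ is supported on $\mathbb R^+$, after the change of variables $\tau_x\mapsto-\tau_x$ on $\partial^-$ both weights become integrals over the same absolute-value measure $\prod_{x\in\mathcal R}\mathrm d\rho(\mathsf a_x)\prod_{x\in\partial^{\textup{ext}}}\mathrm d\zeta(\mathsf a_x)$. The only difference is that the boundary signs are $\sigma\equiv+1$ in one case and $\sigma=+1$ on $\partial^+$, $-1$ on $\partial^-$ in the other. The absolute-value field inside $\mathcal R(L,M)$ is integrated with sign summation: the factor $\rho$ is even, so we may write $\mathrm d\rho(\tau_x)$ as $\mathsf a_x$ distributed by $\rho$ restricted to $\mathbb R^+$ (with mass doubled away from $0$), together with a uniform sign.

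Next, for a fixed absolute-value field $\mathsf a$, we expand each factor $e^{\beta\mathsf a_x\mathsf a_y\sigma_x\sigma_y}$ in the standard FK fashion:
\[
e^{\beta\mathsf a_x\mathsf a_y\sigma_x\sigma_y}=e^{\beta\mathsf a_x\mathsf a_y}\big((1-p_{xy})+p_{xy}\mathbbm 1_{\sigma_x=\sigma_y}\big),\qquad p_{xy}=1-e^{-2\beta\mathsf a_x\mathsf a_y}.
\]
Summing over the free signs in $\mathcal R(L,M)$ for a given $\omega$ yields, in the $(+,+)$ case, $2^{\#\{\text{clusters not touching } \partial^{\textup{ext}}\}}$ times $1$. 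In the $(+,-)$ case it yields the same quantity times $\mathbbm 1\{\partial^+\centernot\longleftrightarrow\partial^-\}$, the connection being taken in $\omega$ without identifying boundary vertices, since a cluster meeting both boundaries would need two signs. This number of free clusters equals $2^{k^w(\omega,\mathsf a)-1}$, where $k^w$ is computed with the wired partition; vertices with $\mathsf a_x=0$ are handled with the same convention as in \eqref{eq: random cluster explicit density wired}. Comparing with \eqref{eq: random cluster explicit density wired} (with $\mathsf h\equiv0$), we see that the $(+,+)$ integrand is, up to a common factor $\tfrac12$, the unnormalised wired density $\mathrm d\Psi^1_{\mathcal R(L,M),\beta}$. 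Here we write $e^{\beta\mathsf a_x\mathsf a_y}(1-p)=\sqrt{1-p}$ and $e^{\beta\mathsf a_x\mathsf a_y}p=\sqrt{1-p}\,\frac{p}{1-p}$. The $(+,-)$ integrand is the same density multiplied by the disconnection indicator.

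Dividing the two expressions and integrating over $(\omega,\mathsf a)$ gives
\[
\frac{\mathbf Z^{+,-}_{\mathcal R(L,M),\beta}}{\mathbf Z^{+,+}_{\mathcal R(L,M),\beta}}=\Psi^1_{\mathcal R(L,M),\beta}[\partial^+\mathcal R(L,M)\centernot\longleftrightarrow\partial^-\mathcal R(L,M)].
\]
Since the right-hand side lies in $[0,1]$, we get $\boldsymbol\tau^{L,M}(\beta)\ge0$. The only delicate point is bookkeeping: we must check that the normalisations of $\rho$ versus its absolute-value pushforward, and the treatment of the atom at $0$ (vertices with $\mathsf a_x=0$ are excluded from clusters and carry no sign), agree between the two partition functions. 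They do, because these contributions are identical on both sides and cancel in the ratio. We will also verify that the edges between two boundary vertices do not appear in $\overline E(\mathcal R(L,M))$, so that boundary spins interact only through interior vertices. This is consistent with the connection event being defined before identifying the boundary.
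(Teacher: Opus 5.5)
Your proposal is correct and is essentially the paper's proof. For fixed $\mathsf a$ you do the edge-by-edge Edwards--Sokal expansion and sum over compatible signs, which gives $2^{k^w(\omega,\mathsf a)-1}$ in both cases (times the disconnection indicator in the $(+,-)$ case); this identifies $\mathbf Z^{+,+}_{\mathcal R(L,M),\beta}=\tfrac12 Z^1_{\mathcal R(L,M),\beta}$ and $\mathbf Z^{+,-}_{\mathcal R(L,M),\beta}=\tfrac12 Z^1_{\mathcal R(L,M),\beta}[\partial^+\mathcal R(L,M)\centernot\longleftrightarrow\partial^-\mathcal R(L,M)]$, exactly as the paper does, and your remarks on the normalisation of $\rho$ and the atom at $0$ correctly handle bookkeeping the paper leaves implicit.
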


\begin{proof} 
We follow the same strategy as in the proof of \cite[Lemma~5.3]{gunaratnam2025supercritical}. Recall the definition of $Z^1_{\mathcal{R}(L,M),\beta}$ from \eqref{eq: random cluster explicit density wired}. We claim that  
\begin{equation}\label{eq: pp partition equality}
\mathbf{Z}^{+,+}_{\mathcal{R}(L,M),\beta}=\frac{1}{2}Z^1_{\mathcal R(L,M),\beta},   
\end{equation}
and
\begin{equation}\label{eq: pm parition equality}
\mathbf{Z}^{+,-}_{\mathcal{R}(L,M),\beta}=\frac{1}{2}Z^1_{\mathcal R(L,M),\beta}[\partial^+\mathcal{R}(L,M)\centernot\longleftrightarrow \partial^-\mathcal{R}(L,M)],   
\end{equation}
from which the desired result follows. Note that here, for convenience, the partition function with an event in the square brackets denotes the unnormalized expectation (i.e.\ partition function times the expectation).

Let us prove the claim. We will only prove \eqref{eq: pm parition equality}, as \eqref{eq: pp partition equality} follows similarly and is simpler to deduce. First, observe that 
\begin{equation}
\mathbf{Z}^{+,-}_{\mathcal{R}(L,M),\beta}=\int Z^{\rm Ising,+,-}_{\mathcal{R}(L,M),\beta,\mathsf{a}}
\prod_{x\in \mathcal{R}(L,M)}\mathrm{d} \rho(\mathsf{a}_x) \prod_{x\in \partial^{\textup{ext}}\mathcal{R}(L,M)}\mathrm{d}\zeta(\mathsf{a}_x),  
\end{equation}
where
\begin{equation}\label{eq:Ising_def}
Z^{\rm Ising,+,-}_{\mathcal{R}(L,M),\beta,\mathsf{a}}=\sum_{\sigma\in \{-1,0,1\}^{\overline{\mathcal{R}(L,M)}}} \mathbbm{1}_{\sigma\sim (+,-,\mathsf{a})}
 \exp\Bigg( \beta\sum_{xy\in \overline{E}(\mathcal{R}(L,M))}  \mathsf a_x \mathsf{a}_y \sigma_x \sigma_y\Bigg),
\end{equation}
and $\sigma\sim (+,-,\mathsf{a})$ means that $\sigma_x=1$ for every $x\in \partial^+\mathcal{R}(L,M)$, $\sigma_x=-1$ for every $x\in \partial^-\mathcal{R}(L,M)$, and $\sigma_x=0$ if and only if $\mathsf{a}_x=0$. The equality \eqref{eq: pm parition equality} then follows if we can establish
\begin{equation}\label{eq: partition function equality}
Z^{\textup{Ising},+,-}_{\mathcal{R}(L,M),\beta, \mathsf{a}}=\frac{1}{2}{Z}^w_{\mathcal{R}(L,M),\beta, \mathsf{a}}[\partial^+\mathcal{R}(L,M)\centernot\longleftrightarrow\partial^-\mathcal{R}(L,M)]\prod_{xy \in \overline{E}(\mathcal{R}(L,M))} \sqrt{1-p(\beta,\mathsf a)_{xy}},
\end{equation}
as seen from \eqref{eq: random cluster explicit density wired}, where we recall that $Z^{w}_{\mathcal{R}(L,M),\beta,\mathsf{a}}$ is the partition function of $\phi^w_{\mathcal{R}(L,M),\beta,\mathsf{a}}$; see \eqref{def: random weight rc}.

We say that a spin configuration $\sigma\in \{-1,0,1\}^{\overline{\mathcal{R}(L,M)}}$ is compatible with $\omega\in \{0,1\}^{\overline{E}(\mathcal{R}(L,M))}$ and $\mathsf{a}$ if $\sigma$ is constant in the connected components of the graph with vertex set $\{x\in \overline{\mathcal{R}(L,M)}: \mathsf{a}_x\neq 0\}$ and edge set $\{xy: \omega_{xy}=1\}$, and equal to $0$ on $\{x\in \overline{\mathcal{R}(L,M)}:\: \mathsf{a}_x=0\}$.  We write $(\omega,\mathsf{a})\sim_{\fg}\sigma$ if, in addition, $\sigma_x=1$ for every $x\in \partial^+\mathcal{R}(L,M)$ and $\sigma_x=-1$ for every $x\in \partial^-\mathcal{R}(L,M)$. Note that, given $\mathsf{a}$ and a configuration $\omega$ realising the event $\{\partial^+\mathcal{R}(L,M)\centernot\longleftrightarrow \partial^-\mathcal{R}(L,M)\}$, the number of configurations $\sigma$ such that $(\omega,\mathsf{a})\sim_{\fg}\sigma$ is $2^{ k^w(\omega,\mathsf{a})-1}$, where we recall that $k^w(\omega,\mathsf{a})$ is the number of connected components after identifying all vertices in $\partial^{\textup{ext}}\mathcal{R}(L,M)$. Conversely, if $(\omega,\mathsf{a})\sim_{\mathfrak g} \sigma$, then $\omega \in \{\partial^+\mathcal{R}(L,M)\centernot\longleftrightarrow \partial^-\mathcal{R}(L,M)\}$. Therefore, we can write
\begin{align}
\begin{aligned}
Z^w_{\mathcal{R}(L,M),\beta,\mathsf{a}}[\partial^+\mathcal{R}&(L,M)\centernot\longleftrightarrow \partial^-\mathcal{R}(L,M)]
\\&=2 \sum_{\substack{\omega\in \{0,1\}^{\overline{E}(\mathcal{R}(L,M))}\\ \partial^+\mathcal{R}(L,M)\centernot\longleftrightarrow \partial^-\mathcal{R}(L,M)}}2^{k^w(\omega,\mathsf{a})-1}\prod_{xy \in \overline{E}(\mathcal{R}(L,M))} \Big(\frac{p(\beta,\mathsf{a})_{xy}}{1-p(\beta,\mathsf{a})_{xy}}\Big)^{\omega_{xy}} 
\\&= 2 \sum_{\substack{\sigma\in \{-1,0,1\}^{\overline{\mathcal{R}(L,M)}}\\\sigma\sim (+,-,\mathsf{a})}} \sum_{(\omega,\mathsf{a})\sim_\fg \sigma}\prod_{xy \in \overline{E}(\mathcal{R}(L,M))} \Big(\frac{p(\beta,\mathsf{a})_{xy}}{1-p(\beta,\mathsf{a})_{xy}}\Big)^{\omega_{xy}}. 
\end{aligned}
\end{align}
Furthermore, if $(\omega,\mathsf{a})\sim_\fg \sigma$, then $\omega_{xy}$ needs to be $0$ whenever $\sigma_x\neq \sigma_y$, and both values $0$ and $1$ are allowed whenever $\sigma_x=\sigma_y\neq 0$. This gives that $Z^w_{\mathcal{R}(L,M),\beta,\mathsf{a}}[\partial^+\mathcal{R}(L,M)\centernot\longleftrightarrow \partial^-\mathcal{R}(L,M)]$ is equal to
\begin{align}
\begin{aligned}
2\sum_{\sigma\in \{-1,0,1\}^{\overline{\mathcal R(L,M)}}}\mathbbm{1}_{\sigma\sim (+,-,\mathsf{a})}&\prod_{\substack{xy \in \overline{E}(\mathcal{R}(L,M))\\ \sigma_x=\sigma_y}}\sum_{\omega_{xy}\in \{0,1\}} \Big(\frac{p(\beta,\mathsf{a})_{xy}}{1-p(\beta,\mathsf{a})_{xy}}\Big)^{\omega_{xy}}\\
&=2\sum_{\sigma\in \{-1,0,1\}^{\overline{\mathcal{R}}(L,M)}}\mathbbm{1}_{\sigma\sim (+,-,\mathsf{a})}\prod_{xy \in \overline{E}(\mathcal{R}(L,M))}e^{2\beta \mathsf{a}_x \mathsf{a}_y \mathbbm{1}_{\sigma_x=\sigma_y}}
\\&=2 Z^{\textup{Ising},+,-}_{\mathcal{R}(L,M),\beta,\mathsf{a}}\prod_{xy\in \overline{E}(\mathcal{R}(L,M))}e^{\beta\mathsf{a}_x\mathsf{a}_y},
\end{aligned}
\end{align}
where we used that $\mathbbm{1}_{\sigma_x=\sigma_y}=(\sigma_x\sigma_y+1)/2$ whenever $\sigma_x\sigma_y\in \{\pm 1\}$ in the last equality. It follows that
\eqref{eq: partition function equality} holds. This completes the proof. 
\end{proof}

\subsection{Infinite volume strip measures}

We now construct infinite volume strip measures.

\begin{Lem}\label{lem:++}
Let $\beta> 0$ and $L\geq 1$. There exist measures $\langle \cdot \rangle^{+,+}_{\mathcal{R}(L),\beta}$ and $\langle \cdot \rangle^{+,-}_{\mathcal{R}(L),\beta}$ such that 
\begin{equation}
\lim_{M\to\infty}\langle \cdot \rangle^{+,+}_{\mathcal{R}(L,M),\beta}=\langle \cdot \rangle^{+,+}_{\mathcal{R}(L),\beta}  \quad \text{and} \quad \lim_{M\to\infty}\langle \cdot \rangle^{+,-}_{\mathcal{R}(L,M),\beta}=\langle \cdot \rangle^{+,-}_{\mathcal{R}(L),\beta},
\end{equation}
where the limits hold in the weak sense.
Furthermore, for every $x\in \{-L,\ldots,L\}^{d-1}$ we have 
\begin{equation}\label{eq:magnetisation convergence}
\lim_{N\to\infty} \langle \tau_{(x,N)} \rangle^{+,-}_{\mathcal{R}(L),\beta}=\langle \tau_{(x,0)} \rangle^{+,+}_{\mathcal{R}(L),\beta} \quad \text{and} \quad \lim_{N\to\infty} \langle \tau_{(x,-N)} \rangle^{+,-}_{\mathcal{R}(L),\beta}=-\langle \tau_{(x,0)} \rangle^{+,+}_{\mathcal{R}(L),\beta}.
\end{equation}
\end{Lem}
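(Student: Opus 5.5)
We will obtain both limits from monotonicity in $M$, using the random-cluster representation and the regularity estimates of Proposition~\ref{prop:reg_plus}, in the spirit of Proposition~\ref{prop:monotonicity in plus}.

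\textbf{The $(+,+)$ measure.} View $\nu^{+,+}_{\mathcal R(L,M),\beta}$ as the free measure with inhomogeneous single-site measures: $\rho$ inside and $\zeta$ on the boundary (Remark~\ref{rem:plus}). The key step is monotonicity in volume with a gap. For $M'\ge M+r$, with $r$ as in Proposition~\ref{prop:monotonicity in plus}, we want $\nu^{+,+}_{\mathcal R(L,M'),\beta}\preccurlyeq \nu^{+,+}_{\mathcal R(L,M),\beta}$ on spins in $\mathcal R(L,M-r)$, together with the analogous statement for absolute values. We get this by rerunning the argument of \cite[Proposition 6.11]{PV26}, which only uses that $\zeta$ dominates the conditional laws of spins at distance $r$ from a $\zeta$-boundary. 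The lateral boundary is common to both strips, so only the top and bottom faces need the comparison. For the same reason, the wired random-cluster measures $\Psi^1_{\mathcal R(L,M),\beta}$ are monotone along the sequence $M_j=jr$ (cf.\ \eqref{eq: monot in domain wired FK}).

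\textbf{Passing to the limit.} Uniform $(a,b)$-regularity from Proposition~\ref{prop:reg_plus}$(ii)$ gives tightness. Monotone limits of expectations of bounded increasing local functions exist along $M_j$, and the gap argument shows that every $M$ is sandwiched between consecutive $M_j$'s. Increasing local functions determine the law, so the limit $\langle\cdot\rangle^{+,+}_{\mathcal R(L),\beta}$ exists, and the limits of $\Psi^1$ and of the absolute-value fields exist likewise.

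\textbf{The $(+,-)$ measure.} Here FKG monotonicity for the sign fails: raising $M$ moves both the plus and the minus boundary. We therefore go through the random-cluster side. Lemma~\ref{lem: surface tension expression} and the Edwards--Sokal argument in its proof show that $\nu^{+,-}_{\mathcal R(L,M),\beta}$ is obtained from $\Psi^1_{\mathcal R(L,M),\beta}[\,\cdot\mid \partial^+\centernot\longleftrightarrow\partial^-]$ by assigning sign $+$ to clusters of $\partial^+$, $-$ to clusters of $\partial^-$, and uniform signs to the other clusters. The event $\{\partial^+\centernot\longleftrightarrow\partial^-\}$ is decreasing, and its probability is at least $e^{-C L^{d-1}}$ uniformly in $M$: close all edges of the middle hyperplane and use finite energy (Proposition~\ref{prop:finite-energy}) together with regularity. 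The obstacle is that conditioning on a decreasing event destroys the monotonicity in $M$.

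\textbf{Handling the obstacle.} We will first try to show that in the limit $\Psi^1_{\mathcal R(L),\beta}[\partial^+\centernot\longleftrightarrow\partial^-]>0$, and that the events $\{\partial^+\mathcal R(L,M)\centernot\longleftrightarrow\partial^-\mathcal R(L,M)\}$ converge to the corresponding infinite event. Convergence of the conditioned measures then follows. The main technical point is that in the infinite strip, the connection between the two halves occurs through bounded regions with high probability. Since $L$ is fixed, the strip is quasi-one-dimensional. Finite energy gives exponential decay in $N$ of the probability that the top faces at height $\pm N$ influence local events, via the existence w.h.p.\ of a fully closed cross-section between the local region and $\pm N$. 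This cut-set argument yields both the existence of the $(+,-)$ limit and a uniqueness/mixing statement.

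\textbf{The magnetisation limits \eqref{eq:magnetisation convergence}.} Near height $N$, with high probability a closed cross-section at height in $(N/2,N)$ separates the region from $\partial^-$. By the Markov property, the conditional law above that cut is the same under $(+,-)$ as under $(+,+)$ with the corresponding cut. The mixing along the strip shows that the $(+,+)$ law near $(x,N)$ converges, by vertical translation invariance of the limit, to the law at $(x,0)$. Regularity controls the unbounded $\tau$, which gives the first identity. The second follows by the spin-flip symmetry combined with the reflection $x_d\mapsto -x_d$.
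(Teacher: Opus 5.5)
\textbf{The magnetisation limits do not follow.} To prove \eqref{eq:magnetisation convergence} you claim that, given a fully closed cross-section at some height $K\in(N/2,N)$, the conditional law above the cut is the same under the $(+,-)$ and $(+,+)$ strip measures. In this random cluster representation that is false. A closed edge $xy$ still carries the weight $\sqrt{1-p(\beta,\mathsf a)_{xy}}=e^{-\beta\mathsf a_x\mathsf a_y}$, so the absolute-value fields on the two sides of a closed cut keep interacting. Conditioning on $\{\partial^+\mathcal R(L)\centernot\longleftrightarrow\partial^-\mathcal R(L)\}$ changes the law of $\mathsf a$ below the cut, and through these weights also the law above it. The cut decouples the two sides only after you also condition on the absolute values along it, and those have different laws under the two measures. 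A layer with $\mathsf a\equiv 0$ would decouple outright, but such layers need not exist (e.g.\ for $\varphi^4$). So the ``mixing along the strip'' you invoke is exactly the missing content.

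The paper argues by monotonicity instead. The upper bound $\langle\tau_{(x,N)}\rangle^{+,-}_{\mathcal R(L),\beta}\le\langle\tau_{(x,N)}\rangle^{+,+}_{\mathcal R(L),\beta}=\langle\tau_{(x,0)}\rangle^{+,+}_{\mathcal R(L),\beta}$ follows from comparison of boundary conditions and vertical invariance. For the lower bound, let $\iota'$ be the first $i$ with $\tau_{(y,2i)}>0$ for all $y$ in the cross-section. It is a.s.\ finite by finite width, regularity, and conditional independence given the odd layers. On $\{\iota'=i\}$, the Markov property and monotonicity give a lower bound $\langle\tau_{(x,0)}\rangle^{+,0}_{\mathcal R(L,N-2i),\beta}$. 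The contribution of $\{\iota'>i\}$ is controlled by Cauchy--Schwarz and regularity.

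\textbf{The $(+,+)$ step is not justified as written.} Proposition~\ref{prop:monotonicity in plus} needs $\mathrm d(\partial\Lambda,\Lambda')\ge r$, which fails for $\mathcal R(L,M)\subset\mathcal R(L,M')$. Your proposed rerun of \cite{PV26} for the top and bottom faces only breaks at precisely this point. Those faces meet the common lateral $\zeta$-boundary (their rim sites are adjacent to lateral $\zeta$-sites), so they are not at distance $r$ from a $\zeta$-boundary.

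The paper needs no gap monotonicity. It introduces $\langle\cdot\rangle^{+,0}_{\mathcal R(L,M),\beta}$, with $\zeta$ only on the lateral sides. This measure is increasing in $M$ and dominated by $\langle\cdot\rangle^{+,+}_{\mathcal R(L,M),\beta}$. It then uses a layer of nonpositive spins at heights $\pm 2i$, present with high probability, to obtain $\nu^{+,+}_{\mathcal R(L,M),\beta}[\mathcal A]\le\nu^{+,0}_{\mathcal R(L,M),\beta}[\mathcal A]+\varepsilon$ for increasing local $\mathcal A$.

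\textbf{The $(+,-)$ limit itself.} Your random-cluster route is workable once $\Psi^1_{\mathcal R(L,M),\beta}$ converges. Closed cross-sections do localize the connectivity events, even though they do not decouple the laws. The paper's route is much shorter: flip the spins of the lower half, so that $\langle F\rangle^{+,-}=\langle F(\tilde\tau)W\rangle^{+,+}/\langle W\rangle^{+,+}$ with the local weight $W=\prod_x e^{-2\beta\tau_{(x,-1)}\tau_{(x,0)}}$. Convergence then follows from that of $\langle\cdot\rangle^{+,+}$, with $W$ controlled by regularity.
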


\begin{proof} 
We first show the convergence of $\langle \cdot \rangle^{+,+}_{\mathcal{R}(L,M),\beta}$. To this end, let us write $\langle \cdot \rangle^{+,0}_{\mathcal{R}(L,M),\beta}$ for the (free) spin measure in $\mathbb{R}^{\mathcal{R}(L+1,M)}$ with single-site measure $\rho$ in $\mathcal{R}(L,M)$ and single-site measure $\zeta$ in $\mathcal{R}(L+1,M)\setminus \mathcal{R}(L,M)$. In words, it is similar to $\langle \cdot \rangle^{+,+}_{\mathcal{R}(L,M),\beta}$ except that we have ``turned off'' the $\zeta$ measure on the part of $\partial^{\textup{ext}}R(L,M)$ of $d$-th coordinate equal to $M+1$ or $-(M+1)$.
By monotonicity\footnote{We used a monotonicity statement for free measures with inhomogeneous single-site measure. This is reminiscent of what was discussed in Remark \ref{rem:plus}, except that now the measures $\langle \cdot \rangle^{+,0}_{\mathcal{R}(L,M),\beta}$ are stochastically increasing in $M$ since we do not ``push away'' the $\zeta$ single-site measures.} in the coupling constants and regularity, $\langle \cdot \rangle^{+,0}_{\mathcal{R}(L,M),\beta}$ converges weakly as $M\to\infty$ to a measure $\langle \cdot \rangle^{+,0}_{\mathcal{R}(L),\beta}$. Our aim is to show that $\langle \cdot \rangle^{+,+}_{\mathcal{R}(L,M),\beta}$ converges weakly as $M\to\infty$ to $\langle \cdot \rangle^{+,0}_{\mathcal{R}(L),\beta}$, so then we may define $\langle \cdot \rangle^{+,+}_{\mathcal R(L),\beta}:=\langle \cdot \rangle^{+,0}_{\mathcal R(L),\beta}$.

To this end, note that $\langle \cdot \rangle^{+,+}_{\mathcal{R}(L,M),\beta} \succcurlyeq \langle \cdot \rangle^{+,0}_{\mathcal{R}(L,M),\beta}$ by the domain Markov property (in which we condition on the values of the spins of $d$-th coordinate equal to $M+1$ or $-(M+1)$), monotonicity in boundary conditions, and the fact that for $\tau \sim \langle \cdot \rangle^{+,+}_{\mathcal{R}(L,M),\beta}$ we almost surely have $\tau_x>0$ for every $x\in \mathcal{R}(L+1,M+1)\setminus \mathcal{R}(L+1,M)$. Hence, for any increasing local event $\mathcal{A}$ we have $\liminf_{M\to\infty}\nu^{+,+}_{\mathcal{R}(L,M),\beta}[\mathcal{A}]\geq \nu^{+,0}_{\mathcal{R}(L),\beta}[\mathcal{A}]$. 

We now show that the converse inequality holds with the liminf replaced by limsup. Let $N$ be such that $\mathcal{A}$ is measurable with respect to $\mathcal{R}(L+1,N)$. Define 
\begin{equation}
\iota := \sup \{i>0:~ \tau_{(x,2i)}\leq 0, \tau_{(x,-2i)}\leq 0, \; \forall x \in \{-L,\ldots, L \}^{d-1}\},
\end{equation}
with the convention that $\iota=-\infty$ if no such $i$ exists. We first prove that $\iota>0$ with high probability under $\nu^{+,+}_{\mathcal R(L,M),\beta}$ provided $M$ is large enough.
Let $B_K$ be the event that for at least half of the $i\in \{1,2,\ldots,M/3\}$ the event  
\begin{equation}
A_{K,i}:=\{|\tau_{(x,\ell)}|\leq K, |\tau_{(x,-\ell)}|\leq K ,\: \forall x\in \{-L-1,\ldots,L+1\}^{d-1}, \forall \ell \in \{2i-1,2i+1\} \}
\end{equation}
occurs. Then, for every $\varepsilon > 0$, by regularity and  Markov's inequality applied to the random variable $\mathcal{N}:=\sum_{i=1}^{M/3}\mathds{1}_{A_{K,i}^c}$, by choosing $K$ sufficiently large we obtain $\nu^{+,+}_{\mathcal{R}(L,M),\beta}[ B_K]\geq 1-\varepsilon/2$.
Let $\mathcal{R}^+_{\textup{odd}}(L):=\{(x,2j+1): x\in \{-L-1,\ldots,L+1\}^{d-1}, \: j\geq -M\}$. Using the domain Markov property, we obtain that the probability
\begin{equation}
\nu^{+,+}_{\mathcal{R}(L,M),\beta}\left[\tau_{(x,2i)}\leq 0, \tau_{(x,-2i)}\leq 0\, \forall x\in \{-L,\ldots,L\}^{d-1} \Big |\: \tau_{|\mathcal{R}^+_{\textup{odd}}(L)}=\eta_{|\mathcal{R}^+_{\textup{odd}}(L)} \right]
\end{equation}
remains bounded away from $0$, uniformly over $i$ and $\eta_{|\mathcal{R}^+_{\textup{odd}}(L)}\in A_{K,i}$. Furthermore, these events are independent of each other conditionally on $\tau_{|\mathcal{R}^+_{\textup{odd}}(L)}$. Therefore, for every $M$ large enough we have that $\nu^{+,+}_{\mathcal{R}(L,M),\beta}[\iota> N]\geq 1-\varepsilon$. Let us now condition on the event $\{\iota = i\}$ for some $i>N$, which is measurable with respect to $\tau_{(x,j)}$ for $|j|\geq 2i$ and $x \in \{-L,\ldots,L\}^{d-1}$. By the domain Markov property and monotonicity in the boundary conditions, $\nu^{+,+}_{\mathcal{R}(L,M),\beta}[\mathcal A \mid \iota=i]\leq \nu^{+,0}_{\mathcal{R}(L,2i-1),\beta}[\mathcal A]$. By monotonicity in the volume of the latter measure, $\nu^{+,+}_{\mathcal{R}(L,M),\beta}[\mathcal A \mid \iota=i]\leq \nu^{+,0}_{\mathcal{R}(L,M),\beta}[\mathcal A]$. Hence,
\begin{multline}
\nu^{+,+}_{\mathcal{R}(L,M),\beta}[\mathcal A]\leq \sum_{i=N+1}^M \nu^{+,+}_{\mathcal{R}(L,M),\beta}[\mathcal A \mid \iota=i]\nu^{+,+}_{\mathcal{R}(L,M),\beta}[\iota=i]+\nu^{+,+}_{\mathcal{R}(L,M),\beta}[\iota\leq N]\\\leq \nu^{+,0}_{\mathcal{R}(L,M),\beta}[\mathcal A]+\varepsilon. 
\end{multline}
Since $\varepsilon>0$ is arbitrary, this implies that $\limsup_{M\to\infty} \nu^{+,+}_{\mathcal{R}(L,M),\beta}[\mathcal A]\leq \nu^{+,0}_{\mathcal{R}(L),\beta}[\mathcal A]$. It follows that $\nu^{+,+}_{\mathcal{R}(L,M),\beta}$ converges to $\nu^{+,0}_{\mathcal{R}(L),\beta}$, as desired.

For $\langle \cdot \rangle^{+,-}_{\mathcal{R}(L,M),\beta}$, note that by flipping the sign of all spins $\tau_x$ for $x\in \{-L-1,\ldots,L+1\}^{d-1}\times \{-M,\ldots,-1\}$, and denoting the resulting configuration $\tilde \tau$,  we obtain
\begin{equation}
\langle F(\tau) \rangle^{+,-}_{\mathcal{R}(L,M),\beta}=  \frac{\langle F(\tilde \tau) \prod_{x\in \{-L,\ldots,L\}^{d-1}}\exp(-2\beta \tau_{(x,-1)} \tau_{(x,0)} )\rangle^{+,+}_{\mathcal{R}(L,M),\beta}}{\langle \prod_{x\in \{-L,\ldots,L\}^{d-1}}\exp(-2\beta \tau_{(x,-1)} \tau_{(x,0)} )\rangle^{+,+}_{\mathcal{R}(L,M),\beta}}  
\end{equation}
for every bounded (local) measurable function $F$. The convergence of $\langle \cdot \rangle^{+,-}_{\mathcal{R}(L,M),\beta}$ then follows from the convergence and the regularity of $\langle \cdot \rangle^{+,+}_{\mathcal{R}(L,M),\beta}$.

\vspace{5pt}

We turn to the proof of \eqref{eq:magnetisation convergence}. The argument is an adaptation of \cite[Lemma~5.6]{gunaratnam2025supercritical}. We only handle $\langle \tau_{(x,N)} \rangle^{+,-}_{\mathcal{R}(L),\beta}$, as the case of $\langle \tau_{(x,-N)} \rangle^{+,-}_{\mathcal{R}(L),\beta}$ follows by global flip. Let us define 
\begin{equation}
\iota' := \min \{i>0:~ \tau_{(x,2i)} >0, \, \forall x \in \{-L,\ldots, L \}^{d-1}\},
\end{equation}
with the convention that the minimum of the empty set is infinity.
Arguing as for $\iota$, we can deduce that $\iota'$ is almost surely finite. Let us now condition on the event $\{\iota' = i\}$, which is measurable with respect to $\tau_{(x,j)}$ for $j\leq 2i$ and $x \in \{-L,\ldots,L\}^{d-1}$. By the Markov property and monotonicity in boundary condition, we have that for every $N\geq 2i$
\begin{equation}
\langle \tau_{(x,N)} 
 \mid \iota' = i \rangle^{+,-}_{\mathcal{R}(L),\beta}\geq  \langle \tau_{(x,0)} \rangle^{+,0}_{\mathcal{R}(L,N-2i),\beta}.
\end{equation}
As a consequence,
\begin{equation}
    \langle \tau_{(x,N)}  \rangle^{+,-}_{\mathcal{R}(L),\beta}\geq \langle \tau_{(x,0)} \rangle^{+,0}_{\mathcal{R}(L,N-2i),\beta}\nu_{\mathcal{R}(L),\beta}^{+,-}[\iota'\leq i]+\langle \tau_{(x,N)}\mathds{1}_{\iota'>i}  \rangle^{+,-}_{\mathcal{R}(L),\beta}.
\end{equation}
By the Cauchy--Schwarz inequality and regularity, there exists a constant $C>0$ such that for every $i\geq 1$ we have
\begin{equation}
|\langle \tau_{(x,N)} 
\mathbbm{1}_{\iota'>i} \rangle^{+,-}_{\mathcal{R}(L),\beta}|\leq \sqrt{\langle \tau^2_{(x,N)}  \rangle^{+,-}_{\mathcal{R}(L),\beta}\nu_{\mathcal{R}(L),\beta}^{+,-}[\iota'>i]}
\leq C\sqrt{\nu_{\mathcal{R}(L),\beta}^{+,-}[\iota'>i]}=: \varepsilon_i.
\end{equation}
Thus, since (by the first part of the proof) $\lim_{N\to\infty}\langle \tau_{(x,0)} \rangle^{+,0}_{\mathcal{R}(L,N),\beta}=\langle \tau_{(x,0)} \rangle^{+,+}_{\mathcal{R}(L),\beta}$, for every $i>0$,
\begin{equation}
\liminf_{N\to\infty} \langle \tau_{(x,N)} 
 \rangle^{+,-}_{\mathcal{R}(L),\beta}\geq \langle \tau_{(x,0)} \rangle^{+,+}_{\mathcal{R}(L),\beta}\nu_{\mathcal{R}(L),\beta}^{+,-}[\iota'\leq i]-\varepsilon_i.
 \end{equation}
 Since $\iota'$ is finite almost surely, $\nu_{\mathcal{R}(L),\beta}^{+,-}[\iota'\leq i]\rightarrow 1$ and $\varepsilon_i\rightarrow 0$ as $i\rightarrow \infty$. We can deduce that 
\begin{equation}
\liminf_{N\to\infty} \langle \tau_{(x,N)} 
 \rangle^{+,-}_{\mathcal{R}(L),\beta}\geq \langle \tau_{(x,0)} \rangle^{+,+}_{\mathcal{R}(L),\beta}.
 \end{equation}
Moreover, by conditioning on the value of $\tau_z$ for $z\in \partial^- \mathcal{R}(L)$ and using the domain Markov property and monotonicity in boundary conditions, for every $N>0$,
\begin{equation}
    \langle \tau_{(x,N)}\rangle^{+,-}_{\mathcal{R}(L),\beta}\leq \langle \tau_{(x,N)} \rangle^{+,+}_{\mathcal{R}(L),\beta}= \langle \tau_{(x,0)} \rangle^{+,+}_{\mathcal{R}(L),\beta}.
\end{equation}
The desired result follows readily from the last two displayed equations.
\end{proof}

\subsection{Strict positivity via Lebowitz's inequality}

We prove that, for every $\beta>\beta_c$, one has $\boldsymbol{\tau}(\beta) > 0$. We first state Lebowitz's inequality, which we prove in the next subsection. Given a moment function $A:V\to\mathbb{N}$, we recall the shorthand notation 
\begin{equation}
\tau_A=\prod_{x\in V} \tau^{A_x}_x.
\end{equation}
In the following statement, we consider a small generalisation of the spin measures considered above: we allow for inhomogeneous single-site measures that satisfy $(ii)$ in Definition \ref{def:single site}, and that are either even or supported on $\mathbb R^+$.

\begin{Prop} \label{prop: lebowitz}
Fix a finite graph $G = (V,E)$. Consider two sets of coupling constants $\{J_{xy}\}_{xy\in E}$ and $\{J'_{xy}\}_{xy\in E}$ satisfying $|J'_{xy}| \le J_{xy}$ for every $xy\in E$, and two external magnetic fields $\mathsf{h}$ and $\mathsf{h}'$ satisfying $|\mathsf{h}'_x|\le \mathsf{h}_x$ for every $x\in V$. Consider a family of single-site measures $(\rho_x)_{x\in V}$ such that for every $x\in V$, $\rho_x$ is either an even measure over $\mathbb{R}$ or $\rho_x$ is supported on $\mathbb{R}^+$. For every pair of moment functions $A, B:V\to\mathbb{N}$, one has
\begin{equation}
\langle \tau_A \tau_B \rangle - \langle \tau_A \tau_B \rangle' \ge |\langle \tau_A \rangle \langle\tau_B \rangle' - \langle \tau_A \rangle' \langle \tau_B \rangle|,
\end{equation}
where $\langle \cdot \rangle$ and $\langle \cdot \rangle'$ stand for the expectations under the measures $\nu_{G,\rho,J, \mathsf{h}}$ and $\nu_{G,\rho,J',\mathsf{h}'}$ respectively\footnote{Note that $\beta$ has been chosen equal to $1$ here which explains why it disappeared from the notation.}.
\end{Prop}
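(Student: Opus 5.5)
We will use the classical duplicated-variables argument of Lebowitz (a Ginibre-type inequality). Introduce two independent copies: $\tau\sim\langle\cdot\rangle$ and $\tau'\sim\langle\cdot\rangle'$, living on the same graph $G$. Pass to the rotated variables
\begin{equation*}
t_x=\frac{\tau_x+\tau'_x}{\sqrt2},\qquad q_x=\frac{\tau_x-\tau'_x}{\sqrt2}.
\end{equation*}
The inequality will follow from the two one-sided bounds
\begin{equation*}
\langle \tau_A\tau_B\rangle-\langle\tau_A\tau_B\rangle'\ \ge\ \pm\big(\langle\tau_A\rangle\langle\tau_B\rangle'-\langle\tau_A\rangle'\langle\tau_B\rangle\big).
\end{equation*}
We rewrite both sides as expectations in the product measure $\langle\cdot\rangle\otimes\langle\cdot\rangle'$:
\begin{equation*}
\langle\tau_A\tau_B\rangle-\langle\tau_A\tau_B\rangle' \mp\big(\langle\tau_A\rangle\langle\tau_B\rangle'-\langle\tau_A\rangle'\langle\tau_B\rangle\big)
=\big\langle(\tau_A\mp\tau'_A)(\tau_B\pm\tau'_B)\big\rangle_{\otimes}.
\end{equation*}
So it suffices to show that $\langle(\tau_A-\tau'_A)(\tau_B+\tau'_B)\rangle_\otimes\ge0$ and $\langle(\tau_A+\tau'_A)(\tau_B-\tau'_B)\rangle_\otimes\ge0$. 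The second reduces to the first after swapping $A$ and $B$.

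\textbf{Expanding the monomials.} With $\tau=(t+q)/\sqrt2$ and $\tau'=(t-q)/\sqrt2$, the expression $\tau_A\pm\tau'_A$ is a polynomial in $(t,q)$. For $\tau_A+\tau'_A$ every coefficient is nonnegative. For $\tau_A-\tau'_A$ the coefficients are nonnegative too, since only terms with odd total $q$-degree survive and they appear with coefficient $2\times(\text{positive})$. Hence the product $(\tau_A-\tau'_A)(\tau_B+\tau'_B)$ is a polynomial in $(t,q)$ with nonnegative coefficients. The task is therefore to show $\langle t_Cq_D\rangle_\otimes\ge0$ for all moment functions $C,D$.

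\textbf{Rewriting the weight.} The Boltzmann weight of the product measure is
\begin{equation*}
\exp\Big(\sum_{xy}\big[(J_{xy}+J'_{xy})(t_xt_y+q_xq_y)/2+(J_{xy}-J'_{xy})(t_xq_y+q_xt_y)/2\big]+\sum_x\big[(\mathsf h_x+\mathsf h'_x)t_x+(\mathsf h_x-\mathsf h'_x)q_x\big]/\sqrt2\Big).
\end{equation*}
Because $|J'|\le J$ and $|\mathsf h'|\le\mathsf h$, every coefficient here is nonnegative. Expanding the exponential in Taylor series, $\langle t_Cq_D\rangle_\otimes$ becomes a nonnegative combination of single-site integrals
\begin{equation*}
\int t_x^{m}q_x^{n}\,\mathrm d\rho_x(\tau_x)\,\mathrm d\rho_x(\tau'_x).
\end{equation*}
This expansion is justified by the super-Gaussian assumption $(ii)$ together with Fubini.

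\textbf{Main obstacle: the single-site integrals.} It remains to show that each single-site integral is nonnegative. We plan to write it as
\begin{equation*}
2^{-(m+n)/2}\iint (u+v)^m(u-v)^n\,\mathrm d\rho_x(u)\,\mathrm d\rho_x(v).
\end{equation*}
If $\rho_x$ is supported on $\mathbb R^+$, then $u+v\ge0$, and the integrand is antisymmetric under $u\leftrightarrow v$ when $n$ is odd, so the integral vanishes. When $n$ is even the integrand is already nonnegative. If $\rho_x$ is even, we use the change of variables $v\mapsto -v$, which exchanges $u+v$ and $u-v$, so the integral equals its mirror with $m,n$ swapped. Symmetrizing, $(u+v)^m(u-v)^n$ is a product of signed powers. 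When $m$ and $n$ are both even it is nonnegative. Otherwise, using $u\mapsto-u$ and $u\leftrightarrow v$, the integral vanishes unless $m$ and $n$ are both even. This parity case check is the only delicate point. Once it is settled, the two one-sided inequalities combine to give the absolute-value statement.
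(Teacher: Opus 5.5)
Your proposal is correct, and up to the last step it follows the paper. Both duplicate the system, rotate to sum and difference variables (the paper uses $u=\tau+\tau'$, $v=\tau-\tau'$ without the $\sqrt2$), and expand $\tau_A\pm\tau'_A$ with nonnegative coefficients. Both then rewrite the product weight with nonnegative couplings $\tfrac12(J\pm J')$ and fields built from $\mathsf h\pm\mathsf h'$, reducing everything to $\langle t_Cq_D\rangle_\otimes\ge0$.

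One small slip: your displayed identity has the correction term with the wrong sign. In fact $\langle(\tau_A-\tau'_A)(\tau_B+\tau'_B)\rangle_\otimes$ equals the left-hand difference \emph{plus} $\langle\tau_A\rangle\langle\tau_B\rangle'-\langle\tau_A\rangle'\langle\tau_B\rangle$. This is harmless, because you show both products are nonnegative.

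The genuine difference is how you prove $\langle t_Cq_D\rangle_\otimes\ge0$.
\begin{itemize}
\item The paper pushes $\rho_x\otimes\rho_x$ forward under $(a,b)\mapsto(a+b,a-b)$ and checks two symmetries. The image is invariant under flipping either coordinate, or, when $\rho_x$ is supported on $\mathbb R^+$, it lives on $\mathbb R^+\times\mathbb R$ and is invariant under flipping the second coordinate. It then conditions on the absolute values $(|u_x|,|v_x|)$, reads the sign field as a ferromagnetic Ising model on $G\times K_2$ with some spins pinned to $+1$, and invokes Griffiths' first inequality as a black box.
\item You instead prove the required positivity directly, in the style of Ginibre. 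You Taylor-expand the weight and reduce to single-site moments $\iint(u+v)^m(u-v)^n\,\mathrm d\rho_x(u)\,\mathrm d\rho_x(v)\ge0$. Your parity analysis settles these correctly: the swap $u\leftrightarrow v$ kills odd $n$; in the even case, the reflection $u\mapsto-u$ combined with the swap kills odd $m$; and the remaining integrands are pointwise nonnegative, using $u+v\ge0$ in the $\mathbb R^+$ case.
\end{itemize}

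Both routes rest on exactly the same symmetry input. Yours is self-contained and never conditions on absolute values, so vertices where those vanish need no separate care. The price is justifying term-by-term integration, which the super-Gaussian assumption covers since $t_x^2+q_x^2=\tau_x^2+\tau_x'^2$. The paper's version is shorter once Griffiths' inequality for Ising is taken as known.
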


We are now equipped to prove the main result of this section.
\begin{Prop}\label{prop: positivity of tau}
For every $\beta \geq \beta_c$,
\begin{equation}
\boldsymbol{\tau}(\beta) \geq 2^d\int_{\beta_c}^\beta \big(\langle \tau_0 \rangle_u^+\big)^2 \mathrm{d}u.  
\end{equation}
In particular, for $\beta>\beta_c$, one has $\boldsymbol{\tau}(\beta)>0$.
\end{Prop}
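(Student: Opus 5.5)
We differentiate the surface tension in $\beta$, following Lebowitz--Pfister \cite{lebowitz1981surface} and \cite{gunaratnam2025supercritical}.

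\textbf{Derivative formula.} For fixed $L,M$, both partition functions are smooth in $\beta$; regularity up to the boundary (Proposition~\ref{prop:reg_plus}) justifies differentiation under the integral. This gives
\begin{equation*}
\frac{\mathrm d}{\mathrm d\beta}\boldsymbol{\tau}^{L,M}(\beta)=\frac{1}{L^{d-1}}\sum_{xy\in\overline E(\mathcal R(L,M))}\Big(\langle\tau_x\tau_y\rangle^{+,+}_{\mathcal R(L,M),\beta}-\langle\tau_x\tau_y\rangle^{+,-}_{\mathcal R(L,M),\beta}\Big).
\end{equation*}
We would like to lower bound each summand using Proposition~\ref{prop: lebowitz}, on the graph $\overline{\mathcal R(L,M)}$ with single-site measures $\rho$ inside and $\zeta$ on the boundary. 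However, $\langle\cdot\rangle^{+,-}$ is not literally of the form $\nu_{G,\rho,J',\mathsf h'}$: its lower boundary spins carry the measure $\zeta(-\cdot)$, which is neither even nor supported on $\mathbb R^+$. We therefore apply the global flip $\tau_y\mapsto-\tau_y$ on $\partial^-\mathcal R(L,M)$, as in the proof of Lemma~\ref{lem:++}. After this flip, both measures share the single-site measures ($\rho$ inside, $\zeta$ on the whole boundary). The couplings become $J\equiv1$ for $\langle\cdot\rangle^{+,+}$ and $J'_{xy}=-1$ on the edges joining $\mathcal R(L,M)$ to $\partial^-\mathcal R(L,M)$ (and $+1$ elsewhere) for $\langle\cdot\rangle^{+,-}$. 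Thus $|J'|\le J$, and interior two-point functions are unchanged by the flip.

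\textbf{Lower bound on each summand.} Taking $A=\mathbbm 1_x$ and $B=\mathbbm 1_y$ in Proposition~\ref{prop: lebowitz} gives
\[
\langle\tau_x\tau_y\rangle^{+,+}-\langle\tau_x\tau_y\rangle^{+,-}\ge|\langle\tau_x\rangle^{+,+}\langle\tau_y\rangle^{+,-}-\langle\tau_x\rangle^{+,-}\langle\tau_y\rangle^{+,+}|\ge0 .
\]
Every summand is therefore nonnegative. We restrict the sum to \emph{vertical} edges $x=(z,k)$, $y=(z,k+1)$, with $z\in\{-L,\dots,L\}^{d-1}$ and $|k|\le N$. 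Let $M\to\infty$ using Lemma~\ref{lem:++}; interior quantities converge, and Fatou together with the $\liminf$ in the definition of $\boldsymbol\tau^L$ lets us integrate the derivative bound over $[\beta_c,\beta]$, using $\boldsymbol{\tau}^{L,M}(\beta_c)\ge0$ from Lemma~\ref{lem: surface tension expression}.

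\textbf{Telescoping.} Write $m^{\pm}_k:=\langle\tau_{(z,k)}\rangle^{+,\pm}_{\mathcal R(L),\beta}$. By monotonicity (Proposition~\ref{prop: monot plus}), $m^+_k\ge|m^-_k|$, and $m^+_k$ is bounded above and below uniformly in $k$ by regularity. Using $|a b'-a'b|\ge m^+_{k}(m^-_{k+1}-m^-_{k})-m^-_{k}(m^+_{k+1}-m^+_k)$-type manipulations, or the more direct route of \cite{lebowitz1981surface} (the summand is at least $\langle\tau_x\rangle^{+,+}(\langle\tau_y\rangle^{+,-}-\langle\tau_x\rangle^{+,-})$ up to a controlled error, with $m^+$ essentially $k$-independent deep in the strip), summing over $k$ telescopes to roughly
\[
m^+\big(m^-_{N}-m^-_{-N}\big)\to 2(m^+)^2,
\]
by \eqref{eq:magnetisation convergence} as $N\to\infty$.

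\textbf{Conclusion and main obstacle.} Summing over $z$ gives a count of $(2L+1)^{d-1}\ge 2^{d-1}L^{d-1}$ columns. Then $\langle\tau_{(z,0)}\rangle^{+,+}_{\mathcal R(L),u}\ge\langle\tau_0\rangle^+_u$ by Proposition~\ref{prop:monotonicity in plus} (plus measures in smaller domains dominate, up to the gap $r$; the boundary columns within distance $r$ are discarded, which is negligible as $L\to\infty$). Combining, $\boldsymbol\tau(\beta)\ge 2^{d-1}\cdot2\int_{\beta_c}^\beta(\langle\tau_0\rangle^+_u)^2\,\mathrm du$. Positivity for $\beta>\beta_c$ follows since $\langle\tau_0\rangle^+_u=m^*(u)>0$ for $u>\beta_c$. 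The main obstacle is the telescoping step: the Lebowitz bound has an absolute value of a cross-difference rather than a clean increment. I would handle it by noting that the $(+,+)$ strip measure is invariant under vertical translations, so $m^+_k\equiv m^+$. Then $|m^+m^-_{k+1}-m^-_km^+|=m^+|m^-_{k+1}-m^-_k|$, which telescopes exactly.
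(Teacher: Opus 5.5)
Your proposal is correct and is essentially the paper's proof. You differentiate $\boldsymbol{\tau}^{L,M}$, keep only the vertical edges $(z,k)(z,k+1)$ with $|k|\le N$, apply Lebowitz's inequality after flipping the spins on $\partial^-\mathcal R(L,M)$ (the paper phrases this as coupling constants $-\beta$), use vertical translation invariance of $\langle\cdot\rangle^{+,+}_{\mathcal R(L),\beta}$ to telescope, conclude with \eqref{eq:magnetisation convergence} and $\langle\tau_{(z,0)}\rangle^{+,+}_{\mathcal R(L),\beta}\ge\langle\tau_0\rangle^+_\beta$, and integrate using Fatou and $\boldsymbol{\tau}^{L,M}(\beta_c)\ge 0$.

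Two minor points. First, for the edges joining $\mathcal R(L,M)$ to $\partial^-\mathcal R(L,M)$, the flipped summand is $\langle\tau_x\tau_y\rangle+\langle\tau_x\tau_y\rangle'$. Its nonnegativity follows from $\mathbb{E}_{\mu\otimes\mu'}[u_xu_y+v_xv_y]\ge 0$ in the duplicated-variables proof, not from Proposition~\ref{prop: lebowitz} with $A=\mathbbm 1_x$, $B=\mathbbm 1_y$ (the paper simply asserts this nonnegativity). Second, no columns need discarding, since \eqref{eq:plus monotonicity} gives $\nu^+_\beta\preccurlyeq\nu^+_{\Lambda,\beta}$ for every finite $\Lambda$ with no gap condition.
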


\begin{proof}
Let $L, N\geq 1$ and $M> N$. We first estimate the derivative in $\beta$ of  $\boldsymbol{\tau}^{L,M}(\beta)$. By definition of $\boldsymbol{\tau}^{L,M}(\beta)$, one has
\begin{equation}\label{eq:pos surf ten 0}
\begin{aligned}
\frac{\mathrm{d}}{\mathrm{d}\beta}\boldsymbol{\tau}^{L,M} (\beta)&=\frac{1}{L^{d-1}} \sum_{xy\in \overline{E}(\mathcal R(L,M))} \left(\langle \tau_x\tau_y \rangle^{+,+}_{\mathcal R(L,M),\beta} - \langle \tau_x\tau_y \rangle^{+,-}_{\mathcal R(L,M),\beta}\right)
\\&\geq \frac{1}{L^{d-1}}\sum_{\substack{x\in \{-L,\ldots,L\}^{d-1}\\ -N\leq j\leq N}} \left( \langle \tau_{(x,j)} \tau_{(x,j+1)} \rangle_{\mathcal R(L,M), \beta}^{+,+} -  \langle \tau_{(x,j)} \tau_{(x,j+1)} \rangle_{\mathcal R(L,M), \beta}^{+,-}  \right).
\end{aligned}
\end{equation}
where the inequality follows from the fact that $\langle \tau_u\tau_v\rangle^{+,+}_{\mathcal{R}(L,M),\beta}\geq \langle \tau_u\tau_v\rangle^{+,-}_{\mathcal{R}(L,M),\beta}$ for every $u,v\in \overline{\mathcal{R}(L,M)}$.
We now send $M\to\infty$.
By Lemma~\ref{lem:++}, we have that $\langle \tau_{(x,j)} \tau_{(x,j+1)} \rangle_{\mathcal R(L,M), \beta}^{+,+} -  \langle \tau_{(x,j)} \tau_{(x,j+1)} \rangle_{\mathcal R(L,M), \beta}^{+,-}$ converges to 
\begin{equation}\label{eq:pos surf ten 1}
\langle \tau_{(x,j)} \tau_{(x,j+1)} \rangle_{\mathcal R(L), \beta}^{+,+} -  \langle \tau_{(x,j)} \tau_{(x,j+1)} \rangle_{\mathcal R(L), \beta}^{+,-}
\end{equation}
as $M$ tends to infinity.
Observe that the Lebowitz inequality applies to the measures $\langle \cdot \rangle^{+,+}_{\mathcal{R}(L),\beta}$ and $\langle \cdot\rangle^{+,-}_{\mathcal{R}(L),\beta}$. To see this note that $\rho$ and $\zeta$ satisfy the assumptions of Proposition~\ref{prop: lebowitz}, and we can express $\langle \cdot\rangle^{+,-}_{\mathcal{R}(L),\beta}$ as a spin measure on $\overline{\mathcal{R}(L)}$ with single-site measure $\rho$ on $\mathcal{R}(L)$ and $\zeta$ on $\partial^{\textup{ext}}\mathcal{R}(L)$, and with coupling constants being equal to $-\beta$ for every edge $xy$ with $x\in \partial^-\mathcal{R}(L)$ and $y\in \overline{\mathcal{R}(L)}\setminus \partial^-\mathcal{R}(L)$, and otherwise being equal to $\beta$.  By Lebowitz's inequality and vertical translation invariance of $\langle \cdot \rangle^{+,+}_{\mathcal{R}(L),\beta}$, we obtain that the quantity in \eqref{eq:pos surf ten 1} is at least
\begin{multline}
\langle \tau_{(x,j)} \rangle^{+,+}_{\mathcal R(L),\beta}\langle \tau_{(x,j+1)} \rangle^{+,-}_{\mathcal R(L),\beta} - \langle \tau_{(x,j+1)} \rangle^{+,+}_{\mathcal R(L),\beta}\langle \tau_{(x,j)} \rangle^{+,-}_{\mathcal R(L),\beta} 
\\
= \langle \tau_{(x,0)} \rangle^{+,+}_{\mathcal R(L),\beta} \left( \langle \tau_{(x,j+1)} \rangle^{+,-}_{\mathcal R(L),\beta} - \langle \tau_{(x,j)} \rangle^{+,-}_{\mathcal R(L),\beta} \right).
\end{multline}
Taking the liminf as $M$ tends to infinity in \eqref{eq:pos surf ten 0} and telescoping gives
\begin{equation}
\liminf_{M\rightarrow \infty}\Big(\frac{\mathrm{d}}{\mathrm{d}\beta}\boldsymbol{\tau}^{L,M}(\beta) \Big)\geq\frac{1}{L^{d-1}}\sum_{x\in \{-L,\ldots,L\}^{d-1}} \langle \tau_{(x,0)} \rangle^{+,+}_{\mathcal R(L),\beta}\left(\langle \tau_{(x,N+1)} \rangle^{+,-}_{\mathcal R(L),\beta} - \langle \tau_{(x,-N)} \rangle^{+,-}_{\mathcal R(L),\beta} \right).
\end{equation}
We now send $N$ to infinity and use the second part of Lemma~\ref{lem:++} to obtain that 
\begin{equation}
\liminf_{M\rightarrow \infty}\Big(\frac{\mathrm{d}}{\mathrm{d}\beta}\boldsymbol{\tau}^{L,M}(\beta) \Big)\geq  \frac 2{L^{d-1}} \sum_{x\in \{-L,\ldots,L\}^{d-1}} \left(\langle \tau_{(x,0)} \rangle^{+,+}_{\mathcal R(L),\beta}\right)^2.
\end{equation}
Note that $\langle \tau_{(x,0)} \rangle^{+,+}_{\mathcal R(L),\beta}\geq \langle \tau_0 \rangle^+_{\beta}$ by \eqref{eq:plus monotonicity}, Lemma~\ref{lem:++}, and translation invariance of $\langle \cdot \rangle^+_{\beta}$. Hence, integrating between $\beta_c$ and $\beta$, and using Fatou's lemma and the fact that $|\{-L,\ldots, L\}^{d-1}|=(2L+1)^{d-1}$,
\begin{equation}
\boldsymbol{\tau}^L(\beta)=\liminf_{M\rightarrow\infty}\boldsymbol{\tau}^{L,M}(\beta)\geq 
\liminf_{M\rightarrow\infty}\Big(\boldsymbol{\tau}^{L,M}(\beta)-\boldsymbol{\tau}^{L,M}(\beta_c)\Big)\geq 2^d\int_{\beta_c}^{\beta} \left( \langle \tau_0 \rangle^{+}_{u}\right)^2 \mathrm{d}u,
\end{equation}
where we used Lemma \ref{lem: surface tension expression} in the first inequality to argue that $\boldsymbol{\tau}^{L,M}(\beta_c)\geq 0$.
The desired result follows from taking the liminf as $L$ tends to infinity.
\end{proof}

\subsection{Proof of disconnection probability decay for wired measure}

We are now equipped to prove Theorem~\ref{thm: wired disconnection}.

\begin{proof}[Proof of Theorem~\textup{\ref{thm: wired disconnection}}]
By Lemma~\ref{lem:++} and the Edwards--Sokal coupling, $\Psi^1_{\mathcal{R}(L,M),\beta}$ converges weakly as $M\to\infty$ to a measure denoted by $\Psi^1_{\mathcal{R}(L),\beta}$. 
We claim that
\begin{equation}\label{eq:pf cv non local event}
\lim_{M\rightarrow \infty} \Psi^1_{\mathcal{R}(L,M),\beta}[\partial^+\mathcal{R}(L,M)\centernot\longleftrightarrow \partial^-\mathcal{R}(L,M)]=\Psi^1_{\mathcal{R}(L),\beta}[\partial^+\mathcal{R}(L)\centernot\longleftrightarrow \partial^-\mathcal{R}(L)],
\end{equation}
which implies the desired result by Lemma~\ref{lem: surface tension expression} and Proposition~\ref{prop: positivity of tau}.

Let us prove \eqref{eq:pf cv non local event}. On the one hand, for every $L\leq N<M$ we have
\begin{multline}
\Psi^1_{\mathcal{R}(L,M),\beta}[\partial^+\mathcal{R}(L,M)\centernot\longleftrightarrow \partial^-\mathcal{R}(L,M)]\\ \leq \Psi^1_{\mathcal{R}(L,M),\beta}[\partial^+\mathcal{R}(L)\centernot\longleftrightarrow \partial^-\mathcal{R}(L) \text{ in } \overline{\mathcal{R}(L,N)}],     
\end{multline}
and by first sending $M$ to infinity and then $N$ to infinity, the right-hand side converges to $\Psi^1_{\mathcal{R}(L),\beta}[\partial^+\mathcal{R}(L)\centernot\longleftrightarrow \partial^-\mathcal{R}(L)]$. On the other hand, one has
\begin{equation} \label{eq: R+LMevent}
\lim_{N\to\infty}\lim_{M\to \infty}\Psi^1_{\mathcal{R}(L,M),\beta}[\partial^+\mathcal{R}(L,M)\longleftrightarrow \partial^-\mathcal{R}(L,M), \partial^+\mathcal{R}(L)\centernot\longleftrightarrow \partial^-\mathcal{R}(L) \text{ in } \overline{\mathcal{R}(L,N)}]=0,
\end{equation} 
To see why, first note---by a finite energy argument similar to the one used in Lemma~\ref{lem:++}---that the probability that there exists at least one $K \in \{1,\ldots,N\}$ such that in $\{-L-1,\dots,L+1\}^{d-1} \times \{-K,K\}$ all of the induced edges are open, can be made arbitrarily close to $1$ by choosing $N$ large enough, uniformly in $M>N$. Such an event is incompatible with the intersection of events in \eqref{eq: R+LMevent}. This proves the claim and concludes the proof.
\end{proof}

\subsection{Proof of the Lebowitz inequality}

We now proceed with the proof of Proposition~\ref{prop: lebowitz}. The first step in the proof is to rewrite $\langle \tau_A \tau_B \rangle - \langle \tau_A \tau_B \rangle'$ in terms of a duplicated system consisting of two independent spin configurations. After rotating the duplicated system and conditioning on its absolute-value field, the desired inequality reduces to Griffiths' inequality for a ferromagnetic spin system on an enlarged graph. Due to our assumptions on the single-site measures, the latter is an Ising system: spins take values in $\{\pm 1\}$ on vertices for which $\rho_x$ is an even measure, while they are fixed to the value $+1$ on vertices where the single-site measure $\rho_x$ is supported on $\mathbb{R}^+$.

\begin{proof}[Proof of Proposition~\textup{\ref{prop: lebowitz}}]
Let $\tau, \tau'$ be configurations sampled independently according to the measures $\mu:=\nu_{G,\rho,J,\mathsf{h}}$ and $\mu':=\nu_{G,\rho,J',\mathsf{h}'}$.
It suffices to prove that
\begin{equation}\label{eq:pfleb1}
\mathbb{E}_{\mu\otimes\mu'} [\tau_A\tau_B-\tau_A'\tau_B'] \ge \mathbb{E}_{\mu\otimes\mu'}[\tau_A\tau_B'-\tau_A'\tau_B],
\end{equation}
because the right-hand side equals $\langle \tau_A \rangle \langle\tau_B \rangle' - \langle \tau_A \rangle' \langle \tau_B \rangle$, and the second required inequality follows from exchanging the roles of $A$ and $B$.
Observe that \eqref{eq:pfleb1} can be rewritten as
\begin{equation}\label{eq:pfleb2}
\mathbb{E}_{\mu\otimes\mu'} [(\tau_A + \tau'_A)(\tau_B - \tau'_B)] \ge 0.
\end{equation}
To show \eqref{eq:pfleb2}, we introduce new spin variables labelled by the set of vertices $V$:
\begin{equation}
u_x := \tau_x + \tau'_x, \qquad v_x := \tau_x - \tau'_x.
\end{equation}
In terms of these new variables, we can write 
\begin{equation}
\tau_A+\tau_A' = \prod_{x\in V} \left(\frac{u_x+v_x}{2}\right)^{A_x} + \prod_{x\in V} \left(\frac{u_x-v_x}{2}\right)^{A_x}=2^{-|A|}\sum_{S\le A}\binom{A}{S}(1+(-1)^{|A|-|S|})u_S v_{A\setminus S},
\end{equation}
and
\begin{equation}
\tau_B-\tau_B' = \prod_{y\in V} \left(\frac{u_y+v_y}{2}\right)^{B_y} - \prod_{y\in V} \left(\frac{u_y-v_y}{2}\right)^{B_y}=
2^{-|B|}\sum_{S\le B}\binom{B}{S}(1-(-1)^{|B|-|S|})u_S v_{B\setminus S},
\end{equation}
where $|A|:=\sum_{x\in V} A_x$, the notation $S\le A$ means that $S:V \to\mathbb{N}$ is a moment function such that $S_x\le A_x$ for every $x\in V$, $A\setminus S$ denotes the moment function which is equal to $A_x- S_x$ for every $x\in V$, and $\binom{A}{S}:=\prod_{x\in V}\binom{A_x}{S_x}$.
Hence, it suffices to show that for all moment functions $A,B$,  
\begin{equation}\label{eq:what-we-need-to-prove}
\mathbb{E}_{\mu\otimes\mu'}[u_Av_B]\geq 0.
\end{equation}
We now rewrite the measure $\mu\otimes \mu'$ in terms of the new spin variables $u,v$, the new coupling constants 
\begin{equation}
K_{xy} := \frac{1}{2}(J_{xy} + J'_{xy}), \qquad L_{xy} := \frac{1}{2}(J_{xy} - J'_{xy}),
\end{equation}
(both of which are non-negative by the assumption on $J_{xy}, J'_{xy}$), as well as the new external magnetic fields
\begin{equation}
K_x:=\frac{1}{2}(\mathsf{h}_x+\mathsf{h}'_x), \qquad L_x:= \frac{1}{2}(\mathsf{h}_x-\mathsf{h}'_x)
\end{equation}
(both of which are non-negative by the assumption on $\mathsf{h}_x,\mathsf{h}'_x$).
We have 
\begin{align*}
   \mathrm{d}\mu\otimes \mu'[(\tau, \tau')] \propto \exp\left\{ \sum_{xy\in E} (J_{xy} \tau_x \tau_y +J'_{xy}\tau_x'\tau_y') +\sum_{x\in V} (\mathsf{h}_x\tau_x+\mathsf{h}_x'\tau_x') \right\}\prod_{x\in V}\mathrm{d}\rho_x(\tau_x)\mathrm{d}\rho_x(\tau'_x),
\end{align*}
and using the new variables we can write 
\begin{align*}
J_{xy} \tau_x \tau_y +J'_{xy}\tau_x'\tau_y'
&=
(K_{xy}+L_{xy})\cdot \frac{u_x+v_x}{2}\cdot\frac{u_y+v_y}{2}+(K_{xy}-L_{xy})\cdot \frac{u_x-v_x}{2}\cdot\frac{u_y-v_y}{2}
\\&=
K_{xy}\cdot \frac{u_xu_y+v_xv_y}{2}+L_{xy}\cdot \frac{u_xv_y+v_xu_y}{2}
\end{align*}
and
\begin{align*}
\mathsf{h}_x \tau_x +\mathsf{h}'_x\tau_x'
&=
(K_x+L_x)\cdot \frac{u_x+v_x}{2}+(K_x-L_x)\cdot \frac{u_x-v_x}{2}=
K_x\cdot u_x+L_x\cdot v_x.
\end{align*}
Also, let $\nu_x$ denote the pushforward of $\rho_x\times \rho_x$ by the map $F(a,b)=\left(a+b,a-b\right)$.
Since $F$ is a continuous bijection, the whole measure can now be written as 
\begin{align}
\frac{\mathrm{d}\mu\otimes \mu'[(u, v)]}{\prod_{x\in V}\mathrm{d}\nu_x(u_x,v_x)}\propto \exp\Biggl\{\sum_{xy\in E} \left(K_{xy} \frac{u_xu_y+v_xv_y}{2}+L_{xy} \frac{u_xv_y+v_xu_y}{2}\right) +\sum_{x\in V}(K_xu_x+L_xv_x)\Biggr\}.
\end{align}

We say that a measure $\nu$ on $\mathbb{R}^2$ is even if for every Borel measurable set $A$, we have $\nu(f_1(A))=\nu(f_2(A))=\nu(A)$, where $f_1(x,y)=(-x,y)$ and $f_2(x,y)=(x,-y)$. We claim that for every $x\in V$, the above measure $\nu_x$ is either an even measure on $\mathbb{R}^2$ or it is supported on $\mathbb{R}^+\times \mathbb{R}$ and for every Borel measurable set $A$, $\nu_x(f_2(A))=\nu_x(A)$. Indeed, it suffices to consider sets of the form $A_1\times A_2$, since these sets generate the $\sigma$-algebra. If $\rho_x$ is an even measure, then $\rho_x\otimes \rho_x$ is an even measure on $\mathbb{R}^2$ and invariant under $f_3(x,y)=(-y,-x)$. Hence
\begin{equation}\nu_x(A_1,A_2)=\rho_x\otimes \rho_x(F^{-1}(A_1,A_2))=\rho_x\otimes \rho_x(f_3(F^{-1}(A_1,A_2)))=\nu_x(-A_1,A_2),
\end{equation}
where we used that $F^{-1}(a,b)=(\tfrac{a+b}{2},\tfrac{a-b}{2})$ and $f_3(F^{-1}(a,b))=(\tfrac{-a+b}{2},\tfrac{-a-b}{2})$.
If $\rho_x$ is supported on $\mathbb{R}^+$, then $\nu_x$ is supported on $\mathbb{R}^+\times \mathbb{R}$ since $F^{-1}(A_1,A_2)\subset (\mathbb{R}^+)^2$ implies that $A_1\subset \mathbb{R}^+$.
On the other hand, in any case, by the invariance of $\rho_x\otimes\rho_x$ under $f_4(x,y)=(y,x)$,
\begin{equation}
\nu_x(A_1,A_2)=\rho_x\otimes\rho_x(F^{-1}(A_1,A_2))=\rho_x\otimes\rho_x(f_4(F^{-1}(A_1,A_2)))=\nu_x(A_1,-A_2).
\end{equation}
This proves the claim.

We can view the latter expression as a spin measure on the enhanced graph $G\times K_2$ with vertex set $V^2:=\{(x,i): x\in V, i\in \{1,2\}\}$ and edge set $E^2=\{\{(x,i),(y,j)\}: xy\in E, i,j\in \{1,2\}\}$. Let us write for $x\in V$, $\tau_{(x,1)}=u_x$ and $\tau_{(x,2)}=v_x$. Then, by the above property of each $\nu_x$, conditioning on the absolute value field $(|\tau_z|)_{z\in V^2}$, the sign field $(\textup{sgn}(\tau_z))_{z\in V^2}$ is distributed according to a (ferromagnetic) Ising measure on $G\times K_2$ with coupling constants $\tfrac{1}{2} K_{xy}\cdot |\tau_{(x,i)}|\cdot |\tau_{(y,j)}|$ for $i=j$ and $\tfrac{1}{2}L_{xy}\cdot |\tau_{(x,i)}|\cdot |\tau_{(y,j)}|$ for $i\neq j$, external magnetic field $K_x|\tau_{(x,1)}|$ and $L_x|\tau_{(x,2)}|$, and non-negative boundary conditions. Observe that when $x\in V$ is such that $\rho_x$ is supported on $\mathbb R^+$, one has $\textup{sgn}(\tau_{(x,1)})=1$.

Hence, the inequality \eqref{eq:what-we-need-to-prove} follows from Griffiths' first inequality for this Ising measure. This completes the proof.
\end{proof}

\begin{Rem}
Let us describe an alternative way of handling the general case in which $\rho_x$ is allowed to be supported on $\mathbb{R}^+$. The inequality can be reduced to the case of even single-site measures as follows. Let $S$ denote the set of vertices whose single-site measure is supported on $\mathbb{R}^+$. For $T>0$, consider two spin systems whose external magnetic fields are given by $h_x+T$ and $h'_x+T$, respectively, for every $x\in S$, and by $h_x$ and $h'_x$ otherwise. In the limit $T\to\infty$, the spins on $S$ are forced to be positive. To counterbalance the change in the measure caused by increasing the magnetic field, symmetrise each $\rho_x$ for $x\in S$ in a tilted way, specifically, define $\tilde{\rho}_x$ by $\mathrm{d}\tilde{\rho_x}(t)= e^{-T\cdot |t|}\cdot (\mathrm{d}\rho_x(t)+\mathrm{d}\rho_x(-t))$. For $x\notin S$, both the single-site measure and the external magnetic field remain unchanged. Applying the Lebowitz inequality to the system with single-site measures $\widetilde{\rho}_x$ and then letting $T\to\infty$ yields the desired result.
\end{Rem}

\section{Comparison between boundary conditions}\label{sec:comparison_bc}

The aim of this section is to use Theorem~\ref{thm: wired disconnection} to prove Theorem~\ref{thm:sharpness}. We first prove that crossings happen with probability exponentially close to 1 under a free measure on an appropriate strip $\mathcal{R}(L,M)$ with a well-chosen (large) magnetic field on the boundary. Then, we use a differentiation argument and the almost everywhere uniqueness of half-space measures to deduce a similar crossing estimate for the free measure (i.e.\ without magnetic field on the boundary), thereby proving Theorem~\ref{thm:sharpness}. 

\subsection{From wired to free with a large boundary magnetic field}
For $h>0$, let $\Psi^{0,h}_{\mathcal{R}(L,M),\beta}$ denote the random cluster measure $\Psi^0_{\mathcal{R}(L,M),\beta,\mathsf{h}}$ with external magnetic field $\mathsf{h}$ equal to $h$ on $\partial^{\rm bot} \mathcal{R}(L,M)\cup \partial^{\rm top} \mathcal{R}(L,M)$ and $0$ otherwise, where
\begin{align}
    \partial^{\rm bot} \mathcal{R}(L,M):=\{-L,\ldots,L\}^{d-1}\times \{-M\},  \\ \partial^{\rm top} \mathcal{R}(L,M):= \{-L,\ldots,L\}^{d-1}\times \{M\}.
\end{align}

\begin{Lem}\label{lem: strip to rectangle}
Let $\beta>\beta_c$. For every $h>0$ sufficiently large, there exist $\delta,c_2>0$ such that, for every $L$ large enough,
\begin{equation}
\Psi^{0,h}_{\mathcal R(L,\delta L),\beta}[\partial^{{\rm bot}} \mathcal R(L, \delta L) \longleftrightarrow\partial^{{\rm top}} \mathcal R(L, \delta L)] \geq 1 - e^{-c_2 L^{d-1}}. 
\end{equation}
\end{Lem}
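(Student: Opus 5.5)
The plan is to transfer Theorem~\ref{thm: wired disconnection} from the wired strip measure to $\Psi^{0,h}_{\mathcal R(L,\delta L),\beta}$. The large boundary field $h$ on $\partial^{\rm bot}\cup\partial^{\rm top}$ will play the role of the wired (plus) boundary condition on the horizontal faces. The free boundary condition on the vertical sides will be handled by making the strip flat ($M=\delta L$ with $\delta$ small), so that only a negligible amount of "side surface'' is available to a disconnecting set.

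\textbf{Step 1: two ghost vertices.} Using the two-ghost construction of Remark~\ref{rem:double ghost}, I would attach $\partial^{\rm top}$ to a ghost $\fg^+$ and $\partial^{\rm bot}$ to a separate ghost $\fg^-$, both with field $h$. The Edwards--Sokal argument of Lemma~\ref{lem: surface tension expression} then rewrites the probability of $\{\fg^+\centernot\longleftrightarrow\fg^-\}$ as a ratio
\begin{equation*}
\frac{\mathbf Z^{h,-h}}{\mathbf Z^{h,h}}
\end{equation*}
of spin partition functions on $\mathcal R(L,\delta L)$. Here $\mathbf Z^{h,\pm h}$ has free sides, field $+h$ on the top face, and field $\pm h$ on the bottom face. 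Since $\{\partial^{\rm bot}\longleftrightarrow\partial^{\rm top}\}$ contains $\{\fg^+\longleftrightarrow\fg^-\}$ up to the ghost edges themselves, which are not needed, it suffices to show
\begin{equation*}
\frac{\mathbf Z^{h,-h}}{\mathbf Z^{h,h}}\leq e^{-c_2L^{d-1}}.
\end{equation*}

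\textbf{Step 2: rerun the Lebowitz differentiation.} I would redo the differentiation argument of Proposition~\ref{prop: positivity of tau}, with the $\zeta$-boundary replaced by the field-$h$ faces. Proposition~\ref{prop: lebowitz} applies verbatim, since the field $-h$ is of the allowed form $|\mathsf h'|\le \mathsf h$ after the flip trick of Lemma~\ref{lem:++}. Differentiating in $\beta$ and telescoping along vertical columns, I expect to obtain
\begin{equation*}
-\frac{\mathrm d}{\mathrm d u}\log\frac{\mathbf Z^{h,-h}}{\mathbf Z^{h,h}}\;\geq\;\sum_{x}\langle\tau_{(x,0)}\rangle^{h,h}_u\Big(\langle\tau_{(x,\delta L)}\rangle^{h,-h}_u-\langle\tau_{(x,-\delta L)}\rangle^{h,-h}_u\Big).
\end{equation*}
Integrating over $u\in[\beta',\beta]$ for a fixed $\beta'\in(\beta_c,\beta)$ then gives the bound $e^{-cL^{d-1}}$, provided two lower bounds hold uniformly in $u\in[\beta',\beta]$ and for $x$ in a positive fraction of $\{-L,\dots,L\}^{d-1}$ (those at distance $\ge \delta L$ from the sides):
\begin{itemize}[label=$\bullet$]
\item the bulk magnetisation $\langle\tau_{(x,0)}\rangle^{h,h}_u$ is bounded below by a positive constant;
\item the face magnetisations $\pm\langle\tau_{(x,\pm\delta L)}\rangle^{h,-h}_u$ are bounded below by a positive constant.
\end{itemize}
Both are increasing in $h$ by Proposition~\ref{prop: monot plus}.

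\textbf{Step 3 (main obstacle): comparing the field-$h$ measure to the plus measure.} The free measure with field $h$ does not obviously dominate the plus measure of Section~\ref{sec:plus_measure}. My first attempt is to condition on the spins of the two faces. For $h$ large, the tilted single-site law $\propto e^{\beta h t}\rho(\mathrm dt)$ puts most of its mass on large positive values. I would then compare this tilted law with a choice of $\zeta$ supported on $[C,\infty)$ (Remark~\ref{rem:plus}(i)), using the regularity bounds of \cite{PV26} and FKG. This should show that the interior restriction dominates a plus-type measure on a slightly smaller box, up to a boundary layer. Proposition~\ref{prop:monotonicity in plus} then gives $\langle\tau_{(x,0)}\rangle^{h,h}_u\geq m^*(u)\geq m^*(\beta')>0$ away from the sides. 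This is where the flatness $\delta$ enters, since the free sides reduce magnetisation only within a bounded-fraction region. If this direct comparison fails, the fallback is a coupling argument. For $h$ large, each face vertex is joined to its ghost with probability close to $1$, uniformly by finite energy (Proposition~\ref{prop:finite-energy}) and conditional regularity (Lemma~\ref{lem:conditional regularity}). This would stochastically dominate a wired-type boundary on most of each face, after which Theorem~\ref{thm: wired disconnection} can be invoked on sub-strips. I expect Step 3 to be the hardest part, and it is where the requirement "$h$ sufficiently large'' is used.
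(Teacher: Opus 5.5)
Your Step~1 is fine. Splitting the ghost and repeating the Edwards--Sokal computation of Lemma~\ref{lem: surface tension expression} does give $\Psi^{0,h}_{\mathcal R(L,\delta L),\beta}[\fg^+\centernot\longleftrightarrow\fg^-]=\mathbf Z^{h,-h}/\mathbf Z^{h,h}$, and Proposition~\ref{prop: lebowitz} applies directly.

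The proof breaks at Step~3, and the break is not technical. You need a lower bound on the magnetisation deep inside a flat slab whose lateral sides are \emph{free}. Proposition~\ref{prop:monotonicity in plus} says nothing about such measures: it compares with measures carrying $\zeta$ on all of $\partial^{\rm ext}$. A measure with free sides is stochastically \emph{below} those. Moreover, no sub-box of the slab sees a plus boundary, because its lateral boundary lies inside the slab, where spins are random and may be negative. Your claim that free sides only depress the magnetisation in a bounded-fraction region is a statement about insensitivity to free boundary conditions in the supercritical regime. That is of the same nature as what this section is trying to prove, and nothing established so far gives it. Flatness cannot help for an $O(1)$ quantity like a magnetisation. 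It only helps when you pay a surgery cost $e^{O(\delta L^{d-1})}$ against a probability already known to be $e^{-cL^{d-1}}$.

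Two further problems compound this. First, the slab is not vertically translation invariant, so your telescoped bound does not follow as written. Lebowitz bounds the column sum below by $\sum_j|m_jn_{j+1}-m_{j+1}n_j|=\sum_j m_jm_{j+1}|n_{j+1}/m_{j+1}-n_j/m_j|$, where $m_j,n_j$ are the $(h,h)$ and $(h,-h)$ magnetisations along the column. You would therefore need $m_j\ge c$ at every height, not only at $j=0$. Second, your fallback goes in the wrong direction. The field measure with free sides is dominated \emph{by} wired-type measures, not the other way round, and invoking Theorem~\ref{thm: wired disconnection} on sub-strips would require wired lateral sides.

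The missing idea is to transfer Theorem~\ref{thm: wired disconnection} itself, rather than re-derive a surface tension in the new geometry. The proof of that theorem already contains the needed magnetisation bound, $\langle \tau_{(x,0)}\rangle^{+,+}_{\mathcal R(L),\beta}\ge\langle\tau_0\rangle^+_\beta$. The transfer is by surgery costing $e^{O(\delta L^{d-1})}$:
\begin{enumerate}
\item Let $A$ be the outer layer of $\mathcal R(L,\delta L)$ and $B$ its inner lateral layer. FKG and regularity give $\Psi^1_{\mathcal R(L),\beta}[\mathsf a|_{A\cup B}\le H]\ge e^{-(c_1/2)L^{d-1}}$ for $H$ large.
\item Raising these absolute values to $H$ and opening every edge of $E(A)$ only decreases the probability of the (decreasing) disconnection event. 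The domain Markov property then leaves a wired boundary of absolute value $H$ around $\mathcal R(L,\delta L)$.
\item Closing the $O(\delta L^{d-1})$ edges between $A$ and $B$ costs $e^{O(\delta L^{d-1})}$ by finite energy. After this, the wired boundary acts only through the top and bottom faces, i.e.\ as a field $H$, and any connection $\partial^+\mathcal R(L,\delta L)\longleftrightarrow\partial^-\mathcal R(L,\delta L)$ must be a bottom-to-top crossing.
\item The conditioning $\mathsf a|_B=H$ is removed by closing the edges at $B$, again at cost $e^{O(\delta L^{d-1})}$.
\end{enumerate}
Choosing $\delta$ small makes all these costs a small fraction of $c_1L^{d-1}$. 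This is exactly where flatness and ``$h$ large'' (with $h=H$) enter.
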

\begin{proof}
Let $\beta>\beta_c$ and consider $\delta\in (0,1)$ a small enough constant to be chosen later. Recall Theorem \ref{thm: wired disconnection}. Our first goal is to compare $\Psi^1_{\mathcal R(L),\beta}$ with $\Psi^{(w,h)}_{\mathcal{R}(L,\delta L),\beta}$, where we recall that the latter is the wired random cluster measure on $\mathcal{R}(L,\delta L)$ with absolute value field equal to $h$ in $\partial^{\textup{ext}}\mathcal{R}(L,\delta L)$, up to a cost that can be made smaller than $e^{(c_1/2)L^{d-1}}$ by choosing $h$ to be large enough. We then force certain edges on the lateral sides of $\mathcal{R}(L,\delta L)$ to be closed, thereby ensuring that any path from $\partial^{+}\mathcal R(L,\delta L)$ to $\partial^{-}\mathcal R(L,\delta L)$ must connect $\partial^{\rm bot}\mathcal R(L,\delta L)$ to $\partial^{\rm top}\mathcal R(L,\delta L)$. To make the cost of this surgery sufficiently small, we will need to choose $\delta$ small enough. The proof is similar to that of \cite[Lemma 5.8]{gunaratnam2025supercritical} and is split into two steps. 

\paragraph{Step 1.}
Introduce
$A := \mathcal{R}(L+1,\delta L+1)\setminus \mathcal{R}(L,\delta L)$ and $B:=\mathcal{R}(L,\delta L)\setminus \mathcal{R}(L-1,\delta L)$ and write $V:=A\cup B$.
By the FKG inequality and regularity, we can choose $H>0$ sufficiently large so that
\begin{equation}
\Psi^1_{\mathcal{R}(L),\beta}\big[\mathsf{a}|_V\leq H \big]\geq e^{-(c_1/2) \, L^{d-1}},
\end{equation}
where $c_1$ is the constant from Theorem~\ref{thm: wired disconnection}. Hence,
\begin{equation}\label{eq: proof lemma T1}
\Psi^1_{\mathcal{R}(L),\beta} \big[\partial^+\mathcal{R}(L)\centernot\longleftrightarrow\partial^-\mathcal{R}(L)]
\geq 
\Psi^1_{\mathcal{R}(L),\beta} \big[\partial^+\mathcal{R}(L)\centernot\longleftrightarrow\partial^-\mathcal{R}(L) \mid \mathsf{a}|_V\leq H \big]\,
e^{-(c_1/2) L^{d-1}}.
\end{equation}
By monotonicity in the absolute value and the FKG inequality from Proposition~\ref{prop: wired random cluster properties} (applied to the conditional measure $\Psi^1_{\mathcal R(L), \beta}[\ \cdot \mid \mathsf a|_V = H]$), we obtain
\begin{equation}
\begin{aligned}
\Psi^1_{\mathcal{R}(L),\beta} \big[\partial^+\mathcal{R}(L)\centernot\longleftrightarrow\partial^-\mathcal{R}(L) \mid \mathsf{a}|_V\leq H \big]
&\geq 
\Psi^1_{\mathcal{R}(L),\beta} \big[\partial^+\mathcal{R}(L)\centernot\longleftrightarrow\partial^-\mathcal{R}(L) \mid \mathsf{a}|_V=H \big] \\
&\geq 
\Psi^1_{\mathcal{R}(L),\beta} \big[\partial^+\mathcal{R}(L)\centernot\longleftrightarrow\partial^-\mathcal{R}(L) \mid \mathsf a|_V=H, \mathcal{A} \big],
\end{aligned}
\end{equation}
where
\begin{equation}
\mathcal A := \{\omega_e = 1,\ \forall \, e \in E(A)\}.
\end{equation}
Let $\mathsf{H}$ denote the absolute value field equal to $H$ on
$A$ and $0$ elsewhere. 
On the event $\mathcal A$, all vertices in $A$ are in the same cluster, so the domain Markov property from Proposition~\ref{prop: wired random cluster properties} yields
\begin{equation}\label{eq: proof lemma T2}
\Psi^1_{\mathcal R(L),\beta}\big[\, \cdot \mid \mathsf a|_V=H, \mathcal A\big]
=
\Psi^{(w,\mathsf{H})}_{\mathcal R(L,\delta L),\beta}\big[\, \cdot \mid \mathsf a|_B=H \big],
\end{equation}
where $\Psi^{(w,\mathsf{H})}_{\mathcal R(L,\delta L),\beta}$ denotes the random cluster measure with $(\{\partial^{\textup{ext}} \mathcal{R}(L,\delta L)\},\mathsf{H})$ boundary conditions.
Combining \eqref{eq: proof lemma T1} and \eqref{eq: proof lemma T2}, and applying Theorem~\ref{thm: wired disconnection}, we obtain
\begin{equation}\label{eq: comparison 1}
\Psi^{(w,\mathsf{H})}_{\mathcal R(L,\delta L),\beta}
\big[\partial^+\mathcal{R}(L,\delta L)\centernot\longleftrightarrow\partial^-\mathcal{R}(L,\delta L) \mid \mathsf a|_B=H\big]
\leq e^{-(c_1/2) L^{d-1}}.
\end{equation}

\paragraph{Step 2.} We now move to the second part of the proof. Let us define the event
\begin{equation}
\mathcal{E}:= \{\partial^{\rm bot} \mathcal R(L, \delta L)\centernot\longleftrightarrow \partial^{\rm top} \mathcal R(L, \delta L)\}.
\end{equation}
Define the set of edges $I:=\{xy: x\sim y, x\in A, y\in B\}$. Observe that
\begin{equation}\label{eq:Iproofinclusion}
\mathcal{E}\cap \{\omega|_{I}=0\} \subset \{\partial^+ \mathcal R(L, \delta L)\centernot\longleftrightarrow \partial^- \mathcal R(L, \delta L)\}\cap \{\omega|_{I}=0\}.
\end{equation}
See Figure \ref{fig:I graph} for an illustration. 

\begin{figure}[htb]
    \centering
    \includegraphics[width=0.6\linewidth]{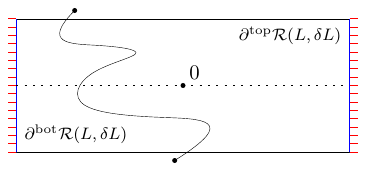}
    \caption{An illustration of \eqref{eq:Iproofinclusion}. The set $B$ is represented in blue. The edges of $I$ are represented in red. When $\omega_{|I}=0$ and $\partial^+\mathcal{R}(L,\delta)\connect{}\partial^-\mathcal{R}(L,\delta L)$, one also has that $\partial^\textup{bot}\mathcal{R}(L,\delta L)\connect{}\partial^\textup{top}\mathcal{R}(L,\delta L)$.}
    \label{fig:I graph}
\end{figure}

Note that since $\mathsf{a}_x=H$ for every $x\in V$ under $\Psi^{(w,\mathsf{H})}_{\mathcal R(L,\delta L),\beta}\big[\, \cdot \mid \mathsf a|_B=H \big]$, by finite-energy there exists $t>0$ such that for every $e\in I$ and every $\theta\in \{0,1\}^{\overline{E}(\mathcal{R}(L,\delta L))\setminus \{e\}}$,
\begin{equation}
\Psi^{(w,\mathsf{H})}_{\mathcal R(L,\delta L),\beta}\big[\omega_e=0 \mid \mathsf a|_B=H,  \omega_{xy}=\theta_{xy} \, \forall \, xy \in \overline{E}(\mathcal{R}(L,\delta L))\setminus \{e\}\big]\geq e^{-t}.    
\end{equation}
This implies that 
\begin{equation}
\Psi^{(w,\mathsf{H})}_{\mathcal R(L,\delta L),\beta}\big[\omega|_{I}=0 \mid \mathsf a|_B=H\big]\geq e^{-t |I|},   
\end{equation}
so that by \eqref{eq:Iproofinclusion}
\begin{equation}
\Psi^{(w,\mathsf{H})}_{\mathcal R(L,\delta L),\beta}\big[\mathcal{E} \mid \omega|_{I}=0 ,\mathsf a|_B=H\big]\leq e^{t |I|} \Psi^{(w,\mathsf{H})}_{\mathcal R(L,\delta L),\beta}\big[\partial^+ \mathcal R(L, \delta L)\centernot\longleftrightarrow \partial^- \mathcal R(L, \delta L) \mid \mathsf a|_B=H\big].
\end{equation}
Since $|I|=O(\delta L^{d-1})$, we can choose $\delta$ to be small enough so that $t|I|\leq (c_1/4) L^{d-1}$. Combining the above with \eqref{eq: comparison 1} and using the domain Markov property we obtain that 
\begin{equation}\label{eq: comparison 2}
\Psi^{0,H}_{\mathcal{R}(L,\delta L),\beta}[\mathcal{E} \mid \mathsf{a}|_B=H]\leq e^{-(c_1/4) L^{d-1}},   
\end{equation}
where the measure $\Psi^{0,H}_{\mathcal{R}(L,\delta L),\beta}$ was defined above.
To remove the conditioning, we will use the fact that ``closing'' a vertex is less costly than closing an edge. More precisely, let $I':=\{xy: x\in B, y\in\mathcal{R}(L,\delta L)\cup \{\mathfrak{g}\}, x\sim y\}$. Observe that
\begin{equation}
\Psi^{0,H}_{\mathcal{R}(L,\delta L),\beta}[\mathcal{E} \mid \mathsf{a}|_B=H, \omega|_{I'}=0]=\frac{\Psi^{0,H}_{\mathcal{R}(L,\delta L),\beta}\Big[\mathbbm{1}_\mathcal{E} \prod_{xy\in I'}\sqrt{1-p_{xy}(\mathsf{a}^H)}\:\Big|\:\mathsf{a}|_B=0\Big]}{\Psi^{0,H}_{\mathcal{R}(L,\delta L),\beta}\Big[\prod_{xy\in I'}\sqrt{1-p_{xy}(\mathsf{a}^H)}\:\Big|\: \mathsf{a}|_B=0\Big]},   
\end{equation}
where $\mathsf{a}^H$ denotes the configuration which is equal to $H$ for $x\in B$ and is equal to $\mathsf{a}$ otherwise. Since $\sqrt{1-p_{xy}(\mathsf{a}^H)}$ is a decreasing function of $\mathsf{a}$, we can apply the FKG inequality and monotonicity in the absolute value field to deduce that 
\begin{equation}\label{eq: comparison 3}
\Psi^{0,H}_{\mathcal{R}(L,\delta L),\beta}[\mathcal{E} \mid \mathsf{a}|_B=H, \omega|_{I'}=0]\geq \Psi^{0,H}_{\mathcal{R}(L,\delta L),\beta}[\mathcal{E}\mid \mathsf{a}|_B=0]\geq \Psi^{0,H}_{\mathcal{R}(L-1,\delta L),\beta}[\mathcal{E}]\geq  \Psi^{0,H}_{\mathcal{R}(L,\delta L),\beta}[\mathcal{E}],
\end{equation}
where we used the fact that $\mathcal{E}$ is decreasing and Proposition \ref{prop: random cluster properties} in the last inequality.
Finally, by bounding $\mathsf{a}_x$ by $H$ for every $x\in \overline{B}$ and using the finite energy property as above, we obtain that there exists $t'>0$ such that
\begin{equation}
\Psi^{0,H}_{\mathcal{R}(L,\delta L),\beta}[\omega|_{I'}=0 \mid \mathsf{a}|_B=H]\geq e^{-t'|I'|}\geq e^{-(c_1/8) L^{d-1}},
\end{equation}
provided that $\delta$ is small enough. This implies that 
\begin{equation}
\Psi^{0,H}_{\mathcal{R}(L,\delta L),\beta}[\mathcal{E} \mid \mathsf{a}|_B=H, \omega|_{I'}=0]\leq e^{(c_1/8) L^{d-1}} \Psi^{0,H}_{\mathcal{R}(L,\delta L),\beta}[\mathcal{E} \mid \mathsf{a}|_B=H],
\end{equation}
which when combined with \eqref{eq: comparison 2} and \eqref{eq: comparison 3} implies the desired result.
\end{proof}

\subsection{Removing the boundary magnetic field}

\begin{Lem}\label{lem: large to free bc} Let $\beta>\beta_c$. There exist $\delta,c_3>0$ such that for every $L\geq 1$,
\begin{equation}\label{eq: large to free bc}
\Psi^0_{\mathcal R(L,\delta L),\beta}[ \partial^{\rm bot} \mathcal R(L, \delta L/2) \longleftrightarrow \partial^{\rm top} \mathcal R(L, \delta L/2)] \geq 1 - e^{-c_3 L^{d-1}}. 
\end{equation}
\end{Lem}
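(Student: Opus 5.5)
We interpolate in the boundary field $h$, following the differentiation strategy of \cite{Bod05} and \cite[Section~6]{gunaratnam2025supercritical}. Fix $\beta>\beta_c$ and $\delta$ as in Lemma~\ref{lem: strip to rectangle}. The field $h_0$ provided there may be taken larger if needed. Write $\mathcal E_L$ for the decreasing event that $\partial^{\rm bot}\mathcal R(L,\delta L/2)$ is not connected to $\partial^{\rm top}\mathcal R(L,\delta L/2)$, and set
\[
f_L(h):=\log \Psi^{0,h}_{\mathcal R(L,\delta L),\beta}[\mathcal E_L].
\]
Since $\mathcal E_L$ contains the disconnection event of the larger slab, Lemma~\ref{lem: strip to rectangle} gives $f_L(h_0)\le -c_2L^{d-1}$. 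Our aim is to show
\[
f_L(0)-f_L(h_0)\le \tfrac{c_2}{2}L^{d-1}
\]
for all large $L$.

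\textbf{Step 1: the derivative formula.} We write
\[
f_L(0)-f_L(h_0)=-\int_0^{h_0}\partial_h f_L(h)\,\mathrm dh.
\]
Using the Edwards--Sokal density and the fact that the field acts only on the top and bottom faces, we obtain
\[
\partial_h f_L(h)=\beta\sum_{x\in\partial^{\rm bot}\cup\partial^{\rm top}}\Big(\langle\tau_x\mid\mathcal E_L\rangle-\langle\tau_x\rangle\Big).
\]
The expectations are taken under the coupled measure with field $h$ on these faces. By FKG for the increasing variable $\tau_x$ against the decreasing event $\mathcal E_L$, each summand is nonpositive. It therefore suffices to show that, for a.e.\ $h\in(0,h_0)$, the average over the faces of $\langle\tau_x\rangle-\langle\tau_x\mid\mathcal E_L\rangle$ tends to $0$.

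\textbf{Step 2: reduction to half-space measures.} Condition on $\mathcal E_L$ and look at the cluster structure near the bottom face. Take $x$ in the bottom face at distance at least $\varepsilon L$ from the lateral sides. On $\mathcal E_L$, the configuration near $x$ is sandwiched between a half-space free measure and a half-space wired measure. The upper bound is obtained by enlarging the domain and wiring far away, the lower bound by shrinking the domain using monotonicity. In both cases the field $h$ sits on the face, so the comparison measures are $\Psi^{0,h}_{\mathbb H,\beta}$ and $\Psi^{1,h}_{\mathbb H,\beta}$, up to errors that vanish as $L\to\infty$. Conditioning on $\mathcal E_L$, a decreasing event, only pushes the local law below the unconditioned one. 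Concretely, $\langle\tau_x\mid\mathcal E_L\rangle$ is bounded below by the free half-space magnetisation $\langle\tau_0\rangle^{0,h}_{\mathbb H,\beta}$ plus $o(1)$. To see this, we explore from the top the set of vertices not connected to the top face; the unexplored region containing $x$ then carries free-type boundary conditions at distance of order $L$. Similarly, $\langle\tau_x\rangle\le\langle\tau_0\rangle^{+,h}_{\mathbb H,\beta}+o(1)$ by Proposition~\ref{prop:reg_plus half-space}. By Theorem~\ref{thm:half-space}, and by Fubini with $\beta$ chosen in a full-measure set, with a small perturbation of $\beta$ handled through the monotonicity of the target event, these two magnetisations agree for a.e.\ $h$. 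Dominated convergence, using the uniform regularity bounds to control $|\partial_h f_L|/L^{d-1}$, then gives $(f_L(0)-f_L(h_0))/L^{d-1}\to0$.

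\textbf{Main obstacle.} Step 2 is the crux. The delicate point is justifying that the conditioned measure near the bottom face dominates the free half-space measure. The event $\mathcal E_L$ is non-local and $\mathsf a$ is random, so the exploration must reveal both $\omega$ and $\mathsf a$. We will rely on the domain Markov property of Proposition~\ref{prop: random cluster properties} with free boundary on the explored set, together with the fact that $\mathsf a=0$ on revealed vertices only decreases the measure. A second issue is the a.e.\ restriction on $\beta$. Lemma~\ref{lem: strip to rectangle} holds for every $\beta>\beta_c$, so for a general $\beta$ we take $\beta'<\beta$ with $\beta'>\beta_c$ in the good set, run the argument at $\beta'$, and conclude by monotonicity in $\beta$. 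Taking $\delta$ smaller and adjusting constants finishes the proof.
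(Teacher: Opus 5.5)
Your proposal is correct and follows essentially the paper's own argument: interpolation in the boundary field with an FKG-signed logarithmic derivative, a bulk/lateral split, sandwiching each bulk term between free and wired half-box measures, Theorem~\ref{thm:half-space} at a good $\beta'\in(\beta_c,\beta)$, and monotonicity in $\beta$ (one slip: $\mathcal E_L$ is \emph{contained in}, not containing, the large-slab disconnection event). The step you single out as the main obstacle needs no exploration, since $\mathcal E_L$ is measurable with respect to $\overline{\mathcal R(L,\delta L/2)}$, which is disjoint from the half-box of radius $\varepsilon L<\delta L/2$ around $x$, so conditioning on the outside of that half-box and using monotonicity in boundary conditions gives the free lower bound directly; and equality of the two half-space magnetisations follows from $\Psi^{1,h}_{\mathbb H,\beta}=\Psi^{0,h}_{\mathbb H,\beta}$ once you write $\beta\langle\tau_x\rangle=\Psi[F_x]$ with the local observable $F_x=\beta\mathsf a_x\bigl(2\omega_{x\mathfrak g}/p(\beta,h,\mathsf a)_{x\mathfrak g}-1\bigr)$, as the paper does by working with $F_x$ directly.
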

\begin{proof}
Let $\beta>\beta_c$. By Theorem~\ref{thm:half-space}, there exists $\beta'\in (\beta_c,\beta)$ such that for almost every $h>0$ we have that 
$\Psi^{1,h}_{\mathbb{H},\beta'}= \Psi^{0,h}_{\mathbb{H},\beta'}$. By Lemma~\ref{lem: strip to rectangle}, there exist $h,\delta,c_2>0$ such that, for every $L$ large enough 
\begin{equation}
\Psi^{0,h}_{\mathcal R(L,\delta L),\beta'}[\partial^{{\rm bot}} \mathcal R(L, \delta L) \longleftrightarrow\partial^{{\rm top}} \mathcal R(L, \delta L)] \geq 1 - e^{-c_2 L^{d-1}}
\end{equation} 
for every $L\geq 1$.

Write $\mathcal{D}=\{\partial^{\rm bot} \mathcal R(L, \delta L/2) \centernot\longleftrightarrow \partial^{\rm top} \mathcal R(L, \delta L/2) \text{ in } \overline{\mathcal R(L, \delta L/2)}\}$. By monotonicity in the coupling constants (see Proposition \ref{prop: random cluster properties}), we have 
\begin{equation}
\Psi^0_{\mathcal R(L,\delta L),\beta}[ \mathcal{D}]\leq \Psi^0_{\mathcal R(L,\delta L),\beta'}[ \mathcal{D}].
\end{equation}
Hence it suffices to show that there exists $c_3>0$ such that for every $L$ large enough,
\begin{equation}
\Psi^0_{\mathcal R(L,\delta L),\beta'}[ \mathcal{D}] \leq e^{-c_3 L^{d-1}}. 
\end{equation}
Without loss of generality, we may assume that for almost every $h>0$ we have that 
$\Psi^{1,h}_{\mathbb{H},\beta}= \Psi^{0,h}_{\mathbb{H},\beta}$, and prove the above inequality at parameter $\beta$. 

We will prove this by gradually decreasing the value of the external magnetic field from $s=h$ to $s=0$ in order to interpolate between $\Psi^{0,h}_{\mathcal R(L,\delta L),\beta}[ \mathcal{D} ]$ and $\Psi^{0}_{\mathcal R(L,\delta L),\beta}[ \mathcal{D} ]$. To this end, consider the logarithmic derivative with respect to $s$ and note that by Russo's formula (cf.\ the calculation of \cite[Theorem~(2.43)]{Grimmett2006RCM} in the case of the usual random cluster measure):
\begin{equation} \label{eq: log derivative}
\frac{\partial \log \Psi^{0,s}_{\mathcal R(L,\delta L),\beta}[ \mathcal{D} ]}{\partial s}= \sum_{x\in \partial^{\pm}\mathcal{R}(L,\delta L)} \Big(\Psi^{0,s}_{\mathcal R(L,\delta L),\beta}\left[F_x(\mathsf{a},\omega) \mid  \mathcal{D}\right] -\Psi^{0,s}_{\mathcal R(L,\delta L),\beta}\left[F_x(\mathsf{a},\omega)\right]\Big),
\end{equation}
where $\partial^{\pm}\mathcal{R}(L,\delta L)=\partial^{\rm bot}\mathcal{R}(L,\delta L)\cup \partial^{\rm top}\mathcal{R}(L,\delta L)$, and
\begin{equation}
F_x(\mathsf{a},\omega)=-\beta \mathsf{a}_x+\frac{\omega_{x\fg}}{p(\beta,s,\mathsf{a})_{x\fg}(1-p(\beta,s,\mathsf{a})_{x\fg})} \frac{\partial p(\beta,s,\mathsf{a})_{x\fg}}{\partial s}=-\beta\mathsf{a}_x+2\beta \mathsf{a}_x\frac{\omega_{x\fg}}{p(\beta,s,\mathsf{a})_{x\fg}}.
\end{equation}
Here the term $-\beta \mathsf{a}_x$ arises from differentiating $e^{-\beta \mathsf{a}_x s}=\sqrt{1-p(\beta,s,\mathsf{a})_{x\fg}}$ in \eqref{eq: random cluster explicit density}, while the other term arises from differentiating $\left(\frac{p(\beta,s,\mathsf{a})}{1-p(\beta,s,\mathsf{a})}\right)^{\omega_{x\fg}}$. Write $F_x=F^1_x-F^2_x$, where 
\begin{equation}
F^1_x(\mathsf{a},\omega)=2\beta \mathsf{a}_x\frac{\omega_{x\fg}}{p(\beta,s,\mathsf{a})_{x\fg}}, \quad
F^2_x(\mathsf{a},\omega)=\beta \mathsf{a}_x.
\end{equation}
Observe that $\mathsf{a}_x/p(\beta,s,\mathsf{a})_{x\fg}$
is an increasing function of $\mathsf{a}_x$, hence both $F^1_x$ and $F^2_x$ are increasing.
We will split the sum in \eqref{eq: log derivative} according to boundary and bulk contributions. Let $\epsilon \in (0,\delta/2)$ to be fixed below. Define
\begin{equation}
{\rm Bulk}(\varepsilon, L):= \{ x \in \partial^{\pm}\mathcal{R}(L,\delta L): {\rm d}( x, \partial \mathcal R(L,\delta L) \setminus \partial^\pm \mathcal R(L,\delta L)) \geq \varepsilon L \},
\end{equation}
and write
\begin{equation}
\eqref{eq: log derivative} =: I({\rm bulk}) + I({\rm boundary})
\end{equation}
with 
\begin{equation}\label{eq: derivative formula bulk}
I({\rm bulk}) = \sum_{x \in {\rm Bulk}(\varepsilon,L)}
\Big(\Psi^{0,s}_{\mathcal R(L,\delta L),\beta}\left[F_x(\mathsf{a},\omega) \mid  \mathcal{D}\right] -\Psi^{0,s}_{\mathcal R(L,\delta L),\beta}\left[F_x(\mathsf{a},\omega)\right]\Big),
\end{equation}
\begin{equation}\label{eq: derivative formula boundary}
I({\rm boundary}) = \sum_{x \in \partial^{\pm}\mathcal{R}(L,\delta L)\setminus{\rm Bulk}(\varepsilon,L)}
\Big(\Psi^{0,s}_{\mathcal R(L,\delta L),\beta}\left[F_x(\mathsf{a},\omega) \mid  \mathcal{D}\right] -\Psi^{0,s}_{\mathcal R(L,\delta L),\beta}\left[F_x(\mathsf{a},\omega)\right]\Big).
\end{equation}

Let us treat the boundary contribution first. 
Notice that by regularity, there exists $C>0$ such that, for every $x\in \partial^{\pm}\mathcal{R}(L,\delta L)$ and every $s\in [0,h]$,
\begin{equation}
    \max \left\{ \left|\Psi^{0,s}_{\mathcal R(L,\delta L),\beta}\left[F_x(\mathsf{a},\omega)\right]\right|,  
    \left|\Psi^{0,s}_{\mathcal R(L,\delta L),\beta}\left[F_x(\mathsf{a},\omega) \mid  \mathcal{D}\right]\right| \right\} \leq C,\label{eq: general control summand}
\end{equation}
where for the second term we used that
\begin{align}
\left|\Psi^{0,s}_{\mathcal R(L,\delta L),\beta}\left[F_x(\mathsf{a},\omega) \mid  \mathcal{D}\right]\right|&\leq \Psi^{0,s}_{\mathcal R(L,\delta L),\beta}[F^1_x(\mathsf{a},\omega) \mid  \mathcal{D}] + \Psi^{0,s}_{\mathcal R(L,\delta L),\beta}[F^2_x(\mathsf{a},\omega) \mid  \mathcal{D}]
\\& \leq
\Psi^{0,s}_{\mathcal R(L,\delta L),\beta}[F^1_x(\mathsf{a},\omega)] + \Psi^{0,s}_{\mathcal R(L,\delta L),\beta}[F^2_x(\mathsf{a},\omega)],
\end{align}
with the second inequality following from the FKG inequality (recall that $\mathcal{D}$ is decreasing and $F^1_x(\mathsf{a},\omega)$ is increasing). Hence, there exists $C_1 > 0$ such that, for every $s \in (0,h]$,
\begin{equation}
|I({\rm boundary})| \leq C_1 \varepsilon L^{d-1}.
\end{equation}

We now analyse the bulk contribution, which requires some more care. Without loss of generality, we only treat the case $x\in \partial^{\rm bot}\mathcal{R}(L,\delta L)$. 
Define $\mathcal{B}(L,\varepsilon,x):=(\Lambda_{\varepsilon L}\cap \mathbb H) +x$, and for $\mathsf{b}\in (\mathbb{R}^+)^{\partial \mathcal{B}(L,\varepsilon,x)}$,
\begin{equation}
\mathsf{h}(x;s)_y:=
\begin{cases}
s & \text{for } y\in \partial \mathcal{B}(L,\varepsilon,x)\cap \{y\in \mathbb Z^d: y_d=-\delta L\},\\
0 & \text{otherwise.}
\end{cases}    
\end{equation} 
See Figure \ref{fig:removinglargemagfield} for an illustration. 

\begin{figure}[htb]
    \centering
    \includegraphics[width=0.5\linewidth]{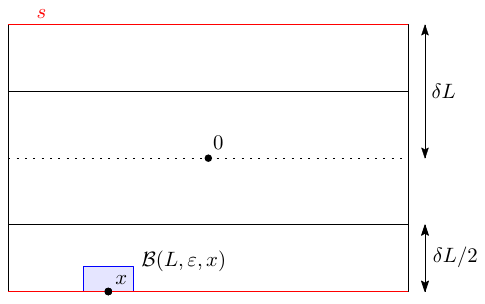}
    \caption{An illustration of the proof of Lemma \ref{lem: large to free bc}. The blue region is the half box $\mathcal{B}(L,\varepsilon,x)$. The top and bottom boundaries of $\mathcal{R}(L,\delta L)$ (in red) carry a magnetic field $s$. The law of $G_x(\mathsf{a},\omega)$ within $\mathcal{B}(L,\varepsilon,x)$ approximates that of $G_x(\mathsf{a},\omega)$ under the half-space measures $\Psi^{0,s}_{\mathbb H,\beta}$ or $\Psi^{1,s}_{\mathbb H,\beta}$. Thanks to Theorem \ref{thm:half-space}, these measures coincide for almost every $s\in (0,h]$.}
    \label{fig:removinglargemagfield}
\end{figure}

Note that the event $\mathcal{D}$ is supported on $\mathcal{R}(L,\delta L/2)$.
Hence, if $x\in \partial^{\rm bot}\mathcal{R}(L,\delta L)$ with $|x|\leq L-\varepsilon L$, by partitioning according to the boundary conditions in $\mathcal{B}(L,\varepsilon,x)$ and using the monotonicity in boundary conditions (see Proposition \ref{prop: random cluster properties}) we get
\begin{align}\label{eq: F 1 lower bound}
\Psi^{0,s}_{\mathcal R(L,\delta L),\beta}[F^1_x(\mathsf{a},\omega) \mid  \mathcal{D}]&\geq \Psi^{0}_{\mathcal{B}(L,\varepsilon,x),\beta,\mathsf{h}(x;s)}[F^1_x(\mathsf{a},\omega)],
\\ \label{eq: F 2 lower bound}
\Psi^{0,s}_{\mathcal R(L,\delta L),\beta}[F^2_x(\mathsf{a},\omega)\mid  \mathcal{D}]
&\geq  \Psi^{0}_{\mathcal{B}(L,\varepsilon,x),\beta,\mathsf{h}(x;s)}[F^2_x(\mathsf{a},\omega)].
\end{align}
Now, let us derive upper bounds on $\Psi^{0,s}_{\mathcal R(L,\delta L),\beta}[F^1_x(\mathsf{a},\omega) ]$ and $\Psi^{0,s}_{\mathcal R(L,\delta L),\beta}[F^2_x(\mathsf{a},\omega)\mid  \mathcal{D}]$. By Remark \ref{rem:plus}, we may choose $\zeta$ in Proposition \ref{prop:reg_plus} in such a way that it is supported on $[h,\infty)$.
By monotonicity in the boundary conditions and monotonicity of the wired measure in the volume (see Proposition \ref{prop: wired random cluster properties}),
\begin{equation} \label{eq: F 1 upper bound}
\Psi^{0,s}_{\mathcal R(L,\delta L),\beta}[F^1_x(\mathsf{a},\omega)]
\leq \Psi^{1,s}_{\mathcal R(L,\delta L),\beta}[F^1_x(\mathsf{a},\omega)]\leq \Psi^{1,s}_{\mathcal{B}(L,\varepsilon,x),\beta}[F^1_x(\mathsf{a},\omega)],
\end{equation}  
where we abuse notation and let $\Psi^{1,s}_{\mathcal R(L,\delta L),\beta}$ and $\Psi^{1,s}_{\mathcal{B}(L,\varepsilon,x),\beta}$ denote half-space wired measures in the (translated) half-space 
$\mathbb H+x$. 
A similar argument gives that
\begin{equation} \label{eq: F 2 upper bound}
\Psi^{0,s}_{\mathcal R(L,\delta L),\beta}[F^2_x(\mathsf{a},\omega) ]\leq \Psi^{1}_{\mathcal{B}(L,\varepsilon,x),\beta,\mathsf{h}(x;s)}[F^2_x(\mathsf{a},\omega)].
\end{equation}
Thus, by the triangle inequality and the fact that $\Psi^{0,s}_{\mathcal R(L,\delta L),\beta}[F^1_x(\mathsf{a},\omega) \mid  \mathcal{D}]\leq \Psi^{0,s}_{\mathcal R(L,\delta L),\beta}[F^1_x(\mathsf{a},\omega)]$, $\Psi^{0,s}_{\mathcal R(L,\delta L),\beta}[F^2_x(\mathsf{a},\omega) \mid  \mathcal{D}]\leq \Psi^{0,s}_{\mathcal R(L,\delta L),\beta}[F^2_x(\mathsf{a},\omega)]$, thanks to the FKG inequality,
\begin{multline}
\Big|\Psi^{0,s}_{\mathcal R(L,\delta L),\beta}\left[F_x(\mathsf{a},\omega)\mid  \mathcal{D}\right]-\Psi^{0,s}_{\mathcal R(L,\delta L),\beta}\left[F_x(\mathsf{a},\omega) \right]\Big| \leq \\ \Psi^{1}_{\mathcal{B}(L,\varepsilon,x),\beta,\mathsf{h}(x;s)}\left[G_x(\mathsf{a},\omega)\right] -\Psi^{0}_{\mathcal{B}(L,\varepsilon,x),\beta,\mathsf{h}(x;s)}\left[G_x(\mathsf{a},\omega)\right],  
\end{multline}
where $G_x=F^1_x+F^2_x$.
By our assumption, one has that $\Psi^{0,s}_{\mathbb H,\beta}=\Psi^{1,s}_{\mathbb H,\beta}$ for almost every $s\in (0,h]$. As a result, for almost every $s \in (0,h]$,
\begin{equation}\label{eq: summand goes to 0 far away from the boundary}
   \Psi^{1}_{\mathcal{B}(L,\varepsilon,x),\beta,\mathsf{h}(x;s)}\left[G_x(\mathsf{a},\omega)\right]-\Psi^{0}_{\mathcal{B}(L,\varepsilon,x),\beta,\mathsf{h}(x;s)}\left[G_x(\mathsf{a},\omega)\right]=o_s(1),
\end{equation}
where the term $o_s(1)$ tends to $0$ as $L$ tends to infinity uniformly over $x\in \partial^{\rm bot}\mathcal{R}(L,\delta L)$ such that $|x|\leq L-\varepsilon L$, and by regularity we have that $|o_s(1)|\leq C$ uniformly over $s\in (0,h]$.

Combining the above, we obtain that for almost every $s\in (0,h]$
\begin{equation}
\Bigg|\frac{\partial \log \Psi^{0,s}_{\mathcal R(L,\delta L),\beta}[ \mathcal{D} ]}{\partial s}\Bigg|\leq C_1 \varepsilon L^{d-1}+ \sum_{{\rm Bulk}(\varepsilon,L)} o_s(1).
\end{equation}
Integrating in $s$ and using the dominated convergence theorem, we obtain that there exists $C_2>0$ such that for $L$ large enough,
\begin{equation}
\Big|\log \Psi^{0,h}_{\mathcal R(L,\delta L),\beta}[ \mathcal{D} ]- \log \Psi^{0}_{\mathcal R(L,\delta L),\beta}[ \mathcal{D} ]\Big|\leq C_2\varepsilon L^{d-1}.  
\end{equation}
The proof follows from choosing $\varepsilon$ small enough in such a way that $C_2\varepsilon \leq c_2/2$, and from fixing $c_3$ small enough to accommodate the small values of $L$.
\end{proof}

\subsection{Proof of supercritical sharpness}
We now use Lemma \ref{lem: large to free bc} to prove Theorem~\ref{thm:sharpness}. The proof is the same as that of \cite[Theorem 5.1]{gunaratnam2025supercritical}. We include it for the reader's convenience.

\begin{proof}[Proof of Theorem~\textup{\ref{thm:sharpness}}]
By Lemma \ref{lem: large to free bc} there exist $\delta,c_3>0$ such that for every $L\geq 1$,
\begin{equation}\label{eq:surface_free}
\Psi^0_{\mathcal R(L,\delta L),\beta}[ \partial^{\mathrm{bot}} \mathcal R(L, \delta L/2) \centernot\longleftrightarrow \partial^{\mathrm {top}} \mathcal R(L, \delta L/2)] \leq e^{-c_3 L^{d-1}}.
\end{equation} Let $\ell\leq \delta L/2$. Note that we can cover the hyperplane $\{-L,\ldots,L\}^{d-1}\times \{0\}$ by $m\leq C({L}/{\ell})^{d-1}$ boxes $\Lambda_\ell(x_1),\ldots,\Lambda_\ell(x_m)$ (all included in $\mathcal{R}(L,\delta L)$) with $x_1,\ldots,x_m\in\{-L,\ldots,L\}^{d-1}\times \{0\}$. Fix such a choice and note that
\begin{equation}
\bigcap_{i=1}^{m} \{\Lambda_\ell(x_i)\centernot\longleftrightarrow\partial \Lambda_{\delta L/2}(x_i)\} \subset \{\partial^{\rm bot} \mathcal{R}(L, \delta L/2)\centernot\longleftrightarrow\partial^{\rm top} \mathcal{R}(L, \delta L/2)\}.
\end{equation}
Therefore, by the FKG inequality together with \eqref{eq:surface_free}, there exists $i_0\in\{1,\ldots,m\}$ such that 
\begin{equation} \label{eq: diff g}
\Psi^0_{\mathcal{R}(L,\delta L),\beta}[\Lambda_\ell(x_{i_0})\centernot\longleftrightarrow\partial \Lambda_{\delta L/2}(x_{i_0})]\leq e^{-c_3 \ell^{d-1}/C}.
\end{equation}
By the monotonicity of the free measure in the volume we obtain 
\begin{equation}
\Psi^0_{\Lambda_{2L}(x_{i_0}),\beta}[\Lambda_\ell(x_{i_0})\centernot\longleftrightarrow\partial \Lambda_{\delta L/2}(x_{i_0})]\leq \Psi^0_{\mathcal{R}(L,\delta L),\beta}[\Lambda_\ell(x_{i_0})\centernot\longleftrightarrow\partial \Lambda_{\delta L/2}(x_{i_0})]\leq e^{-c_3 \ell^{d-1}/C}.
\end{equation}
This implies the desired result with $c= \min\{\delta/4, c_3/C\}$.
\end{proof}

\bibliographystyle{plain}
\bibliography{biblio.bib}
\end{document}